\newtheorem{theorem}{Theorem}[section]
\newtheorem{lemma}[theorem]{Lemma}
\newtheorem{corollary}[theorem]{Corollary}
\newtheorem{conjecture}[theorem]{Conjecture}
\newtheorem{proposition}[theorem]{Proposition}
\theoremstyle{definition}  
\newtheorem{definition}[theorem] {Definition} 
\newtheorem{example} [theorem] {Example}
\newtheorem{remark} [theorem] {Remark}
\newtheorem{obs} [theorem] {Observation}
\newtheorem{thm}[theorem]{Theorem}
\newtheorem{prop}[theorem]{Proposition}
\theoremstyle{definition}
\newcommand{\Z}{{\mathbb{Z}}}
\newcommand{\R}{\mathcal{R}}
\newcommand{\M}{\mathcal{M}}
\newcommand{\A}{\mathcal{A}}
\newcommand{\Pt}{\mathcal{P}}
\newcommand{\Ch}[2]{{C_{#1,#2}(G)}}
\newcommand{\nbc}[2]{{C_{\operatorname{NBC}}(G)}}
\newcommand{\nbcm}[2]{{{C_{\operatorname{NBC}}}^{\A_m}(G)}}
\newcommand{\Chm}[2]{{{C_{#1,#2}}^{\A_m}(G)}}
\newcommand{\HChm}[2]{{{H_{#1,#2}}^{\A_m}(G)}}
\newcommand{\HCh}[2]{{H_{#1,#2}(G)}}
\newcommand{\CST}[2]{{CST_{#1,#2}(G)}}
\newcommand{\CSTm}[2]{{{CST_{#1,#2}}^{\A_m}(G)}}
\newcommand{\CSTam}{{{CST}^{\A_m}}}
\newcommand{\HST}[2]{{HST_{#1,#2}(G)}}
\newcommand{\HSTam}{{{HST}^{\A_m}}}
\newcommand{\HSTm}[2]{{{HST_{#1,#2}}^{\A_m}(G)}}
\newcommand{\PCST}[2]{{{{CST}^+}_{#1,#2}(G)}}
\newcommand{\NCST}[2]{{{{CST}^-}_{#1,#2}(G)}}
\newcommand{\EN}{\operatorname{EN}}
\newcommand{\IA}{\operatorname{IA}}
\newcommand{\IN}{\operatorname{IN}}
\newcommand{\cut}{\operatorname{cut}}
\newcommand{\cyc}{\operatorname{cyc}}
\newcommand{\Tmp}[3]{\begin{tikzpicture}
   \tikzset{new style 0/.style={fill=black, draw=black, shape=circle, inner sep=1.5pt},scale=0.2,yshift=-3cm,baseline}
    \node [style=new style 0, color=red] [label=above:$#1$] (0) at (-11, 6) {};
	\node [style=new style 0] [label=left:$#2$] (1) at (-13, 3) {};
    \node [style=new style 0] [label=right:$#3$] (2) at (-9, 3) {};
    \draw[dotted] (1) to (0);
	\draw[dotted] (0) to (2);
	\draw[dotted] (1) to (2);
\end{tikzpicture}}
\newcommand{\Tmpl}[3]{\begin{tikzpicture}
   \tikzset{new style 0/.style={fill=black, draw=black, shape=circle, inner sep=1.5pt},scale=0.2,yshift=-3cm,baseline}
    \node [style=new style 0] [label=above:$#1$] (0) at (-11, 6) {};
	\node [style=new style 0,color=red] [label=left:$#2$] (1) at (-13, 3) {};
    \node [style=new style 0] [label=right:$#3$] (2) at (-9, 3) {};
    \draw[dotted] (1) to (0);
	\draw[dotted] (0) to (2);
	\draw[dotted] (1) to (2);
\end{tikzpicture}}
\newcommand{\Tmn}[2]{\begin{tikzpicture}
   \tikzset{new style 0/.style={fill=black, draw=black, shape=circle, inner sep=1.5pt},scale=0.2,yshift=-3cm,baseline}
    \node [style=new style 0, color=red] [label=above:$#1$] (0) at (-11, 6) {};
	\node [style=new style 0] (1) at (-13, 3) {};
    \node [style=new style 0] (2) at (-9, 3) {};
    \draw[dotted] (1) to (0);
	\draw[dotted] (0) to (2);
	\draw (1) to node[ below ]{$#2$} (2);
\end{tikzpicture}}
\newcommand{\Tmnl}[2]{\begin{tikzpicture}
   \tikzset{new style 0/.style={fill=black, draw=black, shape=circle, inner sep=1.5pt},scale=0.2,yshift=-3cm,baseline}
    \node [style=new style 0] [label=above:$#1$] (0) at (-11, 6) {};
	\node [style=new style 0,color=red] (1) at (-13, 3) {};
    \node [style=new style 0] (2) at (-9, 3) {};
    \draw[dotted] (1) to (0);
	\draw[dotted] (0) to (2);
	\draw (1) to node[ below ]{$#2$} (2);
\end{tikzpicture}}
\title{A spanning tree model For chromatic Homology}
\author[Banerjee]{Aninda Banerjee}
\address[]{IAI, TCG CREST, Kolkata, India}
\address{NIT, DURGAPUR}
\email{anindabanerjee24@gmail.com}
\author[Chakraborty]{Apratim Chakraborty}
\address[]{IAI, TCG CREST, Kolkata, India}
\address[]{Academy of Scientific and Innovative Research (AcSIR), Ghaziabad- 201002, India}
\email{apratimn@gmail.com}
\author[Das]{Swarup Kumar Das}
\address[]{IAI, TCG CREST, Kolkata, India}
\address{NIT, DURGAPUR}
\email{313swarup@gmail.com}
\author[Paul]{Pravakar Paul}
\address[]{MATHEMATICAL AND PHYSICAL SCIENCES DIVISION, SCHOOL OF ARTS AND SCIENCES, AHMEDABAD UNIVER- SITY, AHMEDABAD 380009, GUJARAT, INDIA}
\email{pravakar.paul@ahduni.edu.in}
\begin{document}
	
\begin{abstract}
After the discovery of Khovanov homology, which categorifies the Jones polynomial, an analogous categorification of the chromatic polynomial, known as chromatic homology, was introduced. Its graded Euler characteristic recovers the chromatic polynomial. In this paper, we present a spanning tree model for the chromatic complex, i.e., we describe a chain complex generated by certain spanning trees of the graph that is chain homotopy equivalent to the chromatic complex. 
We employ the spanning tree model over $\A_m:= \frac{\mathbb{Z}[x]}{<x^m>}$ algebra to answer two open questions. First, we establish the conjecture posed by Sazdanovic and Scofield regarding the homological span of chromatic homology over $\A_m$ algebra, demonstrating that for any graph $G$ with $v$ vertices and $b$ blocks, the homological span is $v - b$. Additionally, we prove a conjecture of Helme-Guizon, Przytycki, and Rong concerning the existence of torsion of order dividing $m$ in chromatic homology over $\A_m$ algebra.
\end{abstract}
	
\maketitle
	
\begin{section}{Introduction}
 It is a fundamental problem in graph theory to determine the number of ways a graph can be colored using  $\lambda$ many colors. The chromatic polynomial $P(G,\lambda)$ counts the number of such colorings when evaluated at $\lambda$. It was first defined in 1912 by George David Birkhoff in an attempt to solve the four-color theorem for planar graphs, proving that  $P(G,4)>0$ for all planar graphs would have implied the result.\\

 Let $G$  be a connected simple graph with $n$ vertices, and fix an ordering on its edges. A spanning tree $T$ of $G$ is called an \emph{NBC spanning tree} if, for every edge $e  \in E(G) \setminus E(T)$ , $e$ is not the smallest ordered edge in the unique cycle of  the subgraph $ T \cup \{e\}$ of $G$.\\
		
There are several expansions of the chromatic polynomial in terms of different graph-theoretic objects. One such expansion expresses the chromatic polynomial in terms of NBC spanning trees. This formulation enables a purely algebraic computation of the chromatic polynomial.\\
		
 Helme-Guizon-Rong \cite{Helme_Guizon_Rong_2005} introduced a Khovanov-type bi-graded homology theory  which categorifies the chromatic polynomial. Throughout the paper the chromatic complex will be denoted by $\Ch{*}{*}$ for a given graph $G$. The chromatic complex of $G$ is generated by the enhanced spanning subgraphs of $G$. It should be noted that, although the original definition of chromatic homology was given for the algebra $\A_2 := \frac{\mathbb{Z}[x]}{<x^2>}$, it can be generalized to any graded commutative algebra. Unless explicitly stated otherwise, it will always be assumed that we are working over $\A_2$. However, in Section 4, we will explore chromatic homology over $\A_m$ algebras where $\A_m := \frac{\mathbb{Z}[x]}{<x^m>}$.\\
		
 We introduce a bi-graded chain complex $\CST{*}{*}$ generated by signed NBC spanning trees whose differential can be easily combinatorially computed from the graph. We will call it spanning tree complex. Its homology will be denoted by $\HST{*}{*}$. Our main theorem states  that the spanning tree complex is chain homotopy equivalent to chromatic complex.
		
 \begin{thm}\label{main}
    The spanning tree complex $\CST{*}{*}$ and the chromatic complex $\Ch{*}{*}$ are chain homotopic and therefore, $\HST{i}{j} \cong \HCh{i}{j}$.
  \end{thm}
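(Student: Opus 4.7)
The plan is to prove this theorem by algebraic Morse theory (equivalently, iterated Gaussian elimination) applied to the chromatic complex $\Ch{*}{*}$, exhibiting an acyclic matching on its generators whose critical (unmatched) generators are exactly the signed NBC spanning trees that span $\CST{*}{*}$. Such a deformation retract automatically produces a chain homotopy equivalence between the two complexes, and the bi-grading isomorphism on homology falls out.

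First I would set up the matching. Recall that $\Ch{*}{*}$ is generated by enhanced spanning subgraphs $(S,\epsilon)$ where $S\subseteq E(G)$ and $\epsilon$ labels each connected component of $(V(G),S)$ by $1$ or $x$, with homological degree $|S|$ and quantum degree counting $x$-labels. Using the fixed edge order, for each enhanced subgraph that is \emph{not} of NBC-spanning-tree type I would locate a canonical witnessing edge $e$ of minimum index that either lies on a cycle of $S$ (in which case $e$ is not the minimum of that cycle), or lies outside $S$ and is the minimum of a cycle created in $S\cup\{e\}$. The enhancement $\epsilon$ would be used to specify a canonical partner $\epsilon'$ so that $(S,\epsilon)$ and $(S\triangle\{e\},\epsilon')$ appear with coefficient $\pm 1$ in the differential (using the Frobenius merge/split operations on $\A_2$). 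By construction, generators unmatched by this rule are exactly those whose edge set is an NBC spanning tree together with an extremal enhancement; these correspond bijectively to the signed NBC spanning trees generating $\CST{*}{*}$.

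I would then verify the two hypotheses of algebraic Morse theory: (i) the matching is acyclic, i.e., no alternating cycle of matched and unmatched differential arrows exists; and (ii) the induced differential on critical generators, computed as the sum over zigzag paths through matched pairs, agrees with the declared combinatorial differential on $\CST{*}{*}$. Acyclicity is the standard "minimum witnessing edge" argument: because the matching rule is driven by the smallest broken-circuit witness, every matched pair is consistently oriented with respect to a linear extension of $2^{E(G)}$, which forbids alternating cycles. The principal obstacle will be point (ii): one must argue that only zigzag paths of controlled length contribute nontrivially, and that their surviving contributions reproduce, signs and all, the purely combinatorial differential declared on $\CST{*}{*}$. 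This relies on telescoping identities coming from the Frobenius-algebra relations of $\A_2$. Once (ii) is verified, the algebraic Morse theorem gives the chain homotopy equivalence $\Ch{*}{*}\simeq\CST{*}{*}$, and passing to homology yields $\HST{i}{j}\cong\HCh{i}{j}$.
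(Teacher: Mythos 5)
Your high-level strategy matches the paper's: build an acyclic matching on (a reduction of) the chromatic complex via algebraic Morse theory so that the critical generators are the signed NBC spanning trees, then invoke the Morse theorem. However, the proposal as written has a genuine gap, concentrated in the matching rule itself.

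Your matching rule is driven solely by the edge set $S$ of an enhanced state $(S,\epsilon)$; the enhancement is relegated to ``the enhancement $\epsilon$ would be used to specify a canonical partner $\epsilon'$.'' This is precisely where the difficulty lives and where the proposal stops short. In $\A_2$ the per-edge merge $\A_2 \otimes \A_2 \to \A_2$ satisfies $x \cdot x = 0$, so the differential coefficient $[(S,\epsilon):(S\cup\{e\},\epsilon')]$ vanishes for many $\epsilon$ and one cannot match all enhancements of $S$ with enhancements of $S\cup\{e\}$ by a single edge choice $e(S)$. A matching rule that ignores this cannot be well defined, and in particular cannot produce exactly two critical states per NBC spanning tree (the signed trees $T^{\pm}$). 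Furthermore, ``the standard minimum witnessing edge argument'' does not apply to the enhancement-sensitive part of the matching: the paper's acyclicity proof for the within-tree matching (Lemma \ref{acyclicity}) uses a rooted-distance potential function $D_{H_j}$ and a careful analysis of the case $e_i = e'_i$, which has no analogue in a purely edge-driven argument.

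The paper avoids the problem by working in two stages. First it quotes the Chandler--Sazdanović reduction of $\Ch{*}{*}$ to the NBC subcomplex (Observation \ref{nbctheoremforus}); this stage is indeed driven by a purely edge-set criterion (broken-circuit witnesses), so your description is appropriate there. Second, and this is what your proposal misses, it shows the NBC complex is shellable (Proposition \ref{shelling order}), partitions it into intervals $[\R(T_i),T_i]$ with $\R(T_i) = \IN(T_i)$ (Proposition \ref{inactiveedgelemma}), and on each interval defines an enhancement-sensitive matching via the contracted tree and a least-leaf-edge rule rooted at a distinguished vertex (Subsection 3.1, Theorem \ref{Matching on G}). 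Acyclicity across intervals relies on the shelling order via Lemma \ref{crucial}. This interval structure is what guarantees exactly two critical states per NBC tree and controls the zigzag paths. Finally, note that your point (ii) is unnecessary for Theorem \ref{main} as stated: the paper \emph{defines} $\CST{*}{*}$ to be the Morse complex of the matching, so the chain homotopy equivalence is immediate from acyclicity; the explicit combinatorial formula for $\partial_{ST}$ is a separate result (Theorem \ref{differential}), not a hypothesis of Theorem \ref{main}.
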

		
    The graded Euler characteristic $\chi_q(\CST{*}{*})$ is the alternating sum of the graded dimensions of its cohomology groups, i.e. $\chi_q(\CST{*}{*})=\sum_{i, j}(-1)^i \cdot q^j \operatorname{dim}(\HST{i}{j})$. As a corollary of Theorem \ref{main}, we obtain the spanning tree expansion of the chromatic polynomial.
		
    \begin{corollary} \label{expancor}
        The graded Euler characteristic of $\CST{*}{*}$ coincides with the chromatic polynomial of the graph $G$ when evaluated at $\lambda=1+q$. Hence we obtain,
		\[
		    P(G ; \lambda)=(-1)^{n-1} \lambda \sum_{i=1}^{n-1} t_{i}(1-\lambda)^i
		\]
        where $t_{i}$ is the number of NBC spanning trees of $G$ which have internal activity $i$  (with respect to any fixed ordering of $E(G)$).
		\end{corollary}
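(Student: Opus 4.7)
The plan is to compare two expressions for the graded Euler characteristic $\chi_q(CST_{*,*}(G))$. On one side, Theorem~\ref{main} gives $HST_{i,j}(G) \cong HCh_{i,j}(G)$, so the graded Euler characteristics of the spanning tree and chromatic complexes agree. The chromatic complex was constructed by Helme-Guizon and Rong precisely so that its graded Euler characteristic recovers $P(G; 1+q)$, yielding
\[
\chi_q(CST_{*,*}(G)) = \chi_q(Ch_{*,*}(G)) = P(G; 1+q).
\]

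On the other side, I would compute $\chi_q(CST_{*,*}(G))$ directly at the chain level from the generators. Since $CST_{i,j}(G)$ is free on signed NBC spanning trees of $G$, one can reorganize the alternating sum by grouping generators according to the underlying NBC spanning tree. Each NBC spanning tree $T$ with internal activity $k$ contributes a pair of signed generators (positive and negative) whose bi-degrees combine into a factor proportional to $(1+q)(-q)^k$, with the overall sign determined by the edge count of $T$. Summing these contributions should give
\[
\chi_q(CST_{*,*}(G)) = (-1)^{n-1}(1+q)\sum_{k=1}^{n-1} t_k (-q)^k.
\]

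Equating the two expressions and substituting $\lambda = 1 + q$ (so that $1+q = \lambda$ and $-q = 1-\lambda$) then yields the claimed identity. The main obstacle is the bookkeeping in the chain-level computation: one must identify the bi-degree of each signed NBC spanning tree in terms of its internal activity, which requires invoking the explicit construction of $CST_{*,*}(G)$ given in the body of the paper and verifying that the signs and the $q$-shifts align so that the contribution of each $T$ with activity $k$ is indeed of the form $(-1)^{n-1-k}(q^{\alpha_k} + q^{\alpha_k+1})$ that telescopes into the displayed expression above.
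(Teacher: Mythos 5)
Your proposal is correct and takes essentially the same route as the paper: compute $\chi_q(\CST{*}{*})$ directly by grouping the signed generators $T^{\pm}$ by their underlying NBC tree, identify the contribution of a tree of internal activity $k$ as $(-1)^{n-1-k}(q^{k}+q^{k+1})$, and substitute $\lambda=1+q$. The bi-degree bookkeeping you flag as the main obstacle is already settled in the paper's discussion immediately preceding the corollary, where it is shown that $j(T^{+})=\IA(T)$, $j(T^{-})=\IA(T)+1$, and $i(T^{\pm})=\IN(T)=n-1-\IA(T)$; plugging in $\alpha_k=k$ makes your telescoping expression coincide with the paper's computation.
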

		
		Theorem \ref{main} can be regarded as a  categorification of the spanning tree expansion formula of the chromatic polynomial.\\
		
		Our theorem builds on the work of Chandler and Sazdanovic \cite{Chandler_Sazdanovic_2019}. Using methods of algebraic Morse theory, they proved that the chromatic complex (over any commutative graded algebra) deformation retracts to a complex generated by enhanced NBC subgraphs. This can be viewed as a categorification of Whitney's Broken circuit Theorem. \\
		
		In a similar vein, using methods from algebraic Morse theory, we prove that chromatic complex is chain homotopy equivalent to a a chain complex generated by the signed NBC spanning trees of the graph. This results in a significant reduction in the number of generators. To emphasize this point, we provide some computations of chromatic homology using the spanning tree model without computer assistance. Moreover, the spanning tree model perspective enables us to explore the chromatic complex using graph-theoretic properties.\\		
		
		The spanning tree model can be extended over $\A_m$ algebra. We prove the following conjecture on the homological span of chromatic homology over $\A_m$ algebra posed by Sazdanovic and Scofield \cite{sazdanovic2018patternskhovanovlinkchromatic}.
		
		\begin{conjecture}[Conjecture 43]\cite{sazdanovic2018patternskhovanovlinkchromatic} \label{hspanconjecture}
			The homological span of chromatic homology over algebra $\A_m$ of any graph $G$ with $v$ vertices and $b$ blocks is equal to $hspan \left(\HChm{*}{*}\right)=v-b$.
		\end{conjecture}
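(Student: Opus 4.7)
The first move is to pass from the chromatic complex to the much smaller spanning tree complex via the $\A_m$-analogue of Theorem~\ref{main} (developed in Section~4), so that $\HChm{*}{*} \cong \HSTm{*}{*}$. From Corollary~\ref{expancor} and its $\A_m$ version, each signed NBC spanning tree $T$ contributes generators to $\CSTm{*}{*}$ at a single homological degree determined by the internal activity $\operatorname{ia}(T)$; hence the nontrivial homological window of the complex is governed by the range of $\operatorname{ia}(T)$ as $T$ varies over NBC spanning trees of $G$, reducing the span question to a combinatorial one about that range.

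The central combinatorial ingredient is the block decomposition
\[
\operatorname{ia}_G(T) \;=\; \sum_{B \text{ block of } G} \operatorname{ia}_B\bigl(T \cap E(B)\bigr),
\]
valid because the fundamental cut of any tree edge $e$ is contained in the unique block containing $e$ (all cuts lie inside blocks). Each bridge contributes exactly $1$ to the right-hand side. For every non-bridge block $B$, the smallest-labeled edge of $B$ must lie in every NBC spanning tree $T$ (otherwise the broken-circuit condition would fail inside $B$), and is automatically internally active in $T \cap E(B)$. Combining with the trivial upper bound $\operatorname{ia}(T) \le v-1$ yields $\operatorname{ia}(T) \in \{b, b+1, \ldots, v-1\}$. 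Both extremes are attained---$v-1$ by the Kruskal greedy spanning tree, and $b$ by a block-by-block construction in which only the minimum-labeled edge of each non-bridge block is active---and intermediate values follow by edge-swap arguments. This accounts for exactly $v-b$ distinct homological degrees occupied by generators, giving the upper bound $\operatorname{hspan}(\HChm{*}{*}) \le v-b$.

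For the matching lower bound I would exhibit nontrivial homology classes at both extreme degrees of the window. At the top ($\operatorname{ia} = v-1$) the Kruskal tree $T^{*}$ is the unique NBC spanning tree with this activity, and the combinatorial differential of $\CSTm{*}{*}$ can be checked to vanish both on and into its generator, so the class survives. At the bottom ($\operatorname{ia} = b$) I would reduce to the two-connected case via the block decomposition and then either analyze the differential directly or extract the extremal quantum-degree coefficient of the graded Euler characteristic---which is forced to be nonzero by Corollary~\ref{expancor}---to force a surviving class. Combining the two bounds gives $\operatorname{hspan}(\HChm{*}{*}) = v - b$.

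The main obstacle, in my view, is the bottom-end nonvanishing step over $\A_m$ for $m \ge 3$: the $x^{i}$-torsion of the algebra can collapse extremal classes that would survive over $\A_2$, so one cannot rely only on the top-level structure of the complex. A block-by-block analysis, together with a K\"unneth-style compatibility between $\CSTm{*}{*}$ and the block decomposition of $G$, should isolate the needed class. A secondary subtlety is that the edge order used to define both NBC and internal activity is global on $G$, not local to each block; one must confirm that the smallest edge of a block in the induced ordering is indeed internally active in the ambient spanning tree, and this is where the block structure enters most delicately.
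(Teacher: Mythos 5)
Your proof takes a genuinely different route to the central combinatorial bound. Where the paper constructs a specific extremal tree $T_{max}$ by fixing an ear decomposition and a compatible edge ordering on each $2$-connected block, you observe that the internal activity of \emph{any} NBC spanning tree decomposes over blocks, $\operatorname{ia}_G(T)=\sum_B \operatorname{ia}_B(T\cap E(B))$, because fundamental cuts (and fundamental cycles) live inside blocks, and that the smallest edge of each block is forced into every NBC tree and is automatically active. This gives $\operatorname{ia}(T)\ge b$ uniformly for every tree and every global ordering, whereas the paper's version of this inequality is phrased through a uniqueness-of-$T_{max}$-via-basis-exchange claim that is considerably murkier (and in fact there can be several NBC trees of minimal activity, e.g.\ for $K_4$ with an ear-compatible ordering). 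Your formulation is cleaner, more robust, and is exactly the kind of statement that ought to be recorded as a lemma. The two approaches are otherwise parallel: both pass to the spanning tree model, both reduce the span to the range of $\operatorname{ia}(T)$, and both need nonvanishing of homology at the two ends.

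The genuine gap is the one you yourself flag: nonvanishing of $\HSTm{v-b-1}{\ast}$. Your ``extract the extremal quantum-degree coefficient of the Euler characteristic'' idea does work, but the decisive observation is missing and should be made explicit: at quantum degree $j=b$ the spanning tree complex is concentrated in the single homological degree $i=v-b-1$. Indeed, at homological degree $i$ a critical state has $v-i$ components, the root component may carry $x^0$ but every other component carries $x^{k}$ with $k\ge 1$, so $j\ge v-i-1$; thus $j=b$ forces $i\ge v-b-1$, while $\operatorname{ia}(T)\ge b$ forces $i\le v-b-1$. Hence $\CSTm{i}{b}=0$ for $i\ne v-b-1$, the differential in and out of bidegree $(v-b-1,b)$ is zero, and $\HSTm{v-b-1}{b}$ is the free group on the trees of activity $b$. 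This simultaneously supplies the nonvanishing and renders the Euler-characteristic detour unnecessary. You still owe an argument that there is at least one NBC tree with $\operatorname{ia}(T)=b$ for an \emph{arbitrary} edge ordering; one clean route is to note (from Propositions \ref{shelling order} and \ref{inactiveedgelemma}) that the count of NBC trees with $|\IN(T)|=k$ is the $k$-th entry of the $h$-vector of the shellable complex $\operatorname{NBC}$ and hence independent of the ordering, after which the paper's ear-decomposition construction for one convenient ordering suffices. The secondary subtlety you raise -- that the ordering is global -- is handled by the same block-containment observation you already used: $\cut(T,e)\subseteq E(B_e)$, so activity of $e$ depends only on the restriction of the ordering to $B_e$.
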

		
		We also prove the following conjecture posed by Helme-Guizon, Przytycki, and Rong on the existence of torsion of order dividing $m$ in chromatic homology over $\A_m$ algebra.
		
		\begin{conjecture} [Conjecture 25]\cite{torsionam}\label{torsionconjecture}
			The cohomology $\HChm{*}{*}$ of a graph $G$ contains a torsion part if and only if $G$ has no loops and contains a cycle of order greater than or equal to $3$. In this case, $\HChm{*}{*}$ has a torsion of order dividing $m$.
		\end{conjecture}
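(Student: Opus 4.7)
The plan is to work with the $\A_m$-extension of the spanning tree model provided by Theorem~\ref{main}, so that the cohomology $\HChm{*}{*}$ can be analyzed through the spanning tree complex $\CSTm{*}{*}$, whose generators and differentials admit a transparent combinatorial description. Both directions of the biconditional are attacked by direct constructions.

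For the ``only if'' direction, one first addresses the case that $G$ has a loop $\ell$ at a vertex $v$. The loop contributes a differential component that acts (after identifying the loop's endpoints) as multiplication by $x$ on the $\A_m$-factor at $v$. Using this as the source of an explicit chain contraction, one shows that the chromatic complex is acyclic, and in particular torsion-free. If instead $G$ is loopless with no cycle of length $\geq 3$, every block of $G$ is either a bridge or a multi-edge on two vertices. A block decomposition of $\CSTm{*}{*}$, of the same kind used in the proof of Conjecture~\ref{hspanconjecture}, combined with a K\"unneth-type splitting reduces the computation to the individual blocks; for each such block a direct computation in the spanning tree model shows the cohomology is a free abelian group.

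For the ``if'' direction, assume $G$ is loopless and contains a cycle $C = v_0 v_1 \cdots v_{k-1} v_0$ of length $k \geq 3$. Choose an edge ordering so that a chosen edge $e$ of $C$ is the smallest edge of $C$; then any spanning tree containing $C \setminus \{e\}$ is NBC-compatible with respect to the chord $e$. On such a spanning tree, construct an explicit cocycle $\alpha \in \CSTm{i}{j}$ by decorating the vertices of $C$ with a prescribed combination of $1$ and $x^{m-1}$ in the $\A_m$-factors, chosen so that the relation $x \cdot x^{m-1} = 0$ forces every potentially nonzero term of $d\alpha$ to vanish. A direct computation then exhibits a cochain $\beta$ with $d\beta = m\alpha$, so $[\alpha] \in \HChm{i}{j}$ has order dividing $m$.

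The main obstacle is showing that $\alpha$ is not itself a coboundary. Here the spanning tree model becomes essential: the much smaller number of NBC spanning tree generators (compared to enhanced subgraphs) allows one to rule out any antecedent $\gamma$ with $d\gamma = \alpha$ by examining which bi-degrees and decorations can appear in $\CSTm{i-1}{j}$. A further technical point is extending the construction beyond a single cycle: if $C$ sits inside a larger graph $G$, one either works in the block of $G$ containing $C$ and invokes the block decomposition again, or constructs the torsion class directly in $\CSTm{*}{*}$ using an NBC spanning tree adapted to $C$ and its embedding in $G$.
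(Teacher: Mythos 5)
Your overall strategy—use the spanning tree model, exhibit a cocycle $\alpha$ that is $m$ times a boundary, and argue it is not itself a boundary—matches the paper's in spirit, and your treatment of the ``only if'' direction (loops kill the complex, loopless forests-with-multiedges give free homology via the block splitting) is fine and in fact more explicit than the paper, which omits that direction as known. But there is a genuine gap in the hard half of the argument, and a structural mismatch worth flagging.

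First, the structural point: the paper does not construct a cocycle supported on an arbitrary cycle $C$. Instead it works at the \emph{top} homological grading, taking $T_M$ to be the unique NBC spanning tree of maximal $i$-grading (the one with exactly $b$ live edges built in the proof of Theorem~\ref{hspanthm}). The critical state $T_M^{x,x^{m-1}}$ is then automatically a cocycle because $\partial_{ST}$ vanishes out of the top grading—no cocycle verification is needed. Your construction, a decorated cocycle in some unspecified bidegree $(i,j)$ attached to a chord of $C$, requires you to separately check $d\alpha = 0$; you gesture at the relation $x\cdot x^{m-1} = 0$ killing the terms, but you never pin down which NBC spanning tree carries $\alpha$ or what its homological grading is, so the check cannot actually be carried out from what you wrote.

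Second, and more seriously, the step you yourself flag as ``the main obstacle''—showing $\alpha$ is not a coboundary—is exactly where the paper's proof does its real work, and your proposed method does not suffice. You suggest ruling out a preimage $\gamma$ ``by examining which bi-degrees and decorations can appear in $\CSTm{i-1}{j}$,'' i.e., essentially a support/degree argument. But in the paper's situation the target class $T_M^{x,x^{m-1}}$ \emph{does} have plenty of candidate preimages: every tree $\psi'_e(T_M)$ for $e\in \IN(T_M)$ contributes generators of the right bidegree, and the argument is a careful linear-algebra coefficient comparison over those preimages (the four contracted shapes (a)--(d), equations (7)--(12), and the parity split on $m$) showing that any integer combination hitting $T_M^{x,x^{m-1}}$ forces the impossible divisibility $m \mid m-2$ or $m+2 \mid m$. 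A degree count alone will not detect that only multiples of $m$ are in the image; you need the explicit differential formula of Theorem~\ref{Am-differential} and the resulting system of linear relations. Without that computation your proposal asserts the key non-exactness but does not prove it.Your overall strategy—use the spanning tree model, exhibit a cocycle $\alpha$ that is $m$ times a boundary, and argue it is not itself a boundary, then reduce the general case to $2$-connected blocks via a K\"unneth splitting—matches the paper's in spirit. Your sketch of the ``only if'' direction (loops kill the complex; loopless graphs with no cycle of length $\geq 3$ split into bridges and multi-edges, each giving free homology) is reasonable and in fact more explicit than the paper, which omits that direction as essentially known.

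But there is a genuine gap in the hard half, and a structural mismatch worth flagging. The paper does not construct a cocycle supported on an arbitrary cycle $C$. It works at the \emph{top} homological grading, taking $T_M$ to be the unique NBC spanning tree of maximal $i$-grading built in the proof of Theorem~\ref{hspanthm} (one live edge per block). The critical state $T_M^{x,x^{m-1}}$ is then automatically a cocycle, because $\partial_{ST}$ vanishes out of the top grading—no cocycle verification is needed. Your construction, a decorated class attached to a chord $e$ of $C$ in some unspecified bidegree $(i,j)$, requires you to separately verify $d\alpha = 0$; you gesture at the relation $x\cdot x^{m-1}=0$ killing all terms, but you never pin down which NBC spanning tree carries $\alpha$ or what its homological grading is, so the check cannot actually be carried out from what you wrote.

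More seriously, the step you flag as ``the main obstacle''—showing $\alpha$ is not a coboundary—is exactly where the paper's proof does its real work, and your proposed method is not adequate. You suggest ruling out a preimage $\gamma$ ``by examining which bi-degrees and decorations can appear in $\CSTm{i-1}{j}$,'' essentially a support or degree-counting argument. But in the paper's situation the target class $T_M^{x,x^{m-1}}$ \emph{does} have many candidate preimages of the right bidegree: every tree $\psi'_e(T_M)$ for $e \in \IN(T_M)$ contributes generators there, and one must run a coefficient comparison across the four possible contracted shapes (a)--(d), derive the linear system (7)--(12), split by parity of $m$, and conclude that any integer combination hitting $T_M^{x,x^{m-1}}$ forces an impossible divisibility such as $m \mid m-2$ or $m+2 \mid m$. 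A degree count alone cannot detect that the image contains only $m$-multiples of $T_M^{x,x^{m-1}}$; you need the explicit differential formula of Theorem~\ref{Am-differential} and this linear-algebraic analysis. Your proposal asserts the key non-exactness but does not prove it, so as written the argument does not go through.
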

    The paper is organized as follows. In Section 2, we review some background material. In Section 3, we introduce the spanning tree model for chromatic homology in the context of \(\A_2\) algebras. We describe the exact combinatorial formula of the differential and discuss some applications and computations. Finally, in Section 4, we generalize the spanning tree model for chromatic homology over \(\A_m\) algebras and provide proofs of Conjecture \ref{hspanconjecture} and Conjecture \ref{torsionconjecture}.

	\end{section}
 \newpage
    \begin{section}{Background}
		\begin{subsection}{Algebraic Morse Theory}{\label{algebraic morse theory}}
			
			Morse theory for regular CW-complexes was originally defined by Foreman. An algebraic version of this was introduced by J\"{o}llenbeck and Welker in \cite{article} and independently by Sk\"{o}ldberg in \cite{Skldberg2005MorseTF} for chain complexes of arbitrary free $\R$-modules.\\
			
			Let $\mathcal{R}$ be a ring with unity, and $C_{\star}=(C_i,\partial_i)_{i\geq 0}$ be a co-chain complex of free $\mathcal{R}$-modules $C_i$ for all $i$. For two elements $c,c'$ of homological grading $i$ and $i+1$, the coefficient of $c'$ in the differential of $c$ is denoted by $[c,c']$.
			
			Let $\Omega_{i}$ be a basis for $C^i$ such that 
            $$
            C_i\cong\bigoplus_{c \in \Omega_i} \mathcal{R}c
            $$
			
			Let $\Omega=\bigcup_n\Omega_{n}$ be a basis for the co-chain complex $C^*$. The differentials $\partial^i$ are given by:
			\begin{equation}
				\partial_i:
				\begin{cases}
					C_i \rightarrow C_{i+1} \\
					c\rightarrow \partial_i(c)=\sum_{c'\in \Omega_{i+1}}[c:c'].c'
				\end{cases} 
			\end{equation}
			
			Given a co-chain complex $C_{\ast}$, one can construct a directed, weighted graph $G_{C_{\star}}=(V,E)$ such that the vertex set of $G$ is the basis elements $V=\Omega$, and draw a directed edge $c \rightarrow c'$ whenever $[c,c']\neq 0$. The weight on each such edge $c \rightarrow c'$ is defined to be $[c:c']$.\\
			\par A matching $\M$ on a directed graph $G=(V,E)$ is a collection of its edges such that no two edges are incident on a common vertex. We can construct another graph $G^{\M}$ given such a directed graph $G$ with vertex set $V$ and the same edge set but just reversing the direction of all arrows in $\M$. Given a co-chain complex $C_{\star}$, we say that a matching $\M$ on $G_{C_{\star}}$ is acyclic if there are no directed cycles on the graph $G^{\M}_{C_{\star}}$ and weights of each matching edge is invertible.\\
			Given a matching $\M$ on the graph $G(C_{\star})=(V,E)$, we shall use the following notations:
				
				\begin{enumerate}
					\item A vertex $v \in V$ is called critical with respect to $\M$ if $v$ does not lie in an edge $e \in \M$. We use the notation
					$$\Omega_i^{\M}:=\{c \in \Omega_i \mid c \text{ is critical}\}$$
					for the set of all critical vertices in the $i$-th homological degree. Let us denote the collection of all possible critical states by $\M^c=\cup_i \Omega_i^{\M}$.
					\item $\mathcal{P}(c,c')$ is the set of all directed alternating paths from $c$ to $c'$ in the graph $G_{C_{\star}}^{\M
                    }$.
					\item The weight of a path $w(p)$ of a path $p=c_1\rightarrow \cdots \rightarrow c_r$ is given by
					$$w(p):=\prod_{i=1}^{r-1} w(c_i \rightarrow c_{i+1})$$
					\item $w(c\rightarrow c'):=
					\begin{cases}
						- \frac{1}{[c:c']}, & \text{ if the edge } (c,c') \in \M,\\
						[c:c'], & \text{if the edge } (c,c') \in E-\M.
					\end{cases}$
					\item $\Gamma(c,c')=\sum_{p \in \mathcal{P}(c,c')}w(p)$ denotes the sum of the weights of all alternating paths from $c$ to $c'$.
				\end{enumerate}
				
				\begin{definition}[Morse Complex]
					Let $(\mathcal{C}_{\star},\partial_{\star},\Omega)$ be a free co-chain complex with an acyclic matching $\M$. The Morse complex $C_{\star}^{\M}=(C_i^{\M},\partial_i^{\M})$, with respect to $\M$, is defined by the $\mathcal{R}$-modules $\mathcal{C}_i^{\M}:=\bigoplus_{c \in \Omega_{i}^{\M}} \mathcal{R}c$, 
					$$ \partial_i^{\M}:
					\begin{cases}
						C_i^{\M} \rightarrow C_{i+1}^{\M}\\
						c \mapsto \sum_{c'\in \Omega_{i+1}^{\M}} \Gamma(c,c')c',
					\end{cases}$$ 
				\end{definition}
				
				Associated to a given freely generated chain complex $(C_\star,\partial_\star)$ and an acyclic matching $\M$ on its Hasse diagram, we have the following maps:
				
				\begin{equation} \label{f map in dmt}
					\begin{aligned}
						& f: C_\star^{\M} \rightarrow C_\star \\
						& f_i: C_i^\M \rightarrow C_i, &  f_i(c) := \sum_{c' \in \Omega_i} \Gamma(c,c').c',
					\end{aligned}
				\end{equation}
				
				\begin{equation} \label{g map in dmt}
					\begin{aligned}
						& g: C_\star \rightarrow C_\star^\M \\
						& g_i: C_i \rightarrow C_i^\M, &  g_i(c) := \sum_{c' \in \Omega_i^{\M}}\Gamma(c,c').c',
					\end{aligned}
				\end{equation}
				
				\begin{equation}
					\begin{aligned}
						& \chi: C_\star \rightarrow C_\star \\
						& \chi_i: C_i \rightarrow C_{i+1}, &  \chi_i(c) := \sum_{c' \in \Omega_{i+1}} \Gamma(c,c').c',
					\end{aligned}
				\end{equation}
				
				\begin{lemma} \label{chain homotopic maps}
					For the maps defined above, we have the following:
					\begin{enumerate}
						\item $f$ and $g$ are chain maps.
						
						\item $f$ and $g$ define a chain homotopy. In particular, we have
						\begin{enumerate}
							\item $f_i \circ g_i - id_{C_i} = \partial \circ \chi_{i+1} + \chi_i \circ \partial$,
							
							\item $g_i \circ f_i - id_{C_i^\M} = 0$.
						\end{enumerate}
					\end{enumerate}
				\end{lemma}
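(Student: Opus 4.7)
The plan is to prove all four assertions by direct combinatorial analysis of alternating paths in the Morse graph $G^{\M}_{C_\star}$. The central hypothesis throughout is the acyclicity of $\M$, which guarantees that each $\Gamma(c, c')$ is a finite sum and underpins the cancellations needed for the identities; I would adopt the convention that the trivial length-zero path from $c$ to itself carries weight $1$, so that $\Gamma(c,c)$ implicitly includes this contribution.

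For the chain-map properties in (1) and the easy half of the homotopy (2)(b), the arguments are mostly bookkeeping. To show $\partial \circ f = f \circ \partial^{\M}$, I would expand both compositions on a critical generator $c \in \Omega_i^{\M}$ into sums indexed by alternating paths ending in $\Omega_{i+1}$, distinguishing whether the final edge is matching or non-matching, and verify that the weight conventions $w(c \to c') = [c:c']$ versus $-1/[c:c']$ make the two expansions agree term by term. The argument for $g$ is dual, tracking paths that terminate at critical elements. For $g_i \circ f_i = \mathrm{id}_{C_i^{\M}}$, I would argue structurally: any alternating path leaving a critical vertex in $\Omega_i^{\M}$ must begin with a non-matching (``up'') edge, and so by alternation any excursion returning to degree $i$ must end with a reversed matching (``down'') edge, forcing its endpoint to be matched and hence non-critical. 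Thus $\Gamma(c, c') = \delta_{c,c'}$ for critical $c, c'$ of the same degree, and the composition collapses to the identity by inspection.

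The hardest step is the chain homotopy (2)(a). My plan is to expand both sides on an arbitrary $c \in \Omega_i$ as sums over alternating paths and exhibit a bijective correspondence: the right-hand side groups paths according to whether an edge is appended at the beginning (via $\chi \circ \partial$) or at the end (via $\partial \circ \chi$), while on the left the contribution of $f \circ g$ groups paths according to the first critical vertex they visit, with $-\mathrm{id}_{C_i}$ subtracting off the length-zero contribution. The identity should then follow from grouping paths by their first (or last) critical waypoint, with the $-1/[c:c']$ weight on reversed matching edges producing exactly the signs needed to cancel contributions through non-critical intermediate vertices. The main obstacle is the case analysis: every alternating path must be matched up correctly across the two sides, and the accounting of matched versus unmatched versus critical status at each intermediate vertex, combined with the weight signs, is where subtle errors are most likely to arise; acyclicity of $\M$ is the global hypothesis that makes all these finite sums well-defined and the cancellations unambiguous.
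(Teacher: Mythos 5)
The paper does not prove Lemma~\ref{chain homotopic maps}. It is recalled, together with the definitions of $f$, $g$, $\chi$ and Theorem~\ref{dmt maintheorem}, from the algebraic discrete Morse theory literature --- J\"ollenbeck--Welker~\cite{article} and Sk\"oldberg~\cite{Skldberg2005MorseTF} --- purely as background, so there is no in-paper argument to compare your plan against; what you have written is a reasonable paraphrase of the standard proof in those references.

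Within your sketch, the treatment of part~(2)(b) is essentially complete and correct: a critical vertex of degree $i$ has no incoming reversed matching edge, so any nontrivial alternating path from such a vertex back into degree $i$ must end with a down-step along a reversed matching edge, landing on a matched and hence non-critical vertex; thus $\Gamma(c,c')=\delta_{c,c'}$ on $\Omega_i^{\M}\times\Omega_i^{\M}$ and $g\circ f=\mathrm{id}$. For parts~(1) and~(2)(a), though, you have outlined a strategy rather than executed it --- ``verify term by term'' and ``exhibit a bijective correspondence'' is precisely where the content of the proof sits. The standard argument splits each alternating path at the last vertex it visits in the appropriate degree, pairs paths that pass through a non-critical vertex with the paths obtained by traversing the associated matching edge, and relies on the $-1/[c:c']$ weight to make those pairs cancel, with acyclicity guaranteeing the process terminates. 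Carrying that bookkeeping out is genuinely fiddly, and if you attempt it you should notice that the degree conventions for $\chi$ as printed in the paper are inconsistent (as stated $\chi_i$ raises degree, so $\partial\circ\chi_{i+1}$ cannot land in $C_i$); the intended homotopy lowers degree by one, as in the cited sources.
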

				
				As a consequence of the above lemma, we obtain the main result in algebraic discrete Morse theory due to the following theorem:            
				
				\begin{theorem}[\cite{article}]\label{dmt maintheorem}
					Let $(C_\star,\partial_\star)$ be a freely generated chain complex and $\M$ be an acyclic matching on $G_{C_{\star}}$. Then, we the following homotopy equivalence of chain complexes:
					
					$$
					(C_\star,\partial_\star) \backsimeq \left(C_\star^\M,\partial_\star^\M\right)
					$$
					
					Moreover, for all $i \geq 0$, we have the isomorphism on the (co)-homology level,
					
					$$
					H_i(C_\star) \cong H_i(C_\star^\M)
					$$
				\end{theorem}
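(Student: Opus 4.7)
The plan is to obtain this theorem as an almost immediate corollary of Lemma \ref{chain homotopic maps}. That lemma supplies the two chain maps $f : C_\star^{\M} \to C_\star$ and $g : C_\star \to C_\star^{\M}$ together with the explicit homotopy $\chi$, and asserts the identities $f_i \circ g_i - \mathrm{id}_{C_i} = \partial \chi_{i+1} + \chi_i \partial$ and $g_i \circ f_i - \mathrm{id}_{C_i^{\M}} = 0$. Unpacking these, the second says that $g \circ f$ is strictly the identity on $C_\star^{\M}$, while the first says that $f \circ g$ is chain homotopic to the identity on $C_\star$ with chain homotopy $\chi$. This is exactly the definition of a chain homotopy equivalence of chain complexes, so $(C_\star, \partial_\star) \simeq (C_\star^{\M}, \partial_\star^{\M})$ follows at once.

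Before invoking the lemma, I would first verify that the formulas defining $f$, $g$, and $\chi$ actually yield well-defined $\mathcal{R}$-linear maps. Each map is given as a sum over the set $\mathcal{P}(c,c')$ of directed alternating paths in $G_{C_\star}^{\M}$, and the crucial point is that the acyclicity hypothesis on $\M$ ensures that between any two basis elements only finitely many such paths exist, so the coefficients $\Gamma(c,c')$ are finite $\mathcal{R}$-linear combinations. The invertibility requirement on weights of matching edges, built into the definition of an acyclic matching, is similarly essential so that the weight $-1/[c:c']$ assigned to a reversed matching edge actually makes sense in $\mathcal{R}$.

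The second statement of the theorem, the isomorphism $H_i(C_\star) \cong H_i(C_\star^{\M})$, then follows by the standard fact that chain-homotopic maps induce the same map on (co)homology: applying $H_i$ to the two identities gives $f_\ast \circ g_\ast = \mathrm{id}$ and $g_\ast \circ f_\ast = \mathrm{id}$, so that $f_\ast$ and $g_\ast$ are mutually inverse isomorphisms in every homological degree.

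The main conceptual work, which is packaged inside Lemma \ref{chain homotopic maps} and which I would not reproduce here, is the combinatorial bookkeeping that verifies the two chain homotopy identities by pairing terms in $\partial \chi + \chi \partial$ against terms in $fg - \mathrm{id}$ organized by the structure of alternating paths. It is at that step that the acyclicity of $\M$ and the invertibility of matching weights are invoked to produce the necessary term-by-term cancellations. Once Lemma \ref{chain homotopic maps} is granted, the theorem at hand is a purely formal consequence, and the only genuine obstacle lies upstream in establishing that lemma.
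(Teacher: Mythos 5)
Your proposal is correct and follows exactly the route the paper itself takes: the theorem is stated as an immediate formal consequence of Lemma~\ref{chain homotopic maps}, with $g\circ f=\mathrm{id}$ and $f\circ g\simeq \mathrm{id}$ via the homotopy $\chi$ giving the chain homotopy equivalence, and the homology isomorphism following because homotopic maps induce equal maps on (co)homology. The auxiliary remarks about well-definedness of $\Gamma(c,c')$ and invertibility of matching weights are consistent with the setup preceding the lemma; the paper cites \cite{article} for the underlying verification rather than reproducing it.
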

				
			\end{subsection}

\begin{subsection}{Shellable Complex and Posets}\label{posets}

		\begin{definition}
              An abstract simplicial complex $\Delta(V)$ on a vertex set $V$, is a  non empty collection of subsets of $V$, such that:
				\begin{enumerate}
					\item $\{v\} \in \Delta, \forall v \in V$.
					\item If $F \subseteq G$ and $G \in \Delta \implies F \in \Delta$.
				\end{enumerate} 
		\end{definition}
				A subcomplex $\Delta'$ of $\Delta$ is a subset of $\Delta$ and it is itself an abstract simplicial complex. The elements of $\Delta$ are called faces. A face that is not properly contained in any other face is called a facet of $\Delta$. A dimension of a face is one less than its cardinality. That is, $dim F=\lvert F \rvert -1.$ An abstract simplicial complex $\Delta$ is said to be \textit{pure} if all its facets are equicardinal. 
				\begin{example}
					Let $V=\{1,2,3\}$. Then $\Delta(V)=\{\{1\},\{2\},\{3\},\{1,2\},\{2,3\}\{1,3\},\{1,2,3\}, \phi \}$ defines a simplicial complex. $\Delta'=\{\{1\},\{1,2\},\phi \}$ is a subcomplex of $\Delta(V)$.
				\end{example} 
				\begin{definition}
					A shelling of a simplicial complex $\Delta$ is a linear order of the facets of $\Delta$, $\{F_1,F_2, \cdots F_p \}$ such that for each pair $F_i,F_j$ of facets such that $1 \leq i<j \leq p$, there is a facet $F_k$ satisfying $1\leq k <j$ and an element $x \in F_j$ such that $F_i \cap F_j \subseteq F_k \cap F_j = F_j -\{x\}$. The linear order is called the shelling order. A simplicial complex is said to be shellable if it is pure and it has a shelling order. The dimension of a simplicial complex is defined as the dimension of the maximum facet. So in the case of pure simplicial complexes, its dimension is the dimension of a facet. 
				\end{definition}
				Let us fix some notations for the following section. Let $\Delta$ be a $r$ dimensional shellable complex with maximal facets $F_1,F_2, \cdots F_k$ listed in a shelling order. Let $\Delta_i$ is the subcomplex of $\Delta$ generated by the first $i$ facets for $i=1,2, \cdots t$, i.e. $\Delta_i=\{G \vert G \subset F_k  \text{ for }  k\leq i\}$.\\
				Let $\R(F_i)=\{x \in F_i: F_i-x \in \Delta_{i-1}\}$, called the restrictions of $F_i$ induced by the shelling. Consider $\Delta$ to be partially ordered by the set inclusion. Then we denote an interval $[G_1, G_2]:=\{G \in \Delta \vert G_1 \subseteq G \subseteq G_2 \}$. 
				\begin{proposition}\label{partition}
					The intervals $[\R(F_i),F_i], i=1,2, \cdots,t$ partition the shellable complex $\Delta$. 
				\end{proposition}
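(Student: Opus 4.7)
The plan is to establish the partition by proving two things: (i) every face $G \in \Delta$ is contained in at least one of the intervals $[\R(F_i), F_i]$, and (ii) no face is contained in two distinct intervals. Both will follow by exploiting the defining property of a shelling together with the combinatorial description of $\R(F_i)$.

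For the covering part, given a face $G \in \Delta$, I would let $i$ be the \emph{smallest} index with $G \subseteq F_i$. Clearly $G \subseteq F_i$, so the only task is to show $\R(F_i) \subseteq G$. I would argue by contradiction: if some $x \in \R(F_i)$ fails to lie in $G$, then $G \subseteq F_i \setminus \{x\}$. By definition of $\R(F_i)$, the face $F_i \setminus \{x\}$ lies in $\Delta_{i-1}$, meaning $F_i \setminus \{x\} \subseteq F_j$ for some $j < i$. But then $G \subseteq F_j$, contradicting minimality of $i$.

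For disjointness, I would suppose $G$ lies in both $[\R(F_i), F_i]$ and $[\R(F_j), F_j]$ with $i < j$ and derive a contradiction. Since $G \subseteq F_i \cap F_j$, the shelling axiom yields an index $k < j$ and an element $x \in F_j$ such that
\[
    F_i \cap F_j \;\subseteq\; F_k \cap F_j \;=\; F_j \setminus \{x\}.
\]
In particular $F_j \setminus \{x\} \subseteq F_k \in \Delta_{j-1}$, so $F_j \setminus \{x\} \in \Delta_{j-1}$ and therefore $x \in \R(F_j)$. Since $\R(F_j) \subseteq G$, this forces $x \in G$. On the other hand, $G \subseteq F_i \cap F_j \subseteq F_j \setminus \{x\}$, so $x \notin G$, a contradiction.

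The argument is almost entirely bookkeeping once one has the two key observations — that minimality of $i$ blocks removing a restriction element, and that the shelling axiom forces the separating element of $F_i \cap F_j$ into $\R(F_j)$. The only subtlety I anticipate is correctly matching the definition of $\R(F_i)$ (which uses the subcomplex $\Delta_{i-1}$ generated by earlier facets) with the shelling condition, which is phrased in terms of $F_i \cap F_j = F_j \setminus \{x\}$ for some earlier $F_k$; this identification is the place where the proof genuinely uses shellability rather than just the existence of a linear order.
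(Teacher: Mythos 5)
Your proof is correct and uses essentially the same two ideas as the paper's: for covering, that a missing restriction element $x \in \R(F_i) \setminus G$ would place $G$ in an earlier subcomplex, and for disjointness, that the shelling axiom forces the separating element $x$ into $\R(F_j)$, contradicting $G \subseteq F_j \setminus \{x\}$. The only cosmetic difference is that the paper organizes the covering argument as an induction on $t$ (proving $\Delta_t = [\R(F_t),F_t] \sqcup \Delta_{t-1}$), whereas you argue directly by choosing the minimal index $i$ with $G \subseteq F_i$; these are equivalent framings of the same argument.
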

				\begin{proof}
					Since $F_1,F_2, \cdots F_i$ is a shelling order of $\Delta_i$, it is enough to show inductively, that $\Delta_t=[\R(F_t),F_t] \cup \Delta_{t-1}$, and $[\R(F_t):F_t]\cap \Delta_{t-1}=\emptyset$ since $\Delta_1=[\R(F_1),F_1]$ and $\R(F_1)=\emptyset$.
					\par Let $P \in \Delta_t - \Delta_{t-1}$, then $P \subseteq F_{t}$. since $\Delta_t=\Delta_{t-1}\cup F_t$. Also if $x \in \R(F_t)-P \implies F_t - x \in \Delta_{t-1} \text{ and } x \notin P$. Now this implies that $P \subseteq F_t-x$ and this contradicts the fact that $P \notin \Delta_{t-1}$. This proves that $\R(F_t) \subseteq P$ and hence the first claim is proved. Now if $G \in [\R(F_t):F_t]\cap \Delta_{t-1}, \implies \R(F_t) \subseteq G \subseteq F_t$ and $G \in \Delta_{t-1} \implies \R(F_t) \in \Delta_{t-1}$. If $\R(F_t)-F_t \implies F_t \in \Delta_{t-1}$- a contradiction. So, $\R(F_t)\subsetneq F_t$. Hence there exists some $i<t$ such that $\R(F_t) \subset F_i \cap F_t$. Since $\{F_i\}_{i=1}^t$ is a shelling order, so there exists some $k<t$ and $x \in F_t$ such that  $\R(F_t) \subset F_i \cap F_t \subseteq F_k \cap F_t=F_t-\{x\}$. Observe that, $F_t-\{x\} \subseteq F_k$ for $k<t$. Hence, $F_t-\{x\} \in \Delta_{t-1} \implies x\in \R(F_t)$ which is a contradiction. 
					This completes our proof.
				\end{proof}
				\begin{definition}
					A poset $\left(P, < \right)$ on a set $P$, is a set with an order relation $<$ such that the relation is
					\begin{enumerate}
						\item  Reflexive, i.e. $a<a \ \forall a \in P$.
						\item Antisymmetric. $a<b$ and $b<a \implies$   $a=b$.
						\item Transitive. $a<b$ and $b<c \implies$ $a<c$.  
					\end{enumerate} 
				\end{definition}
				To every simplicial complex $\Delta$, one can associate a poset $P(\Delta)$, called the \emph{face poset} of $\Delta$, which is defined to be the poset of non empty faces ordered by inclusions.
				\par Given any graph $G=(V,E)$, the power set of its edge set $2^E$ is an abstract simplicial complex on the edge set $E$. Consider its face poset $P(2^E)$ ordered by natural inclusion. We denote this poset by $\left(P(2^E),< \right)$. Given any two elements $s,s' \in P(2^E)$, we say that $s$ is covered by $s'$, if $s<s'$ and there does not exist any $p \in P(2^E)$ such that $s<p<s'$. We denote the cover relation by $s\lessdot s'$.\\
				\begin{example} 
					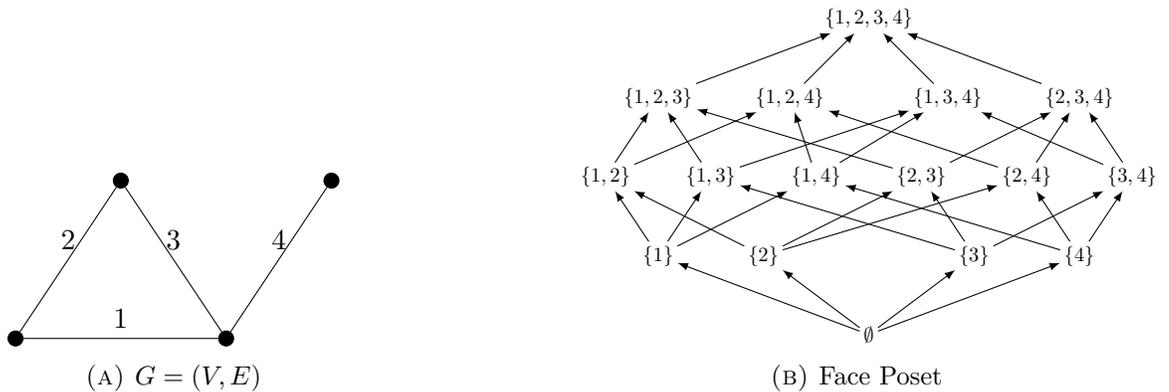
\begin{figure}[ht]
						\centering
						\begin{subfigure}{0.45\textwidth}
							\centering
							\begin{tikzpicture}[scale=0.7]
								\tikzset{vertex/.style={fill=black, draw=black, shape=circle, inner sep=2pt}}
								
								\node [style=vertex] (0) at (-9, 5.75) {};
								\node [style=vertex] (1) at (-11, 2.75) {};
								\node [style=vertex] (2) at (-7, 2.75) {};
								\node [style=vertex] (3) at (-5, 5.75) {};
								
								\draw (1) to node[midway, above] {1} (2);
								\draw (1) to node[midway, above] {2} (0);
								\draw (0) to node[midway, above] {3} (2);
								\draw (2) to node[midway, above] {4} (3);
								
							\end{tikzpicture}
							\caption{$G=(V,E)$}
						\end{subfigure}
                        \hfill
						\begin{subfigure}{0.45\textwidth}
							\centering
						\begin{tikzpicture}[scale=0.7,baseline, transform shape, every node/.style={draw=none, inner sep=2pt}, 
    every path/.style={->,>=latex}]  
    
    \node (1234) at (0,6)  {$\{1,2,3,4\}$};

    \node (123)  at (-4,4.5) {$\{1,2,3\}$};
    \node (124)  at (-1.5,4.5) {$\{1,2,4\}$};
    \node (134)  at (1.5,4.5)  {$\{1,3,4\}$};
    \node (234)  at (4,4.5)  {$\{2,3,4\}$};

    \node (12)   at (-5,3) {$\{1,2\}$};
    \node (13)   at (-3,3) {$\{1,3\}$};
    \node (14)   at (-1,3)  {$\{1,4\}$};
    \node (23)   at (1,3)  {$\{2,3\}$};
    \node (24)   at (3,3)  {$\{2,4\}$};
    \node (34)   at (5,3)  {$\{3,4\}$};

    \node (1)    at (-4,1.5) {$\{1\}$};
    \node (2)    at (-2,1.5) {$\{2\}$};
    \node (3)    at (2,1.5)  {$\{3\}$};
    \node (4)    at (4,1.5)  {$\{4\}$};

    \node (empty) at (0,0)  {$\emptyset$};

    \draw (empty) -> (1);
    \draw (empty) -> (2);
    \draw (empty) -> (3);
    \draw (empty) -> (4);

    \draw (1) -> (12);
    \draw (1) -> (13);
    \draw (1) -> (14);

    \draw (2) -> (12);
    \draw (2) -> (23);
    \draw (2) -> (24);

    \draw (3) -> (13);
    \draw (3) -> (23);
    \draw (3) -> (34);

    \draw (4) -> (14);
    \draw (4) -> (24);
    \draw (4) -> (34);

    \draw (12) -> (123);
    \draw (12) -> (124);

    \draw (13) -> (123);
    \draw (13) -> (134);

    \draw (14) -> (124);
    \draw (14) -> (134);

    \draw (23) -> (123);
    \draw (23) -> (234);

    \draw (24) -> (124);
    \draw (24) -> (234);

    \draw (34) -> (134);
    \draw (34) -> (234);

    \draw (123) -> (1234);
    \draw (124) -> (1234);
    \draw (134) -> (1234);
    \draw (234) -> (1234);

\end{tikzpicture}

							\caption{Face Poset}
						\end{subfigure}
						\caption{A graph G and the Hasse diagram of the face poset $P(2^E)$}
						\label{fig:graph}
					\end{figure}
					Consider the graph $G=(V,E)$ with the edge set $E=\{1,2,3,4\}$ as given in figure \ref{fig:graph}. The simplicial complex $2^E$ and its corresponding face poset is given. Note that by $\emptyset$ we mean the spanning subgraph of $G$ with no edges. Similarly, for any subgraph $H \in 2^E$, we mean the spanning subgraph of $G$ with edge set same as $H$. 
				\end{example} 
			\end{subsection}
			\begin{subsection}{A Short Review of Graph Co-homology}

				In this subsection we recall the construction of Chromatic homology associated to a graph. There are multiple formulations of Chromatic homology in the literature. In this paper we shall stick to enhanced state formulation of Chromatic homology. Since, our results in this paper involves the algebra $\A_{m} := \frac{\mathbb{Z}[x]}{\left< x^{m} \right>}$, we shall only focus on $\A_{m}$.

                An enhanced state is an ordered pair $\left( H, \epsilon \right)$ where $H$ is a spanning subgraph of $G$ and $\epsilon$ is a coloring of the connected components of $H$ by the colors $\{1, x, \ldots, x^{m-1} \}$. More precisely, let $ H \subset G$ is a subgraph with $V(H) = V(G)$ and $E(H) \subset E(G)$. If $\mathcal{C}$ denotes the collection of connected components of $H$, then $\epsilon$ is a map \[ \epsilon: \mathcal{C} \to \{1, x, \ldots, x^{m-1} \}.\]

				\subsubsection{Chain Groups}
			
                Let $C_{\A_{m}}(G)$ denotes the free abelian group generated by all possible enhanced states $\left( H, \epsilon \right)$. We turn this into a graded abelian group where the grading is inherited by the cardinality of $|E(H)|$. Let $n$ denote the number of edges of $G$.  That is, we have the following decomposition: 
                \begin{align*}
                    C^{\A_{m}}(G) &= \bigoplus_{i=0}^{n} {C_{i}}^{\A_{m}}(G)   \\ 
                    {C_{i}}^{\A_{m}}(G) :&= \bigoplus_{\{H \vert \, |E(H)|=i\}} \mathbb{Z} \left< (H, \epsilon) \right>
                \end{align*}
                Essentially ${C_i}^{\A_m}$ consists of all possible enhanced spanning subgraphs $H$ of $G$ where $\lvert E(H)\rvert =i$ [See Figure \ref{fig:chromatic}], 
				
				\begin{figure}
					\centering
					\resizebox{0.65\textwidth}{!}{\includesvg{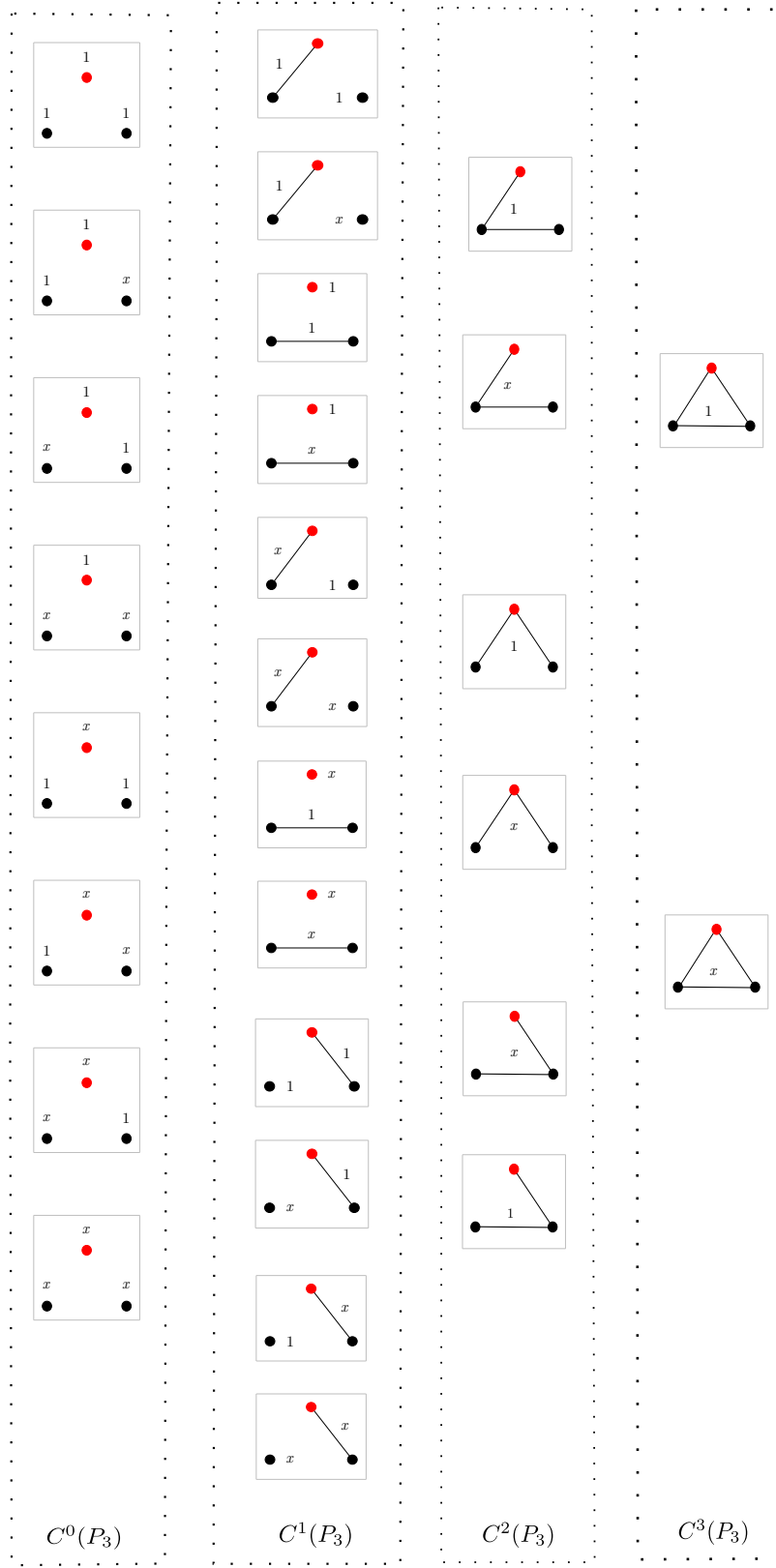}} 
					\caption{Chromatic chain groups in terms of enhanced spanning subgraphs for the graph $P_3$.}
					\label{fig:chromatic}
				\end{figure}
                
                \begin{figure}
					\centering
					\resizebox{0.7\textwidth}{!}{\includesvg{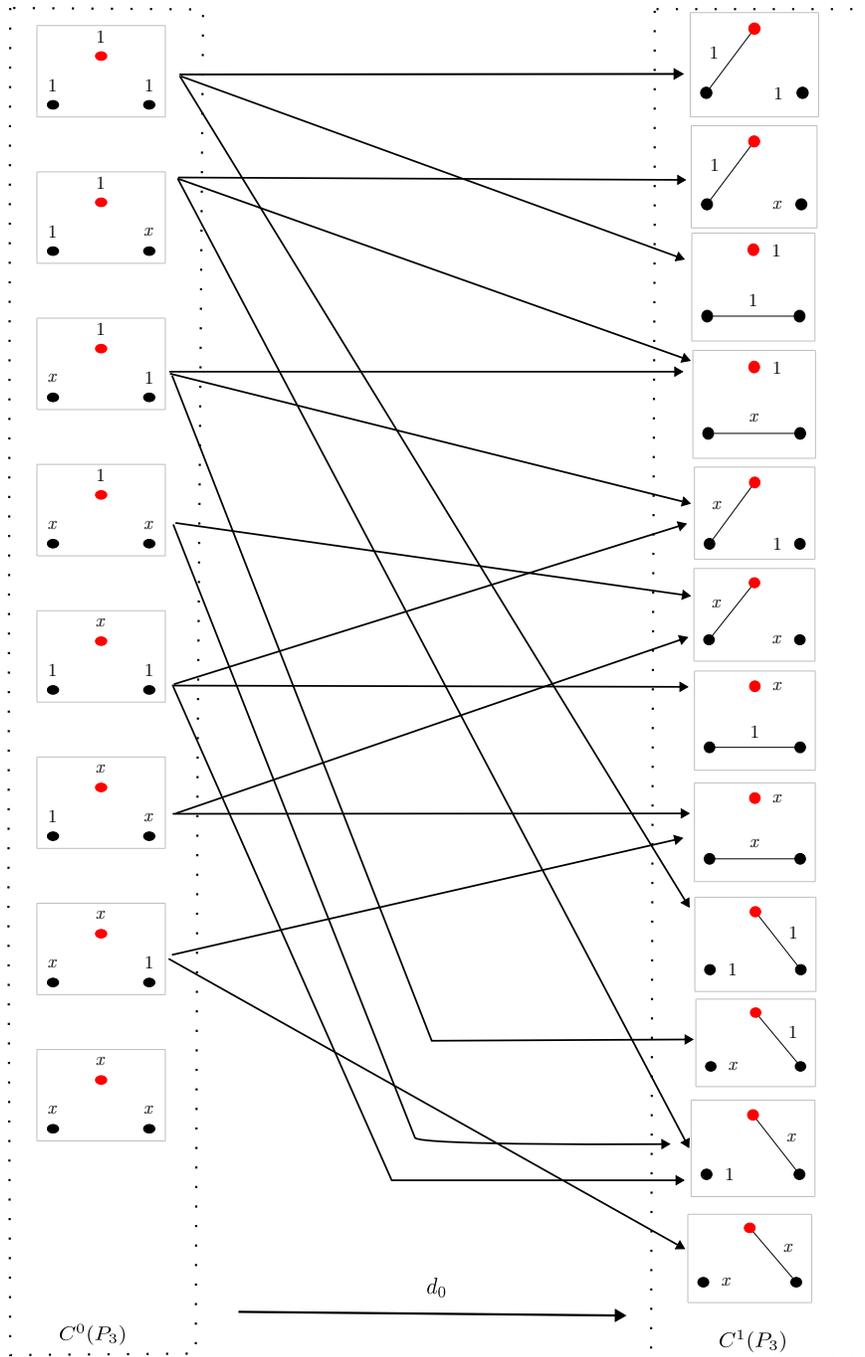}}
					\caption{Differential between the chain groups $C_0(P_3)$ and $C_1(P_3)$}
					\label{fig:differential}
				\end{figure}
                
				\subsubsection{Differential}
				In this subsection we recall the definition of the differential 
                \[ 
                d_{i}: {C_{i}}^{\A_{m}}(G) \to {C_{i+1}}^{\A_{m}}(G) 
                \]
                in the Chromatic complex. The differential $d_{i}$ has two ingredients $(1.)$ multiplication/identity and $(2.)$ the sign correction which we describe as follows. The differential $d_{i}$ evaluated at each generator is going to look like 
                 \[ d_{i} \left( (H, \epsilon) \right) = \sum_{e \notin E(H)} (-1)^{e} \cdot d_i^{e}\left( (H.\epsilon) \right)\]
                where $d_i^{e}$ is defined as per-edge maps and is given by
                $d_i^{e}\left( (H,\epsilon) \right)=(H \cup e, \epsilon_e)$.\\
                If an edge $e$ is added to the spanning subgraph $H$, then the edge $e$ can either connect two distinct connected components of $H$ or $e$ connect two vertices within the same connected component of $H$. 

                 In the first case [See Figure \ref{fig:differential}], if $e$ connects two distinct components of $H$ say $C_i$ and $C_{j}$ then $\epsilon_{e}$ is defined as follows 
                 
                 \begin{align*}
                     \epsilon_e \left( C \right) :&= \epsilon(C) \, \,  \text{if} \, \, C \neq C_i, C_{j} \\
                     \epsilon_{e} (C_i \cup C_j \cup e) : &= \epsilon(C_{i}) \cdot \epsilon (C_{j}) 
                 \end{align*}
                 where $\epsilon(C_{i}) \cdot \epsilon (C_{j})$ is defined by the algebra multiplication in $\A_{m}$. Thus, $(H, \epsilon_{e}) \equiv 0$ when $\epsilon(C_{i}) \cdot \epsilon (C_{j}) =0$ in $\A_{m}$. 

                 In the second case, if $e$ connects two vertices within the same connected component of $H$ say $C_{i}$ then $\epsilon_{e}$ is defined by identity that is 
                 \begin{align*}
                     \epsilon_{e} (C) :&= \epsilon(C) \, \, \text{if} \, \, C \neq C_{i} \\
                     \epsilon_{e} (C_i \cup e) :&= \epsilon (C_{i})
                 \end{align*}

                Next, we discuss the sign correction $(-1)^{e}$. We start with an well ordering on the edges of $G$ say $e_1 < e_2 < \ldots < e_{n}$. Then, we have a canonical basis $\mathcal{B}$ for the exterior algebra generated by $\{ e_{i} \}_{i=1}^{n}$ 
                \[ \bigwedge\left( e_1, e_2, \ldots, e_{n} \right) \] given by 
                $ \mathcal{B} = \{ e_{i_1} \wedge e_{i_2} \wedge \ldots \wedge e_{i_{k}} \ \mid \ k, 1 \leq i_1 < i_2< \ldots < i_{k} \leq n \}$. 
                Suppose the edge set of H consists of $E(H)= \{ e_{k_1} < e_{k_2}< \ldots < e_{k_{i}} \}$ then $(-1)^{e}$ is defined by the sign such that \[ (-1)^{e} e_{k_1}\wedge e_{k_{2}} \wedge \ldots \wedge e_{k_{i}} \wedge e \in \mathcal{B}.  \]

                Another description of the sign correction can be given as follows: given $H$, let $ H_{e}:= \{ e_{k_{i}}: e_{k_{i}} \in E(H) \, \,  \text{and} \, \, e_{k_{i}} < e \}$. Then we can also take $(-1)^{e} := (-1)^{|H_{e}|}$. 
			\end{subsection}
            
				A straightforward calculation implies:
				\begin{proposition}
					The differential that is defined above satisfies $d^2=0$.
				\end{proposition}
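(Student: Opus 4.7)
Fix an enhanced state $(H,\epsilon)$ and expand
\[
d\bigl(d(H,\epsilon)\bigr) \;=\; \sum_{\substack{e,e'\notin E(H)\\ e\neq e'}} (-1)^{e}(-1)^{e'}\;d^{e'}\!\bigl(d^{e}(H,\epsilon)\bigr),
\]
where each sign is computed at the stage where it is used (i.e.\ the $(-1)^{e'}$ above is with respect to $H\cup e$, not $H$). The plan is to pair the term indexed by the ordered pair $(e,e')$ with the term indexed by $(e',e)$ and show that these two contributions are the same enhanced state carrying opposite signs. Summing the cancellations gives $d^{2}=0$.

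For the sign calculation, assume without loss of generality that $e<e'$ in the fixed edge ordering. Using the description $(-1)^{e}=(-1)^{|H_{e}|}$ we compute
\[
(-1)^{|H_{e}|}\cdot(-1)^{|(H\cup e)_{e'}|}\;=\;(-1)^{|H_{e}|}\cdot(-1)^{|H_{e'}|+1},
\]
because inserting $e<e'$ into $H$ adds exactly one edge to the count below $e'$; whereas applying $d^{e'}$ first and then $d^{e}$ gives
\[
(-1)^{|H_{e'}|}\cdot(-1)^{|(H\cup e')_{e}|}\;=\;(-1)^{|H_{e'}|}\cdot(-1)^{|H_{e}|},
\]
because $e'>e$ contributes nothing to the count below $e$. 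The two signs differ by a factor of $-1$, as required. (Equivalently, this is the Koszul sign coming from the fact that the basis element $e_{k_1}\wedge\cdots\wedge e_{k_i}\wedge e\wedge e'$ picks up $-1$ when $e$ and $e'$ are swapped.)

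It remains to show that the unsigned contributions $d^{e'}\!\bigl(d^{e}(H,\epsilon)\bigr)$ and $d^{e}\!\bigl(d^{e'}(H,\epsilon)\bigr)$ agree as enhanced states on $H\cup\{e,e'\}$. This is a short case analysis on the types of $e$ and $e'$ relative to $H$: each can either (i) join two distinct components of $H$ (triggering multiplication in $\A_{m}$) or (ii) lie inside a single component (triggering the identity). In the case where both are of type (i) and share a component, the two component sets involved are $\{C_{1},C_{2}\}$ and $\{C_{2},C_{3}\}$, and either order of operation produces one component with label $\epsilon(C_{1})\epsilon(C_{2})\epsilon(C_{3})$; this uses commutativity and associativity of multiplication in $\A_{m}$. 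The mixed case, where adding one edge first turns the other from a type (i) edge into a type (ii) edge, also yields the same product label by the same commutativity; and the remaining disjoint and double-internal cases are immediate. I expect this case analysis to be the only slightly fiddly step, but in each case the combinatorics of how the partition of $V(G)$ coarsens and how the labels multiply is forced to be symmetric in $e$ and $e'$. Combining the sign computation with this equality of unsigned contributions establishes $d^{2}=0$.
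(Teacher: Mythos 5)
The paper gives no proof, only the remark that the identity follows from ``a straightforward calculation,'' and your argument is exactly that standard calculation. Your sign bookkeeping with $(-1)^{|H_e|}$ is correct (for $e<e'$, the two orders produce signs $(-1)^{|H_e|+|H_{e'}|+1}$ and $(-1)^{|H_e|+|H_{e'}|}$, differing by $-1$), and the case analysis showing $d^{e'}\!\circ d^{e}=d^{e}\!\circ d^{e'}$ on unsigned states is complete, including the degeneration where both unused edges connect the same two components and the second insertion becomes an identity map; the fact that some products may vanish in $\A_m$ does not affect the cancellation since the final labels agree in both orders.
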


            For any $i$, the $i$-th part of the Chromatic chain complex ${C_{i}}^{\A_{m}}(G)$ inherits an additional grading known as the quantum grading that is 
            \[ {C_{i}}^{\A_{m}}(G) = \bigoplus_{j \geq 0} {C_{i,j}}^{\A_{m}}(G).\]
Essentially $C$
            The quantum grading $j:= j(H, \epsilon)$ for each enhanced state $(H, \epsilon)$ is given by the formula
            \[ j(H, \epsilon) := \sum_{i=1}^{m-1} i \times  |\{C : \epsilon(C)= x^{i} \}| \]

            The bi-graded homology group will be denoted  by $\HChm{i}{j}$ for  homological grading $i$ and quantum grading $j$.

			\begin{subsection}{Broken circuit Complex}
				\begin{definition}\label{cyclesumnotation}
					Given a graph $G=(V,E)$, the \textit{edge space} $\mathcal{E}(G)$ is the $\Z_2$ vector space with the basis $E$. The \textit{Cycle Space} $C(G)$ is the subspace of $\mathcal{E}(G)$ generated by cycles of G. Given $S,T \in 2^E$, $S+T:=S \Delta T=S \cup T -S \cap T$. 
				\end{definition}
				Let $\mathcal{O}=\{e_1, e_2, \cdots e_n\}$ be a fixed ordering on its edges.
				A \text{broken circuit} in $G$ is a set $C+e \in \mathcal{E}(G)$, where $C$ is a cycle in G and $e$ is the least ordered edge in the cycle. We shall use the convention that a broken circuit is the least ordered edge removed from the cycle, instead of the convention used in \cite{Chandler_Sazdanovic_2019} that, it is the largest ordered edge removed from the cycle. Define $\operatorname{NBC}_{G, \mathcal{O}}$ to be the collection of all sets $S \subset E$ such that S contains no broken circuits. Let $\operatorname{BC}_{G, \mathcal{O}}$ be its compliment, i.e. $\operatorname{BC}_{G,\mathcal{O}}=2^E-\operatorname{NBC}_{G,\mathcal{O}}$. Whenever $G$ and $\mathcal{O}$ is clear from the context then we shall omit it and denote the broken and non broken circuit by just BC and NBC respectively. 
				Recall from \ref{algebraic morse theory} that $\M^c$ denote the set of critical vertices with respect to a given matching $\M$ on a directed graph $G$. Given a graph $G=(V,E)$. Let $\lambda(s)$ denote the number of connected components in the spanning subgraph $G(s)$ and $k(s)$ denote the integer partition of $\lvert V \rvert$ consisting the sizes of the connected component of $s$.
				\begin{lemma}\cite[Lemma 4.3]{Chandler_Sazdanovic_2019} \label{Initial Matching}
					Let G = (V, E) be a graph with a fixed ordering of its edges. The face poset $P(2^E)$ has an acyclic matching $\M_{\operatorname{BC}}$ in which each matched pair $S \lessdot S'$, satisfies $k(S)=k(S')$, and $\lambda(S)=\lambda(S')$ and the corresponding morse complex is given by $\M_{\operatorname{BC}}^c = \operatorname{NBC}$.
				\end{lemma}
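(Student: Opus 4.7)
The plan is to construct the acyclic matching $\M_{\operatorname{BC}}$ directly by choosing, for each $S$ that contains a broken circuit, a canonical edge whose toggling gives $S$'s partner; then to verify in order (i) consistency of the matching, (ii) $\M_{\operatorname{BC}}^c = \operatorname{NBC}$, (iii) matched covers preserve $\lambda$ and $k$, and (iv) acyclicity. I expect (iv) to be the main obstacle. Throughout, the edge order on $E$ is fixed and a broken circuit is $C \setminus \{\min C\}$ for some cycle $C$ of $G$.

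For each $S \subseteq E$ that contains some broken circuit, define $e(S)$ to be the smallest edge $e \in E$ such that $S \cup \{e\}$ contains a cycle whose minimum edge is $e$, and pair $S$ with $S' := S \triangle \{e(S)\}$. The matching is consistent because $S \cup \{e(S)\} = S' \cup \{e(S)\}$, so the same witness cycle shows $e(S') = e(S)$; hence $S$'s partner is $S'$. The critical cells, namely those $S$ for which $e(S)$ is undefined, coincide with $\operatorname{NBC}$: any broken circuit $C \setminus \{e\} \subseteq S$ (with $e = \min C$) gives $e(S) \leq e$ via the cycle $C \subseteq S \cup \{e\}$, and conversely any witness cycle $C$ for $e(S)$ yields the broken circuit $C \setminus \{e(S)\} \subseteq S$. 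For a matched cover $S \lessdot S \cup \{e(S)\}$, the edge $e(S)$ closes a cycle whose remaining edges already lie in $S$, so its endpoints are in the same connected component of $S$; adding or removing it therefore preserves both $\lambda$ and $k$.

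The central difficulty is acyclicity. Assume for contradiction an alternating directed cycle
\[
S_1 \to T_1 \to S_2 \to T_2 \to \cdots \to T_k \to S_{k+1} = S_1
\]
in $G^{\M_{\operatorname{BC}}}$, where each $T_i = S_i \cup \{f_i\}$ is an unmatched up-edge and each $S_{i+1} = T_i \setminus \{g_i\}$ with $g_i := e(T_i) = e(S_{i+1})$ is a matched down-edge; each $S_i$ is matched, being the target of a matched arrow. Because any cycle in $S_i \cup \{e(S_i)\}$ persists inside the larger set $T_i \cup \{e(S_i)\}$, we obtain $g_i = e(T_i) \leq e(S_i)$, and hence $e(S_{i+1}) \leq e(S_i)$ around the cycle. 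Cyclic monotonicity forces a common value $e(S_1) = \cdots = e(S_k) =: e$, so $g_i = e$ for every $i$. Then $e \in T_i$ and $e \notin S_{i+1} = T_i \setminus \{e\}$; moreover $f_i \neq e$, for otherwise $T_i$'s matched partner $T_i \setminus \{e\}$ would coincide with $S_i$, contradicting that $S_i \to T_i$ is unmatched. Hence $e \in T_i \setminus \{f_i\} = S_i$ for every $i$, directly contradicting $e \notin S_{i+1}$. This shows no alternating cycle exists in $G^{\M_{\operatorname{BC}}}$, completing the proof of acyclicity and of the lemma.
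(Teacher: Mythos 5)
Your overall strategy is sound, and your acyclicity argument is both correct and clean: the observation that $e(T_i)\le e(S_i)$ forces a monotone decrease of $e(S_i)$ around any putative alternating cycle, hence a constant value $e$, and then $e\in S_i$ for all $i$ contradicts $e\notin S_{i+1}$. The identification of the critical cells with $\operatorname{NBC}$ and the preservation of $\lambda$ and $k$ along matched covers are also correct. (The paper only cites this lemma from Chandler--Sazdanovic, so there is no internal proof to compare against.)

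There is, however, a real gap at the \emph{consistency} of the matching, which everything else rests on. For the pairing $S\leftrightarrow S':=S\triangle\{e(S)\}$ to be an involution you need $e(S')=e(S)$, and the phrase ``the same witness cycle shows $e(S')=e(S)$'' only delivers $e(S')\le e(S)$. The dangerous direction is when $e(S)\notin S$, so that $S'=S\cup\{e(S)\}\supsetneq S$: a priori, adjoining $e(S)$ could create a broken circuit that is completed by a strictly smaller edge, giving $e(S')<e(S)$ and hence $S'\triangle\{e(S')\}\ne S$, so the pairing would fail to be symmetric. The claim is in fact true, but the proof is not the one-liner you give. If $C'$ is a cycle witnessing $e(S')<e(S)$ and $e(S)\notin C'$, one gets an immediate contradiction; if $e(S)\in C'$, one must take the symmetric difference $C'+C_0$ with a witness cycle $C_0$ for $e(S)$: this is a nonempty disjoint union of cycles contained in $S\cup\{e(S')\}$, it omits $e(S)$, contains $e(S')$, and its minimum edge is $e(S')$, contradicting the minimality of $e(S)$. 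Without some such cycle-space argument, the pairing is not known to be a matching, and your acyclicity proof---which silently invokes $e(T_i)=e(S_{i+1})$ for matched pairs---is built on an unestablished step.
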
              
            
            Observe that, $\operatorname{NBC} \subset 2^E$ is a subcomplex of $2^E$. The face poset corresponding to it $P(\operatorname{NBC})$ is a subset of $P(2^E)$ and hence also a poset. Hence, one can consider the restriction of the chromatic complex to NBC yeilding a co-chain complex $C_{\ast}(\operatorname{NBC}, d_{\ast}\vert_{\operatorname{NBC}})$. Let us denote the homology of this complex by $H_{\ast}(\operatorname{NBC})$. Now using \ref{Initial Matching} and \ref{dmt maintheorem}  Chandler and  Sazdanovic proved the following theorem.
			
			\begin{thm}{\cite[Theorem 5.8]{Chandler_Sazdanovic_2019}}\label{NBC Isomorphism}
				
				Let $A=\oplus_{k \in \mathbb{Z}} A_i$ be a commutative, graded R-algebra with identity such that each $A_i$ is free of finite rank. For any graph $G=(V, E)$, and any fixed ordering of the edge set $E$,
				\[ H_{\ast}(G) \cong H_{\ast}(\operatorname{NBC}).\]
			\end{thm}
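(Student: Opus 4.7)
\medskip

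\noindent\textbf{Proof proposal.} The plan is to apply algebraic Morse theory (Theorem \ref{dmt maintheorem}) to the chromatic complex $C_{\ast}(G)$ by lifting the acyclic matching $\M_{\operatorname{BC}}$ of Lemma \ref{Initial Matching} from the face poset $P(2^{E})$ to the enhanced-state basis of $C_{\ast}(G)$, and then identifying the resulting set of critical cells with enhanced NBC states.

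\medskip

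\noindent First I would construct the lifted matching $\widetilde{\M}_{\operatorname{BC}}$. For each matched pair $S \lessdot S'$ in $\M_{\operatorname{BC}}$, the hypothesis $k(S)=k(S')$ and $\lambda(S)=\lambda(S')$ guarantees that adding the edge $e=S'\setminus S$ does not merge two components of $S$; instead, $e$ joins two vertices inside a single connected component. Consequently, the connected components of $G(S)$ and $G(S')$ are canonically identified. Any enhanced state $(S,\epsilon)\in \Ch{*}{*}$ therefore has a uniquely determined partner $(S',\epsilon')$ obtained by pushing $\epsilon$ through this bijection, and under the per-edge map $d_{i}^{e}$ (which is the identity on component labels in this ``cycle-creating'' case) we have $d_{i}^{e}(S,\epsilon)=(S',\epsilon')$. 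I would then define $\widetilde{\M}_{\operatorname{BC}}$ to be the collection of all such edges $(S,\epsilon)\to (S',\epsilon')$ in the Hasse-type digraph $G_{C_{\ast}}$. The coefficient $[(S,\epsilon):(S',\epsilon')]$ is then $\pm 1$ coming purely from the sign correction $(-1)^{|H_{e}|}$, hence invertible in $R$, satisfying the weight hypothesis required by algebraic Morse theory.

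\medskip

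\noindent The main obstacle is verifying that $\widetilde{\M}_{\operatorname{BC}}$ is acyclic. The strategy is to show that the forgetful map $(S,\epsilon)\mapsto S$ sends any directed cycle in $G_{C_{\ast}}^{\widetilde{\M}_{\operatorname{BC}}}$ either to a constant or to a directed cycle in $P(2^{E})^{\M_{\operatorname{BC}}}$. The matched arrows project onto matched arrows in $P(2^E)$ by construction. For an unmatched (downward) differential arrow $(S',\epsilon')\to (S'\cup e',\epsilon'')$ lifted from $S'\lessdot S'\cup e'$, the projection again lies in $P(2^{E})$. A hypothetical alternating cycle upstairs therefore projects to either a non-trivial alternating cycle in $P(2^{E})^{\M_{\operatorname{BC}}}$, contradicting Lemma \ref{Initial Matching}, or to a constant fibre; in the latter case one argues that within a fixed fibre the enhanced-state dynamics collapses because the component identifications are consistent, ruling out non-trivial cycles. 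This is the technical heart of the argument and mirrors the standard ``lift acyclicity fibre-by-fibre'' argument in algebraic Morse theory.

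\medskip

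\noindent Once acyclicity is established, the critical cells of $\widetilde{\M}_{\operatorname{BC}}$ are precisely those $(S,\epsilon)$ with $S$ critical for $\M_{\operatorname{BC}}$, i.e.\ $S \in \operatorname{NBC}$, together with an arbitrary coloring of its components. Hence the Morse complex $C_{\ast}^{\widetilde{\M}_{\operatorname{BC}}}$ coincides, as a graded abelian group, with the restricted complex $C_{\ast}(\operatorname{NBC})$. Applying Theorem \ref{dmt maintheorem} gives a chain homotopy equivalence $C_{\ast}(G) \simeq C_{\ast}^{\widetilde{\M}_{\operatorname{BC}}}$, and to conclude one verifies that the Morse differential $\partial^{\widetilde{\M}_{\operatorname{BC}}}$ agrees with the restriction $d_{\ast}|_{\operatorname{NBC}}$. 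This last identification follows because any non-trivial alternating path from one critical state to another of homological degree one higher must lie inside a single cover $S\lessdot S\cup e$ with $S,S\cup e\in\operatorname{NBC}$, so $\Gamma$ reduces to the single-edge differential. Passing to homology yields $H_{\ast}(G)\cong H_{\ast}(\operatorname{NBC})$, as required.
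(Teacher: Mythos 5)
Your strategy is precisely the route the paper itself attributes to Chandler and Sazdanović: lift the acyclic matching $\M_{\operatorname{BC}}$ of Lemma \ref{Initial Matching} to enhanced states and invoke Theorem \ref{dmt maintheorem}. The construction of the lifted matching and the identification of critical cells with enhanced $\operatorname{NBC}$ states are correct. Two of your assertions, however, are stated without the observation that actually makes them true, and you should supply it. In the acyclicity step you write that an unmatched differential arrow ``projects again to $P(2^E)$,'' which is vacuous; what you need is that an \emph{unmatched} arrow upstairs projects to an \emph{unmatched} cover downstairs. This holds because if $S \lessdot S\cup e$ were matched in $\M_{\operatorname{BC}}$, then the unique enhanced state with nonzero incidence out of $(S,\epsilon)$ along $e$ is exactly the pushed-forward partner, so the upstairs arrow would be matched as well. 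Consequently, a directed cycle upstairs projects to a genuine directed cycle in $P(2^E)^{\M_{\operatorname{BC}}}$, contradicting Lemma \ref{Initial Matching}; the ``constant fibre'' alternative you guard against is in fact vacuous, since every arrow changes the underlying spanning subgraph. Likewise, your claim that every alternating path between consecutive critical states has length one is true but unsupported as written. The reason is that $\operatorname{BC}$ is upward closed, all matched cells lie in $\operatorname{BC}$ (since the critical set is exactly $\operatorname{NBC}$), and all critical cells lie in $\operatorname{NBC}$: once a zig-zag leaves an $\operatorname{NBC}$ cell for a $\operatorname{BC}$ cell, the matching step stays in $\operatorname{BC}$ and any subsequent differential only adds edges, so the path can never re-enter the critical locus. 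Adding these two observations makes your reduction of the Morse differential to $d_{\ast}|_{\operatorname{NBC}}$ complete.
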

			
			Theorem \ref{NBC Isomorphism} can be thought of as the categorification of Whitney’s broken circuit theorem for the chromatic polynomial.\\
			
			In this paper we will be only concerned with $\A_m$ algebras. Observe that Theorem \ref{NBC Isomorphism} can be interpreted in the following way.
			
			\begin{obs}\label{nbctheoremforus}
				Given an ordering on the edges of a graph $G$, the homology of the chromatic complex over $\A_m$, $\HChm{*}{*}$ is isomorphic to the homology of the subcomplex generated by enhanced NBC subgraphs which we denote by $\nbcm{}{}$. In fact, the acyclic matching provided by them  by Theorem \ref{dmt maintheorem} implies that that $\Chm{*}{*}$ is chain homotopy equivalent to $\nbcm{}{}$.
			\end{obs}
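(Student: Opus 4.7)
The plan is to recognize this observation as an essentially immediate specialization of Theorem \ref{NBC Isomorphism} combined with Theorem \ref{dmt maintheorem}, once we verify that $\A_m = \mathbb{Z}[x]/\langle x^m \rangle$ satisfies the hypotheses of the former.

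First, I would check that $\A_m$ qualifies. It is commutative and unital as a $\mathbb{Z}$-algebra, and carries the natural grading $\A_m = \bigoplus_{i=0}^{m-1} \mathbb{Z} \cdot x^i$ in which each graded piece is free of rank one, hence of finite rank. Therefore Theorem \ref{NBC Isomorphism} applies and yields the cohomology isomorphism $\HChm{*}{*} \cong H^*(\nbcm{}{})$ for any fixed edge ordering of $G$.

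Next I would explain why the argument actually delivers more than an abstract isomorphism. The proof of Theorem \ref{NBC Isomorphism} in \cite{Chandler_Sazdanovic_2019} starts from the acyclic matching $\M_{\operatorname{BC}}$ of Lemma \ref{Initial Matching}, which matches $S \lessdot S'$ only when $k(S)=k(S')$ and $\lambda(S)=\lambda(S')$; that is, the matched edge additions neither merge nor create components. This compatibility lets one lift $\M_{\operatorname{BC}}$ from the face poset $P(2^E)$ to an acyclic matching on the Hasse diagram of $\Chm{*}{*}$ by matching enhanced states $(S, \epsilon) \lessdot (S', \epsilon)$ with the same coloring transported along the canonical identification of connected components. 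The critical cells of this lifted matching are exactly the enhanced NBC subgraphs, so Theorem \ref{dmt maintheorem} supplies the chain homotopy equivalence $\Chm{*}{*} \simeq \nbcm{}{}$, from which the homology isomorphism is recovered as the induced map on cohomology.

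The only point needing genuine care is the lift itself: one has to confirm that the component-preserving property of $\M_{\operatorname{BC}}$ makes the color transport unambiguous and that no new directed cycles are introduced by decorating vertices with colorings. Since a matched edge $S \lessdot S'$ merges no components and creates no loops, the coloring passes across trivially, and any directed alternating cycle in the decorated Hasse diagram would project to an alternating cycle in $P(2^E)$, contradicting acyclicity of $\M_{\operatorname{BC}}$. Thus the lift is automatic and the observation follows.
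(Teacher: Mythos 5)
Your proposal is correct and follows the same route the paper intends: the observation is just Theorem \ref{NBC Isomorphism} specialized to $\A_m$, with the added remark that since the Chandler--Sazdanovic proof runs through the acyclic matching of Lemma \ref{Initial Matching}, Theorem \ref{dmt maintheorem} yields a chain homotopy equivalence rather than merely a homology isomorphism. The paper treats this entirely as a restatement of \cite{Chandler_Sazdanovic_2019} and offers no written proof, so the only thing you add is the explicit verification that $\A_m$ is a commutative graded algebra with free finite-rank graded pieces and the detail of how the matching on $P(2^E)$ lifts to enhanced states and stays acyclic (because a matched pair $S \lessdot S'$ leaves components unchanged, so colorings transport uniquely, and a directed cycle in the decorated Hasse diagram would project to a positive-length closed walk in the acyclic graph $G^{\M_{\operatorname{BC}}}_{P(2^E)}$); this is a correct and slightly more careful unpacking of the argument the paper takes as read.
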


            \end{subsection}
			
			\begin{subsection}{Activity word on a spanning tree}
				
				\begin{definition}
					Let $G$ be graph with a fixed ordering on its edges. Then for a spanning tree $T$ of $G$ and for each edge $e \in T$, the \textit{cut-set} of $e$ is defined as, $\emph{cut}(T,e)=\{e' \in G \mid e' \text{ connects } T \setminus e\}$. If $f \not\in T$, then the \textit{cycle} of $f$ is defined as $\emph{cyc}(T,f)=\{f' \in G \mid f' \text{ belongs to the unique cycle in } T \cup f\}$. 
				\end{definition}
				\begin{proposition}
					For any tree T, $e \in \emph{cut}(T,p) \iff p \in \emph{cyc}(T,e).$
				\end{proposition}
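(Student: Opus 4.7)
The plan is to reduce both conditions to the same fundamental tree cycle/cut correspondence. I would first note that the statement only has content when $p \in E(T)$ and $e \notin E(T)$, since otherwise one of $\cut(T,p)$ or $\cyc(T,e)$ is not defined. Under these hypotheses, deletion of $p$ partitions $T$ into exactly two subtrees $T_1, T_2$, and $p$ is the unique edge of $T$ joining them. On the other hand, if $u,v$ denote the endpoints of $e$ and $P_{u,v}$ is the unique $u$--$v$ path in $T$, then the unique cycle in $T \cup e$ is $\{e\} \cup E(P_{u,v})$, so $\cyc(T,e) = \{e\} \cup E(P_{u,v})$. These two structural facts are the engine of the proof.

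For the forward implication, I would assume $e \in \cut(T,p)$, so that the endpoints $u,v$ of $e$ lie in distinct components of $T \setminus p$. Then any $T$-path between $u$ and $v$, and in particular $P_{u,v}$, must cross the unique bridge $p$, giving $p \in E(P_{u,v}) \subseteq \cyc(T,e)$. For the converse, I would assume $p \in \cyc(T,e)$; since $p \in T$ and $e \notin T$ we have $p \neq e$, hence $p \in E(P_{u,v})$. Removing $p$ from $T$ therefore separates $u$ from $v$, placing them in different components of $T \setminus p$, which means $e$ bridges $T_1$ and $T_2$ and so $e \in \cut(T,p)$.

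The argument is essentially routine; there is no real obstacle. The only thing to verify carefully is the identification $\cyc(T,e) = \{e\} \cup E(P_{u,v})$, which follows because a tree plus one edge contains a unique cycle, and the dual identification of $\cut(T,p)$ as the set of $G$-edges crossing the bipartition $V(T_1) \sqcup V(T_2)$.
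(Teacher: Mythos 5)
Your proof is correct and follows essentially the same approach as the paper's, hinging on the unique cycle in $T \cup e$ passing through the bridge $p$ of the bipartition $T \setminus p$. You are in fact slightly more thorough than the paper, which only argues the forward implication explicitly and leaves the converse implicit.
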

				\begin{proof}
					Let $e \in \cut(T,p)$. This implies that $p \in T$ and $e \notin T$ and $e$ connects two disjoint components of $T \setminus p$, say $C_1,C_2$. Now if we insert the edge $p$ also, it creates an unique cycle which contains both $p$ and $e$. This proves the proposition.
				\end{proof}

				Given an edge $e \in T$, if $e$ is the smallest edge in $\emph{cut}(T,e)$, then $e$ is said to be an \textit{internally active (live)} edge ($L$) of the tree $T$, otherwise it is said to be an \textit{internally inactive (dead)} edge ($D$). Similarly, if $e \not\in T$ and $e$ is the smallest edge in $\emph{cyc}(T,e)$, then $e$ is said to be \textit{externally active (live)} edge ($l$) of $T$ or else it is said to be \textit{externally inactive (dead)} edge ($d$) of T.
				\par So given a spanning tree $T$ of a graph $G=(V,E)$, we define an activity word function for an edge $e$, $a_T(e):E(G) \rightarrow \{L,D,l,d\}$. The activity word function for the tree $W(T)$ is defined to be the formal free product of $a_T(e)$ in increasing order of edges $e$.	We denote the collection of internally inactive edges of a tree by $\IN(T)$, the collection of internally active edges of a tree by $\IA(T)$ and the collection of externally inactive edges by $\EN(T)$. Observe that in an NBC tree, there are no externally active edges. In other words, if $T$ is an NBC tree and $e \notin T$, then $a_T(e)=d$.
				
				\vspace{-2 cm}
				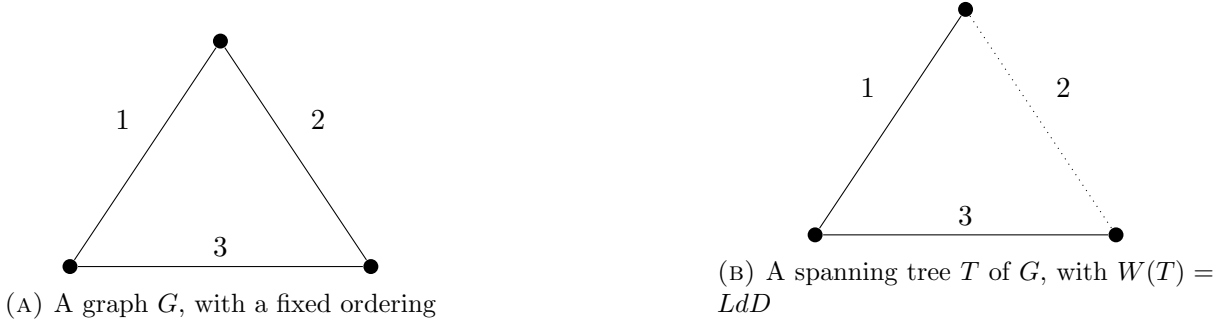
\begin{figure}
					\centering
					\begin{subfigure}[b]{0.4\textwidth}
						\centering
						\begin{tikzpicture}[vertex/.style={circle,fill,inner sep=2pt}]
							\node[vertex] at (0,0) (a) {};
							\node[vertex] at (2,3) (b) {};
							\node[vertex] at (4,0) (c) {};
							\draw (a)--(b) node[midway,above,yshift=0.2cm,xshift=-0.3cm] {$1$};
							\draw (c)--(a) node[midway,above] {$3$};
							\draw (b)--(c) node[midway,above,yshift=0.2cm,xshift=0.3cm]{$2$};
						\end{tikzpicture}
						\caption{A graph $G$, with a fixed ordering}
					\end{subfigure}
					\hfill
					\begin{subfigure}[b]{0.4\textwidth}
						\centering
						\begin{tikzpicture}[vertex/.style={circle,fill,inner sep=2pt}]
							\node[vertex] at (0,0) (a) {};
							\node[vertex] at (2,3) (b) {};
							\node[vertex] at (4,0) (c) {};
							\draw (a)--(b) node[midway,above, yshift=0.2cm,xshift=-0.3cm] {$1$};
							\draw (c)--(a) node[midway,above] {$3$};
							\draw[dotted] (b)--(c) node[midway,above,yshift=0.2cm,xshift=0.3cm]{$2$};
						\end{tikzpicture}
						\caption{A spanning tree $T$ of $G$, with $W(T) = LdD$}
					\end{subfigure}
					
					\caption{Comparison of graph $G$ and tree $T$}
				\end{figure}
				
			\end{subsection}
		\end{section}
	\newpage
		
		\begin{section}{Spanning Tree Model for Chromatic Homology}
        In this entire section, except subsection 3.6, we consider the chromatic complex of a graph over the algebra $\A_2=\frac{\mathbb{Z}[x]}{<x^2>}$. Also, we consider any graph $G$ with a distinguished vertex on it, that we call a root. Although the matching that we provide on the NBC complex depends on the choice of the distinguished vertex on the graph, the chromatic or spanning tree homology is independent of it.
			Recall that the non-broken circuit complex NBC is a subcomplex of $2^E$ with facets as spanning trees that do not contain any broken circuit. In this section, we first provide an acyclic matching on the chromatic complex of a tree, and using this matching, we can provide a matching to the subcomplex NBC for any given graph $G$.\\  
			
			\par Let $G = (V, E)$ be a simple graph with a fixed ordering $\mathcal{O} = \{e_1, e_2, \dots, e_n\}$ on its edges $E$. Let $\lvert V \rvert = k$. Let $T$ be any spanning tree of $G$. Then we write $T$ as an ordered $(k-1)$-tuple in increasing order of its edges, i.e., $T = [e_{i_1}, e_{i_2}, \dots, e_{i_{k-1}}]$ if and only if $e_{i_1} < e_{i_2} < \dots < e_{i_{k-1}}$.\\  
			
			One can provide an ordering on the facets, i.e. spanning trees, called \textit{lexicographic} ordering, defined as follows:  
			
			\[
			T_1 = [e_{i_1}, e_{i_2}, \dots, e_{i_{k-1}}] < T_2 = [e_{j_1}, e_{j_2}, \dots, e_{j_{k-1}}]
			\]
			
			if and only if $e_{i_k} = e_{j_k}$ for all $k < p$ and $e_{i_p} < e_{j_p}$, for some $p \in \{1,2,\dots,n\}$.  \\
			
			We will provide proofs of some important propositions about NBC trees, which can also be found in a more general setting of matroids in \cite{bjoner}.

			\begin{proposition}{\label{shelling order}}
				The \textit{lexicographic} ordering is a shelling order of $\operatorname{NBC}$. Thus, $\operatorname{NBC}$ is a shellable complex.
			\end{proposition}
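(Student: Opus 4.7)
The plan is to verify the two defining properties of a shellable complex for the lexicographic order on the facets of $\operatorname{NBC}$. Purity is immediate: every facet of $\operatorname{NBC}$ is a spanning tree of $G$ and therefore has exactly $\lvert V \rvert - 1$ edges, so $\operatorname{NBC}$ is pure of dimension $\lvert V \rvert - 2$. The substantial work lies in producing, for every pair of NBC spanning trees $T_i < T_j$ in lex order, an NBC spanning tree $T_k < T_j$ together with an edge $x \in T_j$ such that $T_i \cap T_j \subseteq T_k \cap T_j = T_j \setminus \{x\}$.

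The strategy is to choose $x$ to be an internally inactive edge of $T_j$ that lies outside $T_i$, and then to construct $T_k$ by swapping $x$ for the minimum edge of $\cut(T_j, x)$. To locate such an $x$, set $e^{\star} := \min(T_i \Delta T_j)$; since $T_i$ and $T_j$ agree on every edge strictly smaller than $e^{\star}$ and $T_i <_{\mathrm{lex}} T_j$, a direct inspection of the sorted edge-tuples forces $e^{\star} \in T_i \setminus T_j$. Now the cycle $\cyc(T_j, e^{\star})$ cannot lie entirely inside $T_i$ (which is acyclic and already contains $e^{\star}$), so it meets $T_j \setminus T_i$ in some edge $g$. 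Because $g \in T_i \Delta T_j$ and $e^{\star} = \min(T_i \Delta T_j)$, we have $e^{\star} < g$; by the cut--cycle duality established earlier, $e^{\star} \in \cut(T_j, g)$, so $g$ is internally inactive in $T_j$ with witness $e^{\star}$. Take $x := g$.

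Now let $y^{\star} := \min \cut(T_j, x)$; by inactivity $y^{\star} < x$, and $y^{\star} \notin T_j$. Put $T_k := (T_j \setminus \{x\}) \cup \{y^{\star}\}$, a spanning tree. Then $T_k \cap T_j = T_j \setminus \{x\} \supseteq T_i \cap T_j$ (the latter containment because $x \in T_j \setminus T_i$), and a direct comparison of sorted tuples gives $T_k <_{\mathrm{lex}} T_j$ since inserting $y^{\star}$ in place of $x$ produces a strictly lex-smaller sorted sequence. The crucial remaining step is to verify that $T_k$ itself lies in $\operatorname{NBC}$.

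This NBC verification is the main technical obstacle. For each $h \notin T_k$ one must show that $h$ is not the minimum of $\cyc(T_k, h)$. Two cases are routine: when $h = x$, $\cyc(T_k, x) = \cyc(T_j, y^{\star})$, and NBC of $T_j$ supplies an edge below $y^{\star} < x$; when $h \notin T_j$ with $x \notin \cyc(T_j, h)$, we have $\cyc(T_k, h) = \cyc(T_j, h)$ and NBC of $T_j$ applies directly. The delicate case is $h \notin T_j$ with $x \in \cyc(T_j, h)$, equivalently $h \in \cut(T_j, x)$. Here a standard $\mathbb{F}_2$ cycle-space computation yields
\[
\cyc(T_k, h) \;=\; \cyc(T_j, h) \,\Delta\, \cyc(T_j, y^{\star}).
\]
Since $y^{\star} \notin T_j$ and $y^{\star} \neq h$, the edge $y^{\star}$ lies in $\cyc(T_j, y^{\star})$ but not in $\cyc(T_j, h)$, so it survives in the symmetric difference; and the minimality of $y^{\star}$ in $\cut(T_j, x)$ forces $y^{\star} < h$. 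Thus $h$ is never the minimum of $\cyc(T_k, h)$, completing the NBC check and with it the shelling exchange.
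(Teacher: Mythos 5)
Your proof is correct, and while it addresses the same shelling exchange condition, your construction of $T_k$ and your justification that it lies in $\operatorname{NBC}$ differ from the paper's. Both proofs begin by identifying $e^{\star}=\min(T_i\,\Delta\, T_j)\in T_i\setminus T_j$. The paper then adds $e^{\star}$ to $T_j$ and deletes from the resulting cycle $C=\cyc(T_j,e^{\star})$ an edge that is neither $e^{\star}$ nor $\min(C)$ and sits past the first position of disagreement, so its $T_k$ always reinserts $e^{\star}$ itself. You instead locate an inactive edge $g\in C\cap(T_j\setminus T_i)$ and form $T_k=(T_j\setminus\{g\})\cup\{y^{\star}\}$ with $y^{\star}=\min\cut(T_j,g)$; since $y^{\star}$ is merely some element of $\cut(T_j,g)$ bounded above by $e^{\star}$ and need not equal it, the two constructions do not in general produce the same $T_k$. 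What your choice of $y^{\star}$ buys is a fully self-contained NBC verification: the three-way case analysis, and in particular the cycle-space identity $\cyc(T_k,h)=\cyc(T_j,h)\,\Delta\,\cyc(T_j,y^{\star})$ when $h\in\cut(T_j,g)$, turns precisely on the minimality of $y^{\star}$ in $\cut(T_j,g)$. By contrast the paper states that its $T_k$ is a facet of $\operatorname{NBC}$ ``straightforward from the construction'' and defers to Björner for the matroid-level fact, so your argument supplies exactly the details that the paper elides; your brief remark on purity is also a point the paper leaves implicit.
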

			\begin{proof}
				Let $\{T_1 < T_2< \cdots <T_p\}$ be a lexicographic ordering on the facets of NBC. Let $T_i, T_j$ be any pair of facets such that $T_i < T_j$. \\
				Let $T_i=[e_{t_1 ^i}, e_{t_2 ^i}, \cdots e_{t_{k-1}^i}]$ and $T_j=[e_{t_1 ^j}, e_{t_2 ^j}, \cdots e_{t_{k-1}^j}]$.
				Then there exists a smallest integer $l \in \{ 1,2,..,k-1\}$ such that $e_{t_s^i}=e_{t_s ^j},  \forall s<l$ and $e_{t_l ^i}<e_{t_l ^j}$. Moreover it follows that $e_{t_l ^i}< e_{t_s^j} \forall s>l$ since, $e_{t_s ^j}< e_{t_{s+1}^j} \forall s\in [k-2]$.
				Let $T_k'=T_j \cup \{ e_{t_l ^i}\}$. Since $T_j$ is a spanning tree, $T_k'$ contains an unique cycle $C$. Let $T_k=T_k'-\{e_{t_p^j}\}$, where $e_{t_p^j}$ is an edge in the cycle $C$ which is not the least ordered one and not equals to $ e_{t_l ^i}$ and $p>l$. Otherwise if, the cycle consists of edges $e_{t_p^j}$ only with $p<l$ and $e_{t_l^i}$, that would be a contradiction as $T_i$ would contain the cycle $C$. Thus, one can always choose such an edge provided the cycle contain three or more elements.\\
				Then it is straightforward from the construction that $T_k$ is a facet of NBC.
				Moreover $T_j=[e_{t_1 ^j}, e_{t_2 ^j}, \cdots e_{t_l ^j}, e_{t_{l+1}^j}, \cdots e_{t_{k-1}^j}]$ and $T_k=[e_{t_1 ^j}, e_{t_2 ^j}, \cdots e_{t_{l-1} ^j}, e_{t_l ^i}, \cdots, \hat{e_{t_p^j}}, \cdots e_{t_{k-1}^j}]$, where $\hat{e_{t_p^j}}$ is the edge removed from the cycle. Then it follows that $T_k<T_j$. Then $T_i \cap T_j \subset T_k \cap T_j = T_j-\{e_{t_p^j}\}$. Hence, the lexicographic order is a shelling.
			\end{proof}
			Recall from \ref{partition} that if we have a shellable complex, then we have a partition of the complex.
			We shall use this $\textit{lexicographic}$ ordering on NBC to extend the matching on the whole NBC Complex using these partitions.\\

			\begin{proposition} {\label{First Tree}}
				If $T_1$ is the $\operatorname{NBC}$ spanning tree with the smallest lexicographic order then all the internal edges of $T_1$ are active.
			\end{proposition}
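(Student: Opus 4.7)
The plan is to establish the proposition by a standard edge-exchange argument, with the cleanest route running through the auxiliary observation that the lex-smallest NBC spanning tree $T_1$ is in fact the lex-smallest among all spanning trees of $G$. Granting that for a moment, I would suppose for contradiction that some edge $e \in T_1$ is not internally active, so that there exists some $e' \in \cut(T_1, e)$ with $e' < e$. By the definition of the cut set, $e'$ joins the two components of $T_1 \setminus \{e\}$, so $T' := (T_1 \setminus \{e\}) \cup \{e'\}$ is again a spanning tree of $G$. Writing the edges of both trees in increasing order and comparing coordinate-wise, the first discrepancy occurs at a position where $T'$ carries $e'$ and $T_1$ carries a strictly larger edge (either $e$ itself, or some edge of $T_1$ lying between $e'$ and $e$). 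Hence $T' < T_1$ lexicographically, and this will contradict the minimality of $T_1$ as soon as the auxiliary claim is in place.

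To establish the auxiliary claim, I would show that the lex-smallest spanning tree $T_{\min}$ of $G$ is automatically NBC. If not, $T_{\min}$ would contain a broken circuit, i.e.\ a cycle $C$ whose smallest edge $f$ lies outside $T_{\min}$ while $C \setminus \{f\} \subset T_{\min}$. Choosing any $g \in C \setminus \{f\}$ (so that $g > f$ by definition of $f$), the subgraph $(T_{\min} \setminus \{g\}) \cup \{f\}$ is again a spanning tree, because $f$ together with $C \setminus \{f,g\}$ reconnects the two components of $T_{\min} \setminus \{g\}$; and it is strictly lex-smaller than $T_{\min}$ by the same coordinate-by-coordinate comparison as above, contradicting the minimality of $T_{\min}$. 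So $T_{\min}$ is NBC, and combined with $T_{\min} \leq T_1$ (any spanning tree is at least the lex-smallest), this forces $T_{\min} = T_1$.

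The main obstacle I expect is precisely this auxiliary identification of $T_1$ with the lex-smallest spanning tree (equivalently, with the greedy/Kruskal-style tree for the given edge order); once it is in place, both the exchange step and the lexicographic comparison are routine matroid-style manipulations. A more direct alternative would be to verify that the tree $T'$ constructed in the first paragraph is itself NBC and to conclude from that, but the case analysis of a hypothetical broken circuit in $T'$ (particularly when its smallest edge lies outside $T_1$ and the offending cycle uses $e'$) requires a symmetric-difference-in-cycle-space argument that is noticeably more involved than the clean reduction through greediness outlined above.
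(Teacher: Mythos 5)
Your proof is correct, but it takes a genuinely different route from the paper's. The paper argues in place: it picks a dead edge $e \in T_1$, lets $f$ be the minimum of $\cut(T_1,e)$ other than $e$, forms $T = (T_1 \setminus e) \cup f$, and asserts that all activities other than those of $e$ and $f$ are preserved, so that $T$ is again NBC and lexicographically smaller than $T_1$ — a contradiction. The work of verifying that the exchanged tree is NBC is not spelled out in that proposition (the careful argument, ruling out a new broken circuit through $f$ via the two subtrees $T_1 \setminus e$ splits into, appears in the proof of the very next proposition on $\R(T_i) = \IN(T_i)$). You instead factor the argument through the auxiliary observation that $T_1$ coincides with the lexicographically smallest spanning tree of $G$ (i.e.\ the Kruskal/greedy tree), which you prove by showing that any broken circuit in a lex-minimal spanning tree would permit a lex-decreasing exchange. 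Once this is established, the main exchange step only needs the fact that the swapped object is \emph{some} spanning tree, not that it is NBC, so there is no need to track how activities change under the exchange. Your route buys cleanliness — the NBC-ness of the exchanged tree never has to be argued — at the modest cost of the extra lemma identifying $T_1$ with the greedy tree; the paper's route is shorter on its face but leans on an unproved (in that proposition) activity-invariance assertion, and really borrows its justification from the subsequent lemma. Both are standard matroid-style exchange arguments and both are correct.
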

			\begin{proof}
				Assume there is an internally inactive edge $e \in T_1$, then let $f \in \cut(T_1,e)$ be the minimum edge with $f \neq e$. Consider $T=T_1 \cup \{f\} \setminus \{e\}$ then, $a_{T}(f)=L$ and $a_{T}(e)=d$ and the activity of the rest of the edges of $T$ remains same with that of $T_1$. Since, $f < e$ so according to lexicographic ordering $T < T_1$ which contradicts the minimality of $T_1$ with respect to the lexicographic order.
			\end{proof}
			
			Also recall that, for a tree $T$, we denote the collection of inactive edges of $T$ by $\IN(T)$. NBC spanning trees $T$ of a graph $G$ are facets of the subcomplex NBC. As discussed in \ref{posets}, $\mathcal{R}(T)$ corresponds to the restriction of 
			$T$ induced by this shelling order. Then we have the following lemma:
			\begin{prop}\label{inactiveedgelemma}
				Let $\{T_1,T_2, \cdots T_k\}$ be collection of NBC spanning tree of a graph $G$, then $\R(T_i)=\IN(T_i)$.
			\end{prop}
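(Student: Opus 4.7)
The plan is to prove the equality $\mathcal{R}(T_i)=\IN(T_i)$ by unpacking the definition of $\mathcal{R}(T_i)$ in combinatorial terms and then establishing both inclusions, with the lexicographic order giving the connection between ``inactive'' and ``removable''.

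Recall that $\mathcal{R}(T_i)=\{e\in T_i \mid T_i\setminus\{e\}\in\Delta_{i-1}\}$, meaning $T_i\setminus\{e\}$ lies inside some earlier NBC facet $T_j$ with $j<i$. Since $|T_j|=|T_i|$ are both spanning trees, the relation $T_i\setminus\{e\}\subset T_j$ forces $T_j=(T_i\setminus\{e\})\cup\{f\}$ for a unique edge $f\neq e$, and the basis-exchange axiom (or directly: $T_j$ is connected) forces $f\in\cut(T_i,e)$. So $\mathcal{R}(T_i)$ is precisely the set of those $e\in T_i$ for which some $f\in\cut(T_i,e)\setminus\{e\}$ produces an NBC spanning tree $T_j<T_i$ by the exchange $e\mapsto f$.

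Step one is the lex-order translation: writing both trees in increasing order and comparing, a direct case check shows that the exchanged tree $T_j=(T_i\setminus\{e\})\cup\{f\}$ satisfies $T_j<T_i$ \emph{iff} $f<e$. This immediately gives the inclusion $\mathcal{R}(T_i)\subseteq\IN(T_i)$: if $e\in\mathcal{R}(T_i)$ then some $f\in\cut(T_i,e)$ is strictly smaller than $e$, so $e$ is not the minimum of $\cut(T_i,e)$ and hence $e\in\IN(T_i)$.

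For the reverse inclusion, given $e\in\IN(T_i)$, let $f$ be the \emph{smallest} element of $\cut(T_i,e)$; then $f<e$ and $T_j:=(T_i\setminus\{e\})\cup\{f\}$ is a spanning tree with $T_j<T_i$ lexicographically. What remains, and what I expect to be the main obstacle, is checking that $T_j$ is itself an NBC spanning tree, so that it is actually a facet of $\Delta_{i-1}$. To verify this one tests each $g\notin T_j$ and shows $g$ is not the smallest element of $\cyc(T_j,g)$. There are two cases. If $g=e$, then $T_j\cup\{e\}=T_i\cup\{f\}$ so $\cyc(T_j,e)=\cyc(T_i,f)$ contains $f<e$, so $e$ is not minimal. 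If $g\neq e$ and $g\notin T_j$, then $g\notin T_i$ either; if $e\notin\cyc(T_i,g)$ then $\cyc(T_j,g)=\cyc(T_i,g)$ and the NBC hypothesis on $T_i$ finishes the argument, while if $e\in\cyc(T_i,g)$ then $g\in\cut(T_i,e)$, so $f\le g$ by minimality of $f$ in $\cut(T_i,e)$, and an exchange argument (the unique cycle in $T_j\cup\{g\}$ must bridge the two components of $T_i\setminus\{e\}$, forcing it to use $f$) shows $f\in\cyc(T_j,g)$ with $f<g$. In both cases $g$ fails to be minimal in its fundamental cycle with respect to $T_j$, so $T_j$ is NBC, which produces the required $T_j<T_i$ witnessing $e\in\mathcal{R}(T_i)$.
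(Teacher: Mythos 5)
Your proof is correct and follows essentially the same plan as the paper: both inclusions come from the lex-order translation of the basis exchange $e \mapsto f$, with the minimal element $f$ of $\cut(T_i,e)$ producing the witnessing tree $T_j$. The one place you diverge in packaging is the verification that $T_j$ is NBC: the paper assumes a broken circuit $C+g \subset T_j$, notes $f \in C$ and $g<f$, and derives a contradiction from the fact that $g$ cannot bridge the two components of $T_i \setminus \{e\}$; you instead directly check the equivalent external-edge condition, splitting on $g=e$ (where $\cyc(T_j,e)=\cyc(T_i,f)$ contains $f<e$) and on whether $e \in \cyc(T_i,g)$. The underlying combinatorics — a fundamental cycle straddling the cut must use $f$ — is the same, and your case-by-case organization is arguably a bit clearer than the paper's rather terse final sentence in that step.
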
 	
			\begin{proof}
				For $T_1$, we have that $\R(T_1)=\emptyset$, and from proposition \ref{First Tree} the thoerem follows. For $i\geq 2$ let us recall that $e \in \R(T_i) \iff T_i-\{e\} \in \Delta_{i-1}$. Hence there exists some $j\leq i-1$, such that $T_i-\{e\} \in T_j$. Thus, $T_{j}= \left( T_{i}-e  \right)\cup f$ where $f \in \cut(T_{i}, e)$. Now the edge sets of $T_{i}$ and $T_{j}$ agree except for $e$ and $f$. Since, $T_{j} < T_{i} $, we conclude $f < e$. In other words, this implies $e$ must be internally dead. Hence, we conclude $\R(T_{i}) \subset \IN(T_{i})$. 
				
				In the opposite direction, let us choose $e \in \IN(T_{i})$. As $e$ is internally dead, $e \in \cut(T_{i},e)$ is not minimal. Thus, there exists $f \in \cut(T_{i}, e)$, $f < e$ and $f$ is minimal in the set $\cut(T_{i},e)$. We look at the tree $T:= (T_{i}-e) \cup f$. Clearly $T< T_{i}$ by the definition of lexicographic ordering. It remains to show that $T$ is a NBC tree.  
				
				Assume on the contrary that $T$ is not a NBC tree. Now $T_{i}-e$ is a disjoint union of two trees $T_{i}^{(1)}$ and $T_{i}^{(2)}$.  If $T$ contains a broken circuit of the form $\left(C+g \right) \subset T$, then we note that $f \in C$. Otherwise, $C+g$ would be a broken circuit in $T_{i}$ contradicting the hypothesis that $T_{i}$ is a NBC tree. Thus, $f \in C$ and $g < f$. Now $g \notin \cut(T_{i},e)$ as $f$ is the minimal element in $\cut(T_{i},e)$. Hence $g$ must connect two vertices in $T_{i}^{(1)}$ or in  $T_{i}^{(2)}$.  This is a contradiction as the broken circuit $C+g$ lies in $T_{i}$. Hence, $T$ is a NBC tree. By the lexicographic ordering on the NBC trees $T= T_{j}$ for some $j< i$.  
				
				We essentially get $\IN(T_{i}) \subset \R(T_{i})$ concluding the proof of the proposition.  
				
				\begin{figure}[!htb]
					\begin{center}
						\hspace{0cm} \scalebox{1}{\includegraphics{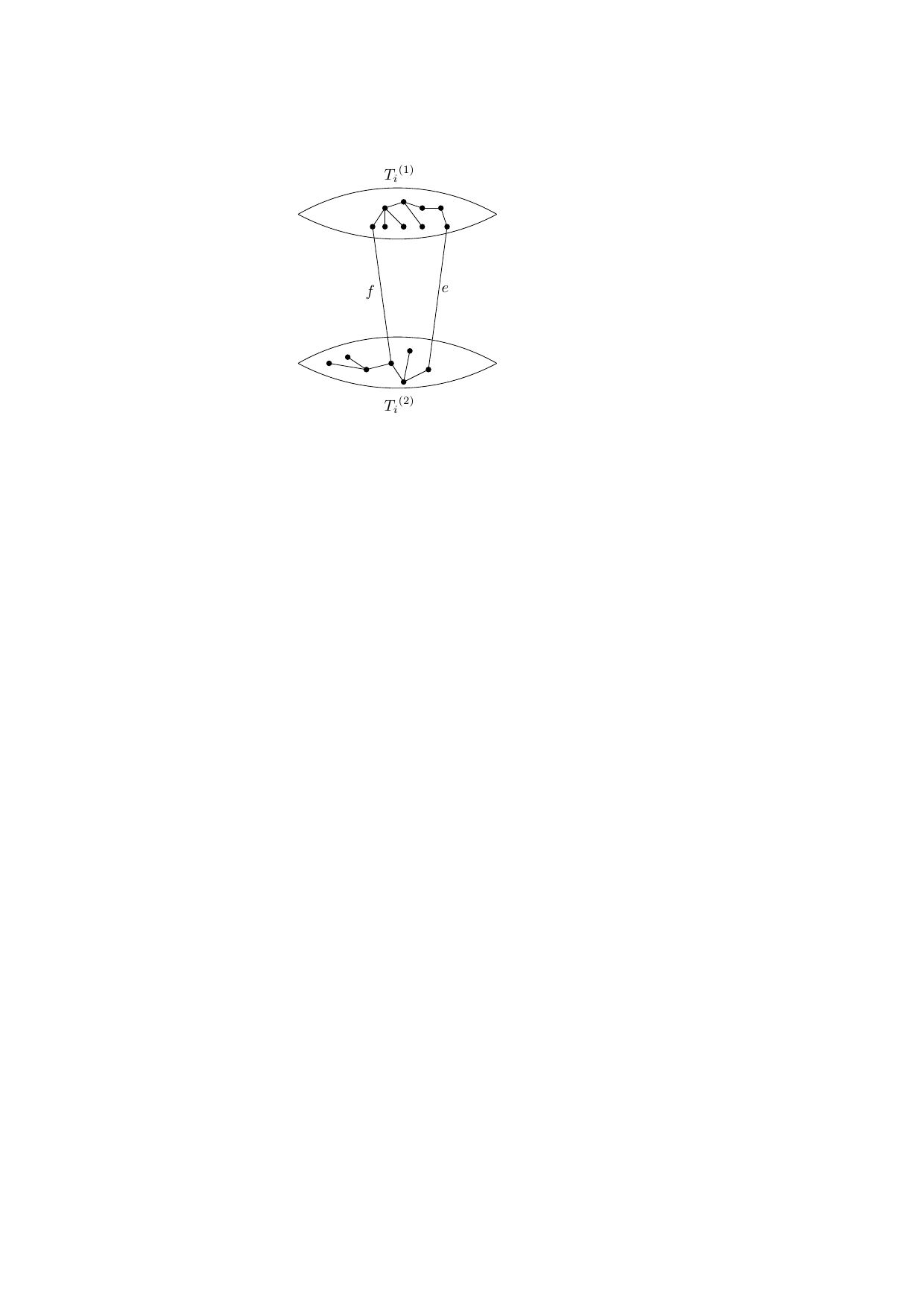}}
						\vspace{1cm}\caption{Schematic picture}
						\label{the relations}
					\end{center}
				\end{figure}

			\end{proof}

            We shall end this subsection with an important observation which would be crucial in the acyclicity of the NBC complex. 
            \begin{lemma} \label{crucial}
                Suppose $S \in [\R(T_{i}), T_{i}]$ be such that $S \cup{e} \in [\R(T_{j}),T_{j}]$, then $j \geq i$. 
            \end{lemma}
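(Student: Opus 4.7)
The plan is to leverage the partition of the NBC complex established in Proposition \ref{partition}, namely $\operatorname{NBC} = \bigsqcup_{i} [\R(T_{i}), T_{i}]$, to obtain a clean combinatorial characterization of which block an NBC subset belongs to. Specifically, I will show that $P \in [\R(T_{i}), T_{i}]$ if and only if $T_{i}$ is the lexicographically smallest NBC spanning tree containing $P$. Given this, the lemma becomes essentially immediate, since any spanning tree containing $S \cup \{e\}$ automatically contains $S$.

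The first step is to prove the claimed characterization. Assume $S \in [\R(T_{i}), T_{i}]$ and suppose for contradiction that there exists $k < i$ with $S \subseteq T_{k}$. Then $S$ lies in the subcomplex $\Delta_{k}$ generated by the first $k$ facets in the shelling order. Applying Proposition \ref{partition} to $\Delta_{k}$, we have the disjoint decomposition
\[
\Delta_{k} \;=\; \bigsqcup_{l \leq k} [\R(T_{l}), T_{l}],
\]
so $S$ must belong to $[\R(T_{l}), T_{l}]$ for some $l \leq k < i$. But this contradicts the disjointness of the partition of $\operatorname{NBC}$ itself, since $S$ already lies in $[\R(T_{i}), T_{i}]$. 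Hence no such $k$ exists, and $T_{i}$ is indeed the lex-smallest NBC tree containing $S$.

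The second step is the lemma itself. From the hypothesis $S \cup \{e\} \in [\R(T_{j}), T_{j}]$ we have $S \cup \{e\} \subseteq T_{j}$, and in particular $S \subseteq T_{j}$. Thus $T_{j}$ is an NBC spanning tree containing $S$. By the characterization from the previous step, $T_{i}$ is the lex-smallest NBC tree containing $S$, so $T_{i} \leq T_{j}$ in the lexicographic order, which is precisely $i \leq j$.

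I do not expect any serious obstacle: the work really amounts to recognizing that the partition property forces each subset $P$ to sit in the block indexed by the lex-smallest NBC tree above it, after which monotonicity under the inclusion $S \subseteq S \cup \{e\}$ is automatic. The only point requiring a little care is ensuring that $S$ itself is an NBC set so that the use of Proposition \ref{partition} is legitimate, but this is free since $S \in [\R(T_{i}), T_{i}] \subseteq \operatorname{NBC}$.
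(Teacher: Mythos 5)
Your proof is correct, and it takes a cleaner route than the paper's. The paper argues directly by splitting into cases according to whether $e \in \R(T_{j})$ or not: in the first case $\R(T_{j}) \subseteq S$ puts $S$ into two blocks simultaneously; in the second case $T_{j}-e \in \Delta_{j-1}$ and $S \subseteq T_{j}-e$ puts $S$ into $\Delta_{j-1}$, which decomposes into blocks indexed below $j-1$, again contradicting disjointness. Your argument instead first extracts the reusable characterization that $P \in [\R(T_{i}), T_{i}]$ exactly when $T_{i}$ is the lexicographically first NBC spanning tree containing $P$ (a standard reformulation of the shelling partition, proved here from Proposition \ref{partition} applied to $\Delta_{k}$ and disjointness), after which the lemma reduces to the trivial monotonicity $S \subseteq S \cup \{e\} \subseteq T_{j}$. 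What your route buys is that the case analysis on $e$ disappears entirely and the statement becomes a one-line corollary of a general fact about shellings; what the paper's route buys is that it is self-contained within the lemma and does not require stating the characterization as a separate step. One point worth making explicit in your write-up is why the restrictions $\R(T_{l})$ computed inside $\Delta_{k}$ agree with those computed in the full complex $\operatorname{NBC}$: this holds because $\R(T_{l})$ depends only on $\Delta_{l-1}$, which is determined by the first $l-1$ facets and is the same in both settings, so the two partitions are genuinely nested. With that observation spelled out, your argument is complete and correct.
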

            \begin{proof}
                If $e \in T_{i}$, then $j=i$. So with out loss of generality, we may assume $e \notin T_{i}$. Let us assume $j < i$. Then, by hypothesis \[ \R(T_{j}) \subset S\cup e \subset T_{j}.   \]
                Now, either $e \in \R(T_{j})$ or $e \notin \R(T_{j})$. We shall show, both of this cases leads to a contradiction. 

               If $e \notin \R(T_{j})$, then the first inclusion implies $\R(T_{j}) \subset (S\cup e)-e = S$. But then $S \in [\R(T_{i}), T_{i}] \cap [\R(T_{j}), T_{j}] $, contradicting the fact that they are disjoint. 
               Next, assume $e \in \R(T_{j})$. By the definition of $\R(T_{j})$, this implies $T_{j} -e \in \triangle_{j-1}$ where $\triangle_{j-1}$ is the complex generated by $T_1, T_2, \ldots, T_{j-1}$. Now, we have the following chain of inclusions 
               \[ S= (S\cup e)- e \subset T_{j}-e \subset \triangle_{j-1}.\] 
               Then, $S \in \triangle_{j-1} \cap [\R(T_{i}), T_{i}]$. The only possibility of such an intersection if $j-1=i$, since $j-1 < j < i$, that leads to a contradiction.Hence, $j \geq i.$

            \end{proof}
			
			\subsection{Acyclic matching on chromatic complex of a tree}
			{\label{Matching on tree}}
	Let $T$ be a tree rooted at the vertex $v_d$. Recall that the generators of the chromatic complex of graph correspond to enhanced spanning subgraphs \cite{Helme_Guizon_Rong_2005}. Formally, an enhanced spanning subgraph $(H, \epsilon)$  of a graph $G$ is a spanning subgraph $H$ of $G$ together with a set theoretic map
			$\epsilon : C(H) \rightarrow \{1,x\}$ where $C(H)$ is the set of connected components of $H$.\\
			Recall that, algebraic Morse theory associates a directed weighted graph $G_{C_{\ast}}$ to a co-chain complex $C_{\ast}$. Now for a tree $T=(V,E)$, with $n$ many edges, we will inductively provide an acyclic matching $\M_{T}$ on the chromatic complex of $C_{\ast}(T)$, that is on the graph $G_{C_{\ast}(T)}$. Induction statement is on the number of edges of the rooted tree $T$.
			
			\begin{enumerate}
				\item \textbf{Base case: $n=1$} For a rooted tree $T$, with one edge, let $v_d$ denote the root vertex. The matching in this case is shown in Figure \ref{base case matching}.
				\begin{figure}[ht]
					\centering
					\begin{tikzpicture}[vertex/.style={circle,fill,inner sep=2pt},scale=0.75]
						\node[vertex] [label=above:$v_d$,label=below:$1$] at (0,0) (a) {};
						\node[vertex] [label=above:$v_1$,label=below:$1$] at (2,0) (b) {};
						\draw[->] (3,0.25) -- (4.5,0.25) node[midway,above] {$d$};
						\draw[<-] (3,-0.25) -- (4.5,-0.25) node[midway,below] {$\M$};
						\node[vertex] [label=above:$v_d$] at (5.5,0) (c) {};
						\node[vertex] [label=above:$v_1$] at (7.5,0) (d) {};
						\draw (c) -- (d) node[midway,above] {$1$};
					\end{tikzpicture}
					\hspace{2cm}
					\begin{tikzpicture}[vertex/.style={circle,fill,inner sep=2pt},scale=0.75]
						\node[vertex] [label=above:$v_d$,label=below:$x$] at (0,0) (a) {};
						\node[vertex] [label=above:$v_1$,label=below:$1$] at (2,0) (b) {};
						\draw[->] (3,0.25) -- (4.5,0.25) node[midway,above] {$d$};
						\draw[<-] (3,-0.25) -- (4.5,-0.25) node[midway,below] {$\M$};
						\node[vertex] [label=above:$v_d$] at (5.5,0) (c) {};
						\node[vertex] [label=above:$v_1$] at (7.5,0) (d) {};
						\draw (c) -- (d) node[midway,above] {$x$};
					\end{tikzpicture}
					\caption{The matching for $n=1$}
					\label{base case matching}
				\end{figure}
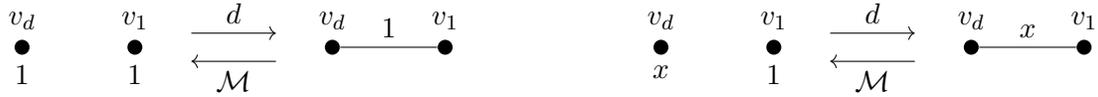
				In this case, we consider the tree with two vertices $v_d$ and $v_m$. This is certainly a matching. The chromatic complex is supported at $0$ and $1$ homological gradings. We call the critical vertices with respect to this matching as critical states. The critical states of this matching are supported at grading $0$ and is consisting of two states $H_1$ and $H_2$. The enhancement of $H_1$ is $1$ at the root vertex and $x$ at the vertex $v_m$, whereas the enhancement of $H_2$, is $x$ on both of the vertices $v_d$ and $v_m$.
				
				\item Let as assume that we have a matching $\M_{T_{n-1}}$ for a tree with $n-1$ many edges such that there are only two critical states $H_c^{T^+}, H_c^{T^-}$ respectively, with respect to $\M_{T_{n-1}}$ and those are in homological grading $0$. The enhancements of the critical states are: \label{enhancements}
				$$
				H_c^{T^+}:
				\begin{cases}
					\epsilon(C)=1, \text{ if }, C=\{v_d\}\\
					\epsilon(C)=x, \text{ Otherwise } 
				\end{cases}
				$$
				
				$$
				H_c^{T^-}:
				\begin{cases}
					\epsilon(C)=x \text{ for all connected components}
				\end{cases}
				$$
				Assume, $T$ is a tree with $n>1$ edges and let $v_d$ is the root vertex. Let $C_l$ be the collection of leaf edges in the tree. Let $C_l=\{e_{i_1},e_{i_{2}},\cdots, e_{i_t}\}$ and let $\{v_{i_1},v_{i_2},\cdots, v_{i_t}\}$ are the leaves corresponding to leaf edges in $C_l$. Let $v_{i_m}$ be the leaf corresponding to the least ordered leaf-edge $e_{i_m}$ in $C_l$. Consider the collection of enhanced spanning subgraphs $\mathcal{V}_p$, consisting of all enhanced spanning subgraphs  $\left(H,\epsilon \right)$ where the connected component $C_{v_{i_m}}$ containing $v_{i_m}$ has more than one vertex. Also let $\mathcal{V}_q$ be the collection of all enhanced spanning subgraphs  $\left(H',\epsilon'\right)$ where the connected component $C_{v_{i_m}}'$ containing $v_{i_{m}}$ consists of only the leaf vertex $v_{i_m}$. Let $C_{adj}$ is the connected component in $\left(H', \epsilon'\right) \in \mathcal{V}_q$ such that adding the edge $e_{i_m}$ in $H'$, connects $C_{v_{i_m}'}$ and $C_{adj}$. Observe that, for any $\left(H,\epsilon \right) \in \mathcal{V}_p$, the leaf-edge $e_{i_m} \in H$.
				Then we pair off $(H,\epsilon) \in \mathcal{V}_p$ with $(H',\epsilon') \in \mathcal{V}_q$, 
				where $H'=H-\{e_{i_m}\}$ and $\epsilon'$ is determined by the following conditions-
				\[ \epsilon'(C_{v_{i_m}}') =1, , \epsilon'(C_{adj})=\epsilon (C_{v_{i_m}})  \text{ and } \epsilon'(C)=\epsilon(C) \text{ for } C\in C(H) \setminus  \{ C_{v_{i_m}} \}. \] 
				
				\begin{figure}[ht]
					\centering
					\begin{tikzpicture}[vertex/.style={circle,fill,inner sep=2pt},scale=0.7]
						\node[vertex] [label=above:$x$] at (0.5,0.5) (a) {};
						\node[vertex] [label=below:$1$,label=above:$v_n$] (b) [above right=of a] {};
						\node [draw,thick,circle through=(a)] at (0,0) {};
						\node at (0,0) {$H'$} (a);
						\draw[->] (2,0.25) -- (4,0.25) node[midway,above] {$d$};
						\draw[<-] (2,-0.75) -- (4,-0.75) node[midway,above] {$\M$};
						\node[vertex] at (6.5,0.5) (c) {};
						\node[vertex] [label=above:$v_n$] (d) [above right=of c] {};
						\draw (c) -- (d) node[midway,above] {$x$};
						\node [draw,thick,circle through=(c)] at (6,0) {};
						\node at (6,0) {$H$} (c);
					\end{tikzpicture}
					\hfill
					\begin{tikzpicture}[vertex/.style={circle,fill,inner sep=2pt},scale=0.7]
						\node[vertex] [label=above:$1$] at (0.5,0.5) (a) {};
						\node[vertex] [label=below:$1$,label=above:$v_n$] (b) [above right=of a] {};
						\node [draw,thick,circle through=(a)] at (0,0) {};
						\node at (0,0) {$H'$} (a);
						\draw[->] (2,0.25) -- (4,0.25) node[midway,above] {$d$};
						\draw[<-] (2,-0.75) -- (4,-0.75) node[midway,above] {$\M$};
						\node[vertex] at (6.5,0.5) (c) {};
						\node[vertex] [label=above:$v_n$] (d) [above right=of c] {};
						\draw (c) -- (d) node[midway,above] {$1$};
						\node [draw,thick,circle through=(c)] at (6,0) {};
						\node at (6,0) {$H$} (c);
					\end{tikzpicture}
					\begin{tikzpicture}[vertex/.style={circle,fill,inner sep=2pt},scale=0.7]
						\node[vertex] at (0.5,0.5) (a) {};
						\node[vertex] [label=below:$x$,label=above:$v_n$] (b) [above right=of a] {};
						\node [draw,thick,circle through=(a)] at (0,0) {};
						\node at (0,0) {$H'$} (a);
						\draw[->] (2,0.25) -- (4,0.25) node[midway,above] {$d$};
						\draw[<-] (2,-0.75) -- (4,-0.75) node[midway,above] {$\M_{T_{n-1}}$};
						\node[vertex] at (6.5,0.5) (c) {};
						\node[vertex] [label=above:$v_n$,label=below:$x$] (d) [above right=of c] {};
						\node [draw,thick,circle through=(c)] at (6,0) {};
						\node at (6,0) {$H'$} (c);
					\end{tikzpicture}
					\caption{The matching for any tree with $n$ edges}
					\label{neg-cross}
				\end{figure}
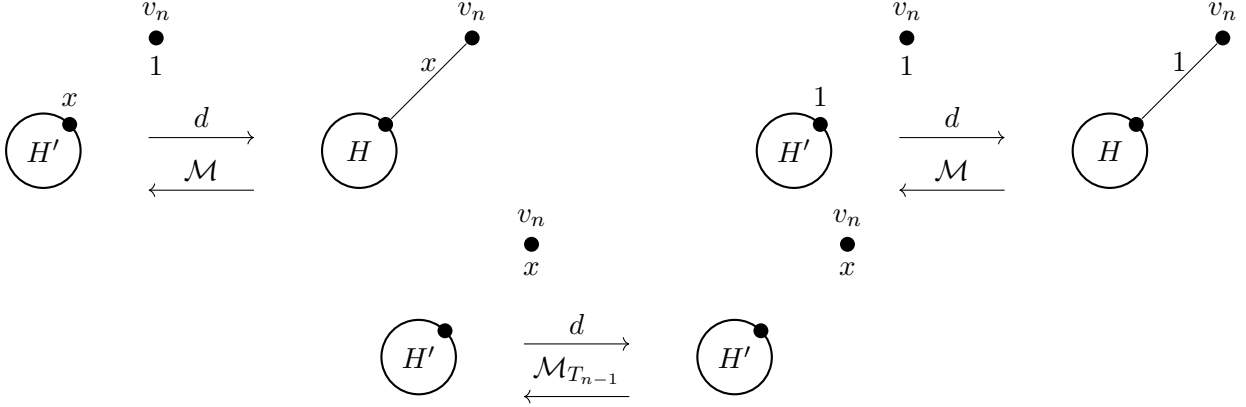 
				
				\item Now we provide the matching on the set of remaining enhanced subraphs of $T$ those we denote by $ \mathcal{V}_r$. By induction let $\M_{T \setminus \{v_{i_m}\}}$ be matching on the chromatic complex of  $T \setminus \{ v_{i_m} \}$ which is a tree with $n-1$ edges.
				We observe that in $\mathcal{V}_r$ all enhanced spanning subgraphs have $\{ v_{i_m} \}$ as a connected component with cardinality $1$ and enhancement $x$. So the enhanced spanning subgraphs  of $ \mathcal{V}_r$ are in bijective correspondence with the enhanced spanning subgraphs  of $T \setminus \{ v_{i_m} \}$. The bijective correspondence is provided by the map $\Psi((H, \epsilon))= (H\setminus \{ v_{i_m} \}, \epsilon_{|H\setminus \{ v_{i_m} \}})$. We can use the matching  $\M_{T \setminus \{ v_{i_m} \}}$ to provide matching on $ \mathcal{V}_r$ is the following way. We pair off $(H, \epsilon)$ with $\Psi^{-1}( \M_{T \setminus \{ v_{i_m} \}} (\Psi( H, \epsilon)))$. Thus by induction hypothesis, we have a matching on the entire chromatic complex of the tree with n many edges, with only two critical states which are supported at the homological grading $0$ and the with enhancement same as described in (\ref*{enhancements}).
			\end{enumerate} 
			\begin{lemma}\label{acyclicity}
				The matching $\M_T$ on the chromatic complex of $T$ is acyclic.
			\end{lemma}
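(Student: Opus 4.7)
The plan is to induct on $n$, the number of edges of the rooted tree $T$. The base case $n=1$ is checked by direct inspection of Figure~\ref{base case matching}: the chromatic complex has only four generators, two matched pairs, and no further arrows, so the modified graph $G^{\M_T}_{C_\star(T)}$ trivially contains no directed cycle. For $n>1$, I assume inductively that the matching $\M_{T \setminus v_{i_m}}$ produced by the same construction on the tree with $n-1$ edges is acyclic.

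The key observation driving the inductive step is that the three families $\mathcal{V}_p$, $\mathcal{V}_q$, $\mathcal{V}_r$ from the construction partition the generators of $C_\star(T)$ according to the local state at the distinguished leaf vertex $v_{i_m}$: either its component has size $>1$, or it is isolated with enhancement $1$, or it is isolated with enhancement $x$. Since $v_{i_m}$ is a leaf, the only edge of $T$ incident to it is $e_{i_m}$, so any differential arrow that does not add $e_{i_m}$ preserves the component of $v_{i_m}$. I would use this rigidity to run a short case check on each class; the outcome is that the only cross-class arrows are $\mathcal{V}_r \to \mathcal{V}_p$ (unmatched, coming from adding $e_{i_m}$ in the enhancement $x$ state, when the result is nonzero in $\A_2$) and $\mathcal{V}_p \to \mathcal{V}_q$ (the reversed matched arrows of step (2) of the construction). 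In particular the macro digraph on $\{\mathcal{V}_r,\mathcal{V}_p,\mathcal{V}_q\}$ is already acyclic, so any hypothetical directed cycle in $G^{\M_T}_{C_\star(T)}$ must lie entirely inside a single class.

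Inside $\mathcal{V}_p$ and $\mathcal{V}_q$ no matched arrow is internal, so every arrow is an original differential arrow and strictly raises the homological grading, which precludes a cycle. The remaining case is a cycle inside $\mathcal{V}_r$; here I would use the bijection $\Psi: \mathcal{V}_r \to C_\star(T \setminus v_{i_m})$ from step~(3), which identifies both the internal differential arrows and the internal matched arrows of $\mathcal{V}_r$ with the corresponding arrows of $G^{\M_{T \setminus v_{i_m}}}_{C_\star(T \setminus v_{i_m})}$, so a cycle in $\mathcal{V}_r$ transports to a cycle in the smaller graph and contradicts the inductive hypothesis. The main obstacle I anticipate is the bookkeeping for this last step: one has to verify that $\Psi$ is a genuine digraph isomorphism between the relevant subgraphs (that the signs contributed by the absent edge $e_{i_m}$ agree on both sides and that the matched arrows of step~(2) have invertible weight $\pm 1$, the latter being automatic since adding $e_{i_m}$ to a state in $\mathcal{V}_q$ multiplies the enhancement $1$ by $\epsilon(C_{adj})$ and so contributes a sign $\pm 1$).
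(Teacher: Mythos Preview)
Your proof is correct and takes a genuinely different route from the paper. You argue by induction on the number of edges, exploiting the recursive definition of $\M_T$ directly: the three-part partition $\mathcal{V}_p,\mathcal{V}_q,\mathcal{V}_r$ yields a macro digraph $\mathcal{V}_r \to \mathcal{V}_p \to \mathcal{V}_q$ with no backward edges, the first two classes contain only grading-increasing internal arrows, and the third is handled by the inductive hypothesis via $\Psi$. The paper instead gives a direct (non-inductive) contradiction argument: it assumes a hypothetical directed cycle $H_1 \to d^{e_1}(H_1) \to H_2 \to \cdots \to H_1$ and splits into two cases according to whether some differential edge $e_i$ coincides with the subsequent matching edge $e'_i$; in the coincidence case it introduces a distance-from-the-root potential $D_{H_j}=\sum_{\epsilon(C)=x} d(v_d,C)$ that is non-increasing along the cycle and strictly drops at the coincidence step. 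Your inductive argument is cleaner and mirrors the construction transparently; the paper's potential function, on the other hand, makes visible the structural fact that along any alternating path the label $1$ drifts monotonically away from the root, which is useful intuition elsewhere in the paper (e.g.\ in the analysis of alternating paths in Lemma~\ref{path within tree}). One small remark: the sign and invertibility bookkeeping you flag at the end is relevant for the Morse chain equivalence but not for acyclicity per se, which is a statement about the underlying directed graph; for the isomorphism $\Psi$ you only need that coefficients are simultaneously nonzero on both sides, and this is immediate since the isolated vertex $v_{i_m}$ plays no role in the merge maps for edges $e\neq e_{i_m}$.
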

			
			\begin{proof}
				Assume for the sake of contradiction that there is a directed cycle of enhanced spanning subgraphs 
\[ H_1 \to d_p^{e_1}(H_1) \to H_2 \to d_p^{e_2}(H_2) \to H_3 \to \cdots \to d_p^{e_k}(H_k) \to H_1 \]
in the Hasse diagram of the chromatic complex of \( T \), with matching \( \M_T \). Let \( p \) be the homological grading of \( H_i \), and \( d_p^{e_i} (H_i) \) is obtained by taking the differential with respect to the edge \( e_i \). (See Figure \ref{dicycle}.)
				
				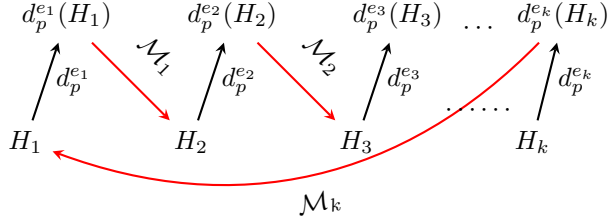
\begin{figure}[ht]
					\centering
					\begin{tikzpicture}[scale=0.55, auto, every node/.style={font=\small}]

						\tikzset{
							arrow/.style={->, thick, >=stealth},
							redarrow/.style={->, thick, draw=red, >=stealth}
						}

						\node (H1) at (-11,1) {$H_1$};
						\node (dH1) at (-10,4) {$d_p^{e_1} (H_1)$};
						\draw[arrow] (H1) -- node[right] {$d_p^{e_1}$} (dH1);

						\node (H2) at (-7,1) {$H_2$};
						\node (dH2) at (-6,4) {$d_p^{e_2}(H_2)$};
						\draw[arrow] (H2) -- node[right] {$d_p^{e_2}$} (dH2);

						\node (H3) at (-3,1) {$H_3$};
						\node (dH3) at (-2,4) {$d_p^{e_3}(H_3)$};
						\draw[arrow] (H3) -- node[right] {$d_p^{e_3}$} (dH3);

						\node (Hk) at (1.25,1) {$H_k$};
						\node (dHk) at (2,4) {$d_p^{e_k} (H_k)$};
						\draw[arrow] (Hk) -- node[right] {$d_p^{e_k}$} (dHk);

						\draw[redarrow] (dH1) -- node[midway, sloped, above=2mm] {$\M_1$} (H2);
						\draw[redarrow] (dH2) -- node[midway, sloped, above=2mm] {$\M_2$} (H3);
						\draw[redarrow, bend left=33] (dHk) to node[sloped, below=1mm] {$\M_k$} (H1);
						
						\node (dots1) at (0, 3.75) {$\cdots$}; 
						\node (dots2) at (0,1.75) {$\cdots\cdots$};
						
					\end{tikzpicture}
					\caption{A directed cycle in a Hasse diagram }\label{dicycle}
				\end{figure}
				
				Define a distance function
\[
    d: \{v_d\} \times V(T)  \rightarrow \mathbb{N} \cup \{0\}, \quad
    d(v_d, v)  := \text{distance of the vertex } v \text{ from the root in the graph } T.
\]
Also, define
\begin{equation}{\label{mindist}}
    d(v_d,S) := \min\{ d(v_d,v) \mid v \in S\}
\end{equation} 

Let \( v^{(i)} \) and \( v'^{(i)} \) be the two endpoints of the edge \( e_i \), and let their connected components in \( C(H_i) \) be denoted by \( C_i \) and \( C'_i \), respectively. Without loss of generality, assume that \( d(v_d,C_i) < d(v_d,C'_i) \).  Let \( e'_i \) be the unique edge in \( d_p^{e_i}(H_i) \setminus H_{i+1} \) (the index is taken modulo $k$). In the directed cycle differential by $e'_i$ correspond to the  matching arrow between $d_p^{e_i}(H_i) $ and $H_{i+1}$. Given two edges $e$ and $e'$ of the tree $T$, we say $e <_{\M_T} e'$ or $e$ is less than $e'
$ in the matching order if matching by $e$ occurs before matching by $e'$ in the recursive construction of $\M_T$. Assume that $e_1'  <_{\M_T} e_2'  <_{\M_T}  \cdots  <_{\M_T} e_k'$.

\begin{enumerate}
    \item \textbf{Case 1:} Assume \( e_i \neq e'_i \) for all \( i=1,2, \dots,k \). Since \( e'_1 \neq e_1\), the edge $e'_1$ is contained in the subgraph $H_1$.  But there is a matching arrow from  $d_p^{e_k}(H_k)$ to $H_1$  via differential along the edge $e'_k $, which is a contradiction since $e'_1$ is contained in the subgraph $d_p^{e_k}(H_k)$  and  $ e'_1 $ is the minimum edge in the matching order by assumption.
    
    \item \textbf{Case 2:} Assume \( e_i = e'_i \) for some \( i \). Then, we must have \( \epsilon(C_i) = 1, \epsilon(C'_i) = x \), as any other enhancement on these two components would violate the property of a directed cycle in a Hasse diagram (That is, in a directed cycle, a matching arrow must be followed by a distinct differential arrow). Then, \( H_{i+1} \) and \( H_i \) are the same as spanning subgraphs, and \( \epsilon(C_{i+1})=x, \epsilon(C'_{i+1})=1 \). Also, the enhancement remains unchanged for any other connected component of \( H_i \).
    
    Define
    \[
        D_{H_j} = \sum_{\{C \in C(H_j) \mid \epsilon(C)=x\}} d(v_d,C).
    \]
    
    Then, in this case, we have \( D_{H_{i+1}} < D_{H_i} \). Also, notice that the function \( D_{H_j} \) is non-increasing in \( j \). This implies,
    \[ 
        D_{H_1} \geq D_{H_2} \geq D_{H_3} > \cdots \geq D_{H_i} > D_{H_{i+1}} \geq \cdots \geq D_{H_k} \geq D_{H_1},
    \]
    which is a contradiction. Thus, such a cycle cannot exist, and the matching is acyclic.
\end{enumerate}
\end{proof}

			\subsection{Acyclic matching on the chromatic complex of a graph G} \label{matching on G}
			Let $\CST{*}{*}$ Now, we want define an acyclic matching $\M_{\operatorname{NBC}}$ on the NBC complex of any graph $G$.  We would like to extend the acyclic matching $\M_{T}$ for a spanning tree of $G$, so that the critical states of $\M_{\operatorname{NBC}}$ are in one to one correspondence with signed NBC spanning trees of $G$. Again, we fix a vertex $v_d$ of $G$ which will play the role of the root for the NBC spanning trees. Let $\{T_1 < T_2< \cdots <T_k\}$ be the lexicographic ordering on NBC spanning trees of $G$.	
			We inductively provide a matching $\M_G$ on the NBC complex of $G$.
			
			Recall from \ref{partition} that, for a shellable complex $\Delta$ with facets $\{F_1,F_2, \cdots F_k\}$, the intervals $[\R(F_i),F_i]$ partitions $\Delta$. It follows immediately from $\ref{shelling order}$, that for any graph $G$, NBC is a shellable complex. Furthermore, the intervals $[\R(T_i),T_i]$ forms a partition of the complex NBC, where $T_i$'s are the spanning trees of $G$ and they correspond to the facets of NBC. 
			\begin{thm}{\label{Matching on G}}
				There exists an acyclic matching $\M_{\operatorname{NBC}}$ on the complex $\nbc{}{}$ generated by enhanced NBC spanning subgraphs of any graph $G$. Moreover, the critical states $\M_{\operatorname{NBC}}$ of are in one to one correspondence with the signed NBC spanning trees of $G$.
			\end{thm}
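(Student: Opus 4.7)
The plan is to decompose the NBC complex using the shellability results and build the matching interval by interval, then verify global acyclicity via Lemma \ref{crucial}. By Proposition \ref{shelling order}, the lexicographic order $T_1 < T_2 < \cdots < T_k$ on NBC spanning trees is a shelling, so Propositions \ref{partition} and \ref{inactiveedgelemma} give a disjoint partition
\[
\operatorname{NBC} \;=\; \bigsqcup_{i=1}^{k} [\R(T_i), T_i], \qquad \R(T_i) = \IN(T_i),
\]
in which each forest $S$ in the $i$-th block is $\R(T_i)$ together with some subset of the internally active edges $\IA(T_i)$.

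On each interval I would construct a matching by pulling back the tree matching of Subsection \ref{Matching on tree} along a natural bijection. Contracting the edges of $\R(T_i)$ in the tree $T_i$ yields a rooted tree $T_i/\R(T_i)$ whose edges correspond to $\IA(T_i)$ and whose vertices are the connected components of the spanning subgraph $(V(G),\R(T_i))$. The enhanced spanning subgraphs $(H,\epsilon)$ of $G$ with $H \in [\R(T_i), T_i]$ are in natural bijection with the enhanced spanning subgraphs of $T_i/\R(T_i)$, and this bijection intertwines the chromatic differential along each edge of $\IA(T_i)$ with the corresponding differential on the contracted tree. Applying the acyclic matching $\M_{T_i/\R(T_i)}$ therefore lifts to an acyclic matching on the interval whose two critical states have underlying edge set $\R(T_i)$ and enhancements ``$1$ on the component of $v_d$ and $x$ on every other component'' and ``$x$ on every component''. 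These I define to be the signed NBC spanning trees $T_i^{+}$ and $T_i^{-}$.

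Taking $\M_{\operatorname{NBC}}$ to be the union of these per-interval matchings gives a matching on all of NBC whose critical set is $\{T_i^{\pm}\}_{i=1}^{k}$, yielding the required bijection with signed NBC spanning trees. To verify global acyclicity, suppose a directed cycle exists in $G^{\M_{\operatorname{NBC}}}$. Every differential edge adjoins an edge to the spanning subgraph and, by Lemma \ref{crucial}, can only move from interval index $i$ to some $j \geq i$; every matching edge lies in a single interval by construction. Closing up the cycle forces the interval index to remain constant, so the entire cycle is contained in some $[\R(T_i), T_i]$, contradicting the acyclicity of the pulled-back matching there, via Lemma \ref{acyclicity} applied to $T_i/\R(T_i)$.

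The main technical subtlety I expect is verifying that the contraction bijection intertwines the chromatic differentials, in particular that the ``multiplication'' and ``identity'' parts of the per-edge maps transport correctly under the identification of components of subgraphs of $G$ with vertices of $T_i/\R(T_i)$. Once this bookkeeping is settled, the rest of the argument is a clean combination of the shelling partition, Lemma \ref{crucial}, and the tree case already handled in Subsection \ref{Matching on tree}.
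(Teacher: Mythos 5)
Your proposal matches the paper's proof essentially step for step: the same shelling-induced partition into intervals $[\R(T_i),T_i]$, the same identification of each interval with the chromatic complex of the contracted tree $T_i/\R(T_i)$ on which the matching of Subsection~3.1 is applied, the same description of the two critical states per interval, and the same use of Lemma~\ref{crucial} together with Lemma~\ref{acyclicity} to rule out directed cycles. The subtlety you flag about the contraction bijection intertwining differentials is indeed present but is handled in the paper by the same canonical identification you describe.
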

			
			\begin{figure}[!htb]
				\begin{center} \scalebox{0.75}{\includegraphics{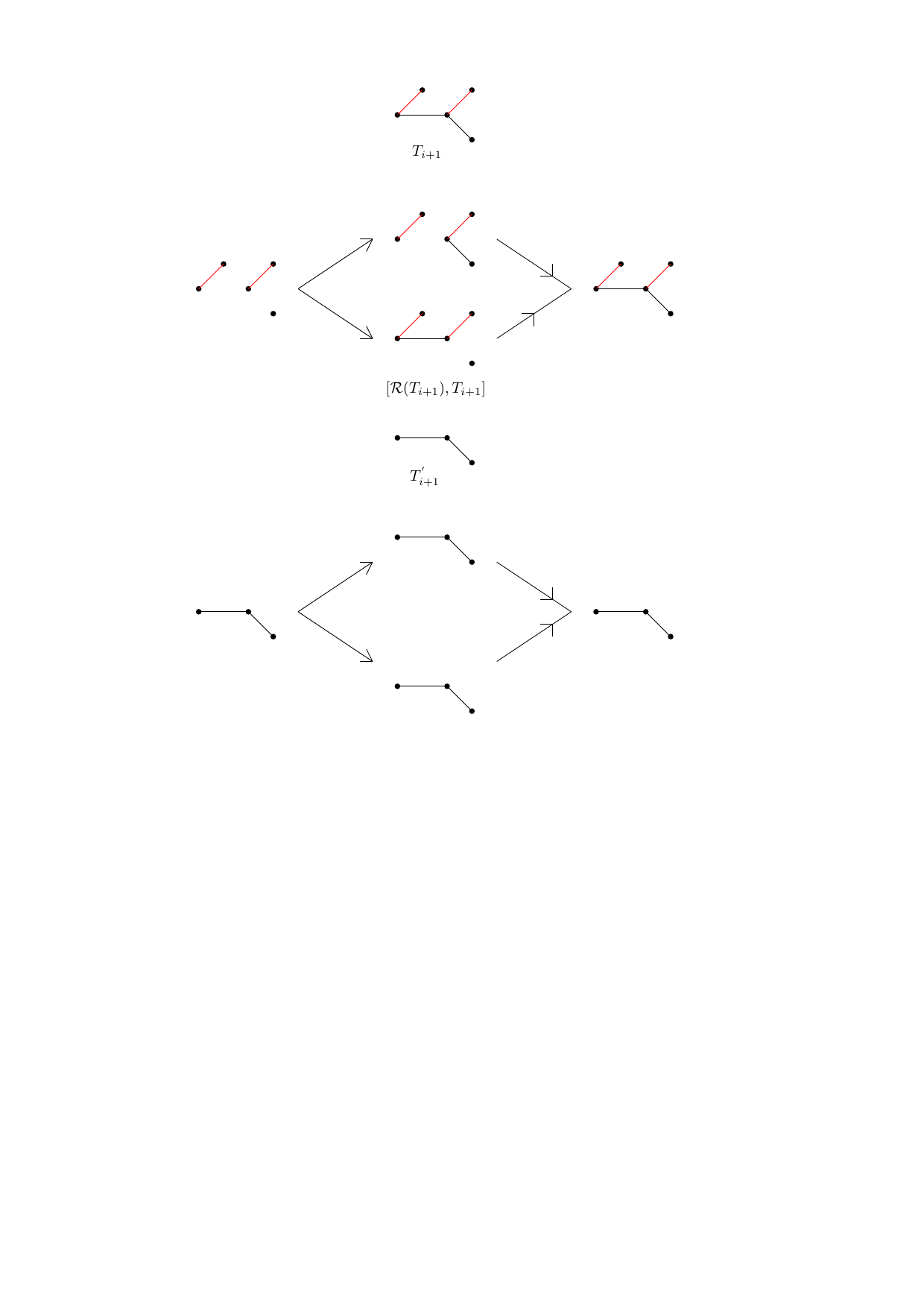}}
					\vspace{1cm}\caption{The edges in $\IN(T_{i+1})$ are colored red}
					\label{contraction}
				\end{center}
			\end{figure}
			\begin{proof}
				Let us assume $T_1< T_2 < \ldots < T_{k}$ denotes the shelling order on the NBC spanning trees of the graph $G$. Our strategy to construct the acyclic matching on the NBC complex involves studying each of the intervals $[\R(T_{i}), T_{i}]$ separately. Each interval $[\R(T_{i}), T_{i}]$ will contain exactly two critical states. They shall be denoted by $T_{i}^{+}$ and $T_{i}^{-}$. We shall provide an explicit description of $T_{i}^{+}$ and $T_{i}^{-}$. We proceed by induction on the indexing set $i$ of the NBC spanning trees. 
				\begin{enumerate}
					\item $\R(T_1)= \phi$. By Proposition 3.2 the interval $[\R(T_{1}), T_{1}]$ can be identified with the complex generated by $T_{1}$. We provide the matching as described in subsection $3.1$. 
					Thus,$[\R(T_{1}), T_{1}]$ contains precisely two  the critical states in the homological grading $0$ . $H_c^{T_{1}^{+}}$ and $H_c^{T_{1}^{-}}$ can be described as follows: 
					\begin{align*}
						H_c^{T_{1}^{+}}  &:= \left( H, \epsilon_{+} \right) & \qquad  H_c^{T_{1}^{-}}  &:= \left( H, \epsilon_{-} \right)  \\
						\epsilon_{+}(v) &= x \, \, \text{if}  \, \, v \neq v_{d} & \text{and} \qquad \epsilon_{-}(v) &= x \, \, \text{for all vertex} \, \, v \\ 
						\epsilon_{+}(v_{d}) &= 1
					\end{align*}
					
					where $H$ denotes the spanning subgraph with no edges. This concludes our base case. 
					\item Inductively, assume we have constructed $\M_{\operatorname{NBC}}$ on the interval $[\R(T_{j}), T_{j}]$ and $T_{j}^{+}$, $T_{j}^{-}$ denotes the two critical states for all $1 \leq j \leq i$ in $[\R(T_{j}), T_{j}]$. We extend $\M_{\operatorname{NBC}}$ on the interval $[\R(T_{i+1}), T_{i+1}]$. We write the edge set of $T_{i+1}$ as disjoint union $E(T_{i+1}) = \IN(T_{i+1}) \cup \IA(T_{i+1})$ where $\IN(T_{i+1})$ and $\IA(T_{i+1})$ denotes the collection of inactive and active edges of $T_{i+1}$ respectively. Note that, by virtue of Proposition 3.3 we have the following:
					\[ \R(T_{i+1})= IN (T_{i+1}).\]
					Let $T_{i+1}^{'}$ denotes the tree which is obtained by from $T_{i+1}$ by contracting the inactive edges $\IN(T_{i+1})$. Then the formal subcomplex generated by the spanning subgraphs in $[\R(T_{i+1}), T_{i+1}]$ can be canonically identified with the formal chromatic complex of the tree $T_{i+1}^{'}$. For example see the Figure \ref{contraction}. With this identification, we extend the matching $\M_{\operatorname{NBC}}$ on $[\R(T_{i+1}), T_{i+1}]$ as described in subsection $3.1$. Next we explicitly describe the two critical states in $[\R(T_{i+1}), T_{i+1}]$. Let $H^{T_{i+1}}$ denotes the spanning subgraph with $E(H_{T_{i+1}})= \IN(T_{i+1})$ and $C_{v_d}$ denotes the connected component of $H^{T_{i+1}}$ that contains the root vertex $v_{d}$. $H_c^{T_{i+1}^{+}}$ and $H_c^{T_{i+1}^{-}}$ can be described as follows: 
					\begin{align*}
						H_c^{T_{i+1}^{+}} & := \left( H^{T_{i+1}}, \epsilon_{+} \right) & \qquad H_c^{T_{i+1}^{-}} & := \left( H^{T_{i+1}}, \epsilon_{-} \right) \\ 
						\epsilon_{+}(C) &= x \, \, \text{if} \, \, v_{d} \notin C & \text{and} \qquad \epsilon_{-}(C) &= x \, \, \text{for all components} \, \, C \, \, \text{of} \, \,  H_{T_{i+1}} \\
						\epsilon_{+}(C_{v_{d}}) &= 1
					\end{align*}
					This concludes the inductive step. 
				\end{enumerate}
				
				Finally, the acyclicity of the matching $\M_{\operatorname{NBC}}$ follows from Lemma \ref{acyclicity} and \ref{crucial} and the fact that the matching $\M_{\operatorname{NBC}}$ is defined on each disjoint intervals $[\R(T_i),T_i]$ inductively. So if $i \neq j$ and we move through a differential from $[\R(T_i),T_i]$ to $[\R(T_j),T_j]$, then we 
                must have $j \geq i$. Finally the matching edge in $\M_{\operatorname{NBC}}$ can not leave the interval $[\R(T_i), T_{i}]$. Thus, $\M_{\operatorname{NBC}}$ provides an acyclic matching on $\nbc{}{}$.
				
			\end{proof}

			\begin{figure}{}
				\resizebox{0.64\textwidth}{!}{ 
					\begin{tikzpicture}[vertex/.style={draw, fill=black,minimum size=10pt, inner sep=0pt,circle}, root/.style={draw, fill=red,color=red,minimum size=10pt, inner sep=0pt,circle}]
						\node [style=root] (0) at (-10, 12.25) {};
						\node [style=vertex] (1) at (-11.5, 10.25) {};
						\node [style=vertex] (2) at (-8.5, 10.25) {};
						\node [] (3) at (-10, 13) {\scalebox{1.7}{$1$}};
						\node [] (5) at (-11.5, 11) {\scalebox{1.7}{$1$}};
						\node [] (6) at (-8.5, 11) {\scalebox{1.7}{$1$}};
						\node [] (7) at (-12, 13.5) {};
						\node [] (8) at (-8, 13.5) {};
						\node [] (9) at (-12, 9.75) {};
						\node [] (10) at (-8, 9.75) {};
						\node [style=root] (11) at (-10, 6.25) {};
						\node [style=vertex] (12) at (-11.5, 4.25) {};
						\node [style=vertex] (13) at (-8.5, 4.25) {};
						\node [] (14) at (-10, 7) {\scalebox{1.7}{$1$}};
						\node [] (15) at (-11.5, 5) {\scalebox{1.7}{$1$}};
						\node [] (16) at (-8.5, 5) {\scalebox{1.7}{$x$}};
						\node [] (17) at (-12, 7.5) {};
						\node [] (18) at (-8, 7.5) {};
						\node [] (19) at (-12, 3.75) {};
						\node [] (20) at (-8, 3.75) {};
						\node [style=root] (21) at (-10, 0.25) {};
						\node [style=vertex] (22) at (-11.5, -1.75) {};
						\node [style=vertex] (23) at (-8.5, -1.75) {};
						\node [] (24) at (-10, 1) {\scalebox{1.7}{$1$}};
						\node [] (25) at (-11.5, -1) {\scalebox{1.7}{$x$}};
						\node [] (26) at (-8.5, -1) {\scalebox{1.7}{$1$}};
						\node [] (27) at (-12, 1.5) {};
						\node [] (28) at (-8, 1.5) {};
						\node [] (29) at (-12, -2.25) {};
						\node [] (30) at (-8, -2.25) {};
						\node [style=root] (31) at (-10, -5.75) {};
						\node [style=vertex] (32) at (-11.5, -7.75) {};
						\node [style=vertex] (33) at (-8.5, -7.75) {};
						\node [] (34) at (-10, -5) {\scalebox{1.7}{$1$}};
						\node [] (35) at (-11.5, -7) {\scalebox{1.7}{$x$}};
						\node [] (36) at (-8.5, -7) {\scalebox{1.7}{$x$}};
						\node [] (37) at (-12, -4.5) {};
						\node [] (38) at (-8, -4.5) {};
						\node [] (39) at (-12, -8.25) {};
						\node [] (40) at (-8, -8.25) {};
						\node [style=root] (41) at (-10, -11.75) {};
						\node [style=vertex] (42) at (-11.5, -13.75) {};
						\node [style=vertex] (43) at (-8.5, -13.75) {};
						\node [] (44) at (-10, -11) {\scalebox{1.7}{$x$}};
						\node [] (45) at (-11.5, -13) {\scalebox{1.7}{$1$}};
						\node [] (46) at (-8.5, -13) {\scalebox{1.7}{$1$}};
						\node [] (47) at (-12, -10.5) {};
						\node [] (48) at (-8, -10.5) {};
						\node [] (49) at (-12, -14.25) {};
						\node [] (50) at (-8, -14.25) {};
						\node [style=root] (51) at (-10, -17.75) {};
						\node [style=vertex] (52) at (-11.5, -19.75) {};
						\node [style=vertex] (53) at (-8.5, -19.75) {};
						\node [] (54) at (-10, -17) {\scalebox{1.7}{$x$}};
						\node [] (55) at (-11.5, -19) {\scalebox{1.7}{$1$}};
						\node [] (56) at (-8.5, -19) {\scalebox{1.7}{$x$}};
						\node [] (57) at (-12, -16.5) {};
						\node [] (58) at (-8, -16.5) {};
						\node [] (59) at (-12, -20.25) {};
						\node [] (60) at (-8, -20.25) {};
						\node [style=root] (61) at (-10, -23.75) {};
						\node [style=vertex] (62) at (-11.5, -25.75) {};
						\node [style=vertex] (63) at (-8.5, -25.75) {};
						\node [] (64) at (-10, -23) {\scalebox{1.7}{$x$}};
						\node [] (65) at (-11.5, -25) {\scalebox{1.7}{$x$}};
						\node [] (66) at (-8.5, -25) {\scalebox{1.7}{$1$}};
						\node [] (67) at (-12, -22.5) {};
						\node [] (68) at (-8, -22.5) {};
						\node [] (69) at (-12, -26.25) {};
						\node [] (70) at (-8, -26.25) {};
						\node [style=root] (71) at (-10, -29.75) {};
						\node [style=vertex] (72) at (-11.5, -31.75) {};
						\node [style=vertex] (73) at (-8.5, -31.75) {};
						\node [] (74) at (-10, -29) {\scalebox{1.7}{$x$}};
						\node [] (75) at (-11.5, -31) {\scalebox{1.7}{$x$}};
						\node [] (76) at (-8.5, -31) {\scalebox{1.7}{$x$}};
						\node [] (77) at (-12, -28.5) {};
						\node [] (78) at (-8, -28.5) {};
						\node [] (79) at (-12, -32.25) {};
						\node [] (80) at (-8, -32.25) {};
						\node [style=root] (81) at (0, 16) {};
						\node [style=vertex] (82) at (-1.5, 14) {};
						\node [style=vertex] (83) at (1.5, 14) {};
						\node [] (85) at (-1.25, 15.25) {\scalebox{1.7}{$1$}};
						\node [] (86) at (0.75, 14) {\scalebox{1.7}{$1$}};
						\node [] (87) at (-2, 16.5) {};
						\node [] (88) at (2, 16.5) {};
						\node [] (89) at (-2, 13.25) {};
						\node [] (90) at (2, 13.25) {};
						\node [style=root] (91) at (0, 11.5) {};
						\node [style=vertex] (92) at (-1.5, 9.5) {};
						\node [style=vertex] (93) at (1.5, 9.5) {};
						\node [] (94) at (-1.25, 10.75) {\scalebox{1.7}{$1$}};
						\node [] (95) at (0.75, 9.5) {\scalebox{1.7}{$x$}};
						\node [] (96) at (-2, 12) {};
						\node [] (97) at (2, 12) {};
						\node [] (98) at (-2, 8.75) {};
						\node [] (99) at (2, 8.75) {};
						\node [style=root] (100) at (0, 7) {};
						\node [style=vertex] (101) at (-1.5, 5) {};
						\node [style=vertex] (102) at (1.5, 5) {};
						\node [] (103) at (0.75, 7) {\scalebox{1.7}{$1$}};
						\node [] (104) at (0, 5.5) {\scalebox{1.7}{$1$}};
						\node [] (105) at (-2, 7.5) {};
						\node [] (106) at (2, 7.5) {};
						\node [] (107) at (-2, 4.25) {};
						\node [] (108) at (2, 4.25) {};
						\node [style=root] (109) at (0, 2.5) {};
						\node [style=vertex] (110) at (-1.5, 0.5) {};
						\node [style=vertex] (111) at (1.5, 0.5) {};
						\node [] (112) at (0.75, 2.5) {\scalebox{1.7}{$1$}};
						\node [] (113) at (0, 1) {\scalebox{1.7}{$x$}};
						\node [] (114) at (-2, 3) {};
						\node [] (115) at (2, 3) {};
						\node [] (116) at (-2, -0.25) {};
						\node [] (117) at (2, -0.25) {};
						\node [style=root] (118) at (0, -2) {};
						\node [style=vertex] (119) at (-1.5, -4) {};
						\node [style=vertex] (120) at (1.5, -4) {};
						\node [] (121) at (-1.25, -2.75) {\scalebox{1.7}{$x$}};
						\node [] (122) at (0.75, -4) {\scalebox{1.7}{$1$}};
						\node [] (123) at (-2, -1.5) {};
						\node [] (124) at (2, -1.5) {};
						\node [] (125) at (-2, -4.5) {};
						\node [] (126) at (2, -4.5) {};
						\node [style=root] (127) at (0, -6.5) {};
						\node [style=vertex] (128) at (-1.5, -8.5) {};
						\node [style=vertex] (129) at (1.5, -8.5) {};
						\node [] (130) at (-1.25, -7.25) {\scalebox{1.7}{$x$}};
						\node [] (131) at (0.75, -8.5) {\scalebox{1.7}{$x$}};
						\node [] (132) at (-2, -6) {};
						\node [] (133) at (2, -6) {};
						\node [] (134) at (-2, -9.25) {};
						\node [] (135) at (2, -9.25) {};
						\node [style=root] (136) at (0, -11) {};
						\node [style=vertex] (137) at (-1.5, -13) {};
						\node [style=vertex] (138) at (1.5, -13) {};
						\node [] (139) at (0.75, -11) {\scalebox{1.7}{$x$}};
						\node [] (140) at (0, -12.5) {\scalebox{1.7}{$1$}};
						\node [] (141) at (-2, -10.5) {};
						\node [] (142) at (2, -10.5) {};
						\node [] (143) at (-2, -13.75) {};
						\node [] (144) at (2, -13.75) {};
						\node [style=root] (145) at (0, -15.5) {};
						\node [style=vertex] (146) at (-1.5, -17.5) {};
						\node [style=vertex] (147) at (1.5, -17.5) {};
						\node [] (148) at (0.75, -15.5) {\scalebox{1.7}{$x$}};
						\node [] (149) at (0, -17) {\scalebox{1.7}{$x$}};
						\node [] (150) at (-2, -15) {};
						\node [] (151) at (2, -15) {};
						\node [] (152) at (-2, -18.25) {};
						\node [] (153) at (2, -18.25) {};
						\node [style=root] (154) at (0, -21.5) {};
						\node [style=vertex] (155) at (-1.5, -23.5) {};
						\node [style=vertex] (156) at (1.5, -23.5) {};
						\node [] (157) at (1.25, -22.25) {\scalebox{1.7}{$1$}};
						\node [] (158) at (-0.75, -23.5) {\scalebox{1.7}{$1$}};
						\node [] (159) at (-2, -21) {};
						\node [] (160) at (2, -21) {};
						\node [] (161) at (-2, -24.25) {};
						\node [] (162) at (2, -24.25) {};
						\node [style=root] (163) at (0, -26) {};
						\node [style=vertex] (164) at (-1.5, -28) {};
						\node [style=vertex] (165) at (1.5, -28) {};
						\node [] (166) at (1.25, -26.75) {\scalebox{1.7}{$1$}};
						\node [] (167) at (-0.75, -28) {\scalebox{1.7}{$x$}};
						\node [] (168) at (-2, -25.5) {};
						\node [] (169) at (2, -25.5) {};
						\node [] (170) at (-2, -28.75) {};
						\node [] (171) at (2, -28.75) {};
						\node [style=root] (172) at (0, -30.5) {};
						\node [style=vertex] (173) at (-1.5, -32.5) {};
						\node [style=vertex] (174) at (1.5, -32.5) {};
						\node [] (175) at (1.25, -31.25) {\scalebox{1.7}{$x$}};
						\node [] (176) at (-0.75, -32.5) {\scalebox{1.7}{$1$}};
						\node [] (177) at (-2, -30) {};
						\node [] (178) at (2, -30) {};
						\node [] (179) at (-2, -33.25) {};
						\node [] (180) at (2, -33.25) {};
						\node [style=root] (181) at (0, -35) {};
						\node [style=vertex] (182) at (-1.5, -37) {};
						\node [style=vertex] (183) at (1.5, -37) {};
						\node [] (184) at (1.25, -35.75) {\scalebox{1.7}{$x$}};
						\node [] (185) at (-0.75, -37) {\scalebox{1.7}{$x$}};
						\node [] (186) at (-2, -34.5) {};
						\node [] (187) at (2, -34.5) {};
						\node [] (188) at (-2, -37.75) {};
						\node [] (189) at (2, -37.75) {};
						\node [style=root] (190) at (10, 2) {};
						\node [style=vertex] (191) at (8.5, 0) {};
						\node [style=vertex] (192) at (11.5, 0) {};
						\node [] (193) at (9.75, 0.75) {\scalebox{1.7}{$1$}};
						\node [] (195) at (8, 2.5) {};
						\node [] (196) at (12, 2.5) {};
						\node [] (197) at (8, -0.75) {};
						\node [] (198) at (12, -0.75) {};
						\node [style=root] (199) at (10, -4.25) {};
						\node [style=vertex] (200) at (8.5, -6.25) {};
						\node [style=vertex] (201) at (11.5, -6.25) {};
						\node [] (202) at (9.75, -5.5) {\scalebox{1.7}{$x$}};
						\node [] (203) at (8, -3.75) {};
						\node [] (204) at (12, -3.75) {};
						\node [] (205) at (8, -7) {};
						\node [] (206) at (12, -7) {};
						\node [style=root] (207) at (10, -10.5) {};
						\node [style=vertex] (208) at (8.5, -12.5) {};
						\node [style=vertex] (209) at (11.5, -12.5) {};
						\node [] (210) at (10, -11.75) {\scalebox{1.7}{$1$}};
						\node [] (211) at (8, -10) {};
						\node [] (212) at (12, -10) {};
						\node [] (213) at (8, -13.25) {};
						\node [] (214) at (12, -13.25) {};
						\node [style=root] (215) at (10, -16.75) {};
						\node [style=vertex] (216) at (8.5, -18.75) {};
						\node [style=vertex] (217) at (11.5, -18.75) {};
						\node [] (218) at (10, -18) {\scalebox{1.7}{$x$}};
						\node [] (219) at (8, -16.25) {};
						\node [] (220) at (12, -16.25) {};
						\node [] (221) at (8, -19.5) {};
						\node [] (222) at (12, -19.5) {};
						\node [] (223) at (-7.5, 11.25) {};
						\node [] (224) at (-2.5, 6.25) {};
						\node [] (225) at (-7.5, 6) {};
						\node [] (226) at (-2.5, 10.25) {};
						\node [] (229) at (-7.5, -0.5) {};
						\node [] (230) at (-2.5, 1.5) {};
						\node [] (235) at (-7.5, -12.25) {};
						\node [] (236) at (-2.5, -12.25) {};
						\node [] (237) at (-7.5, -23.25) {};
						\node [] (238) at (-2.5, -17.75) {};
						\node [] (239) at (-7.5, -17.25) {};
						\node [] (240) at (-2.5, -8.25) {};
						\node [] (241) at (2.5, -2.25) {};
						\node [] (242) at (7.5, -4.5) {};
						\node [] (243) at (2.5, 13.75) {};
						\node [] (244) at (7.5, 2) {};
						\node [] (245) at (2.75, -21.5) {};
						\node [] (246) at (7.5, -12.5) {};
						\node [] (247) at (2.5, -30.5) {};
						\node [] (248) at (7.5, -20.5) {};
						\node [] (251) at (6, -9) {};
						\node [] (252) at (14, -9) {};
						\node [] (253) at (-3, -20.5) {};
						\node [] (254) at (6, -20.5) {};
						\node [] (255) at (-3, -38.5) {};
						\node [] (256) at (14, -38.5) {};
						\node [] (257) at (-13, 17) {};
						\node [] (258) at (14, 17) {};
						\node [] (259) at (-13, -34) {};
						\node [] (260) at (-5, -34) {};
						\node [] (261) at (-5, -19) {};
						\node [] (262) at (3.75, -19) {};
						\node [] (263) at (3.75, -8) {};
						\node [] (264) at (14, -8) {};
						\node [] (265) at (9, 14.25) {\scalebox{1.8}{$[\R(T_1),T_1]=\Delta(T_1)$}};
						\node [] (266) at (10, -28) {\scalebox{1.8}{$[\R(T_2),T_2]$}};
						\draw [color=gray!50] (7.center) to (8.center);
						\draw [color=gray!50] (7.center) to (9.center);
						\draw [color=gray!50] (9.center) to (10.center);
						\draw [color=gray!50] (10.center) to (8.center);
						\draw [color=gray!50] (17.center) to (18.center);
						\draw [color=gray!50] (17.center) to (19.center);
						\draw [color=gray!50] (19.center) to (20.center);
						\draw [color=gray!50] (20.center) to (18.center);
						\draw [color=gray!50] (27.center) to (28.center);
						\draw [color=gray!50] (27.center) to (29.center);
						\draw [color=gray!50] (29.center) to (30.center);
						\draw [color=gray!50] (30.center) to (28.center);
						\draw [color=gray!50] (37.center) to (38.center);
						\draw [color=gray!50] (37.center) to (39.center);
						\draw [color=gray!50] (39.center) to (40.center);
						\draw [color=gray!50] (40.center) to (38.center);
						\draw [color=gray!50] (47.center) to (48.center);
						\draw [color=gray!50] (47.center) to (49.center);
						\draw [color=gray!50] (49.center) to (50.center);
						\draw [color=gray!50] (50.center) to (48.center);
						\draw [color=gray!50] (57.center) to (58.center);
						\draw [color=gray!50] (57.center) to (59.center);
						\draw [color=gray!50] (59.center) to (60.center);
						\draw [color=gray!50] (60.center) to (58.center);
						\draw [color=gray!50] (67.center) to (68.center);
						\draw [color=gray!50] (67.center) to (69.center);
						\draw [color=gray!50] (69.center) to (70.center);
						\draw [color=gray!50] (70.center) to (68.center);
						\draw [color=gray!50] (77.center) to (78.center);
						\draw [color=gray!50] (77.center) to (79.center);
						\draw [color=gray!50] (79.center) to (80.center);
						\draw [color=gray!50] (80.center) to (78.center);
						\draw [color=gray!50] (87.center) to (88.center);
						\draw [color=gray!50] (87.center) to (89.center);
						\draw [color=gray!50] (89.center) to (90.center);
						\draw [color=gray!50] (90.center) to (88.center);
						\draw (81) to (82);
						\draw [color=gray!50] (96.center) to (97.center);
						\draw [color=gray!50] (96.center) to (98.center);
						\draw [color=gray!50] (98.center) to (99.center);
						\draw [color=gray!50] (99.center) to (97.center);
						\draw (91) to (92);
						\draw [color=gray!50] (105.center) to (106.center);
						\draw [color=gray!50] (105.center) to (107.center);
						\draw [color=gray!50] (107.center) to (108.center);
						\draw [color=gray!50] (108.center) to (106.center);
						\draw (101) to (102);
						\draw [color=gray!50] (114.center) to (115.center);
						\draw [color=gray!50] (114.center) to (116.center);
						\draw [color=gray!50] (116.center) to (117.center);
						\draw [color=gray!50] (117.center) to (115.center);
						\draw (110) to (111);
						\draw [color=gray!50] (123.center) to (124.center);
						\draw [color=gray!50] (123.center) to (125.center);
						\draw [color=gray!50] (125.center) to (126.center);
						\draw [color=gray!50] (126.center) to (124.center);
						\draw (118) to (119);
						\draw [color=gray!50] (132.center) to (133.center);
						\draw [color=gray!50] (132.center) to (134.center);
						\draw [color=gray!50] (134.center) to (135.center);
						\draw [color=gray!50] (135.center) to (133.center);
						\draw (127) to (128);
						\draw [color=gray!50] (141.center) to (142.center);
						\draw [color=gray!50] (141.center) to (143.center);
						\draw [color=gray!50] (143.center) to (144.center);
						\draw [color=gray!50] (144.center) to (142.center);
						\draw (137) to (138);
						\draw [color=gray!50] (150.center) to (151.center);
						\draw [color=gray!50] (150.center) to (152.center);
						\draw [color=gray!50] (152.center) to (153.center);
						\draw [color=gray!50] (153.center) to (151.center);
						\draw (146) to (147);
						\draw [color=gray!50] (159.center) to (160.center);
						\draw [color=gray!50] (159.center) to (161.center);
						\draw [color=gray!50] (161.center) to (162.center);
						\draw [color=gray!50] (162.center) to (160.center);
						\draw [color=gray!50] (168.center) to (169.center);
						\draw [color=gray!50] (168.center) to (170.center);
						\draw [color=gray!50] (170.center) to (171.center);
						\draw [color=gray!50] (171.center) to (169.center);
						\draw [color=gray!50] (177.center) to (178.center);
						\draw [color=gray!50] (177.center) to (179.center);
						\draw [color=gray!50] (179.center) to (180.center);
						\draw [color=gray!50] (180.center) to (178.center);
						\draw [color=gray!50] (186.center) to (187.center);
						\draw [color=gray!50] (186.center) to (188.center);
						\draw [color=gray!50] (188.center) to (189.center);
						\draw [color=gray!50] (189.center) to (187.center);
						\draw (154) to (156);
						\draw (163) to (165);
						\draw (172) to (174);
						\draw (181) to (183);
						\draw [color=gray!50] (195.center) to (196.center);
						\draw [color=gray!50] (195.center) to (197.center);
						\draw [color=gray!50] (197.center) to (198.center);
						\draw [color=gray!50] (198.center) to (196.center);
						\draw (190) to (191);
						\draw (191) to (192);
						\draw [color=gray!50] (203.center) to (204.center);
						\draw [color=gray!50] (203.center) to (205.center);
						\draw [color=gray!50] (205.center) to (206.center);
						\draw [color=gray!50] (206.center) to (204.center);
						\draw (199) to (200);
						\draw (200) to (201);
						\draw [color=gray!50] (211.center) to (212.center);
						\draw [color=gray!50] (211.center) to (213.center);
						\draw [color=gray!50] (213.center) to (214.center);
						\draw [color=gray!50] (214.center) to (212.center);
						\draw (207) to (208);
						\draw [color=gray!50] (219.center) to (220.center);
						\draw [color=gray!50] (219.center) to (221.center);
						\draw [color=gray!50] (221.center) to (222.center);
						\draw [color=gray!50] (222.center) to (220.center);
						\draw (215) to (216);
						\draw (207) to (209);
						\draw (215) to (217);
						\draw [{Latex[length=3mm,width=5mm]}-,red] (223.center) to (224.center);
						\draw [{Latex[length=3mm,width=5mm]}-,red] (225.center) to (226.center);
						\draw [{Latex[length=3mm,width=5mm]}-,red] (229.center) to (230.center);
						\draw [{Latex[length=3mm,width=5mm]}-,red] (235.center) to (236.center);
						\draw [{Latex[length=3mm,width=5mm]}-,red] (237.center) to (238.center);
						\draw [{Latex[length=3mm,width=5mm]}-,red] (239.center) to (240.center);
						\draw [{Latex[length=3mm,width=5mm]}-,red] (241.center) to (242.center);
						\draw [{Latex[length=3mm,width=5mm]}-,red] (243.center) to (244.center);
						\draw [{Latex[length=3mm,width=5mm]}-,red] (245.center) to (246.center);
						\draw [{Latex[length=3mm,width=5mm]}-,red] (247.center) to (248.center);
						\draw [dashed] (253.center) to (254.center);
						\draw [dashed] (254.center) to (251.center);
						\draw [dashed] (251.center) to (252.center);
						\draw [dashed] (252.center) to (256.center);
						\draw [dashed] (256.center) to (255.center);
						\draw [dashed] (255.center) to (253.center);
						\draw [dashed] (257.center) to (258.center);
						\draw [dashed] (258.center) to (264.center);
						\draw [dashed] (257.center) to (259.center);
						\draw [dashed] (259.center) to (260.center);
						\draw [dashed] (261.center) to (260.center);
						\draw [dashed] (261.center) to (262.center);
						\draw [dashed] (263.center) to (262.center);
						\draw [dashed] (263.center) to (264.center);
				\end{tikzpicture}}
				
				\caption{The matching on the NBC complex of $P_3$. The red arrows indicate the matching edges.}
				\label{Schem_matching}
			\end{figure}
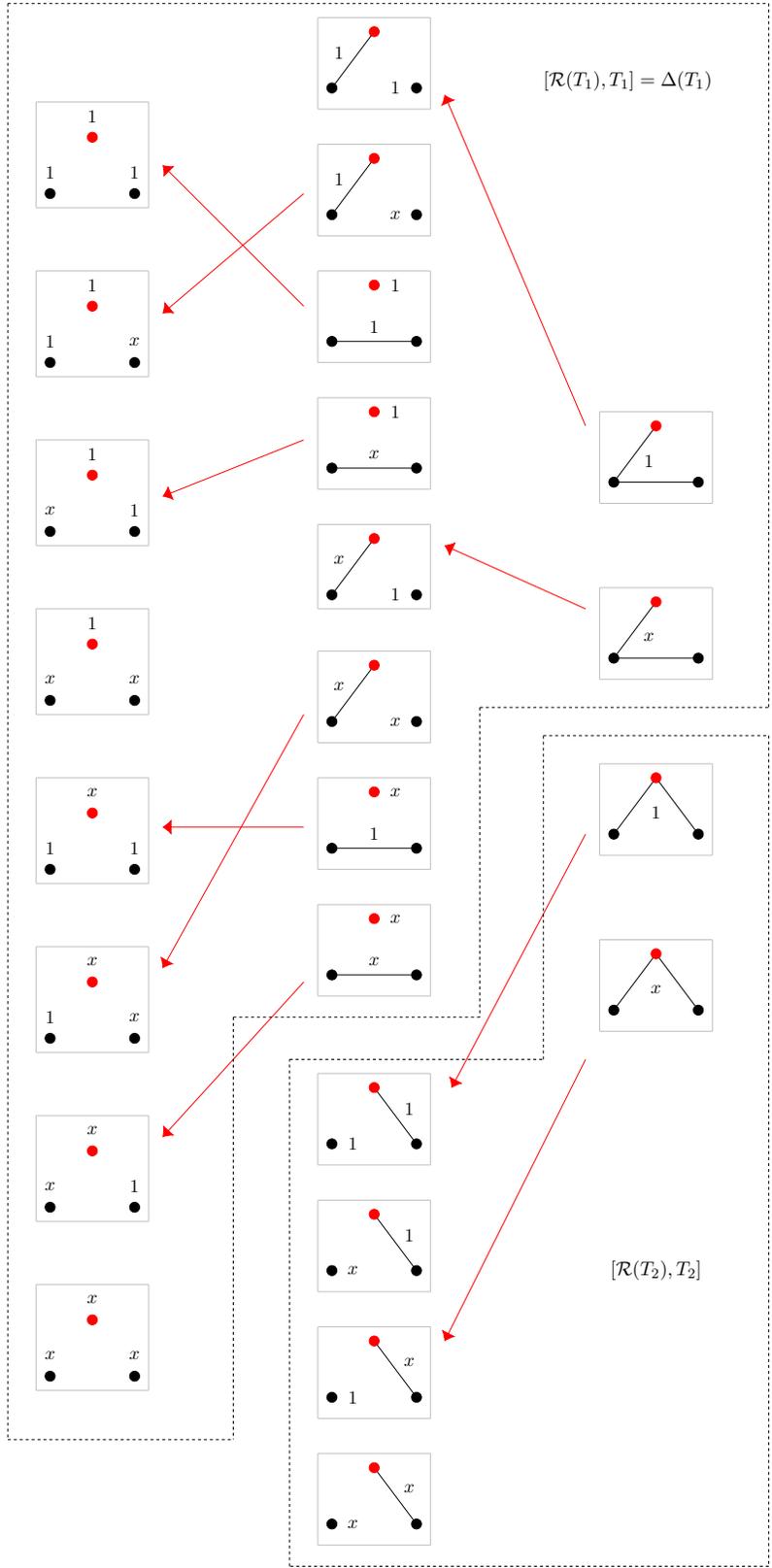	
			
			\subsection{Spanning Tree Complex}
			\begin{definition}[Spanning Tree Complex]
				For a given graph $G = (V, E)$ with a fixed ordering on its edges and a distinguished vertex $v_d$ (which we will sometimes refer to as the root), we have a one-to-one correspondence between the signed NBC spanning trees $T^+, T^-$ of $G$ and the critical points of $\M_{\operatorname{NBC}}$ due to Theorem \ref{Matching on G}.  
				
				We formally define the spanning tree complex $(\CST{*}{*},\partial_{ST})$ by bijectively identifying it with the Morse complex of the acyclic matching $\M_{\operatorname{NBC}}$ from Theorem \ref{Matching on G}. Let us denote the collection of signed NBC spanning trees of $G$ by $G_{SST}$. We define the gradings:
				
				\[
				\begin{cases}
					i(T^+) := i(H_c^{T^+}),  \hspace{1cm} j(T^+) := j(H_c^{T^+}), \\
					i(T^-) := i(H_c^{T^-}),  \hspace{1cm} j(T^-) := j(H_c^{T^-}).
				\end{cases}
				\]
				
				where $H_c^{T^+}$ and $H_c^{T^-}$ are critical states corresponding to $T$ with respect to the matching $\M_{\operatorname{NBC}}$.  
				
				The chain groups of the spanning tree complex $(\CST{*}{*},\partial_{ST})$ are defined as follows:
				
				\[
				\CST{i}{*} := \mathbb{Z} \langle T^{\pm} \mid T^{\pm} \in G_{SST}, \, i(T^+) = i(T^-) = i \rangle.
				\]
				
				Observe that $\CST{i}{*}$ can be written as:
				
				\[
				\CST{i}{*} := \PCST{i}{j} \bigoplus \NCST{i}{j+1},
				\]
				
				where $\PCST{i}{j} \subset \CST{i}{*}(G)$ is defined as:
				
				\[
				\PCST{i}{j} := \mathbb{Z} \langle T^{+} \mid T^{+} \in G_{SST}, \, i(T^+) = i , j(T^+) = j \rangle.
				\]
				
				Similarly, $\NCST{i}{j+1} \subset \CST{i}{*}(G)$ is defined as:
				
				\[
				\NCST{i}{j} := \mathbb{Z} \langle T^{-} \mid T^{-} \in G_{SST}, \, i(T^-) = i , j(T^-) = j \rangle.
				\]

				Observe that $\PCST{i}{j}$ consists of critical states in the $(i,j)$-th grading such that $\epsilon(C_{\text{root}}) = 1$,  
				where $C_{\text{root}}$ is the connected component of $H_c^{T^+}$ containing the root $v_d$.  
				
				Similarly, $\NCST{i}{j}$ consists of critical states in the $(i,j)$-th grading such that $\epsilon(C_{\text{root}}) = x$,  
				where $C_{\text{root}}$ is the connected component of $H_c^{T^-}$ containing the root $v_d$.  
				The gradings follow from Theorem \ref{Matching on G}.  
				
				Until now, $\CST{i}{j}$ has only been described as a bigraded group. The differential of this chain complex,  
				\[
				\partial_{ST}: \CST{i}{*} \rightarrow \CST{i+1}{*},
				\]  
				is defined via correspondence with the differential $\partial^i_{\M_{\operatorname{NBC}}}$ of the algebraic Morse complex.  
				Later, in Theorem \ref{differential}, we will show that it has an explicit graph-theoretic description.
			\end{definition}

			For a given graph $G=(V,E)$, we have defined an acyclic matching $\M_{\operatorname{NBC}}$ on the complex $\nbc{}{}$ in Theorem \ref{Matching on G}. Recall from Section 2.1 that whenever we have an acyclic matching $\M_{C_{\ast}}$ for a given cochain complex $\left( C_{\ast}, \partial_{\ast}\right)$, we obtain the corresponding Morse complex $\left(C_{\ast}^{\M}, \partial_{\ast}^{\M}\right)$, which is generated by the critical vertices with respect to $\M_{C_{\ast}}$. In this case, the corresponding Morse complex is:
			\[
			C^{\M}_{\ast}(G):=\bigoplus_{i}C^{\M}_{i}(G),
			\]
			where
			\[
			C^{\M}_{i}(G)=\mathbb{Z} \langle H_c^{T^\pm} \in C^{\M}_{\ast}(G) \mid i(H_c^{T^\pm})=i) \rangle.
			\]
			Moreover, we can write  
			\[
			C_{i}^{\M}=\bigoplus_{j} C^{\M}_{i,j}(G).
			\]
			
			The differential $\partial^{\M}: C^{\M}_{i,j} \rightarrow C^{\M}_{i+1,j}$ is defined as follows:
			\[
				\partial^{\M}(H_c^{T^{\pm}}) =  \sum_{i(H_c^{T'^{\pm}})=i+1}\Gamma(H_c^{T^{\pm}},H_c^{T'^{\pm}}).H_c^{T'^{\pm}}
			\]
            
			The incidence $\Gamma(H_c^{T^{\pm}},H_c^{T'^{\pm}})$ is defined as the sum of weights of all alternating paths from $H_c^{T^{\pm}}$ to $H_c^{T'^{\pm}}$ in $G^{\M}_{C^*}$.\\

			Observe that the enhancement of $H_c^{T^+}$ is $1$ at the component containing the root $v_d$ and $x$ on all other connected components. Therefore, $j(H_c^{T^{+}})= \IA(T)$. The enhancement of $H_c^{T^-}$ is $x$ on all connected components which implies  $j(H_c^{T^{-}})= \IA(T)+1$. Let, $n$ be the number of vertices of $G$. Then We have,
			
			\[
			i(H_c^{T^{-}}) + j(H_c^{T^{-}}) = \IN(T) + \IA(T)+1 = n
			\]
			and
			\[
			i(H_c^{T^{+}}) + j(H_c^{T^{+}}) = \IN(T) + \IA(T) = n-1.
			\]

			Since the differential preserves the quantum grading, it follows that $\partial_{ST}$ vanishes on $H_c^{T^{-}}$. When evaluated at $H_c^{T^{+}}$, we obtain:
			
			\[
			\partial_{\M}(H_c^{T^{+}}) = \sum_{i(H_c^{T'^{\pm}})=i+1}\Gamma(H_c^{T^{+}},H_c^{T'^{\pm}})H_c^{T'^{\pm}}.
			\]

			There is a one-to-one correspondence between the generators of $\CST_{i,j}(G)$ and the generators of $C_{i,j}^{\M}$, leading to a $\mathbb{Z}$-module isomorphism between them. The differential  
			\[
			\partial_{ST}: \CST^{i,j} \rightarrow \CST^{i+1,j}
			\]
			can then be described as:
			\[
			\partial_{ST}(T^+)=\sum_{i(T'^-)=i+1} [\partial_{ST}(T^+):T'^-]T'^-,
			\]
			where the incidence $[\partial_{ST}(T^+):T'^-]$ is given by $\Gamma(H_c^{T^+},H_c^{T'^-})$.\\

			\begin{thm}[Spanning tree model for chromatic homology]
				The spanning tree complex $\CST{*}{*}$ and the chromatic complex $\Ch{*}{*}$ are chain homotopy equivalent. In particular,
				\[
				\HST{i}{j} \cong \HCh{i}{j}.
				\]
			\end{thm}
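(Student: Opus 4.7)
The plan is to realize the desired equivalence as a composition of two already-established ones. First, I would invoke Observation \ref{nbctheoremforus} (the Chandler--Sazdanovic result) to replace the full chromatic complex $\Ch{*}{*}$ by the chain homotopy equivalent subcomplex $\nbc{}{}$ generated by enhanced NBC subgraphs. Second, I would apply the fundamental theorem of algebraic discrete Morse theory (Theorem \ref{dmt maintheorem}) to the acyclic matching $\M_{\operatorname{NBC}}$ on $\nbc{}{}$ constructed in Theorem \ref{Matching on G}, yielding that $\nbc{}{}$ is chain homotopy equivalent to its Morse complex $(\nbc{}{})^{\M_{\operatorname{NBC}}}$.

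Next, I would observe that by definition the spanning tree complex $\CST{*}{*}$ is identified with the Morse complex $(\nbc{}{})^{\M_{\operatorname{NBC}}}$ as a bigraded chain complex: the generators $T^+, T^-$ are in bijection with the critical states $H_c^{T^+}, H_c^{T^-}$ by Theorem \ref{Matching on G}, the bigrading $(i,j)$ is defined so as to match that of the corresponding critical state, and $\partial_{ST}$ is by construction the Morse differential transported through this bijection. Composing the two chain homotopy equivalences then gives $\Ch{*}{*} \simeq \CST{*}{*}$, and passing to homology yields $\HCh{i}{j} \cong \HST{i}{j}$. Compatibility with the quantum grading requires only the brief remark that every per-edge differential $d_i^e$ in $\Ch{*}{*}$ preserves $j$, hence so does the total differential, and the Morse differential, being a signed sum over alternating paths whose edges each preserve $j$, inherits this property.

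The main conceptual obstacle has in fact already been resolved in the text preceding the theorem, namely the construction of $\M_{\operatorname{NBC}}$ and the verification of its acyclicity in Theorem \ref{Matching on G}. The delicate point there is to rule out directed cycles in $G_{\nbc{}{}}^{\M_{\operatorname{NBC}}}$, achieved by combining Lemma \ref{acyclicity} (no cycles within a single interval $[\R(T_i),T_i]$, using the distance-based potential $D_H$) with Lemma \ref{crucial} (no directed passage from $[\R(T_i),T_i]$ back to a strictly earlier interval $[\R(T_j),T_j]$ with $j<i$). Once these inputs are in hand, the proof of the theorem itself reduces to assembling the pieces above, and no new calculation with enhanced states is needed.
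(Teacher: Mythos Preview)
Your proposal is correct and follows exactly the same approach as the paper: the paper's proof consists of the single sentence ``The proof follows immediately from Theorem \ref{Matching on G} and Observation \ref{nbctheoremforus},'' which is precisely the two-step composition you describe (with Theorem \ref{dmt maintheorem} implicit in the appeal to Theorem \ref{Matching on G}). Your additional remarks on the identification of $\CST{*}{*}$ with the Morse complex and on preservation of the quantum grading are accurate elaborations that the paper leaves tacit.
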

			
			\begin{proof}
				The proof follows immediately from Theorem \ref{Matching on G} and Observation \ref{nbctheoremforus}.
			\end{proof}
			Now, we will take the  graded Euler characteristic of $\CST{*}{*}$ and show that it gives us the spanning tree expansion of the chromatic polynomial.
			
			\begin{proof}[Proof of Corollary \ref{expancor}]
				
				Observe that, the number of inactive edges gives the homological grading of a spanning tree. In other words,  Consequently, for a spanning tree $T$ with internal activity $i$, the homological grading is $n-i-1$.
				
				Furthermore, in this setting, the tree $T^+$ has quantum grading $i$, while the tree $T^-$ has quantum grading $i+1$.
				
				Since the internal activity of NBC spanning trees satisfy $1 \leq i \leq n - 1$, we obtain the following expression for the graded Euler characteristic:
				
				\begin{align*}
					\chi_q(\CST{*}{*}) &= \chi_q(\PCST{*}{*}) + \chi_q(\NCST{*}{*}) \\
					&= \sum_{i=1}^{n-1} t_i q^{i} (-1)^{n-i-1} +  \sum_{i=1}^{n-1} t_i q^{i+1} (-1)^{n-i-1} \\
					&= (q+1) \sum_{i=1}^{n-i-1} t_i q^{i} (-1)^{n-i-1} \\
					&= \lambda \sum_{i=1}^{n-1} t_i (\lambda - 1)^{i} (-1)^{n-i-1} \quad \text{(Substituting } \lambda = 1+q \text{)} \\
					&= (-1)^{n-1} \lambda \sum_{i=1}^{n-1} t_{i} (1 - \lambda)^i
				\end{align*}

				Hence,
				
				\[
				P(G ; \lambda)=(-1)^{n-1} \lambda \sum_{i=1}^{n-1} t_{i}(1-\lambda)^i
				\]

			\end{proof}
			
			Another immediate corollary of the spanning tree model is the fact that chromatic homology is supported on the diagonals $i+j=n$ and $i+j=n-1$.
			The generators of $\PCST{i}{j}$ satisfy $i+j=n-1$ and the generators of $\NCST{i}{j}$ satisfy $i+j=n$.
			\begin{corollary}
				The chromatic homology $\HCh{i}{j}$ of a graph $G$ is supported on the two diagonals $i+j=n$ and $i+j=n-1$, where $n$ is the number of vertices of $G$.
			\end{corollary}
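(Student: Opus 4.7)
The plan is to leverage the chain homotopy equivalence $\HST{*}{*} \cong \HCh{*}{*}$ just established in Theorem 3.6, so that it suffices to verify the support condition at the level of the spanning tree chain groups $\CST{i}{j}$ themselves. Since $\HST{i}{j}$ is a subquotient of $\CST{i}{j}$, whenever $\CST{i}{j} = 0$ we automatically have $\HST{i}{j} = 0$, and hence $\HCh{i}{j} = 0$. So the entire task reduces to pinning down which bigradings $(i,j)$ can support a nonzero generator of $\CST{*}{*}$.

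The next step is to read off the bigrading of the two families of generators from the explicit description of the critical states $H_c^{T^{+}}$ and $H_c^{T^{-}}$ given in Theorem \ref{Matching on G}. Both critical states have underlying spanning subgraph with edge set exactly $\IN(T)$, so their homological grading equals $|\IN(T)|$. For the quantum grading, $H_c^{T^{+}}$ is enhanced by $1$ on the root component and $x$ on every other component, while $H_c^{T^{-}}$ is enhanced by $x$ on every component; tracking the contracted tree $T_{i+1}'$ shows that these enhanced states have, respectively, $|\IA(T)|$ and $|\IA(T)|+1$ many $x$-labels.

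Combining these observations with the identity $|\IN(T)| + |\IA(T)| = n-1$, valid for every spanning tree $T$ of $G$, I would obtain
\[
i(T^{+}) + j(T^{+}) \;=\; |\IN(T)| + |\IA(T)| \;=\; n-1,
\qquad
i(T^{-}) + j(T^{-}) \;=\; |\IN(T)| + |\IA(T)| + 1 \;=\; n.
\]
Hence $\PCST{i}{j}$ is concentrated on the diagonal $i+j = n-1$ and $\NCST{i}{j}$ on the diagonal $i+j = n$, so that $\CST{i}{j} = 0$ for every $(i,j)$ with $i+j \notin \{n-1,n\}$. The isomorphism with chromatic homology then finishes the argument.

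There is essentially no obstacle: the nontrivial content of the corollary is already buried in the computation $i(H_c^{T^{\pm}}) + j(H_c^{T^{\pm}}) \in \{n-1,n\}$ that appears earlier in the section. The only thing worth being careful about is making the vanishing argument cleanly — namely that a bigraded chain complex supported on two diagonals has homology supported on those same two diagonals, after which the transfer to $\HCh{i}{j}$ is automatic via Theorem 3.6.
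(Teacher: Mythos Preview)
Your proposal is correct and follows essentially the same approach as the paper: the paper's own proof consists of the single sentence ``The proof follows directly from the preceding discussion,'' where that discussion is precisely the computation $i(H_c^{T^{\pm}})+j(H_c^{T^{\pm}})\in\{n-1,n\}$ that you reproduce, together with the observation that $\PCST{i}{j}$ lies on $i+j=n-1$ and $\NCST{i}{j}$ on $i+j=n$. Your write-up simply makes explicit the vanishing step and the appeal to the isomorphism $\HST{*}{*}\cong\HCh{*}{*}$, which the paper leaves implicit.
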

			
			\begin{proof}
				The proof follows directly from the preceding discussion. 
			\end{proof}


			\subsection{Combinatorial form of the differential of the spanning tree complex}
			
			Now we want to provide an explicit formula to compute the incidence $[T,T']$ in the differential $\partial_{ST}$, that will help significantly to compute the homology of the spanning tree complex.

            We start with a few basic observations which would be essential for the combinatorial description of the differential in the algebraic Morse complex. First, we prove an useful property relating the cutsets of the trees obtained by basis exchange.

              \begin{proposition} \label{same cut prop}
       Given two spanning trees $T_i, T_j$ with $T_j=(T_i \cup \{f\}) \setminus \{e\}$. If $\alpha \not\in C=\cyc(T_i\cup f)$ then, $\cut(T_i,\alpha)=\cut(T_j,\alpha)$.
   \end{proposition}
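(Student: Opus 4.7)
The plan is to show that removing $\alpha$ from $T_i$ and removing $\alpha$ from $T_j$ induce the same partition of the vertex set $V(G)$; once this is established, the cut set, being the collection of all edges of $G$ crossing this partition, is manifestly the same for both trees.

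First I would make the elementary observation that the cycle $C$ contains both $e$ and $f$ (this is built into the basis exchange $T_j = (T_i \cup \{f\}) \setminus \{e\}$), so the hypothesis $\alpha \notin C$ forces $\alpha \neq e$ and $\alpha \neq f$, and hence $\alpha \in T_i \cap T_j$. Write $C = P \cup \{f\}$ where $P$ is the unique $T_i$-path joining the endpoints of $f$, and note $e \in P$.

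Next, let $V = A_1 \sqcup A_2$ be the vertex partition obtained by deleting $\alpha$ from $T_i$. Since $\alpha \notin P$, the path $P$ is still intact in $T_i \setminus \alpha$, so its vertex set lies entirely in one side of the partition, which we may take to be $A_1$; in particular both endpoints of $f$ lie in $A_1$, and $e$ (an edge of $P$) has both endpoints in $A_1$. The key step is then to verify that $T_j \setminus \alpha = (T_i \setminus \{e, \alpha\}) \cup \{f\}$ induces the same partition. No edge is added or removed between $A_1$ and $A_2$ in passing from $T_i \setminus \alpha$ to $T_j \setminus \alpha$, since both $e$ and $f$ are internal to $A_1$; so there are still no crossing edges in $T_j \setminus \alpha$. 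On the $A_2$-side nothing changes, while on the $A_1$-side we have replaced the edge $e$ of the spanning subtree $T_i[A_1]$ by $f$, and this yields a spanning tree of $A_1$ because $T_i \cup \{f\}$ has unique cycle $C \subseteq A_1$ and $e \in C$. Thus $T_j \setminus \alpha$ has precisely the two components with vertex sets $A_1$ and $A_2$.

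Finally I would close by noting that $\cut(T_i,\alpha) = \cut(T_j,\alpha)$ equals $\{g \in E(G) : g \text{ has one endpoint in } A_1 \text{ and one in } A_2\}$, and this description depends only on the partition, not on the tree. The only mildly delicate point in the argument is justifying that $P$ (equivalently $C$) sits on one side of the $\alpha$-cut, but this is immediate from $\alpha \notin P$, so I do not expect any real obstacle.
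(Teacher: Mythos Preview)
Your proof is correct and follows essentially the same approach as the paper's. Both arguments hinge on the observation that the entire cycle $C$ (and in particular both $e$ and $f$) lies on one side of the vertex partition induced by deleting $\alpha$ from $T_i$; you phrase this as ``the partition is unchanged,'' while the paper phrases it as an element-by-element inclusion $\cut(T_i,\alpha)\subset\cut(T_j,\alpha)$ (and symmetrically), but the content is the same.
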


   \begin{proof}
       Suppose $h \in \cut(T_i,\alpha)$ then $h$ connects $C_1$ and $C_2$ where, $T_i \setminus \alpha = C_1 \sqcup C_2$. Now without loss of generality assume that $e \in C_1$ then, $C \setminus f \subset C_1$ since, $C_1 \cap C_2 = \phi$. Now,
       \[
        T_j \setminus \alpha = ((T_i \cup f)\setminus e)  \setminus \alpha = ((C_1 \cup f)\setminus e) \sqcup C_2  
      \]
      Now as $\alpha \not\in C$ so, $h \neq e$ and thus, $h$ connects $(C_1 \cup f)\setminus e)$ and $C_2$ which implies $h \in \cut(T_j,\alpha)$ and hence, $\cut(T_i,\alpha) \subset \cut(T_j,\alpha)$. Similarly, one can show that $\cut(T_j,\alpha) \subset \cut(T_i,\alpha)$ since $h \neq f$ and thus, we have our desired result.
   \end{proof}

            Next, we prove another important observation that is crucial for the combinatorial description of the differential.

            \begin{proposition}\label{main differential lemma}
                Suppose $T_{i} < T_{j}$ are two NBC trees such that $\R(T_{i}) \cup f \in [\R(T_{j}), T_{j}]$ then $T_{j}=  \left(T_{i}-e\right) \cup f $ where $e$ is the maximal internally live edge in the unique cycle in $T_{i} \cup f.$ Moreover, $\R(T_{i}) \cup f = \R(T_{j})$.
            \end{proposition}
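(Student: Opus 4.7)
My plan is to construct an explicit candidate $T' := (T_i - e) \cup f$ for $T_j$, verify it is $\operatorname{NBC}$, and invoke the uniqueness of the interval partition from Proposition \ref{partition}. First I rule out $f \in T_i$: otherwise $f \in \IA(T_i) = T_i \setminus \R(T_i)$ by Proposition \ref{inactiveedgelemma}, giving $\R(T_i) \cup \{f\} \subset T_i$, so partition uniqueness would force $T_j = T_i$, contradicting $T_i < T_j$. Hence $f \notin T_i$ and the fundamental cycle $C := \cyc(T_i, f)$ is well-defined. Because $\R(T_i) \cup \{f\} \subset T_j$ is acyclic and $\operatorname{NBC}$, a short check gives $C \cap \IA(T_i) \neq \emptyset$ and $e := \max\bigl(C \cap \IA(T_i)\bigr) > \min(C)$ (else $C \setminus \{\min C\} \subset \R(T_i) \cup f$ would be a broken circuit); activity of $e$ together with $f \in \cut(T_i, e)$ yields $e < f$.

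The main step is showing $T'$ is $\operatorname{NBC}$, which I do via the characterization that a spanning tree $T$ is $\operatorname{NBC}$ iff $\min \cyc(T, h) \in T$ for every $h \notin T$. For $h = e$, $\cyc(T', e) = C$ and $\min C \neq e$, so $\min C \in T'$. For $h \notin T_i \cup \{e\}$ the matroid fundamental-cycle identity gives
\[
\cyc(T', h) \;=\; \begin{cases} \cyc(T_i, h), & e \notin \cyc(T_i, h),\\ \cyc(T_i, h) \,\Delta\, C, & e \in \cyc(T_i, h). \end{cases}
\]
The first branch reduces to $\operatorname{NBC}$-ness of $T_i$. The second branch is the main technical obstacle: activity of $e$ forces $e \leq h$ whenever $e \in \cyc(T_i, h)$, and $\operatorname{NBC}$-ness of $T_i$ forces both $\min \cyc(T_i, h)$ and $\min C$ to lie in $T_i$; a careful comparison of these minima rules out the minimum of $\cyc(T_i, h) \,\Delta\, C$ being $h$ in every sub-case, so the minimum lies in $T_i \setminus \{e\} \subset T'$.

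With $T'$ $\operatorname{NBC}$, I verify $\R(T_i) \cup f \in [\R(T'), T']$. The inclusion $\R(T_i) \cup f \subset T'$ is immediate; for $\R(T') \subset \R(T_i) \cup f$, take $a \in T' \setminus (\R(T_i) \cup f) = \IA(T_i) \setminus \{e\}$: if $a \notin C$, Proposition \ref{same cut prop} preserves the cut; if $a \in C$, a direct description of the two components of $T' - a$ followed by a cut computation, using $a < e$ and $e = \min \cut(T_i, e)$, shows $a$ remains the minimum of $\cut(T', a)$. Partition uniqueness then identifies $T' = T_j$, proving the first assertion. The moreover equality $\R(T_j) = \R(T_i) \cup f$ reduces to $\R(T_i) \cup f \subset \R(T')$: the observation that $\cut(T', f) = \cut(T_i, e)$ (they describe the same bipartition of $V$) makes $e < f$ witness $f \in \IN(T')$, and for $b \in \R(T_i)$ the same $b \in C$ vs.\ $b \notin C$ split, using Proposition \ref{same cut prop} and a cut analysis, transfers inactivity from $T_i$ to $T'$.
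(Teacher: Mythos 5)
Your strategy—construct $T':=(T_i-e)\cup f$ explicitly, verify it is an $\operatorname{NBC}$ tree, then use the uniqueness of the interval partition—is a genuinely different route from the paper's, which instead works with the $T_j$ assumed to exist and shows by a basis-exchange/cutset argument that $T_j$ must coincide with the explicit candidate. Your route is attractive because it is more constructive, but there is a genuine gap at the point you flag as the main technical obstacle, and the gap is not repairable by ``a careful comparison of the minima.''

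Concretely: when $e\in\cyc(T_i,h)$ you must exclude $\min\bigl(\cyc(T_i,h)\,\Delta\,C\bigr)=h$. Comparing $m_1:=\min\cyc(T_i,h)$ and $m_2:=\min C$ only helps when at least one of them lies outside $I:=\cyc(T_i,h)\cap C$; when $m_1=m_2=:m\in I$ they cancel out of the symmetric difference and tell you nothing. That sub-case really occurs and really can make $T'$ fail to be $\operatorname{NBC}$. Take $V=\{1,\dots,6\}$, $T_i$ the path $1$-$2$-$3$-$4$-$5$-$6$, extra edges $\{2,5\},\{1,6\},\{1,3\},\{4,6\}$, and the edge ordering
\[
\{2,3\}<\{3,4\}<\{4,5\}<\{1,3\}<\{4,6\}<\{2,5\}<\{1,2\}<\{5,6\}<\{1,6\}.
\]
Then $T_i$ is $\operatorname{NBC}$, $\IA(T_i)=\{\{2,3\},\{3,4\},\{4,5\}\}$, $\IN(T_i)=\{\{1,2\},\{5,6\}\}$. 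With $f=\{1,6\}$ we get $C$ the $6$-cycle, $e=\max(C\cap\IA(T_i))=\{4,5\}>\min C=\{2,3\}$, and $e<f$. Yet $T'=(T_i-\{4,5\})\cup\{1,6\}$ contains the broken circuit $\{\{1,2\},\{5,6\},\{1,6\}\}$ and the edge $h=\{2,5\}$ is externally active in $T'$; here $m_1=m_2=\{2,3\}\in I$. (In this example $\R(T_i)\cup\{f\}$ is itself a broken circuit, so the proposition's hypothesis fails — which is exactly the point: your three extracted consequences of the hypothesis, namely $f\notin T_i$, $C\cap\IA(T_i)\neq\emptyset$, and $e>\min C$, are \emph{not} enough to force $T'$ to be $\operatorname{NBC}$. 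The hypothesis must be invoked again, in a stronger form, inside the $\operatorname{NBC}$ check.)

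The fix is to show that $\min\bigl(\cyc(T_i,h)\,\Delta\,C\bigr)=h$ would produce a broken circuit inside $\R(T_i)\cup\{f\}$. Indeed, suppose it holds. Every edge $\gamma$ of $\cyc(T_i,h)\,\Delta\,C$ other than $h$ and $f$ lies in $T_i$ and satisfies $\gamma>h>e$. If $\gamma\in C\setminus\cyc(T_i,h)$ then $\gamma\notin\IA(T_i)$ because $\gamma>e=\max(C\cap\IA(T_i))$; if $\gamma\in\cyc(T_i,h)\setminus C$ then $h\in\cut(T_i,\gamma)$ with $h<\gamma$, so again $\gamma\notin\IA(T_i)$. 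Either way $\gamma\in\IN(T_i)=\R(T_i)$, so $\bigl(\cyc(T_i,h)\,\Delta\,C\bigr)\setminus\{h\}\subset\R(T_i)\cup\{f\}$ is a broken circuit, contradicting $\R(T_i)\cup\{f\}\in[\R(T_j),T_j]\subset\operatorname{NBC}$. With this inserted, the remainder of your argument — identifying $\R(T_i)\cup\{f\}\in[\R(T'),T']$, invoking Proposition \ref{partition}, and the $\cut(T',f)=\cut(T_i,e)$ observation for the ``moreover'' part — is sound and does complete the proof.
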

            \begin{proof}
                We note that $f \in \IN(T_{j})$. Otherwise $\IN(T_{i}) = \left(\IN(T_{i}) \cup f \right)-f \in [\R(T_{j}), T_{j}]$ by virtue of Proposition 3.3. It contradicts the fact that $[\R(T_{i}), T_{i}] \cap [\R(T_{j}), T_{j}]= \phi$. Thus, the added edge $f$ must belongs to $\R(T_{j})$. 

Let $T_{k}$ denotes the tree as described in the hypothesis that is $T_{k}$ is obtained by removing the maximal internally live edge from the unique cycle in $T_{i}\cup f $. We show that such $T_{k}$ must always exists and $T_{k}$ is an NBC tree. We start with the following observations: 
\begin{itemize}
    \item Let $C$ denotes the unique cycle in $T_{i} \cup f$. Then $C$ must contain at least one internally live edge in $T_{i}$. 
    \item $T_{k}$ must be an NBC tree. 
\end{itemize}
The first observation is trivial because otherwise $C \subset  \left(\IN(T_{i}) \cup f \right)= \left( \R(T_{i}) \cup f \right) \subset T_{j} $. The equality follows from Proposition 3.3. It contradicts the fact $T_{j}$ is a tree. Thus, we can assume such $T_{k}$ must exists. 

Next, if $C$ contains more than one internally live edge in $T_{i}$, then by the construction of $T_{k}$, it must be an NBC tree. So, without loss of generality we may assume that the cycle $C$ contains a unique internally live edge $ \alpha$ of $T_{i}$. That is \[ \alpha \in \IA(T_{i}). \]
Moreover for the sake of contradiction we may also assume $\alpha$ is minimal in the cycle $C$ that is $C-\alpha$ is a broken circuit. But then 
\[ C -\alpha \subset \R(T_{i}) \cup f \subset T_{j} \] where $T_{j}$ is an NBC tree. Thus, $\alpha$ can not be minimal in $C$. Hence, $T_{k}$ must be an NBC tree. 

To finish the proposition we show $T_{k}$ must be equal to $T_{j}$. Note that $f \in T_{k}$ must be internally dead that is $f \in \IN(T_{k})$. Otherwise, 
\[ T_{k}-f= T_{i}-e \in [\R(T_{i}), T_{i}] \cap [\R(T_{k}), T_{k}] \]
Thus, $f \in \IN(T_{k})$. If $ f \in \IN(T_{k}) \subset \IN(T_{i})\cup f$, then 
\[ \IN(T_{i}) \cup f = \R(T_{i}) \cup f \in [\R(T_{k}), T_{k}] \cap [\R(T_{j}), T_{j}]  \]
concluding $k=j.$ 
Thus, we may assume \[  \left(\IN(T_{k})-f \right) \not\subset \IN(T_{i}).   \]
Thus, there exists $\alpha \in T_{i} \cap T_{k}= T_{i}-e= T_{k}-f $ such that $\alpha \in \IA(T_{i})$ and $\alpha \in \IN(T_{k})$. In other words, some internally live edge in $T_{i}$ turned internally dead in $T_{k}$. We shall show that it leads to a contradiction. 

If $\alpha \notin C$, the unique cycle in $T_{i} \cup f$, then by Proposition \ref{same cut prop} we have the following: 
\[ \cut(T_{i}, \alpha) = cut (T_{k}, \alpha) \]
Hence,  we may assume $\alpha \in C$ is internally active in $T_{i}$. Then, by the choice of $e$, we must have \[ \alpha < e \, \, \text{and} \, \, e<f. \]
The first inequality follow from the fact that $e$ is the maximal among the internally live edges of $C$ in $T_{i}$. The second inequality follows from the fact that $e$ in internally live and $f \in \cut(T_{i},e)$.  

Now $\alpha \in \IN(T_{k})$ implies that there exists $\beta \in \cut(T_{k}, \alpha)$ such that $\beta < \alpha$. Combining all the inequality we get 
\[\beta< \alpha< e< f.\]
Note that, $\beta \notin \cut(T_{i}, e)$ as $e$ is the minimal edge in $\cut(T_{i},e)$. Thus, $\beta$ must connect two vertices in $T_{i}^{(1)}$ or $T_{i}^{(2)}$ depending on whether $\alpha$ belongs to $T_{i}^{(1)}$ or $T_{i}^{(2)}$ as in \ref{the relations}. In both cases, this contradicts the hypothesis \[ \alpha \in  \IA(T_{i}).  \]

This finishes the argument that $j=k$. \\

Finally, observe that if any edge $x \in \R(T_{i})=\IN(T_{i})$, then it is not the minimum edge in $\cut(T_i,x)$. Now, either $x \notin \cyc(T_i,f)$, in which case $\cut(T_j,x)=\cut(T_i,x)$, or $x \in \cyc(T_i,f)$, in which case  
\[
\cut(T_i,x) \setminus \cut(T_j,x) \subseteq \cut(T_i, e).
\]  
In the second case, observe that $e$ is live in $T_i$, so it is the minimum edge in $\cut(T_i,e)$. Also, since $e \in \cut(T_j,x)$, it follows that $x$ is not the minimum edge in $\cut(T_j,x)$. Hence, $x \in \IN(T_j)= \R(T_{j})$. Therefore, it follows that  
\[
\R(T_{i}) \cup f= \R(T_{j}).
\]
\end{proof}

    \begin{lemma}\label{lextrick}
                
	Given two NBC spanning trees \(T, T'\) with \(T < T'\), the incidences satisfy the following conditions:  
\[
[\partial_{ST}(T^+):T'^+] = [\partial_{ST}(T^-):T'^-] = [\partial_{ST}(T^-):T'^+] = 0.
\]
The incidence \([\partial_{ST}(T^+):T'^-]\) is nonzero only if  
\[
T' = (T - e) \cup f,
\]
where \(f\) is an external edge in \(T\) and \(e\) is the maximal internally live edge in the unique cycle in \(T \cup f\).
\end{lemma}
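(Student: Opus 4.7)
The plan has two parts: the three vanishings reduce to bigrading, while the nonzero case requires structural analysis of alternating paths.

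\textbf{Part 1: The three vanishings.} Using $i(T^{\pm})=|\IN(T)|$, $j(T^+)=|\IA(T)|$, $j(T^-)=|\IA(T)|+1$, together with the identity $|\IN(T)|+|\IA(T)|=n-1$, one sees that $T^+$ lies on the diagonal $i+j=n-1$ while $T^-$ lies on $i+j=n$. The differential $\partial_{ST}$ raises $i$ by one and preserves $j$, so it shifts this diagonal by $+1$. Hence $[\partial_{ST}(T^+):T'^+]=0$ because $T'^+$ sits on the wrong diagonal $n-1\neq n$, and $[\partial_{ST}(T^-):T'^{\pm}]=0$ because no generator of $\CST{*}{*}$ lives on the diagonal $n+1$.

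\textbf{Part 2: The nonzero incidence.} By definition $[\partial_{ST}(T^+):T'^-]=\Gamma(H_c^{T^+},H_c^{T'^-})$, a weighted sum over directed alternating paths in $G_{C_*}^{\M_{\operatorname{NBC}}}$. Assume at least one such path $p$ exists. Every matching arrow of $\M_{\operatorname{NBC}}$ lives inside a single interval $[\R(T_k),T_k]$, so interval changes only occur along differential arrows, and Lemma \ref{crucial} forces these changes to be nondecreasing in the lex order on NBC trees. Since $T'>T$, the path must eventually leave $[\R(T),T]$ via a differential adding an edge $f\notin E(T)$. Focus on the first such external arrow of $p$: just before it sits at some $S\in[\R(T),T]$, and just after at $S\cup f$ in an interval $[\R(T''),T'']$ with $T''>T$. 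Apply Proposition \ref{main differential lemma} to the companion subgraph $\R(T)\cup f$, sitting in a uniquely determined interval of the partition: this yields $T_{*}:=(T-e)\cup f$ with $e$ the maximal internally live edge in $\cyc(T\cup f)$, together with the identification $\R(T)\cup f=\R(T_{*})$.

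In the simplest subcase, the first differential of $p$ is already the external step (i.e.\ $S=\R(T)$). Then $p$ lands on $\R(T_{*})$, which is itself a critical subgraph of $[\R(T_{*}),T_{*}]$. Since critical vertices carry no matched arrows, $p$ must terminate there, and so $T'=T_{*}=(T-e)\cup f$, exactly the stated form.

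\textbf{Main obstacle.} The harder case is $S\supsetneq\R(T)$, where $p$ first meanders inside $[\R(T),T]$ before escaping through $f$. My strategy is to exploit that the restriction of $\M_{\operatorname{NBC}}$ to $[\R(T),T]$ coincides with the matching $\M_T$ built in Subsection \ref{Matching on tree} on the chromatic complex of the tree obtained from $T$ by contracting $\IN(T)$; by Lemma \ref{acyclicity} this internal Morse complex has only the two critical states $T^{\pm}$, both in homological degree zero, so the local Morse differential vanishes. Combining this internal cancellation with the monotonicity from Lemma \ref{crucial} and the rigid identification produced by Proposition \ref{main differential lemma}, I expect to show that the external edge $f$ added at the first departure of $p$ already forces $T'=(T-e)\cup f$. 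Tracking the enhancement and sign in $\A_2$ along the detour inside $[\R(T),T]$ so that this reduction becomes rigorous is the delicate part.
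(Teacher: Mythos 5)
Your Part 1 is exactly the paper's argument and is correct. Your Part 2 identifies all the right ingredients — Lemma \ref{crucial} for monotonicity of interval transitions, Proposition \ref{main differential lemma} for the rigid identification of the target interval, and the two‑critical‑state structure of the matching inside $[\R(T),T]$ — so you are following the paper's route. But the proof is not closed: you declare $S\supsetneq\R(T)$ to be a ``harder case'' and leave it as the main obstacle, when in fact that case cannot occur. An alternating path computing $\Gamma(H_c^{T^+},H_c^{T'^-})$ stays in homological levels $i(T)$ and $i(T)+1$ throughout; any differential arrow it takes departs from a level-$i(T)$ state. Inside $[\R(T),T]$ every level-$i(T)$ state $S$ satisfies both $\R(T)\subseteq S$ and $|S|=|\IN(T)|=|\R(T)|$, forcing $S=\R(T)$ as a spanning subgraph (the enhancement may differ from $\epsilon_+$, but the edge set cannot). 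So the external arrow always departs from $\R(T)$, whether or not the path first bounces around inside $[\R(T),T]$ via internally active edges. Once this is noticed, the ``detour'' case collapses into your simplest subcase, and no sign or enhancement tracking is needed to identify $T'$.

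There is also a small imprecision in the ``simplest subcase'': you write that $p$ lands on $\R(T_*)$, ``which is itself a critical subgraph,'' and conclude the path terminates because critical vertices carry no matched arrows. But not every enhanced state with underlying subgraph $\R(T_*)$ is critical — only two of them are. The correct reason the path must stop is that $\R(T_*)=\IN(T_*)$ is the floor of $[\R(T_*),T_*]$, so any matching arrow at this level points into it (from a higher level), never out; an alternating path that has just arrived via a differential therefore has no legal next step. Thus every valid path in $\mathcal{P}(H_c^{T^+},H_c^{T'^-})$ must terminate in $[\R(T_*),T_*]$, forcing $T'=T_*=(T-e)\cup f$, with $T_*$ uniquely determined by Proposition \ref{main differential lemma}. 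With these two observations inserted, your Part 2 becomes a correct and complete version of the paper's argument.
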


\begin{proof}
	As observed earlier, the generators are supported in the diagonals $i(T^+) + j(T^+) = n - 1$ and $i(T^-) + j(T^-) = n - 1$. Since the differential must preserve the quantum grading ($j$-grading) and increase the homological grading($i$-grading) by $1$, it immediately follows that for any two NBC spanning trees \(T, T'\) with \(T < T'\), we have

\[
[\partial_{ST}(T^+):T'^+] = [\partial_{ST}(T^-):T'^-] = [\partial_{ST}(T^-):T'^+] = 0.
\]

For the final conclusion, observe that if in some alternating path in the Morse complex, a differential on an external edge $f$ takes an enhanced spanning subgraph $(\IN(T),\epsilon)$ to an enhanced spanning subgraph $(\IN(T) \cup f,\epsilon')$ in some interval $[\mathcal{R}(T_k),T_k]$, then by Proposition \ref{main differential lemma}, $T_k$ is uniquely determined. Moreover, 
\[
T_k = (T - e) \cup f,
\]
where \(f\) is an external edge in \(T\) and \(e\) is the maximal internally live edge in the unique cycle in \(T \cup f\). Since $i(T_k) = i(T) + 1$ and $\IN(T_k) = \IN(T) \cup f$, it again follows by Proposition \ref{main differential lemma} that the alternating path must necessarily terminate on some critical enhanced spanning subgraph belonging to the interval $[\mathcal{R}(T_k),T_k]$. Otherwise, the homological grading($i$-grading) would increase by more than $1$. Thus, it follows that $T'$ is necessarily $T_k$, and the conclusion follows.

			\end{proof}

From now onwards, we shall denote any enhanced spanning subgraph $(H,\epsilon)$ in a partition $[\R(T),T]$ by $H_{\epsilon}^T$. Recall from Subsection \ref{algebraic morse theory} that by $\mathcal{P}(H_{\epsilon}^T, F_{\epsilon}^T)$ we denote the collection of all alternating paths from $H_{\epsilon}^T$ to $F_{\epsilon}^T$ in $G_{C_{\ast}}^{\M}$. In the following definition, we define the sets of two types of alternating paths that we will require.

\begin{definition}[Alternating paths in the chromatic complex]
	Given a graph $G$, consider any two enhanced spanning subgraphs $H_{\epsilon}, F_{\epsilon} $. Then we define the following:
				
	\begin{enumerate}
		\item $\mathcal{P}^{\downarrow}(H_{\epsilon},F_{\epsilon})$ := Number of alternating paths from $H_{\epsilon}$ to $F_{\epsilon}$ through enhanced spanning subgraphs , where the paths begin with a non-matched edge and $i(H_{\epsilon})+1=i(F_\epsilon)$.\\
					
		\item $\A^{\downarrow}(H_{\epsilon},F_{\epsilon})$ := Number of alternating paths from $H_{\epsilon}$ to $F_{\epsilon}$ through enhanced spanning subgraphs, where the paths begin with a non matched edge and $i(H_{\epsilon})=i(F_\epsilon)$.
				
    \end{enumerate}
\end{definition}

			\begin{proposition} \label{atmost one path}
				$\A^{\downarrow}(H_{\epsilon}^T,F_{\epsilon}^T) \in \{0,1\}$ for all enhanced spanning subgraphs $H_{\epsilon}^T,F_{\epsilon}^T \in [\R(T),T]$.
			\end{proposition}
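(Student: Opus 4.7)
The first step is to reduce the problem to a statement about trees. By Lemma~\ref{crucial}, any non-matched differential out of $[\R(T),T]$ lands in an interval $[\R(T_j),T_j]$ with $j\geq i$, and every matched edge in $\M_{\operatorname{NBC}}$ is internal to its interval by construction. Hence once an alternating path leaves $[\R(T),T]$ it cannot return, so any path counted by $\A^\downarrow(H_\epsilon^T, F_\epsilon^T)$ stays inside $[\R(T),T]$. By Theorem~\ref{Matching on G}, this interval is canonically identified with the chromatic complex of the contracted tree $T'$ (obtained from $T$ by contracting the inactive edges), and the restriction of $\M_{\operatorname{NBC}}$ is precisely the matching $\M_{T'}$ from Subsection~\ref{Matching on tree}. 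It therefore suffices to prove: for any rooted tree $T'$ and any two enhanced states $H,F$ of its chromatic complex, $\A^\downarrow(H,F)\leq 1$.

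I would proceed by induction on the number of edges of $T'$, the base case being a direct inspection of the four enhanced states on a single edge. For the inductive step, let $e_{i_m}$ be the smallest leaf edge, $v_{i_m}$ its leaf vertex, and partition the enhanced states into $\mathcal{V}_p, \mathcal{V}_q, \mathcal{V}_r$ as in Subsection~\ref{Matching on tree}. The recursive definition of $\M_{T'}$ yields the following transitions in $G^\M$: from a state in $\mathcal{V}_q$ only non-matched edges emanate, all landing in $\mathcal{V}_q$; from a state in $\mathcal{V}_p$ non-matched edges stay in $\mathcal{V}_p$ and a unique matched edge goes to its partner in $\mathcal{V}_q$ (removing $e_{i_m}$); from a state in $\mathcal{V}_r$ non-matched edges either stay in $\mathcal{V}_r$ or enter $\mathcal{V}_p$ by adding $e_{i_m}$ (when the $\A_2$-multiplication gives a nonzero differential), while matched edges are those of the recursive matching $\M_{T'\setminus v_{i_m}}$ and therefore remain in $\mathcal{V}_r$. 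In particular, once the path leaves $\mathcal{V}_r$ it cannot return.

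I would then split on the class of $F$; since an even-length alternating path terminates on a matched edge, $F \in \mathcal{V}_q \cup \mathcal{V}_r$. If $F \in \mathcal{V}_r$, the path stays in $\mathcal{V}_r$ and, under the canonical bijection between $\mathcal{V}_r$ and the chromatic complex of $T'\setminus v_{i_m}$, becomes an alternating path for $\M_{T'\setminus v_{i_m}}$; the inductive hypothesis gives uniqueness. If $F \in \mathcal{V}_q$, the path leaves $\mathcal{V}_r$ exactly once via the non-matched edge adding $e_{i_m}$, immediately followed by the matched edge removing $e_{i_m}$ to land at $F$. The multiplication formula of Subsection~\ref{Matching on tree} forces the pre-exit state in $\mathcal{V}_r$ to be $(F, \epsilon_{pre})$, where $\epsilon_{pre}$ is obtained from $\epsilon_F$ by swapping the labels at $\{v_{i_m}\}$ and at the adjacent component $C_{adj}$; in particular such a path exists only when $\epsilon_F(C_{adj}) = x$, and $(F, \epsilon_{pre})$ is then uniquely determined by $F$. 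The sub-path in $\mathcal{V}_r$ from $H$ to $(F, \epsilon_{pre})$ is unique by induction. The remaining subcases are handled directly: for $H \in \mathcal{V}_p$ the only relevant paths have length $2$ and force $F = (H \setminus e_{i_m}) \cup e$ for a unique edge $e$, while for $H \in \mathcal{V}_q$ no alternating path of length $\geq 2$ exists.

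The main technical point will be the enhancement bookkeeping for the two-step transition $\mathcal{V}_r \to \mathcal{V}_p \to \mathcal{V}_q$, so as to pin down the pre-exit state $(F, \epsilon_{pre})$ unambiguously from $F$. Once this is set up carefully using the explicit formulas of Subsection~\ref{Matching on tree}, the rest of the argument is a straightforward consequence of the transition analysis above together with the inductive hypothesis.
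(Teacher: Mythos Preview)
Your proof is correct and takes essentially the same approach as the paper: induction on the number of edges of the contracted tree, exploiting the recursive structure of $\M_T$ via the smallest leaf edge. The paper's argument is a terser contradiction (assume two paths, observe that at least one must use the $e_{i_m}$-matching, and rule this out), whereas you carry out a direct forward case analysis on the classes $\mathcal{V}_p,\mathcal{V}_q,\mathcal{V}_r$ and also make explicit the reduction to the tree case via Lemma~\ref{crucial}, which the paper leaves implicit.
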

			
			\begin{proof}
				We prove the statement by induction on the number of edges of $T$. 
				
				For $|E(T)| = 1$, the statement follows directly from the matching $\M_1$. 
				
				Now, assume that the statement holds for all trees with $|E(T)| < n$. We will prove it for $|E(T)| = n$. Suppose there exist at least two alternating paths, $\Pt_1$ and $\Pt_2$, between $H_{\epsilon}^T$ and $F_{\epsilon}^T$ such that $\A^{\downarrow}(H_{\epsilon}^T,F_{\epsilon}^T) \geq 2$. 
				
				By the induction hypothesis, at least one of $\Pt_1$ and $\Pt_2$ must contain a directed edge $H_z^T  \leftrightarrows d H_z^T$, where the matching occurs due to the edge $e_n$, the smallest ordered leaf in $T$. 
				
				Now, either $F_{\epsilon}^T = {H'_z}^T$ or $F_{\epsilon}^T$ appears immediately after ${H'_z}^T$ in $\mathcal{P}_1$.  
				\begin{itemize}
					\item If $F_{\epsilon}^T = {H'_z}^T$, then $\mathcal{P}_2$ must also contain the same directed edge, leading to at least two alternating paths from $H_{\epsilon}^T$ to $d H_{\epsilon}^T$, which is a contradiction.  
					\item If $F_{\epsilon}^T$ appears immediately after $H'_z{}^T$, then we must have $H_{\epsilon}^T = H'_z{}^T$, implying $\mathcal{P}_1 = \mathcal{P}_2$.  
				\end{itemize}

				Thus, the result holds for any spanning tree $T$.
			\end{proof}

			For any spanning tree $T$ and a vertex $v$ of $G$ we have a unique path in $T$ between the root $v_d$ and $v$. Let us denote this unique path by $P_v(T)$. Let $C$ be a connected component of $\IN(T)$. Then, we can travel from $v_d$ to $C$ in $T$ via a unique alternating path in $\Ch{*}{*}$.

\begin{figure}[ht]
\centering

\resizebox{0.8\textwidth}{!}{
\begin{tikzpicture}
\node at (0,0) (graph1) {
\fbox{
\begin{tikzpicture}[vertex/.style={draw, circle, inner sep=0pt, minimum size=6pt, fill=black, color=black}]
\node[vertex, red, fill=red] [label=above: $v_d$] at (0,0) (a) {};
\node[vertex] at (1,0) (b) {};
\node[vertex] [label=below: $1$] at (2,0) (c) {};
\node[vertex] [label=below: $x$] at (3,0) (d) {};
\node[vertex] [label=below: $x$] at (4,0) (e) {};
\node at (5,0) (f) {};
\draw[dashed] (a)--(b);
\draw[dashed] (e)--(f);
\end{tikzpicture}
}};
\node at (3,-4) (graph2) {
\fbox{
\begin{tikzpicture}[vertex/.style={draw, circle, inner sep=0pt, minimum size=6pt, fill=black, color=black}]
\node[vertex, red, fill=red] [label=above: $v_d$] at (0,0) (a) {};
\node[vertex] at (1,0) (b) {};
\node[vertex] at (2,0) (c) {};
\node[vertex] at (3,0) (d) {};
\node[vertex] [label=below: $x$] at (4,0) (e) {};
\node at (5,0) (f) {};
\draw[dashed] (a)--(b);
\draw[dashed] (e)--(f);
\draw (c) -- (d) node[midway,above] {$x$};
\end{tikzpicture}
}};
\node at (7,0) (graph3) {
\fbox{
\begin{tikzpicture}[vertex/.style={draw, circle, inner sep=0pt, minimum size=6pt, fill=black, color=black}]
\node[vertex, red, fill=red] [label=above: $v_d$] at (0,0) (a) {};
\node[vertex] at (1,0) (b) {};
\node[vertex] [label=below: $x$] at (2,0) (c) {};
\node[vertex] [label=below: $1$] at (3,0) (d) {};
\node[vertex] [label=below: $x$] at (4,0) (e) {};
\node at (5,0) (f) {};
\draw[dashed] (a)--(b);
\draw[dashed] (e)--(f);
\end{tikzpicture}
}};
\node at (10,-4) (graph4) {
\fbox{
\begin{tikzpicture}[vertex/.style={draw, circle, inner sep=0pt, minimum size=6pt, fill=black, color=black}]
\node[vertex, red, fill=red] [label=above: $v_d$] at (0,0) (a) {};
\node[vertex] at (1,0) (b) {};
\node[vertex] [label=below: $x$] at (2,0) (c) {};
\node[vertex] at (3,0) (d) {};
\node[vertex] at (4,0) (e) {};
\node at (5,0) (f) {};
\draw[dashed] (a)--(b);
\draw[dashed] (e)--(f);
\draw (d) -- (e) node[midway,above] {$x$};
\end{tikzpicture}
}};
\node at (14,0) (graph5) {
\fbox{
\begin{tikzpicture}[vertex/.style={draw, circle, inner sep=0pt, minimum size=6pt, fill=black, color=black}]
\node[vertex, red, fill=red] [label=above: $v_d$] at (0,0) (a) {};
\node[vertex] at (1,0) (b) {};
\node[vertex] [label=below: $x$] at (2,0) (c) {};
\node[vertex] [label=below: $x$] at (3,0) (d) {};
\node[vertex] [label=below: $1$] at (4,0) (e) {};
\node at (5,0) (f) {};
\draw[dashed] (a)--(b);
\draw[dashed] (e)--(f);
\end{tikzpicture}
}};
\draw[->] (graph1) -- (graph2) node[midway,above, xshift=0.5cm] {$d$};
\draw[->,color=red] (graph2) -- (graph3) node[midway,above,xshift=-0.5cm] {$\M$};
\draw[->] (graph3) -- (graph4) node[midway,above,xshift=0.5cm] {$d$};
\draw[->,color=red] (graph4) -- (graph5) node[midway,above,xshift=-0.5cm] {$\M$};
\end{tikzpicture}}
\caption{An alternating path between enhanced spanning subgraphs in $[\R(T),T]$. Here, nodes represent components in $\IN(T)$ with enhancement of $1$ or $x$.}
\label{alternating path in a tree}
\end{figure}
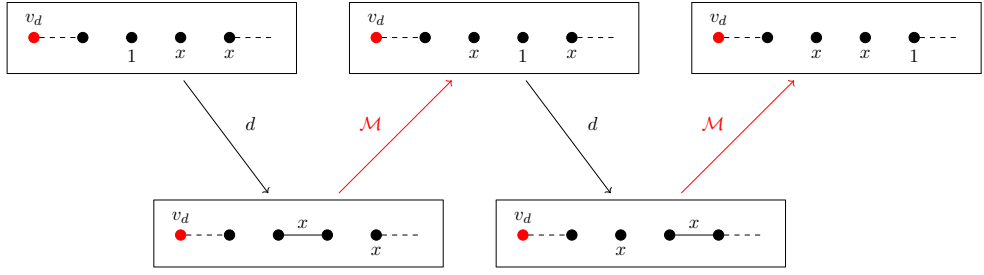

			\begin{lemma} \label{path within tree}
				Let $T$ be a spanning tree of $G$, and let $C$ be a connected component of $\IN(T)$. Consider an enhancement $\epsilon_C$ that assigns $1$ to $C$ and $x$ to all other connected components of $\IN(T)$. Let $C_{\text{root}}$ be the unique connected component of $\IN(T)$ containing the root vertex $v_d$. Then,  
				\[
				\A^{\downarrow}(H_c^{T^+}, H_{\epsilon}^T) = 1
				\]
				for some enhanced spanning subgraph $H_{\epsilon}^T$  if and only if $H_{\epsilon}^T = (\IN(T),\epsilon_C)$ for some connected component $C$ of $\IN(T)$. 
			\end{lemma}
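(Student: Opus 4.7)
My strategy is induction on the number of edges of $T$, using the canonical identification between the interval $[\R(T),T]$ and the chromatic complex of the contracted tree $T/\IN(T)$; under this reduction I may assume that $T$ is itself a tree whose full chromatic complex is what I analyze, that $H_c^{T^+}=(\emptyset, v_d:1,\ \text{others}:x)$, and that the target states $(\IN(T),\epsilon_C)$ become the empty-edge enhanced states $\epsilon_v$ with $\epsilon_v(v)=1$ and $\epsilon_v(u)=x$ for $u\neq v$. Proposition \ref{atmost one path} already guarantees $\A^{\downarrow}\in\{0,1\}$, so the task reduces to pinning down which target states are reachable; the degenerate case $v=v_d$, where the target coincides with $H_c^{T^+}$, is handled by the standard convention $\Gamma(c,c)=1$ via the empty alternating path.

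For the base case $|E(T)|\leq 1$ I verify the claim by direct inspection of the base matching in the definition of $\M_T$. For the inductive step let $e_m$ be the smallest leaf edge of $T$ and $v_m$ the corresponding leaf vertex. Recall that the construction of $\M_T$ partitions the complex into $\mathcal{V}_p$ (states in which $v_m$ lies in a non-singleton component), $\mathcal{V}_q$ (states in which $v_m$ is a singleton), and within $\mathcal{V}_q$ the subset $\mathcal{V}_r$ consisting of states with $\epsilon(v_m)=x$; the matching pairs $\mathcal{V}_p$ with $\mathcal{V}_q\setminus\mathcal{V}_r$ via adding or removing $e_m$, and on $\mathcal{V}_r$ is defined recursively by $\M_{T\setminus v_m}$. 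Since $v_m\neq v_d$, the starting state $H_c^{T^+}$ sits in $\mathcal{V}_r$. A key structural observation is that once an alternating path leaves $\mathcal{V}_r$ it cannot return: differentials never remove $e_m$, and the $\mathcal{V}_p\leftrightarrow\mathcal{V}_q\setminus\mathcal{V}_r$ pairing is disjoint from $\mathcal{V}_r$.

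I then split alternating paths from $H_c^{T^+}$ according to whether they ever use $e_m$. In Case A the path stays entirely in $\mathcal{V}_r$, and such paths correspond bijectively to alternating paths from $H_c^{(T\setminus v_m)^+}$ in the chromatic complex of $T\setminus v_m$ (by forgetting the singleton $v_m$ with its enhancement $x$); the induction hypothesis then yields endpoints $\epsilon_v$ for every $v\in V(T\setminus v_m)$, lifting to all $\epsilon_v$ with $v\neq v_m$. In Case B the first step that leaves $\mathcal{V}_r$ must be a differential adding $e_m$, because matchings inside $\mathcal{V}_r$ come from $\M_{T\setminus v_m}$ and never touch $e_m$. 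Nonvanishing of this differential forces the component $\{u\}$ of $v_m$'s neighbor $u$ to carry enhancement $1$ in the preceding state; applying the Case A analysis inductively pins that preceding state uniquely as $\epsilon_u$. A short direct computation then shows that the subsequent matching step lands on $\epsilon_{v_m}$, and every continuation from $\epsilon_{v_m}$ is either zero (any differential not along $e_m$ multiplies two $x$'s since only $v_m$ carries a $1$) or is simply the reverse of the matching edge just used, so the path must terminate there.

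The main obstacle will be the Case B bookkeeping: verifying that after reaching $\epsilon_{v_m}$ no nontrivial continuation exists, so that $\epsilon_{v_m}$ is the unique endpoint contributed by Case B, and that no alternating path from $H_c^{T^+}$ loops back into $\mathcal{V}_r$ after leaving it, which would fork the enumeration. The acyclicity of $\M_T$ (Lemma \ref{acyclicity}) rules out directed cycles, Proposition \ref{atmost one path} normalizes the weights, and the one-way transit $\mathcal{V}_r\to\mathcal{V}_p\cup(\mathcal{V}_q\setminus\mathcal{V}_r)$ recorded above confirms that Cases A and B together exhaust all reachable endpoints and that each $\epsilon_v$, $v\in V(T)$, is attained exactly once.
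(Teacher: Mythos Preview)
Your proof is correct and takes a genuinely different route from the paper's. The paper argues directly: starting from $H_c^{T^+}$, the only nonzero differential adds an internally active edge $f_1$ incident to $C_{\text{root}}$; the subsequent matching step is asserted to remove $f_1$ and deposit the label $1$ on the adjacent component $C_1$; iterating this traces out the unique path $P_C$ in the contracted tree $T'$ from $C_{\text{root}}$ to $C$, and the converse is claimed to follow because this exhausts the possible moves. You instead induct on $|E(T)|$ via the recursive structure of $\M_T$, splitting alternating paths according to whether they ever touch the minimal leaf edge $e_m$, and handling the two cases by reduction to $T\setminus v_m$ and a short explicit computation respectively.

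What each approach buys: the paper's walk picture feeds straight into Theorem~\ref{differential}, where the weight of the alternating path is read off as $(-1)^{L(P_v(T))}$ from the number of active edges traversed along $P_C$. Your induction is structurally tighter at the level of this lemma---it makes explicit the one-way transit out of $\mathcal{V}_r$, the fact that the $e_m$-differential is nonzero only from $\epsilon_u$, and the terminal nature of $\epsilon_{v_m}$, all of which the paper leaves to the figure and to the unproved claim that the matching of $d^{f_1}(H_c^{T^+})$ occurs ``solely due to $f_1$''---but recovering the explicit description of the alternating path as a walk along $P_C$ would require unwinding your recursion. Both arguments rest on the same implicit assumption that the chosen leaf $v_m$ is distinct from the root $v_d$.
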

			
			\begin{proof}
				Let $C(H_c^{T^+}) = \{C_{\text{root}}, C_1, \dots, C_k\}$ be the connected components of $H_c^{T^+}$, where $C_{\text{root}}$ is the component containing the root $v_d$. Consider the unique path $P_C$ from $C_{\text{root}}$ to $C$ in the tree $T'$, obtained by contracting the internally inactive edges of $T$.  
				
				Any alternating path starting from $H_c^{T^+}$ must begin with a non-matched directed edge and can only arise due to an internally active edge $f_1$ that is incident to the component $C_{\text{root}}$, since $C_{\text{root}}$ is the only component in $\IN(T)$ where there exists an enhancement $\epsilon$ satisfying $\epsilon(C_{\text{root}}) = 1$. Let $C_1$ be the other component of $\IN(T)$ that merges with $C_{\text{root}}$ upon the inclusion of $f_1$. Denote by $d^{f_1}(H_c^{T^+})$ the enhanced state resulting from the inclusion of $f_1$.  
				
				The next directed edge in this path must be a matching edge, say $m_1$. By the definition of $\M_{\operatorname{NBC}}$, this matching $m_1$ must occur solely due to the internally active edge $f_1$. Observe that $m_1(d^{f_1}(H_c^{T^+}))$ is an enhanced state with the same homological grading as $H_c^{T^+}$. Thus, there exists a path in $T$ from $H_c^{T^+}$ to $(\IN(T), \epsilon(C_1))$ via $f_1$.  
				
				This alternating path can continue through a sequence of internally active edges $\{f_1, f_2, \dots, f_i\}$ of $T$ along $P_C$ (See Figure \ref{alternating path in a tree}). Therefore,  
				\[
				\A^{\downarrow}(H_c^{T^+}, H_{\epsilon}^T) = 1.
				\]
				
				This also establishes the converse, as we have identified the necessary criterion that $H_{\epsilon}^T$ must satisfy in order for $\A^{\downarrow}(H_c^{T^+}, H_{\epsilon}^T) = 1$, ensuring the result holds.
			\end{proof}

			To state our theorem about the description of the differential. First, we introduce some notation. Let $v \in V(G)$ and $T$ is a spanning tree of $G$. Then, $L(P_v(T))$ denote the number of internally active edges in the path $P_v(T)$ and $\xi(T,e_D)$ denote the number of edges in $\IN(T)$ that are less than $e_D$.

			\begin{thm}\label{differential}
				Suppose $T<T'$ and $T'=T \cup \{e\} \setminus \{f\}$ with $e \in \IN(T')  \setminus \IN(T)$. Then the incidence between two critical states $H_c^{T^+},H_c^{T'^-}$ is completely determined by its activity word $W(T)$ and $W(T')$. In fact, if $v_1$ and $v_2$ be the end points $e$.  Then,
				\[[{\partial_{ST}}(T^+),{T'}^-]= {(-1)}^{\xi(T,e)}({(-1)}^{L(P_{v_1}(T))} +  {(-1)}^{L(P_{v_2}(T))}) . \]
				
				For any other pair of trees $T$ and $T'$, we have $[\partial_{ST}(T^+):{T'}^-]=0$. Therefore, for any two trees $T$ and $T'$,  $[\partial_{ST}(T^+):{T'}^-] \in \{ 0, 2, -2 \}.$ 
			\end{thm}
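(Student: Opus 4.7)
The plan is to first reduce the statement via Lemma \ref{lextrick}, then identify the structure of alternating paths in $G_{C_{\star}}^{\M}$, and finally compute the signed weight of each surviving path. Lemma \ref{lextrick} already handles the three vanishing incidences and restricts a nonzero $[\partial_{ST}(T^+), T'^-]$ to precisely the pair $(T, T')$ singled out in the theorem. It therefore suffices to compute the signed sum $\Gamma(H_c^{T^+}, H_c^{T'^-})$ of alternating paths from $H_c^{T^+}$ to $H_c^{T'^-}$.

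The key structural claim I would establish is that every contributing alternating path factors uniquely as (i) an intra-interval grade-preserving walk inside $[\R(T), T]$ from $H_c^{T^+}$ to some state with spanning subgraph $\IN(T)$, followed by (ii) a single inter-interval differential on the edge $e$ landing directly at $H_c^{T'^-}$. For the final edge of the path: since $H_c^{T'^-}$ is critical and therefore unmatched, this edge must be non-matching; moreover the predecessor under any differential into $H_c^{T'^-}$ has spanning subgraph $\IN(T') \setminus \{g\}$, which fails to contain $\R(T') = \IN(T')$ and so is not in $[\R(T'), T']$, forcing the final edge to be an inter-interval jump. For the single-jump property: Lemma \ref{crucial} and Proposition \ref{main differential lemma} force any inter-interval jump to originate at $(\IN(T_k), \epsilon)$ and land at $(\IN(T_k) \cup \{g\}, \epsilon')$; combining this with the identity $\IN(T') = \IN(T) \cup \{e\}$ from Proposition \ref{main differential lemma}, two successive jumps would require their added edges to satisfy $\{g_1, g_2\} = \{e\}$, which is impossible. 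Hence there is exactly one jump, on the edge $e$, originating from a state $(\IN(T), \epsilon)$ in $[\R(T), T]$.

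Lemma \ref{path within tree} then pins down the intra-interval endpoint as $(\IN(T), \epsilon_C)$ for some connected component $C$ of $\IN(T)$, and since $\IN(T) \cup \{e\} \subseteq T'$ is a forest, the endpoints $v_1, v_2$ of $e$ lie in distinct components $C_1, C_2$ of $\IN(T)$. For the phase-$2$ multiplication $\epsilon_C(C_1) \cdot \epsilon_C(C_2)$ to be nonzero and to yield the all-$x$ state $H_c^{T'^-}$, exactly one of $C_1, C_2$ must carry the enhancement $1$, forcing $C \in \{C_1, C_2\}$. This produces precisely two surviving alternating paths, indexed by $i \in \{1, 2\}$.

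To finish, I would tally the weight of each. The phase-$2$ differential contributes $(-1)^{\xi(T, e)}$ together with the multiplication coefficient $+1$ from $1 \cdot x = x$. For phase~1, Lemma \ref{path within tree} shows that the unique alternating path traverses the internally active edges along $P_{v_i}(T)$ in order, consisting of $L(P_{v_i}(T))$ round-trips of the form ``non-matching add of an IA edge, then matched remove''. Between consecutive round-trips the spanning subgraph returns to $\IN(T)$, so within a single round-trip the add step and the up-differential underlying the matching edge both act on subgraph $\IN(T)$ and hence share the ordering sign $(-1)^{|H_e|}$; these factors cancel, while the intrinsic factor $-1/[c:c']$ on the reversed matching edge supplies a single $-1$. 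Each round-trip therefore contributes $-1$, and phase~1 contributes $(-1)^{L(P_{v_i}(T))}$. Summing the two contributions gives the stated formula, and the restriction to $\{0, \pm 2\}$ follows. The principal obstacle is keeping the sign bookkeeping of phase~1 transparent as one mirrors the recursive construction of $\M_T$ from Subsection \ref{Matching on tree}; the structural reductions above follow straightforwardly from Proposition \ref{main differential lemma} and Lemma \ref{path within tree}.
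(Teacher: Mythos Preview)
Your proposal is correct and follows essentially the same route as the paper: reduce via Lemma~\ref{lextrick}, use Lemma~\ref{path within tree} to identify the two intra-interval endpoints $(\IN(T),\epsilon_{C_{v_i}})$, apply the merge on $e$ to land at $H_c^{T'^-}$, and tally signs. Your sign bookkeeping for phase~1 (ordering signs on the add and on the underlying up-differential of the matching edge cancel, leaving a net $-1$ per round-trip) is more explicit than the paper's one-line assertion that ``each matching edge contributes a $-1$ factor,'' but it is the same computation.

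One small remark on your single-jump argument: the sentence ``two successive jumps would require $\{g_1,g_2\}=\{e\}$'' is correct in spirit but somewhat indirect. The cleaner observation (which the paper uses implicitly through Lemma~\ref{lextrick}) is that after the first jump you land at $(\IN(T_l),\epsilon)$ with $|\IN(T_l)|=i+1$, which is the \emph{minimum} homological grade in $[\R(T_l),T_l]$; such a state is either critical or matched upward, so no reversed matching edge exits it and the alternating path must terminate there. This bypasses the need to invoke Proposition~\ref{main differential lemma} a second time for a hypothetical later jump.
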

            
			\begin{proof}
				
				By Lemma \ref{lextrick} there is exactly one less internally inactive edge in $T$ than $T'$ and hence, there is a pair of edge $(e,f)$ such that $e \in \IN(T')$ and $f$ is the smallest edge in $\cut(T',e)$. So, $T'=T \cup \{e\} \setminus \{f\}$.\\
				
				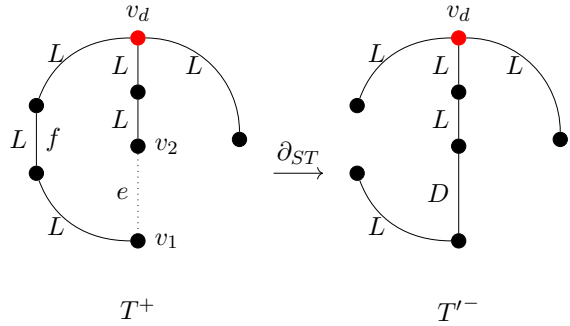
\begin{figure}[ht]
					\centering
					\scalebox{0.9}{
						\begin{tikzpicture}[vertex/.style={circle, draw, inner sep=0pt, minimum size=6pt, fill=black}, edge/.style={draw}, baseline=0]
							\node[vertex,fill=red,color=red] [label=above:$v_d$]  (a) at (0,2) {};
							\node[vertex] (b) at (0,1.2) {};
							\node[vertex] [label=right:$v_2$]  (c) at (0,0.4) {};
							\node[vertex] [label=right:$v_1$]  (d) at (0,-1) {};
							\node[vertex] (e) at (-1.5,1) {};
							\node[vertex]  (f) at (-1.5,0) {};
							\node[vertex] (g) at (1.5,0.5) {};
							\path[edge] (a) to node[left] {$L$} (b);
							\path[edge] (b)   to node[left] {$L$}  (c);
							\path[edge] (a) to[out=180,in=70] node[ left ]{$L$} (e);
							\path[edge] (e) to node[left]{$L$} node[ right ]{$f$} (f);
							\path[edge] (f) to[out=290,in=180]node[ left ]{$L$} (d);
							\path[edge, dotted] (c) to node[ left ]{$e$} (d);
							\path[edge] (a) to[out=0,in=90]node[ left ]{$L$} (g);
							\node at (0,-2) {$T^+$ };
						\end{tikzpicture}
						
						\begin{tikzpicture}[baseline]
							\node at (0,0) (a) {};
							\node at (1,0) (b) {};
							\draw[->] (a)--(b) node[midway,above] {$\partial_{ST}$};
						\end{tikzpicture}
						
						\begin{tikzpicture}[vertex/.style={circle, draw, inner sep=0pt, minimum size=6pt, fill=black}, edge/.style={draw}, baseline=0]
							\node[vertex,fill=red,color=red][label=above:$v_d$]  (a) at (0,2) {};
							\node[vertex] (b) at (0,1.2) {};
							\node[vertex] (c) at (0,0.4) {};
							\node[vertex] (d) at (0,-1) {};
							\node[vertex] (e) at (-1.5,1) {};
							\node[vertex] (f) at (-1.5,0) {};
							\node[vertex] (g) at (1.5,0.5) {};
							\path[edge] (a) to node[ left ]{$L$} (b);
							\path[edge] (b) to node[ left ]{$L$} (c);
							
							\path[edge] (c) to node[ left ]{$D$}  (d);
							\path[edge] (a) to[out=180,in=70] node[ left ]{$L$} (e);
							\path[edge] (f) to[out=290,in=180] node[ left ]{$L$}  (d);
							\path[edge] (a) to[out=0,in=90] node[ left ]{$L$} (g);
							\node at (0,-2) {${T'}^{-}$ };
					\end{tikzpicture}}

					\caption{Counting incidences between spanning trees $T^+$ and ${T'}^{-}$}
					\label{incidencecount}
				\end{figure}
				
				\begin{figure}[ht]
                {
                \centering 
                \scalebox{0.65}
                {\includesvg{p_v_2.svg}
                }
                \caption{Alternating path corresponding to $P_{v_2}(T)$}
                \label{pv2vd}
                }
                \end{figure}

			\begin{figure}[ht]
                {
                \centering
                \scalebox{0.65}
                {\includesvg{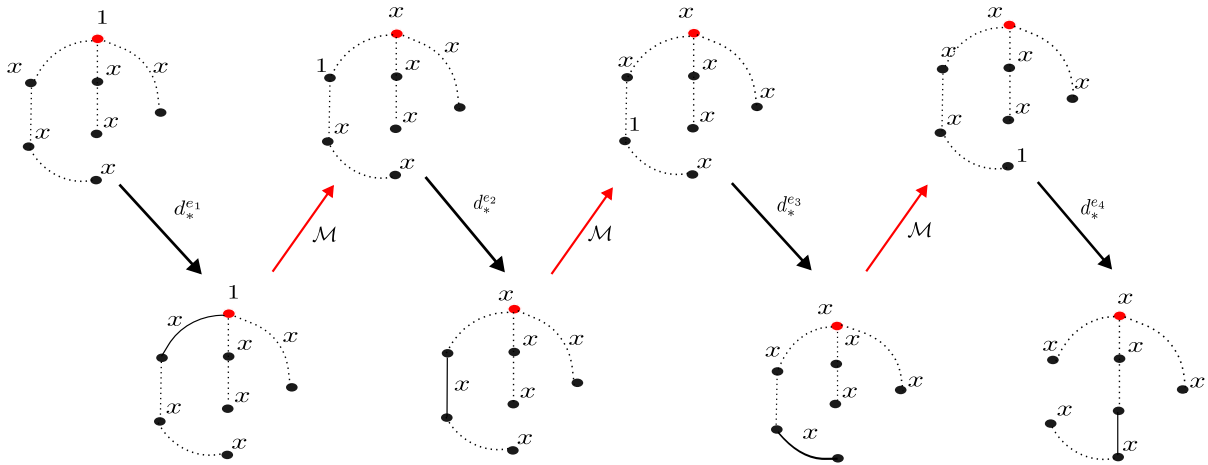}
                }
                \caption{Alternating path corresponding to $P_{v_1}(T)$}
                 \label{pv1vd}
                }
               
                \end{figure}

				Now an alternating path $\Pt$ between $H_c^{T^+}$ and $H_c^{T'^-}$ will consist of a pair of enhanced spanning subgraphs $(S_T,E_T)$, where $S_T=H_c^{T^+}$, $E_T \in [\R(T),T]$ with $\A^{\downarrow}(S_T,E_T)=1$. When $\Pt$ travels from $[\R(T),T]$ to $[\R(T'),T']$, at least one of the end vertices of $e$ in $E(T)$ must have $1$ as their enhancement, and thus, by Lemma \ref{path within tree}, $E(T)$ either corresponds to $P_{v_1}(T)$ or $P_{v_2}(T)$   (See Figures \ref{incidencecount}, \ref{pv2vd}, and \ref{pv1vd}, in which case we get $[\partial_{ST}(T^+):{T'}^-]=0$). For each such choice, if we apply the merge map at $e$, then we will reach the critical state $H_c^{T'^-}$. Thus, there are only two alternating paths between the critical states $H_c^{T^+}$ and $H_c^{T'^-}$.  \\

				To determine the incidence $[\partial_{ST}(T^+):{T'}^-]$, we will need to compute the weight of the alternating paths. Now observe that, using algebraic Morse theory, each matching edge contributes a $-1$ factor to the weight, and finally, when we apply the differential using the edge $f$, a weight factor of ${(-1)}^{\xi(T,e)}$ is multiplied. Therefore, the weight of the alternating path corresponding to $P_{v_1}(T)$ is  
				
				\[
				{(-1)}^{\xi(T,e)}{(-1)}^{L(P_{v_1}(T))}
				\]
				
				and the weight corresponding to $P_{v_2}(T)$ is  
				
				\[
				{(-1)}^{\xi(T,e)}{(-1)}^{L(P_{v_2}(T))}.
				\]
				
				By summing up, we obtain  
				
				\[
				[{\partial_{ST}}(T^+),{T'}^-]= {(-1)}^{\xi(T,e)}({(-1)}^{L(P_{v_1}(T))} +  {(-1)}^{L(P_{v_2}(T))}).
				\]
				
				Now, finally, from the previous lemmas of this subsection, it follows that for any pair of spanning trees $(T,T')$ that do not satisfy the hypothesis of this theorem, we have $[\partial_{ST}(T^+):{T'}^-]=0$.

			\end{proof}
			
			Using Theorem \ref{differential}, we can write a more neat expression of the spanning tree differential. If $e$ is an external edge of a NBC spanning tree $T$, then denote the collection of live edges in $\cyc(T,e)$ by $L(\cyc(T,e))$ and notice that
			
			$$
			|L(\cyc(T,e))| \equiv  L(P_{v_1}(T)) + L(P_{v_2}(T)) \equiv ({(-1)}^{L(P_{v_1}(T))} +  {(-1)}^{L(P_{v_2}(T))}) \pmod{2}
			$$

			Also, observe that there is a unique NBC spanning tree  $T'=T \cup \{e\} \setminus \{f\}$ where the external edge $f$ is inactive and $i(T')=i(T)+1$ if such a tree exists. It is obtained removing the largest internally active edge in $\cyc(T,e)$.
			
			\begin{definition}[Trees in the image] \label{imagetree}
				Given a tree $T$ and an edge $e \in \EN(T)$ , define  
				\[
				\psi_e(T) := T \cup \{e\} \setminus \{\max(L(\cyc(T,e)))\}.
				\]
			\end{definition}
			
			There also a dual notation of the above map $\psi_e$, which we define below as it will be used later.
			
			\begin{definition}[Trees in the preimage] \label{preimagetree}
				Given a tree $T$ and an edge $e \in \IN(T)$, define  
				\[
				\psi'_e(T) := T \cup \{ \min(\cut(T,e)\} \setminus \{ e \}.
				\]
			\end{definition}

     \begin{lemma} \label{min dead edge}
        Suppose \( T \) is an NBC spanning tree with \( i(T) \geq 1 \). Then there exists \( e \in \IN(T) \) such that \( \psi'_e(T) \) is an NBC spanning tree satisfying  
\[
i(\psi'_e(T)) = i(T) - 1.
\]  
Moreover, $\psi_e(\psi'_e(T))=T$.
     \end{lemma}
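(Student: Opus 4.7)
The plan is to take $e := \min \IN(T)$, the minimal internally inactive edge of $T$ in the fixed ordering, set $f := \min \cut(T,e)$ (so $f < e$), and let $T' := \psi'_e(T) = (T - e) \cup f$. I would then verify the three conclusions in turn: that $T'$ is an NBC spanning tree, that $\IN(T') = \IN(T) \setminus \{e\}$ (so $i(T') = i(T) - 1$), and that $\psi_e(T') = T$.

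The first item is exactly what is proven in the second half of Proposition \ref{inactiveedgelemma} (the argument establishing $\IN(T_i) \subseteq \R(T_i)$), so I would simply invoke it. Once the grading statement is in hand, the third item is a short computation: in $T'$ the edge $f$ is internally live because $\cut(T',f) = \cut(T,e)$ has minimum $f$, and by the grading step every other live edge of $T'$ lying in $C := \cyc(T' \cup e) = \cyc(T \cup f)$ is strictly less than $f$, so $\max L(\cyc(T',e)) = f$ in $T'$ and hence $\psi_e(T') = T' \cup e \setminus \{f\} = T$.

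The main work, and the main obstacle, is the grading step. The starting point is a trichotomy for any $\beta \in (C \cap T) \setminus \{e\}$: since $f$ closes the cycle $C$ through $\beta$, one has $f \in \cut(T,\beta)$, and therefore $\beta > f$ forces $\beta \in \IN(T)$ (with $f$ as a witness of deadness), whereas $\beta < f < e$ combined with the minimality $e = \min \IN(T)$ forces $\beta \in \IA(T)$. I would then compare $\cut(T',\beta)$ with $\cut(T,\beta)$ via a component decomposition: writing $T - \beta = A_\beta \sqcup B_\beta$ with $e \in A_\beta$, splitting $A_\beta$ further as $A_\beta^1 \sqcup A_\beta^2$ upon removing $e$ (with the $A_\beta$-endpoint of $\beta$ in $A_\beta^1$), one sees that $T' - \beta$ has components $A_\beta^1$ and $A_\beta^2 \cup B_\beta$, so $\cut(T',\beta) \setminus \cut(T,\beta)$ consists precisely of edges from $A_\beta^1$ to $A_\beta^2$. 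These new edges all lie in $\cut(T,e)$ and are therefore $\geq f$ by the minimality of $f$. Putting this together with the trichotomy: inactive $\beta > e > f$ remains inactive in $T'$ via the new smaller witness $e \in \cut(T',\beta)$; active $\beta < f$ remains active because every new cut-mate is $\geq f > \beta$ and the surviving old cut-mates in $\cut(T,\beta) \cap \cut(T',\beta)$ are already $\geq \beta$; and activity for cycle-external edges is preserved by Proposition \ref{same cut prop}. The choice $e = \min \IN(T)$ is what makes everything line up: it is the only ingredient ensuring that no cycle edge of $T$ smaller than $f$ is inactive, and that $e$ itself can serve as a uniform deadness-witness for every other inactive cycle edge in $T'$.
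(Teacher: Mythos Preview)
Your proposal is correct and follows essentially the same approach as the paper: both choose $e=\min\IN(T)$, set $f=\min\cut(T,e)$, and verify that activities of the common edges are unchanged by exploiting the minimality of $e$ and $f$ together with Proposition~\ref{same cut prop}. Your direct component decomposition for the cycle edges is a more explicit version of the paper's terse two--case contradiction argument, and your verification that $f=\max L(\cyc(T',e))$ matches the paper's final paragraph.
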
  

     \begin{proof}
         Our candidate for $e$ will be the smallest edge in $\IN(T)$. Suppose for contradiction $i(\psi'_e(T)) \neq i(T)-1$ then, there exists $e' \in T \cap \psi'_e(T)$ for which $a_{T}(e') \neq a_{\psi'_e(T)}(e')$. Let us assume $f = \min \{\IN(\cut(T,e))\}$ then we have the following cases:\\

         \textbf{Case 1:} Suppose $a_T(e')=L$ and $a_{\psi'_e(T)}(e')=D$. Then $e \in \cut(\psi'_e(T),e')$ and $e < e'$, but then $f \in \cut(T,e')$ and $e' < f$. Hence, we have $e < f$ which is a contradiction.\\

         \textbf{Case 2:} Suppose $a_T(e')=D$ and $a_{\psi'_e(T)}(e')=L$. Then $f \in \cut(T,e')$ and $f < e'$, but then $e \in \cut(\psi'_e(T),e')$ which implies $e' < e$ and this contradicts the minimality of $e$.\\

         In order to show $\psi_e(\psi'_e(T))=T$, we need show that $f=\min(\cut(T,e)) = \max(L(\cyc(\psi'_e(T),e)))$. Suppose there exists $g > f$ with $g \in L(\cyc(\psi'_e(T),e))$ then, we have $f < g < e$ but since, $f \in \cut(T,g)$ so, $g \in \IN(T)$ but this contradicts the fact that $e$ is the smallest edge in $\IN(T)$.
     \end{proof}

     We also define the following function for any vertex $v$ incident to edge $e$
			
	  \begin{definition}
		 Let $T$ be a tree and let $e$ be an external edge of $T$. Define the function $s_e(T)$ as follows:

			 \[
				s_e(T)= \begin{cases}
					(-1)^{L(P_v(T))}, & \text{if } |L(\cyc(T,e)| \equiv 0 \pmod{2}\\
					0, & \text{if } |L(\cyc(T,e)| \equiv 1 \pmod{2} 
				\end{cases}
				\]
				
	\end{definition}

			It may seem that $s_e(T)$ depends on the choice of $v$ but the parity of $L(P_v(T))$ does not depend on $v$ and hence, $s_e(T)$ is independent of the choice of $v$. Let, $\operatorname{NBC}(G)$ be the collection of NBC trees of $G$ with respect to the fixed ordering. Then, we have following formula for the spanning tree differential.
			
			\begin{equation}
				\partial_{ST}(T^+) = \sum_{\substack{e \in \EN(T) \\ \psi_e(T) \in \operatorname{NBC}(G) \\ i(\psi_e(T)) = i(T) + 1}} (-1)^{\xi(T,e)} 2 s_e(T) \cdot {\psi_e(T)}^-.
				\label{eq:spanning_tree_differential}
			\end{equation}
			
			\begin{remark}
				
				It should be noted that if $e$ is the smallest edge in $\IN(T)$ then, $\psi_e(\psi'_e(T))=T$.

			\end{remark}

			Now, we will prove a theorem that will help us understand the $\Z_2$ torsions in the chromatic homology.
			
			\begin{thm}\label{two torsion}
				Let $G$ be a connected graph with an ordering $\omega$ on the edge set. If any NBC spanning tree $T$ satisfies $ \partial_{ST}(T^+)=0$ and the homological grading of $T^+$ is $i>0$, then there exists a $\mathbb{Z}_2$ summand at the $i$'th homological grading.

				As a consequence, if $h(\omega)$ denotes the homological grading of the maximum NBC spanning tree with respect to the lexicographical order then there is a $\mathbb{Z}_2$ summand at the homological grading $h(\omega)$.
			\end{thm}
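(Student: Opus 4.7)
The plan is to exhibit a $\mathbb{Z}_2$ direct summand in $H^i(\CST{*}{*}) \cong \HCh{i}{*}$ by first isolating where torsion can live, then producing a single $\pm 2$ matrix entry. Since $\partial_{ST}$ is identically zero on $\NCST{*}{*}$ and maps $\PCST{i}{*}$ into $\NCST{i+1}{*}$, the short exact sequence $0 \to \NCST{*}{*} \to \CST{*}{*} \to \PCST{*}{*} \to 0$ induces on cohomology a split short exact sequence
\[
0 \to \operatorname{coker}\bigl(\partial_{ST}\colon \PCST{i-1}{*} \to \NCST{i}{*}\bigr) \to H^i(\CST{*}{*}) \to \ker\bigl(\partial_{ST}\colon \PCST{i}{*} \to \NCST{i+1}{*}\bigr) \to 0.
\]
The kernel on the right is free, so all torsion of $H^i$ resides in the cokernel on the left.

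By Theorem \ref{differential}, every nonzero entry of $\partial_{ST}$ is $\pm 2$. Hence if the matrix of $\partial_{ST}\colon \PCST{i-1}{*} \to \NCST{i}{*}$ is nonzero, the gcd of its entries is exactly $2$, the first invariant factor in its Smith normal form equals $2$, and a $\mathbb{Z}_2$ summand appears in the cokernel. The problem thereby reduces to exhibiting one $\pm 2$ entry in this matrix.

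For a candidate preimage, set $e := \min \IN(T)$ (which exists because $i \geq 1$) and $T' := \psi'_e(T)$. By Lemma \ref{min dead edge}, $T'$ is NBC with $i(T') = i-1$ and $\psi_e(T') = T$. Applying the combinatorial formula \eqref{eq:spanning_tree_differential} at the external edge $e$ of $T'$ gives $[\partial_{ST}(T'^+),\, T^-] = (-1)^{\xi(T',e)} \cdot 2\, s_e(T')$, so it suffices to verify $s_e(T') \neq 0$. Comparing activities in $T$ and $T'$ (only the swapped pair $\{e,f\}$ changes type, with $f := \min\cut(T,e)$ becoming internally live in $T'$ while $e$ was internally dead in $T$) yields the parity identity
\[
|L(\cyc(T',e))| = |L(\cyc(T,f))| + 1,
\]
so the task reduces to showing $|L(\cyc(T,f))|$ is odd, equivalently $s_f(T) = 0$.

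I expect this last parity statement to be the main obstacle. The plan is to deduce it from the hypothesis $\partial_{ST}(T^+) = 0$ together with Proposition \ref{main differential lemma}: distinct external edges of $T$ contribute to distinct basis vectors in $\NCST{i+1}{*}$, so no cancellation between the summands of $\partial_{ST}(T^+)$ is possible, and any nonvanishing summand would contradict the hypothesis. When $\psi_f(T) \in \operatorname{NBC}(G)$ with the correct grading, this at once forces $s_f(T) = 0$; the delicate case is $\psi_f(T) \notin \operatorname{NBC}(G)$, which I plan to handle by analyzing the broken circuit that $\psi_f(T)$ inherits from the fundamental cycle $\cyc(T,f)$ in combination with the minimality of $e$ in $\IN(T)$. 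Once the parity is established, the previous steps supply the desired $\mathbb{Z}_2$ summand in $H^i$. The consequence for $h(\omega)$ is then immediate: by Lemma \ref{lextrick}, $\partial_{ST}(T^+)$ is supported on $(T')^-$ with $T' > T$ in the lex order, so $\partial_{ST}(T_{\max}^+) = 0$ automatically, and applying the main assertion with $T = T_{\max}$ and $i = h(\omega)$ finishes the argument.
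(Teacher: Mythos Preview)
Your approach is essentially the paper's: both take $e=\min\IN(T)$, set $T'=\psi'_e(T)$ via Lemma~\ref{min dead edge}, reduce to showing $|L(\cyc(T,f))|$ is odd for $f=\min\cut(T,e)$, and then read off $[\partial_{ST}(T'^+),T^-]=\pm 2$. Your extraction of the $\mathbb{Z}_2$ summand via the Smith normal form of $\partial_{ST}\colon \PCST{i-1}{*}\to\NCST{i}{*}$ is in fact cleaner than the paper's last sentence, which asserts a $\mathbb{Z}_2$ from $\partial_{ST}(T'^+)=2X$ without checking that $X$ is not itself a boundary; your argument (gcd of entries is exactly $2$, hence first invariant factor is $2$) closes that.

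You are also right that the paper glosses over a genuine subtlety: deducing $|L(\cyc(T,f))|$ odd from $\partial_{ST}(T^+)=0$ requires $\psi_f(T)$ to be an NBC tree of grading $i(T)+1$, which is not automatic. However, your proposed fix is misdirected. When $|L(\cyc(T,f))|\geq 2$, any broken circuit witnessing that $\IN(T)\cup f$ fails to be NBC does \emph{not} come from $\cyc(T,f)$ itself --- that case forces $|L(\cyc(T,f))|=1$ --- but from a second external edge $g''<f$, and one finds $L(\cyc(T,g''))=L(\cyc(T,f))$ as sets (since the symmetric difference of the two fundamental cycles must lie in $\IN(T)$). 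Repeating the analysis with $g''$ in place of $f$ yields a finite descent $f>g''>g'''>\cdots$ of external edges all sharing the same live-edge set; it terminates at some $g$ with $\IN(T)\cup g$ NBC, and then Proposition~\ref{main differential lemma} together with $\partial_{ST}(T^+)=0$ forces $|L(\cyc(T,g))|=|L(\cyc(T,f))|$ to be odd. So the gap is fillable, but by a descent on external edges rather than by inspecting $\cyc(T,f)$ alone; the minimality of $e$ enters only at the outset, to guarantee that $\min\cyc(T,f)$ is live (hence $|L(\cyc(T,f))|\geq 1$).
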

			
			\begin{proof}
				
				Assume that $T^+ \in {CST^+}_{i}(G)$ with $i>0$ and $\partial_{ST}(T^+)=0$. Since $T$ has at least one dead edge, then according to Lemma \ref{min dead edge} we choose $D_f$ to be the minimum dead edge of $T$, and we have $\psi'_{D_f}(T)$ where $d_e$ is the minimum edge in $\cut(T,D_f)$. Let $C$ be the cycle consisting edges of $\cyc(\psi'_{D_f}(T), D_f)$ and $v_1$ and $v_2$ be the endpoints of $d_e$. If there were an even number of live edges in the cycle $C$ , then we have
				$ L(P_{v_1}(T)) \equiv L(P_{v_2}(T)) \pmod{2} $. Then, by Theorem \ref{differential}, there exists a tree $T_2 > T$ such that
				\[[{\partial_{ST}}(T^+),{T_2}^-]= {(-1)}^{\xi(T,f)}({(-1)}^{L(P_{v_1}(T))} +  {(-1)}^{L(P_{v_2}(T))}) \neq 0 . \]
				
				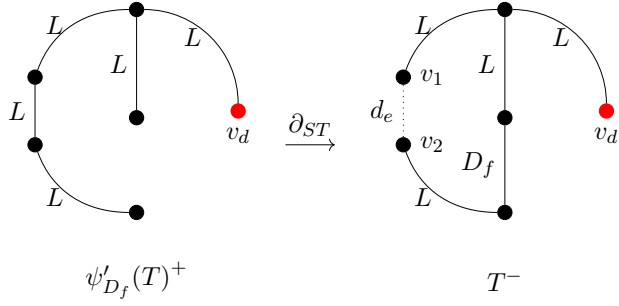
\begin{figure}[ht]
					\centering
					\scalebox{0.9}{
						\begin{tikzpicture}[vertex/.style={circle, draw, inner sep=0pt, minimum size=6pt, fill=black}, edge/.style={draw}, baseline=0]
							\node[vertex]   (a) at (0,2) {};
							\node[vertex]  (c) at (0,0.4) {};
							\node[vertex]  (d) at (0,-1) {};
							\node[vertex] (e) at (-1.5,1) {};
							\node[vertex]  (f) at (-1.5,0) {};
							\node[vertex,fill=red,color=red][label=below:$v_d$] (g) at (1.5,0.5) {};
							\path[edge] (a) to node[left] {$L$} (c);
							\path[edge] (a) to[out=180,in=70] node[ left ]{$L$} (e);
							\path[edge] (e) to node[left] {$L$} (f);
							\path[edge] (f) to[out=290,in=180]node[ left ]{$L$} (d);
							\path[edge] (a) to[out=0,in=90]node[ left ]{$L$} (g);
							\node at (0,-2) {${\psi'_{D_f}(T)}^+$ };
						\end{tikzpicture}
						
						\begin{tikzpicture}[baseline]
							\node at (0,0) (a) {};
							\node at (1,0) (b) {};
							\draw[->] (a)--(b) node[midway,above] {$\partial_{ST}$};
						\end{tikzpicture}
						
						\begin{tikzpicture}[vertex/.style={circle, draw, inner sep=0pt, minimum size=6pt, fill=black}, edge/.style={draw}, baseline=0]
							\node[vertex]  (a) at (0,2) {};
							\node[vertex] (c) at (0,0.4) {};
							\node[vertex] (d) at (0,-1) {};
							\node[vertex] [label=right:$v_1$] (e) at (-1.5,1) {};
							\node[vertex][label=right:$v_2$] (f) at (-1.5,0) {};
							\node[vertex,fill=red,color=red] [label=below:$v_d$](g) at (1.5,0.5) {};
							\path[edge] (a) to node[ left ]{$L$} (c);
							\path[edge] (c) to node[ left ]{$D_f$}  (d);
							\path[edge] (a) to[out=180,in=70] node[ left ]{$L$} (e);
							\path[edge,dotted] (e) to node[ left ]{$d_e$} (f);
							\path[edge] (f) to[out=290,in=180] node[ left ]{$L$} (d);
							\path[edge] (a) to[out=0,in=90] node[ left ]{$L$} (g);
							\node at (0,-2) {${T}^{-}$ };
					\end{tikzpicture}}
					
					\caption{ $[{\partial_{ST}}({\psi'_{D_f}(T)}^+),{T}^-] = \pm 2$}
					\label{preimage}
				\end{figure}
				
				Hence, $C$ must have an odd number of live edges. Thus, by Theorem \ref{differential}, we have $[{\partial_{ST}}({\psi_{D_f}(T)}^+,{T}^-] = \pm 2$. It follows that ${\partial_{ST}}({\psi'_{D_f}(T)}^+) = 2X$, where $X=\pm T^- + \sum_{k \in I} \pm {T_k}^-$ (Here, $I$ is an indexing set that is possibly empty, and the homological grading of each $T_k$ for $k \in I$ is $i$). Since ${\partial_{ST}}(X)=0$, it follows that chromatic homology has a $\mathbb{Z}_2$ summand at homological grading $i$.\\

				Let $T_{\omega}$ be the  NBC tree in highest lexicographical order. As ${\partial_{ST}}({T_{\omega}}^+)=0$, it follows that there is a $\mathbb{Z}_2$ summand at $h(\omega)$. 
			\end{proof}
			
			\begin{remark}
				
				It should be noted that there is also a unique NBC spanning tree $T_{max}$ at the highest homological grading. This follows from our construction in Theorem \ref{hspanthm}. However, it need not be the same as $T_{\omega}$ for all edge orderings $\omega$. \\ 
				
					The fact that $h_{max} = v - b - 1$ for a connected graph with $v$ vertices and $b$ blocks was established in \cite{sazdanovic2018patternskhovanovlinkchromatic} for the $\A_2$ algebra. It also follows from Theorem \ref{hspanthm}, which shows that $h_{max} = v - b - 1$ more generally for the $\A_m$ algebra.

			\end{remark}

			\subsection{Computations}
			\begin{example}
				Let us compute the spanning tree homology for an $n$ cycle $P_n$, where $n>2$. Now we have $n-1$ many spanning trees and $n-2$ many NBC-spanning trees for $P_n$ Let $\{T_1,T_2, \cdots T_{n-2}\}$  be NBC spanning trees of $P_n$ ordered lexicographically. At each homological grading $i$ there are exactly two critical states $H_c^{T_{i}^+},H_c^{T_{i}^+}$ corresponding to each NBC spanning tree $T_i$. Since each tree has one more internally inactive edge than its immediate predecessor in the lexicographic ordering. \\
				\textbf{For $i=0$}: Then the trees $T_1=[1,2,\cdots ,n-1]$ and $T_2=[1,2,\cdots,n-2,n]$ are in the homological grading $0$ and $1$ respectively(see fig \ref{Cycle smallest trees}). Then if $n$ is even then by \ref{differential} we have that $[T_{1}^+,T_{2}^-] = 0$. If n is odd then, $[T_{1}^+,T_{2}^-]=2$. As a consequence we have that 
				\[H^{\A_2}_0(P_n)=\mathbb{Z} \oplus \mathbb{Z}  \text{ if n is even }
				\]
				\[
				H^{\A_2}_0(P_n)=\mathbb{Z} \text{ if n is odd}
				\]
				\begin{figure}
					\centering
					\resizebox{0.6\textwidth}{!}{
                \begin{tikzpicture}
						\tikzstyle{new style 0}=[circle, fill=black, minimum size=6pt, inner sep=2pt]
						\tikzstyle{new style 1}=[circle, fill=red, minimum size=6pt, inner sep=2pt]
						\tikzstyle{none}=[circle, draw=none, minimum size=6pt, inner sep=2pt]
						
						\node [style=new style 1] (0) at (-10, 6) {};
						\node [style=new style 0] (1) at (-8, 7) {};
						\node [style=new style 0] (2) at (-6, 6) {};
						\node [style=new style 0] (3) at (-10, 4) {};
						\node [style=none] (4) at (-6, 4) {};
						\node [style=none] (5) at (-6, 2) {};
						\node [style=new style 0] (7) at (-8, 1) {};
						\node [style=new style 0] (8) at (-10, 2) {};
						\node [style=none] (9) at (-9.25, 7) {$1$};
						\node [style=none] (10) at (-6.5, 7) {$2$};
						\node [style=none] (12) at (-9.25, 1) {$n-2$};
						\node [style=none] (13) at (-10.6, 3) {$n-1$};
						\node [style=none] (15) at (-9.25, 5) {$d$};
						
						\node [style=new style 1] (16) at (0.5, 6) {};
						\node [style=new style 0] (17) at (2.5, 7) {};
						\node [style=new style 0] (18) at (4.5, 6) {};
						\node [style=new style 0] (19) at (0.5, 4) {};
						\node [style=none] (20) at (4.5, 4) {};
						\node [style=none] (21) at (4.5, 2) {};
						\node [style=new style 0] (22) at (2.5, 1) {};
						\node [style=new style 0] (23) at (0.5, 2) {};
						\node [style=none] (24) at (1.25, 5) {$D$};
						\node [style=none] (25) at (1.25, 7) {$1$};
						\node [style=none] (26) at (3.75, 7) {$2$};
						\node [style=none] (27) at (3.75, 1) {$n-3$};
						\node [style=none] (28) at (1, 1) {$n-2$};
						\node [style=none] (29) at (-0.2, 3) {$n-1$};
						\node [style=none] (30) at (-0.5, 5) {$n$};
						
						\draw (0) to (1);
						\draw (1) to (2);
						\draw (2) to (4.center);
						\draw (4.center) to (5.center);
						\draw (5.center) to (7);
						\draw (8) to (7);
						\draw (3) to (8);
						
						\draw (16) to (17);
						\draw (17) to (18);
						\draw (18) to (20.center);
						\draw (20.center) to (21.center);
						\draw (21.center) to (22);
						\draw (16) to (19);
						\draw (23) to (22);
						
						\draw [thick, ->] (-5, 4) -- (-2, 4) node[midway, above] {$\partial_{ST}$};
						
					\end{tikzpicture}
                    
                    }
					
					\caption{The trees $T_1$ and $T_2$}
					\label{Cycle smallest trees}
				\end{figure}
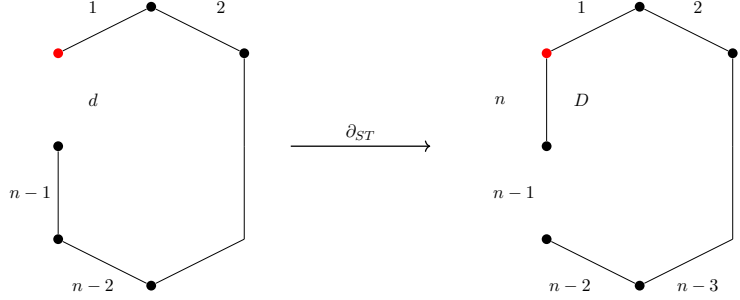
				\textbf{For $n-i \geq 2$ and is even}: The tree that corresponds to critical states at the $i$'th homological grading is $T_i=[1,2, \cdots,n-i-1, \widehat{n-i},n-i+1, \cdots n]$, here the $\widehat{n-i}$ denotes the tree obtained by removal of $n-i$'th ordered edge. Let $T_{i-1}, T_{i+1}$ be that trees corresponding to the critical states at the $i-1$'th and $i+1$'th grading. Then  by Lemma \ref{differential} it follows that, $[T_{i-1}^+,T_{i}^-]=\pm 2$ and $[T_{i}^+,T_{i+1}^-]=0$. Thus, we have 
				$$H^{\A_2}_i(P_n)= \mathbb{Z}_2 \oplus \mathbb{Z}$$
				\textbf{For $n-i \geq 2$ and is odd}: Similarly as above we can easily verify that, $[T_{i}^+,T_{i+1}^-]=\pm 2$. Hence we have,
				$$
				H^{\A_2}_i(P_n)= \mathbb{Z}
				$$
			\end{example}
			
			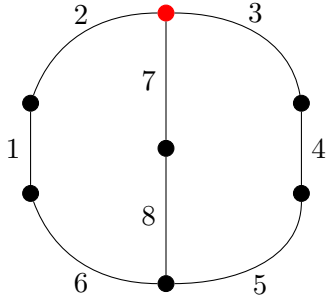
\begin{figure}
				\centering
				\begin{tikzpicture}[vertex/.style={circle, draw, inner sep=0pt, minimum size=6pt, fill=black}, edge/.style={draw}, scale=1.2]
					\node[vertex,fill=red,color=red] (a) at (0,2) {};
					\node[vertex] (b) at (0,0.5) {};
					\node[vertex] (d) at (0,-1) {};
					\node[vertex] (e) at (-1.5,1) {};
					\node[vertex] (f) at (-1.5,0) {};
					\node[vertex] (g) at (1.5,0) {};
					\node[vertex] (h) at (1.5,1) {};
					\path[edge] (a) to node[left] {$7$} (b);
					\path[edge] (b) to node[left] {$8$} (d);
					\path[edge] (a) to[out=180,in=70] node[above] {$2$} (e);
					\path[edge] (e) to node[left] {$1$} (f);
					\path[edge] (f) to[out=290,in=180] node[below] {$6$} (d);
					\path[edge] (a) to[out=0,in=100]node[above]{3} (h);
					\path[edge] (g) to[out=270,in=0] node[below] {$5$} (d);
					\path[edge] (g) to node[right] {$4$} (h);
				\end{tikzpicture}
				\caption{The Graph G with a fixed ordering on its edges}
				\label{mainexample1}
			\end{figure}
			\begin{example}
				Let us compute the spanning tree homology of the graph $G$ given in \ref{mainexample1} with a fixed ordering on its edges. Then all possible NBC spanning trees of $G$ are listed in table \ref{NBC spanning trees}. The spanning tree chain complex of $G$ is given in \ref{chain complex table for G}. The computations of the incidences of the spanning tree differential are given in table \ref*{Incidence table for G}. Observe that all the torsions that arrive in the homology groups come from the incidences $[\partial_{ST}(T^+):{T'}^-]$, for NBC spanning trees $T,T'$ of $G$. One such pictorial description of the incidences from which torsion appears is given below:
				
				\begin{table}
					
					\centering
					\resizebox{0.8\textwidth}{!}
					{ 
						\begin{tabular}{|c|c|c|c|}
							\hline
							\begin{tikzpicture}[vertex/.style={circle, draw, inner sep=0pt, minimum size=6pt, fill=black}, edge/.style={draw}, scale=1.2]
								\node[vertex,fill=red,color=red] (a) at (0,2) {};
								\node[vertex] (b) at (0,0.5) {};
								\node[vertex] (d) at (0,-1) {};
								\node[vertex] (e) at (-1.5,1) {};
								\node[vertex] (f) at (-1.5,0) {};
								\node[vertex] (g) at (1.5,0) {};
								\node[vertex] (h) at (1.5,1) {};
								\path[edge] (a) to node[left] {$7$} (b);
								\path[edge] (a) to[out=180,in=70] node[above] {$2$} (e);
								\path[edge] (e) to node[left] {$1$} (f);
								\path[edge] (a) to[out=0,in=100]node[above]{$3$} (h);
								\path[edge] (g) to[out=270,in=0] node[below] {$5$} (d);
								\path[edge] (g) to node[right] {$4$} (h);
								\node at (0,-2) {$T_1, W(T_1)=LLLLLdLd$};
							\end{tikzpicture} &	\begin{tikzpicture}[vertex/.style={circle, draw, inner sep=0pt, minimum size=6pt, fill=black}, edge/.style={draw}, scale=1.2]
								\node[vertex,fill=red,color=red] (a) at (0,2) {};
								\node[vertex] (b) at (0,0.5) {};
								\node[vertex] (d) at (0,-1) {};
								\node[vertex] (e) at (-1.5,1) {};
								\node[vertex] (f) at (-1.5,0) {};
								\node[vertex] (g) at (1.5,0) {};
								\node[vertex] (h) at (1.5,1) {};
								\path[edge] (b) to node[left] {$8$} (d);
								\path[edge] (a) to[out=180,in=70] node[above] {$2$} (e);
								\path[edge] (e) to node[left] {$1$} (f);
								\path[edge] (a) to[out=0,in=100]node[above]{3} (h);
								\path[edge] (g) to[out=270,in=0] node[below] {$5$} (d);
								\path[edge] (g) to node[right] {$4$} (h);
								\node at (0,-2) {$T_2,W(T_2)-LLLLLddD$};
							\end{tikzpicture}
							& 	\begin{tikzpicture}[vertex/.style={circle, draw, inner sep=0pt, minimum size=6pt, fill=black}, edge/.style={draw}, scale=1.2]
								\node[vertex,fill=red,color=red] (a) at (0,2) {};
								\node[vertex] (b) at (0,0.5) {};
								\node[vertex] (d) at (0,-1) {};
								\node[vertex] (e) at (-1.5,1) {};
								\node[vertex] (f) at (-1.5,0) {};
								\node[vertex] (g) at (1.5,0) {};
								\node[vertex] (h) at (1.5,1) {};
								\path[edge] (a) to node[left] {$7$} (b);
								\path[edge] (a) to[out=180,in=70] node[above] {$2$} (e);
								\path[edge] (e) to node[left] {$1$} (f);
								\path[edge] (f) to[out=290,in=180] node[below] {$6$} (d);
								\path[edge] (a) to[out=0,in=100]node[above]{$3$} (h);
								\path[edge] (g) to node[right] {$4$} (h);
								\node at (0,-2) {$T_3,W(T_3)=LLLLdDLd$};
							\end{tikzpicture}
							&
							\begin{tikzpicture}[vertex/.style={circle, draw, inner sep=0pt, minimum size=6pt, fill=black}, edge/.style={draw}, scale=1.2]
								\node[vertex,fill=red,color=red] (a) at (0,2) {};
								\node[vertex] (b) at (0,0.5) {};
								\node[vertex] (d) at (0,-1) {};
								\node[vertex] (e) at (-1.5,1) {};
								\node[vertex] (f) at (-1.5,0) {};
								\node[vertex] (g) at (1.5,0) {};
								\node[vertex] (h) at (1.5,1) {};
								\path[edge] (b) to node[left] {$8$} (d);
								\path[edge] (a) to[out=180,in=70] node[above] {$2$} (e);
								\path[edge] (e) to node[left] {$1$} (f);
								\path[edge] (f) to[out=290,in=180] node[below] {$6$} (d);
								\path[edge] (a) to[out=0,in=100]node[right]{$3$} (h);
								\path[edge] (h) to node[right]{$4$} (g);
								\node at (0,-2) {$T_4,W(T_4)=LLLLdDdD$};
							\end{tikzpicture}
							
							\\
							\hline
							\begin{tikzpicture}[vertex/.style={circle, draw, inner sep=0pt, minimum size=6pt, fill=black}, edge/.style={draw}, scale=1.2]
								\node[vertex,fill=red,color=red] (a) at (0,2) {};
								\node[vertex] (b) at (0,0.5) {};
								\node[vertex] (d) at (0,-1) {};
								\node[vertex] (e) at (-1.5,1) {};
								\node[vertex] (f) at (-1.5,0) {};
								\node[vertex] (g) at (1.5,0) {};
								\node[vertex] (h) at (1.5,1) {};
								\path[edge] (a) to node[left] {$7$} (b);
								\path[edge] (b) to node[left] {$8$} (d);
								\path[edge] (a) to[out=180,in=70] node[above] {$2$} (e);
								\path[edge] (e) to node[left] {$1$} (f);
								\path[edge] (a) to[out=0,in=100]node[above]{3} (h);
								\path[edge] (g) to node[right] {$4$} (h);
								\node at (0,-2) {$T_5,W(T_5)=LLLLddDD$};
							\end{tikzpicture} & \begin{tikzpicture}[vertex/.style={circle, draw, inner sep=0pt, minimum size=6pt, fill=black}, edge/.style={draw}, scale=1.2]
								\node[vertex,fill=red,color=red] (a) at (0,2) {};
								\node[vertex] (b) at (0,0.5) {};
								\node[vertex] (d) at (0,-1) {};
								\node[vertex] (e) at (-1.5,1) {};
								\node[vertex] (f) at (-1.5,0) {};
								\node[vertex] (g) at (1.5,0) {};
								\node[vertex] (h) at (1.5,1) {};
								\path[edge] (a) to node[left] {$7$} (b);
								\path[edge] (a) to[out=180,in=70] node[above] {$2$} (e);
								\path[edge] (e) to node[left] {$1$} (f);
								\path[edge] (f) to[out=290,in=180] node[below] {$6$} (d);
								\path[edge] (a) to[out=0,in=100]node[above]{3} (h);
								\path[edge] (g) to[out=270,in=0] node[below] {$5$} (d);
								\node at (0,-2) {$T_6,W(T_6)=LLLdDDLd$};
							\end{tikzpicture} 
							&	\begin{tikzpicture}[vertex/.style={circle, draw, inner sep=0pt, minimum size=6pt, fill=black}, edge/.style={draw}, scale=1.2]
								\node[vertex,fill=red,color=red] (a) at (0,2) {};
								\node[vertex] (b) at (0,0.5) {};
								\node[vertex] (d) at (0,-1) {};
								\node[vertex] (e) at (-1.5,1) {};
								\node[vertex] (f) at (-1.5,0) {};
								\node[vertex] (g) at (1.5,0) {};
								\node[vertex] (h) at (1.5,1) {};
								\path[edge] (b) to node[left] {$8$} (d);
								\path[edge] (a) to[out=180,in=70] node[above] {$2$} (e);
								\path[edge] (e) to node[left] {$1$} (f);
								\path[edge] (f) to[out=290,in=180] node[below] {$6$} (d);
								\path[edge] (a) to[out=0,in=100]node[above]{3} (h);
								\path[edge] (g) to[out=270,in=0] node[below] {$5$} (d);
								\node at (0,-2) {$T_7,W(T_7)=LLLdDDdD$};
							\end{tikzpicture}
							&	\begin{tikzpicture}[vertex/.style={circle, draw, inner sep=0pt, minimum size=6pt, fill=black}, edge/.style={draw}, scale=1.2]
								\node[vertex,fill=red,color=red] (a) at (0,2) {};
								\node[vertex] (b) at (0,0.5) {};
								\node[vertex] (d) at (0,-1) {};
								\node[vertex] (e) at (-1.5,1) {};
								\node[vertex] (f) at (-1.5,0) {};
								\node[vertex] (g) at (1.5,0) {};
								\node[vertex] (h) at (1.5,1) {};
								\path[edge] (a) to node[left] {$7$} (b);
								\path[edge] (b) to node[left] {$8$} (d);
								\path[edge] (a) to[out=180,in=70] node[above] {$2$} (e);
								\path[edge] (e) to node[left] {$1$} (f);
								\path[edge] (a) to[out=0,in=100]node[above]{$3$} (h);
								\path[edge] (g) to[out=270,in=0] node[below] {$5$} (d);
								\node at (0,-2) {$T_8,W(T_8)=LLLdDdDD$};
								
							\end{tikzpicture}
							\\
							\hline
							\begin{tikzpicture}[vertex/.style={circle, draw, inner sep=0pt, minimum size=6pt, fill=black}, edge/.style={draw}, scale=1.2]
								\node[vertex,fill=red,color=red] (a) at (0,2) {};
								\node[vertex] (b) at (0,0.5) {};
								\node[vertex] (d) at (0,-1) {};
								\node[vertex] (e) at (-1.5,1) {};
								\node[vertex] (f) at (-1.5,0) {};
								\node[vertex] (g) at (1.5,0) {};
								\node[vertex] (h) at (1.5,1) {};
								\path[edge] (a) to node[left] {$7$} (b);
								\path[edge] (a) to[out=180,in=70] node[above] {$2$} (e);
								\path[edge] (e) to node[left] {$1$} (f);
								\path[edge] (f) to[out=290,in=180] node[below] {$6$} (d);
								\path[edge] (g) to[out=270,in=0] node[below] {$5$} (d);
								\path[edge] (g) to node[right] {$4$} (h);
								\node at (0,-2) {$T_9,W(T_9)=LLdDDDLd$};
							\end{tikzpicture}&
							
							\begin{tikzpicture}[vertex/.style={circle, draw, inner sep=0pt, minimum size=6pt, fill=black}, edge/.style={draw}, scale=1.2]
								\node[vertex,fill=red,color=red] (a) at (0,2) {};
								\node[vertex] (b) at (0,0.5) {};
								\node[vertex] (d) at (0,-1) {};
								\node[vertex] (e) at (-1.5,1) {};
								\node[vertex] (f) at (-1.5,0) {};
								\node[vertex] (g) at (1.5,0) {};
								\node[vertex] (h) at (1.5,1) {};
								\path[edge] (b) to node[left] {$8$} (d);
								\path[edge] (e) to node[left] {$1$} (f);
								\path[edge] (f) to[out=290,in=180] node[below] {$6$} (d);
								\path[edge] (a) to[out=180,in=70] node[above] {$2$} (e);
								\path[edge] (g) to[out=270,in=0] node[below] {$5$} (d);
								\path[edge] (g) to node[right] {$4$} (h);
								\node at (0,-2) {$T_{10},W(T_{10})=LLdDDDdD$};
							\end{tikzpicture}
							&	\begin{tikzpicture}[vertex/.style={circle, draw, inner sep=0pt, minimum size=6pt, fill=black}, edge/.style={draw}, scale=1.2]
								\node[vertex,fill=red,color=red] (a) at (0,2) {};
								\node[vertex] (b) at (0,0.5) {};
								\node[vertex] (d) at (0,-1) {};
								\node[vertex] (e) at (-1.5,1) {};
								\node[vertex] (f) at (-1.5,0) {};
								\node[vertex] (g) at (1.5,0) {};
								\node[vertex] (h) at (1.5,1) {};
								\path[edge] (a) to node[left] {$7$} (b);
								\path[edge] (e) to node[left] {$1$} (f);
								\path[edge] (f) to[out=290,in=180] node[below] {$6$} (d);
								\path[edge] (a) to[out=0,in=100]node[above]{$3$} (h);
								\path[edge] (g) to[out=270,in=0] node[below] {$5$} (d);
								\path[edge] (g) to node[right] {$4$} (h);
								\node at (0,-2) {$T_{11},W(T_{11})=LdDDDDLd$};
							\end{tikzpicture} & \begin{tikzpicture}[vertex/.style={circle, draw, inner sep=0pt, minimum size=6pt, fill=black}, edge/.style={draw}, scale=1.2]
								\node[vertex,fill=red,color=red] (a) at (0,2) {};
								\node[vertex] (b) at (0,0.5) {};
								\node[vertex] (d) at (0,-1) {};
								\node[vertex] (e) at (-1.5,1) {};
								\node[vertex] (f) at (-1.5,0) {};
								\node[vertex] (g) at (1.5,0) {};
								\node[vertex] (h) at (1.5,1) {};
								\path[edge] (b) to node[left] {$8$} (d);
								\path[edge] (e) to node[left] {$1$} (f);
								\path[edge] (f) to[out=290,in=180] node[below] {$6$} (d);
								\path[edge] (a) to[out=0,in=100]node[above]{3} (h);
								\path[edge] (g) to[out=270,in=0] node[below] {$5$} (d);
								\path[edge] (g) to node[right] {$4$} (h);
								\node at (0,-2) {$T_{12},W(T_{12})=LdDDDDdD$};
							\end{tikzpicture} 
							\\
							\hline
							\begin{tikzpicture}[vertex/.style={circle, draw, inner sep=0pt, minimum size=6pt, fill=black}, edge/.style={draw}, scale=1.2]
								\node[vertex,fill=red,color=red] (a) at (0,2) {};
								\node[vertex] (b) at (0,0.5) {};
								\node[vertex] (d) at (0,-1) {};
								\node[vertex] (e) at (-1.5,1) {};
								\node[vertex] (f) at (-1.5,0) {};
								\node[vertex] (g) at (1.5,0) {};
								\node[vertex] (h) at (1.5,1) {};
								\path[edge] (b) to node[left] {$8$} (d);
								\path[edge] (e) to node[left] {$1$} (f);
								\path[edge] (a) to node[left] {$7$} (b);
								
								\path[edge] (f) to[out=290,in=180] node[below] {$6$} (d);
								\path[edge] (a) to[out=0,in=100]node[above]{$3$} (h);
								\path[edge] (g) to node[right] {$4$} (h);
								\node at (0,-2) {$T_{13},W(T_{13})=LdLLdDDD$};
							\end{tikzpicture}
							& \begin{tikzpicture}[vertex/.style={circle, draw, inner sep=0pt, minimum size=6pt, fill=black}, edge/.style={draw}, scale=1.2]
								\node[vertex,fill=red,color=red] (a) at (0,2) {};
								\node[vertex] (b) at (0,0.5) {};
								\node[vertex] (d) at (0,-1) {};
								\node[vertex] (e) at (-1.5,1) {};
								\node[vertex] (f) at (-1.5,0) {};
								\node[vertex] (g) at (1.5,0) {};
								\node[vertex] (h) at (1.5,1) {};
								\path[edge] (a) to node[left] {$7$} (b);
								\path[edge] (b) to node[left] {$8$} (d);
								\path[edge] (e) to node[left] {$1$} (f);
								\path[edge] (f) to[out=290,in=180] node[below] {$6$} (d);
								\path[edge] (a) to[out=0,in=100]node[above]{3} (h);
								\path[edge] (g) to[out=270,in=0] node[below] {$5$} (d);
								\node at (0,-2) {$T_{14},W(T_{14})=LdLdDDDD$};
							\end{tikzpicture}
							& & \\
							\hline
						\end{tabular}
					}
					\vspace{0.3cm}
					\caption{NBC Spanning trees of the graph G with their activity word function ordered according to the \textit{lexicographic}-ordering}
					\label{NBC spanning trees}
				\end{table}
				\begin{table}
					\setlength{\tabcolsep}{2mm}
					\def\arraystretch{1.5}
					\centering
					\begin{tabular}{|c|c|c|c|c|c|c|}
						\hline
						\backslashbox{$j$}{$i$} & $0$ & $1$ & $2$ & $3$ & $4$ & $5$ \\ \hline
						$7$  & $T_1^-$ &  &  & & &  \\ \hline
						$6$ & $T_1^+$& $T_2^-$,$T_3^-$& & & &  \\ \hline
						$5$ & &$T_2^+$,$T_3^+$ &$T_4^-$,$T_5^-$,$T_6^-$ & & & \\ \hline
						$4$ & & &$T_4^+$,$T_5^+$,$T_6^+$ &$T_7^-$,$T_9^-$,$T_ 8^-, T_{13}^-$ & &  \\ \hline
						$3$ & & & &$T_9^+$,$T_8^+$,$T_7^+,T_{13}^+$ &$T_{14}^-,T_{11}^-, T_{10}^-$ & \\ \hline
						$2$ & & &  & &$T_{14}^+,T_{11}^+,T_{10}^+$ & $T_{12}^-$  \\ \hline 
						$1$ & & & & & &$T_{12}^+$ \\ \hline
						
					\end{tabular}
					\vspace{0.3cm}
					\caption{Generators of the spanning tree complex of $G$}
					\label{chain complex table for G}
				\end{table}

				\begin{table}
					\centering
					\setlength{\tabcolsep}{0.01\textwidth}
					\def\arraystretch{1.45}
					\begin{tabular}{|l|l|}
						\hline
						$T_1^- \mapsto 0$ & $T_1^+ \mapsto -2 T_2^-$ \\ \hline
						$T_2^- \mapsto 0$,$T_3^- \mapsto 0$ & $T_2^+ \mapsto 0 $, $T_3^+ \mapsto 2T_6^-$\\ \hline
						$T_4^- \mapsto 0$, $T_5^- \mapsto 0$, $T_6^- \mapsto 0$ & \begin{tabular}{l}
							$T_4^+ \mapsto 2T_7^- - 2T_{13}^-$\\ $T_5^+ \mapsto 2T_8^- + 2T_{13}^-$,  $T_6^+ \mapsto 0$
							
						\end{tabular}\\ \hline
						\begin{tabular}{l}
							$T_8^- \mapsto 0$, $T_9^- \mapsto 0$\\
							$T_7^- \mapsto 0, T_{13}^- \mapsto 0$ 
						\end{tabular} & 
						\begin{tabular}{l}
							$T_8^+ \mapsto -2T_{14}^-$, $T_9^+ \mapsto 2T_{11}^-$\\
							$T_7^+ \mapsto 2T_{14}^-$,$T_{13}^+ \mapsto 2T_{14}^- $
							
						\end{tabular} \\ \hline
						$T_{14}^- \mapsto 0$, $T_{11}^- \mapsto 0$, $T_{10}^- \mapsto 0$ & $T_{14}^+ \mapsto 0 $, $T_{11}^+ \mapsto 0, T_{10}^+ \mapsto 2T_{12}^-$ \\ \hline
						$T_{12}^- \mapsto 0$ & $T_{12}^+ \mapsto 0$ \\ \hline
					\end{tabular}
					\vspace{0.3cm}
					\caption{Incidence Table for $CST(G)$}
					\label{Incidence table for G}
				\end{table}
				\begin{figure}[!tbph]
					\centering
					\resizebox{0.45\textwidth}{!}{\includesvg{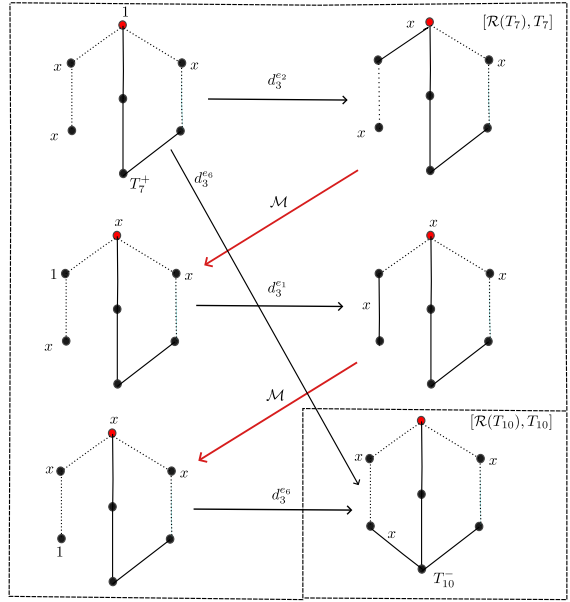}} 
					\caption{Two Alternating paths between $T_{7}^+$ to $T_{10}^-$ giving rise to a torsion in the homology}
					\label{fig:alternatingpathex1}
				\end{figure}
				\begin{table}
					\centering
					\setlength{\tabcolsep}{0.01\textwidth}
					\def\arraystretch{1.45}
					\begin{tabular}{|c|c|c|c|c|c|c|}
						\hline
						homological grading $i$ &\hspace{0.4 cm}  0 \hspace{0.4 cm} & 1 & 2 & 3 & 4 & 5 \\
						\hline
						$\HST{i}{*}$ & $\mathbb{Z}$ & $\mathbb{Z}^2 \oplus \mathbb{Z}_2$ & $\mathbb{Z}^3 \oplus \mathbb{Z}_2$ & $\mathbb{Z}^4 \oplus \mathbb{Z}_2^2$ & $\mathbb{Z}^3 \oplus \mathbb{Z}_2^2$ & $\mathbb{Z}\oplus \mathbb{Z}_2$\\
						\hline				
						
					\end{tabular}
					\vspace{0.3 cm}
					\caption{Homology table for $CST(G)$}
					\label{homology table 1}
				\end{table}
			\end{example}  
            
\end{section}

		\begin{section}{Spanning tree model for chromatic homology over $\A_m$ algebra and its applications }  
		
		Throughout the section we will denote the chromatic complex with underlying algebra $\A_m$  by $\Chm{*}{*}$ and its homology will be denoted by $\HChm{*}{*}$. We will use the same technique to provide acyclic matching on the Hasse diagram of $\Chm{*}{*}$ which was discussed in subsections \ref{Matching on tree} and \ref{matching on G}.
		
		It is not hard to see that the acyclic matching $\M_G$ on the chromatic complex over the $\A_2$ algebra can be generalized to give an acyclic matching $\M_{G,m}$ for the chromatic complex over the $\A_m$ algebra for any $m \geq 2$. We provide a sketch for completeness.

		\begin{thm}{\label{Matching on G am}}
			Let $G$ be a connected simple graph with a fixed ordering on the edge set. For any $m \geq 2$, there exists an acyclic matching matching $\M_{G,m}$ on the subcomplex $\nbc{}{}$ of $\Chm{*}{*}$ generated by enhanced NBC spanning subgraphs of any graph $G$.
            Let $v$ be the number of vertices of the graph $G$. Moreover, there are , $m{(m-1)}^{|C( \IN(T))|-1}$ number of critical points for any NBC spanning tree $T$ of $G$ where $|C( \IN(T))|$ denotes the number of connected components of the subgraph $\IN(T)$.
			
		\end{thm}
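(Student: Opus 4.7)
The plan is to mirror the two-step construction of Section 3, adapting the matching to accommodate the $m$ choices of enhancement per connected component in $\A_m$. First I would construct an acyclic matching $\M_{T,m}$ on the chromatic complex of any rooted tree $T$ over $\A_m$ by induction on the number of edges $n$. For the base case $n=1$, pair each of the $m$ states $(\epsilon(v_d)=x^i,\,\epsilon(v_1)=1)$ at grading $0$ with the unique grading-$1$ state obtained by including the edge; the remaining $m(m-1)$ states, where $\epsilon(v_1) \in \{x, x^2, \dots, x^{m-1}\}$, are critical. For the inductive step, pick the smallest-ordered leaf edge $e_{i_m}$ with leaf $v_{i_m}$, pair $(H,\epsilon) \in \mathcal{V}_p$ with $(H',\epsilon') \in \mathcal{V}_q$ exactly when $\epsilon'(C'_{v_{i_m}}) = 1$ (this is a bijection because multiplication by $1$ is the identity on enhancements), and recurse on the residual set $\mathcal{V}_r$, which decomposes into $m-1$ copies of the chromatic complex of $T\setminus\{v_{i_m}\}$ indexed by the enhancement $\epsilon(v_{i_m}) \in \{x, \dots, x^{m-1}\}$. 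A straightforward induction then yields $(m-1)\cdot m(m-1)^{v(T)-2} = m(m-1)^{v(T)-1}$ critical states in homological grading $0$.

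The main obstacle is establishing acyclicity of $\M_{T,m}$, which I would prove by generalizing Lemma \ref{acyclicity}. Suppose for contradiction that a directed cycle exists in $G^{\M_{T,m}}$, and distinguish two cases according to whether some differential edge $e_i$ coincides with the subsequent matching edge $e'_i$. Case 1, where $e_i \neq e'_i$ for all $i$, transfers verbatim from the $\A_2$ proof since it relies only on the minimality of $e'_1$ in the matching order, not on the underlying algebra. For Case 2 I would introduce the weighted monovariant
\[ D_H := \sum_{C \in C(H)} \deg(\epsilon(C)) \cdot d(v_d, C), \qquad \deg(x^i) := i, \]
and check that $D_H$ is invariant under any matching arrow (the singleton being un-merged carries enhancement $1$, contributing zero, while $C_{\operatorname{adj}}$ sits at the same distance from $v_d$ as the merged component) and weakly non-increasing under any differential (the merged component inherits the smaller of the two distances while total degree is preserved). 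In a Case 2 step the configuration $(x^a, x^b)$ on $(C_i, C'_i)$ becomes $(x^{a+b}, 1)$; the leaf-side component $C'_i = \{v_{i_k}\}$ lies strictly farther from the root than $C_i$, so $D_H$ strictly drops by $\deg(\epsilon(C'_i)) \cdot (d(v_d, C'_i) - d(v_d, C_i)) \geq 1$. Here $\deg(\epsilon(C'_i)) \geq 1$ is forced, because if $\epsilon(C'_i)=1$ then $H_i$ would itself be matched with $d^{e_i}(H_i)$, contradicting the cycle structure in $G^{\M_{T,m}}$. Summing around the cycle produces a strictly negative net change in $D_H$, the required contradiction.

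Finally, to extend to $\nbcm{}{}$, I would repeat Theorem \ref{Matching on G} verbatim. The lexicographic shelling of NBC spanning trees (Proposition \ref{shelling order}) together with Proposition \ref{inactiveedgelemma} partitions $\nbcm{}{}$ into intervals $[\R(T_i), T_i]$, each canonically isomorphic to the chromatic complex of the contracted tree $T'_i$ obtained by collapsing $\IN(T_i)$. Applying $\M_{T'_i, m}$ on each such interval produces exactly $m(m-1)^{v(T'_i)-1} = m(m-1)^{|C(\IN(T_i))|-1}$ critical states per NBC spanning tree $T_i$, since $v(T'_i) = |C(\IN(T_i))|$. Global acyclicity then follows from intra-interval acyclicity established above together with Lemma \ref{crucial}, which prevents any alternating path from crossing into a lexicographically earlier interval.
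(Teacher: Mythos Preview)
Your proposal is correct and follows essentially the same approach as the paper: the same leaf-edge recursion for the tree matching, the same shelling/interval decomposition for the global matching, and the same appeal to Lemma~\ref{crucial} for global acyclicity. Your treatment is in fact more detailed than the paper's on one point: the paper disposes of acyclicity for $\A_m$ with the single sentence ``by a similar argument in Theorem~\ref{acyclicity},'' whereas you spell out the needed generalization by replacing the $\A_2$ monovariant $D_{H}=\sum_{\epsilon(C)=x} d(v_d,C)$ with the degree-weighted version $D_H=\sum_C \deg(\epsilon(C))\cdot d(v_d,C)$ and verifying its behavior under matching and differential arrows; this is exactly the right extension.
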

		\begin{proof}

			Recall that the generators of the chromatic complex $\Chm{*}{*}$ are enhanced spanning subgraphs $(H, \epsilon)$ of a graph $G$, where $H$ is a spanning subgraph of $G$ together with a set-theoretic map
			$\epsilon : C(H) \rightarrow \{1,x, \dots, x^{m-1}\}$, where $C(H)$ is the set of connected components of $H$. Now let us first provide an acyclic matching on the Hasse diagram of $[\R(T),T]$ for each spanning tree $T$ of $G$.\\
			
			Let $T'$ be the tree obtained from $T$ by contracting the internally inactive edges of $T$. Thus, each enhanced spanning subgraphs of $[\R(T),T]$ can be paired off with enhanced spanning subgraphs of $[\R(T'):T']$. We inductively define the matching $\M^T_{G,m}$ based on the number of edges of $T'$ as follows:
			
			\begin{enumerate}
				\item \textbf{Base case:} For a rooted tree $T$ with one edge, let $v_d$ denote the root vertex. The matching in this case is shown in Figure \ref{base_case_matching}.
				
				\begin{figure}[ht]
					\centering
					\begin{tikzpicture}[vertex/.style={circle,fill,inner sep=2pt},scale=0.75]
						\node[vertex] [label=above:$v_d$,label=below:$x^i$] at (0,0) (a) {};
						\node[vertex] [label=above:$v_1$,label=below:$1$] at (2,0) (b) {};
						\draw[->] (3,0.25) -- (4.5,0.25) node[midway,above] {$d$};
						\draw[<-] (3,-0.25) -- (4.5,-0.25) node[midway,below] {$\M$};
						\node[vertex] [label=above:$v_d$] at (5.5,0) (c) {};
						\node[vertex] [label=above:$v_1$] at (7.5,0) (d) {};
						\draw (c) -- (d) node[midway,above] {$x^i$};
					\end{tikzpicture}
					\caption{Base case matching for the $\A_m$ algebra}
					\label{base_case_matching}
				\end{figure}
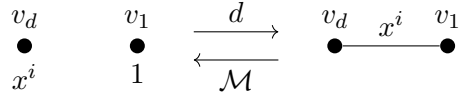
				
				Here, $i \in \{0,\cdots,m-1\}$. The critical enhanced spanning subgraphs in this case are the pairs $\{H_c,\epsilon_c\}$, where $H_c$ is the graph with two isolated vertices $\{v_d,v_1\}$ and $\epsilon_c(v_d) \in \{1,x,\cdots,x^{m-1}\}$ and $\epsilon_c(v_1) \in \{x,\cdots,x^{m-1}\}$. Moreover, it is clear that this matching is acyclic.\\
				
				\item \textbf{Inductive step:} Assume that for any tree with $|E(T)| \leq n-1$, we have the matching $\M^{T_{n-1}}_{G,m}$. Then for any tree $T$ with $|E(T)|=n$, let $v_n$ be the least ordered leaf of $T_n$. Then denote $\mathcal{V}_n$ to be the collection of all enhanced spanning subgraphs of $T_n$ such that each of them has a connected component containing $v_n$ which has a vertex other than $v_n$. For $(H,\epsilon) \in \mathcal{V}_n$, let $H'=H \setminus \{e_n\}$, where $e_n$ is the edge incident to $v_n$ and $\epsilon'(v_n)=1$. So, we pair $(H,\epsilon)$ with $(H',\epsilon')$. For the rest of the enhanced spanning subgraphs not in $\mathcal{V}_n$, we use the matching $\M^{T_{n-1}}_{G,m}$ to pair them off. The matching $\M^{T_n}_{G,m}$ is illustrated in the following Figure \ref{aminduction}.
				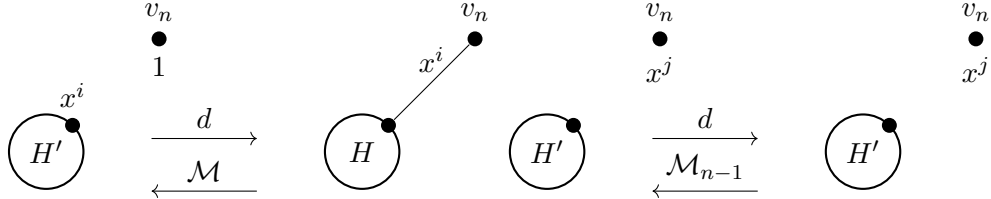
\begin{figure}[ht]
					\centering
					\begin{tikzpicture}[vertex/.style={circle,fill,inner sep=2pt},scale=0.7]
						\node[vertex] [label=above:$x^i$] at (0.5,0.5) (a) {};
						\node[vertex] [label=below:$1$,label=above:$v_n$] (b) [above right=of a] {};
						\node [draw,thick,circle through=(a)] at (0,0) {};
						\node at (0,0) {$H'$} (a);
						\draw[->] (2,0.25) -- (4,0.25) node[midway,above] {$d$};
						\draw[<-] (2,-0.75) -- (4,-0.75) node[midway,above] {$\M$};
						\node[vertex] at (6.5,0.5) (c) {};
						\node[vertex] [label=above:$v_n$] (d) [above right=of c] {};
						\draw (c) -- (d) node[midway,above] {$x^i$};
						\node [draw,thick,circle through=(c)] at (6,0) {};
						\node at (6,0) {$H$} (c);
					\end{tikzpicture}
					\vspace{4mm}
					\begin{tikzpicture}[vertex/.style={circle,fill,inner sep=2pt},scale=0.7]
						\node[vertex] at (0.5,0.5) (a) {};
						\node[vertex] [label=below:$x^j$,label=above:$v_n$] (b) [above right=of a] {};
						\node [draw,thick,circle through=(a)] at (0,0) {};
						\node at (0,0) {$H'$} (a);
						\draw[->] (2,0.25) -- (4,0.25) node[midway,above] {$d$};
						\draw[<-] (2,-0.75) -- (4,-0.75) node[midway,above] {$\M_{n-1}$};
						\node[vertex] at (6.5,0.5) (c) {};
						\node[vertex] [label=above:$v_n$,label=below:$x^j$] (d) [above right=of c] {};
						\node [draw,thick,circle through=(c)] at (6,0) {};
						\node at (6,0) {$H'$} (c);
					\end{tikzpicture}
					\caption{The matching for any tree. Here, $i \in \{ 0,\cdots,m-1 \}$ and $j \in \{ 1,\cdots,m-1 \}$}
					\label{aminduction}
				\end{figure} 
				
				Thus, the critical enhanced spanning subgraphs obtained from $\M^{T_n}_{G,m}$ are pairs $\{(H_c,\epsilon_c)\}$, where $H$ is the graph with $n$ isolated vertices where each such vertex is either a component containing internally inactive edges of $T_n$ or just an isolated vertex, and $\epsilon_c$ is defined as follows:
				
				$$
				\epsilon_c(C):= \begin{cases}
					x^i & v_d \in C, \: i \in \{0,\cdots,m-1\} \\
					x^j & v_d \not\in C, \: j \in \{1,\cdots,m-1\}
				\end{cases}
				$$
				
			\end{enumerate}
			
			Hence there are, $m {(n-1)}^{m-1}$ critical points for a NBC tree. By a similar argument in Theorem \ref{acyclicity}, it can be showed that this matching is acyclic and can be extended to an acyclic matching $\M_{G,m}$ for the chromatic complex over the $\A_m$ algebra following arguments similar to Subsection \ref{matching on G}. This is done, by matching using internally active edges of $T_i$ for enhanced spanning subgraphs in the interval $[\R(T_i),T_i]$. It is then straightforward to see that there are exactly  $m{(m-1)}^{|C( \IN(T))|-1}$ number of critical points for any NBC spanning tree $T$ of $G$.
			
		\end{proof}
		
		Now, we introduce the notation to classify  the critical points of the Morse complex $C^{\A_m}_{\M}(G)$
		\begin{corollary}\label{criticalstates-AN}
			
			Let $T$ be a spanning tree of $G$. Let $C(\IN(T))$ denote the set of components such that each of those consists of internal inactive edges of the tree. There is a distinguished component containing the root $v_d$ which we denote by $R_C$. Then the set of critical states correspond to $T$ are in one to one correspondence with maps $f: C(\IN(T)) \rightarrow \{1,x,\cdots,x^{m-1} \}$ that satisfy $f( C(\IN(T)) \setminus{ R_C}) \subseteq \{x,\cdots,x^{m-1} \}$. Therefore, in $H_{\A_n}^{\M}(G)$ there are $m{(m-1)}^{|C( \IN(T))|-1}$ critical points for each spanning tree $T$ of $G$. 
		\end{corollary}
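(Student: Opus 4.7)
The plan is to derive the corollary directly from the inductive description of the acyclic matching $\M_{G,m}$ given in Theorem \ref{Matching on G am}, by explicitly reading off the critical enhanced spanning subgraphs from each step of the inductive construction. The key observation that drives everything is the partition $\operatorname{NBC} = \bigsqcup_{i} [\R(T_{i}), T_{i}]$ together with Proposition \ref{inactiveedgelemma} identifying $\R(T_{i}) = \IN(T_{i})$; so it suffices to identify the critical states of $\M_{G,m}$ restricted to each interval $[\R(T), T]$ and then sum over NBC spanning trees.

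First I would fix a spanning tree $T$ and consider the contracted tree $T'$ obtained by contracting $\IN(T)$, as in the proof of Theorem \ref{Matching on G am}. The matching on $[\R(T), T]$ is, by construction, the pullback of the inductive leaf-removal matching on the chromatic complex of $T'$, and the inactive edges of $T$ are never touched by the matching: they form a fixed ``skeleton'' in every critical state. Therefore each critical state in $[\R(T), T]$ has underlying spanning subgraph equal to $\IN(T)$, and the connected components of this underlying subgraph are precisely the elements of $C(\IN(T))$. The only remaining data is the enhancement $f : C(\IN(T)) \to \{1, x, \ldots, x^{m-1}\}$.

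Next I would verify the constraint on $f$ by induction on the number of edges of $T'$, mirroring the inductive construction of $\M_{G,m}$. In the base case $|E(T')| = 1$, the matching pairs the state with enhancement $1$ on the leaf $v_{n}$ to the state obtained by filling in the leaf edge (with the root receiving any power of $x$); the critical states are exactly those where every non-root component carries an enhancement in $\{x, \ldots, x^{m-1}\}$, while the root component is free. In the inductive step, by the description given in Theorem \ref{Matching on G am}, whenever a non-root component receives the enhancement $1$, it is matched with the state obtained by deleting the least-ordered leaf edge incident to it; thus no critical state has a non-root component labelled $1$. Conversely, any state with the underlying subgraph $\IN(T)$ and with non-root components labelled in $\{x, \ldots, x^{m-1}\}$ is unmatched, because every potential matching move in the algorithm requires a component with label $1$ that is not the root component.

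The counting then follows immediately: there are $m$ choices for $f(R_{C})$ and $m-1$ choices for each of the remaining $|C(\IN(T))| - 1$ components, giving $m(m-1)^{|C(\IN(T))|-1}$ critical states per spanning tree $T$. The only subtlety I anticipate is being careful that the inductive matching from Theorem \ref{Matching on G am} really does exhaust all pairings involving a non-root component with label $1$, with no critical state slipping through; this is precisely the content of the inductive step and is the one place where one has to trace the construction carefully rather than invoke a black box.
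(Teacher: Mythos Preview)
Your proposal is correct and follows essentially the same approach as the paper. The paper does not supply a separate proof for this corollary; the description of the critical states and the count $m(m-1)^{|C(\IN(T))|-1}$ are already established inside the proof of Theorem~\ref{Matching on G am} via the same inductive leaf-removal argument you outline, and the corollary is merely a restatement of that conclusion.
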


		We denote a generator of $\CSTm{*}{*}$ by $T^{(i_1,i_2,\cdots,i_n)}$, where $n$ is number of connected components of the critical enhanced spanning subgraph in $\Chm{*}{*}$ corresponding to $T^{(i_1,i_2,\cdots,i_n)}$ and each $i_k$ represents the enhancement $x^{i_k}$ of a connected component and they are arranged in the matching order with $x^{i_1}$ being the enhancement of the component containing $v_d$.\\
		
		Let $T^{(i_1,i_2,\cdots,i_n)} \in \CSTm{*}{*}$ and $e \in \IA(T)$, then suppose $(i_k,i_{k+1})$ be the pair such that the components containing both the vertices incident to $e$ have enhancements $x^{i_k}$ and $x^{i_{k+1}}$. We call $e$ to be \textit{non-vanishing} if $(i_k + i_{k+1})< m$.\\
		Let $\widetilde{T}$ denote the tree obtained by contracting internally inactive edges of $T$.  Suppose $P(e)$ be a path in $\widetilde{T}$ beginning with $e$ with $e$ being non-vanishing and $V(P(e))=\{v_1,v_2,\cdots,v_k\}$, then denote $H_\epsilon^{P(e)}$ to be the enhanced spanning subgraph where $H_\epsilon^{P(e)} = H_c^{T^{(i_1,i_2,\cdots,i_{n+1})}}$ as spanning subgraphs and define $\epsilon$ as 
		
		\[
		\epsilon(v)=\begin{cases}
			\epsilon_c(v), & \text{if } v \not\in P\\
			x^{i_k+i_{k+1}}, & \text{if } v=v_1\\
			\epsilon_c(v_{j+1}), & \text{if } v = v_j, \ 2 \leq j \leq n-1\\
			1, & \text{if } v=v_k
		\end{cases}
		\]

        where, $e=\{v_1,v_2\}$ with $v_1$ being closer to $v_d$ than $v_2$.\\
		
		Let $P(e_1,e_2,\cdots,e_k) := P(e_1) \cup P(e_2) \cup \cdots \cup P(e_k)$ where each $P(e_i)$ is a path in $\widetilde{T}$ such that $E(P(e_i)) \cap E(P(e_j)) = \phi$ for all $i \neq j$ and each $P(e_i)$ begins with $e_i$, where each $e_i$ is non-vanishing. Denote $H_{(\epsilon_1,\cdots,\epsilon_k)}^{P(e_1,e_2,\cdots,e_k)}$ to be the enhanced spanning subgraph where, $H_{(\epsilon_1,\cdots,\epsilon_k)}^{T(f_1,\cdots,f_k)} = H_c^{{T^{(i_1,i_2,\cdots,i_{n+1})}}}$ as spanning subgraphs and 
		
		\[
		(\epsilon_1,\cdots,\epsilon_k)(v):=\begin{cases}
			\epsilon_c(v), & \text{if } v \not\in P(e_1,e_2,\cdots,e_k)\\
			\epsilon_i(v), & \text{if } v \in P(e_1,e_2,\cdots,e_k)
		\end{cases}
		\]

        where, each $\epsilon_i$ is the enhancement of $H^{P(e_i)}$.\\

        The fact coming from Proposition \ref{atmost one path} also holds in this scenario too. We now characterize all the enhanced spanning subraphs $H_\epsilon^{T^{(i_1,i_2,\cdots,i_{n+1})}}$ for which there is an alternating path from $H_c^{{T^{(i_1,i_2,\cdots,i_{n+1})}}}$ to $H_\epsilon^{T^{(i_1,i_2,\cdots,i_{n+1})}}$ in $[\R(T),T]$.
		
		\begin{lemma} \label{path-within-tree-am}
			$\A^{\downarrow}(H_c^{{T^{(i_1,i_2,\cdots,i_{n+1})}}},H_\epsilon^{T^{(i_1,i_2,\cdots,i_{n+1})}})=1$ if and only if $H_\epsilon^{T^{(i_1,i_2,\cdots,i_{n+1})}} = H_{(\epsilon_1,\cdots,\epsilon_k)}^{P(e_1,e_2,\cdots,e_k)}$ as enhanced spanning subgraphs for some $P(e_1,e_2,\cdots,e_k)$.
		\end{lemma}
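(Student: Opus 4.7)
The plan is to prove Lemma \ref{path-within-tree-am} by induction on $|E(\widetilde{T})|$ in close analogy to the argument for the $\A_2$ case in Lemma \ref{path within tree}. As a preliminary step I would generalize Proposition \ref{atmost one path} to the $\A_m$ setting, establishing that $\A^{\downarrow}(H_c^{T^{(i_1,\ldots,i_{n+1})}}, H_\epsilon^T) \in \{0,1\}$. The argument transfers verbatim: in the base case $|E(\widetilde{T})|=1$ at most one alternating move is available, and in the inductive step any two putative distinct alternating paths must behave identically relative to the matching arrow attached to the smallest-ordered leaf $v_n$ of $\widetilde{T}$, reducing to the induction hypothesis applied to $\widetilde{T} \setminus \{v_n\}$.

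For the reverse direction I would construct the required alternating path explicitly. Given a configuration $P(e_1,\ldots,e_k)$ of edge-disjoint paths with each $e_i$ non-vanishing, I treat each $P(e_i)$ separately. Writing $P(e_i)$ as $v^{(i)}_1 - v^{(i)}_2 - \cdots - v^{(i)}_{\ell_i}$ with $v^{(i)}_1$ closer to the root, one interleaves differentials along the successive edges with the matching arrows prescribed by $\M_{G,m}$: each differential merges the accumulated enhancement with the next component and the subsequent matching arrow transports that merged enhancement forward while resetting the previous vertex to $1$. Non-vanishing of $e_i$ ensures the initial multiplication is non-zero, and all further multiplications along $P(e_i)$ remain non-zero because they involve a factor of $1$. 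Edge-disjointness of the $P(e_i)$ makes the contributions from distinct paths commute, so they can be concatenated in the order dictated by the recursive construction of $\M_{G,m}$, yielding a single alternating path terminating at $H_{(\epsilon_1,\ldots,\epsilon_k)}^{P(e_1,\ldots,e_k)}$.

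For the forward direction, I would argue by induction on $|E(\widetilde{T})|$ that any alternating path from $H_c^{T^{(i_1,\ldots,i_{n+1})}}$ necessarily produces such a disjoint-paths configuration. Peeling off the smallest leaf $v_n$, the path either uses the matching pair associated with $v_n$ or it does not; in the latter case the entire path lives inside the sub-complex corresponding to $\widetilde{T} \setminus \{v_n\}$ and induction applies, while in the former case the interaction localizes as the terminal segment of exactly one $P(e_i)$ whose endpoint is $v_n$, which can be excised before invoking induction on the shorter path and smaller tree. The hardest step will be ruling out branching at an interior vertex $v$: one must show that the matching rule of $\M_{G,m}$ prevents two distinct internally active edges incident to $v$ from both contributing non-trivially to a single alternating path, since doing so would require the enhancement of $v$'s component to be $1$ at two incompatible stages of the alternation. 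Once this branching phenomenon is excluded, the traversed edges automatically decompose into edge-disjoint paths emanating outward and the disjoint-paths structure follows.
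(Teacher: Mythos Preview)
Your inductive strategy mirrors the paper's own approach, which is a one-sentence reduction to the $\A_2$ argument of Lemma~\ref{path within tree} together with the remark that the individual $P(e_i)$ are to be processed in the matching order. Your sketch of the reverse direction and of the leaf-peeling induction is already more detailed than what the paper provides.

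The gap is in your mechanism for excluding branching. The claim that two active edges at an interior vertex $v$ ``would require the enhancement of $v$'s component to be $1$ at two incompatible stages'' is an $\A_2$ argument: in $\A_2$ every nonzero product needs a factor of $1$. For $\A_m$ with $m>2$ this fails. After the alternating path has carried the label $1$ from $v$ on to one child, $v$ itself carries some $x^j$ with $j\geq 1$, and a later differential along a \emph{different} child edge $vv'$ is admissible whenever $j+\deg\epsilon_c(v')<m$ --- no factor of $1$ is needed. With a leaf ordering in which $v'$ is peeled before the child already labelled $1$, the subsequent $\M^{-1}$ step also goes through, and one lands on a terminal state whose label at $v$ is a product of \emph{shifted} enhancements, not matching the formula that defines $H_{(\epsilon_1,\ldots,\epsilon_k)}^{P(e_1,\ldots,e_k)}$ (a formula that is in any case ambiguous when two $P(e_i)$ share a vertex). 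So the sentence ``once this branching phenomenon is excluded, the traversed edges automatically decompose into edge-disjoint paths'' is precisely where the argument breaks for general $m$. The paper's one-line proof does not confront this point either; the lemma is only ever invoked downstream (Theorem~\ref{Am-differential}) for a single path terminating at an endpoint of the external edge, where the difficulty does not arise.
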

		
		\begin{proof}
			The argument is similar to Lemma \ref{path within tree}. We just need to arrange each $P(e_i)$ in the matching order of $[\R(T),T]$ coming from Theorem \ref{Matching on G am}. See Figure \ref{am-path-example}.
		\end{proof}

          \begin{table}[ht]
					\centering
					\setlength{\tabcolsep}{4mm}
					\def\arraystretch{1.5}
					\resizebox{\textwidth}{!}
					{
						\begin{tabular}{c c c c c c c c c}
							\begin{tikzpicture}[vertex/.style={circle, draw, inner sep=0pt, minimum size=6pt, fill=black}, edge/.style={draw}, baseline=0]
								\node[vertex,fill=red,color=red] [label=above:$x$] (a) at (0,2) {};
								\node[vertex] [label=right:$x^2$] (b) at (0,1) {};
								\node[vertex][label=right:$x$]  (c) at (0,0) {};
								\node[vertex][label=right:$x^2$] (d) at (0,-1) {};
								\node[vertex] [label=left:$x^2$] (e) at (-1.5,1) {};
								\node[vertex][label=left:$x$] (f) at (-1.5,0) {};
								\node[vertex][label=right:$x^2$]  (g) at (1.5,0.5) {};
								\path[edge,dotted] (a) to (b);
								\path[edge,dotted] (b) to (c);
								\path[edge,dotted] (a) to[out=180,in=70] (e);
								\path[edge,dotted] (e) to (f);
								\path[edge,dotted] (f) to[out=290,in=180] (d);
								\path[edge,dotted] (a) to[out=0,in=90] (g);
							\end{tikzpicture} & 
							\Large{$\xrightarrow{d}$} &
							 \begin{tikzpicture}[vertex/.style={circle, draw, inner sep=0pt, minimum size=6pt, fill=black}, edge/.style={draw}, baseline=0]
								\node[vertex,fill=red,color=red] [label=above:$x$] (a) at (0,2) {};
								\node[vertex] [label=right:$x^2$] (b) at (0,1) {};
								\node[vertex][label=right:$x$]  (c) at (0,0) {};
								\node[vertex][label=right:$x^2$] (d) at (0,-1) {};
								\node[vertex] (e) at (-1.5,1) {};
								\node[vertex] (f) at (-1.5,0) {};
								\node[vertex][label=right:$x^2$]  (g) at (1.5,0.5) {};
								\path[edge,dotted] (a) to (b);
								\path[edge,dotted] (b) to (c);
								\path[edge,dotted] (a) to[out=180,in=70] (e);
								\path[edge,red,thick] (e) to node[left]{$x^3$} (f);
								\path[edge,dotted] (f) to[out=290,in=180] (d);
								\path[edge,dotted] (a) to[out=0,in=90] (g);
							\end{tikzpicture} & \Large{$\xrightarrow{\M}$} &
							\begin{tikzpicture}[vertex/.style={circle, draw, inner sep=0pt, minimum size=6pt, fill=black}, edge/.style={draw}, baseline=0]
								\node[vertex,fill=red,color=red] [label=above:$x$] (a) at (0,2) {};
								\node[vertex] [label=right:$x^2$] (b) at (0,1) {};
								\node[vertex][label=right:$x$]  (c) at (0,0) {};
								\node[vertex][label=right:$x^2$] (d) at (0,-1) {};
								\node[vertex] [label=left:$x^3$] (e) at (-1.5,1) {};
								\node[vertex][label=left:$1$] (f) at (-1.5,0) {};
								\node[vertex][label=right:$x^2$]  (g) at (1.5,0.5) {};
								\path[edge,dotted] (a) to (b);
								\path[edge,dotted] (b) to (c);
								\path[edge,dotted] (a) to[out=180,in=70] (e);
								\path[edge,dotted] (e) to (f);
								\path[edge,dotted] (f) to[out=290,in=180] (d);
								\path[edge,dotted] (a) to[out=0,in=90] (g);
							\end{tikzpicture} & \Large{$\xrightarrow{d}$} &
							\begin{tikzpicture}[vertex/.style={circle, draw, inner sep=0pt, minimum size=6pt, fill=black}, edge/.style={draw}, baseline=0]
								\node[vertex,fill=red,color=red] [label=above:$x$] (a) at (0,2) {};
								\node[vertex] [label=right:$x^2$] (b) at (0,1) {};
								\node[vertex][label=right:$x$]  (c) at (0,0) {};
								\node[vertex] (d) at (0,-1) {};
								\node[vertex] [label=left:$x^3$] (e) at (-1.5,1) {};
								\node[vertex] (f) at (-1.5,0) {};
								\node[vertex][label=right:$x^2$]  (g) at (1.5,0.5) {};
								\path[edge,dotted] (a) to (b);
								\path[edge,dotted] (b) to (c);
								\path[edge,dotted] (a) to[out=180,in=70] (e);
								\path[edge,dotted] (e) to (f);
								\path[edge,red,thick] (f) to[out=290,in=180] node[left]{$x^2$} (d);
								\path[edge,dotted] (a) to[out=0,in=90] (g);
							\end{tikzpicture} & \Large{$\xrightarrow{\M}$} & 
							\begin{tikzpicture}[vertex/.style={circle, draw, inner sep=0pt, minimum size=6pt, fill=black}, edge/.style={draw}, baseline=0]
								\node[vertex,fill=red,color=red] [label=above:$x$] (a) at (0,2) {};
								\node[vertex] [label=right:$x^2$] (b) at (0,1) {};
								\node[vertex][label=right:$x$]  (c) at (0,0) {};
								\node[vertex][label=right:$1$] (d) at (0,-1) {};
								\node[vertex] [label=left:$x^3$] (e) at (-1.5,1) {};
								\node[vertex][label=left:$x^2$] (f) at (-1.5,0) {};
								\node[vertex][label=right:$x^2$]  (g) at (1.5,0.5) {};
								\path[edge,dotted] (a) to (b);
								\path[edge,dotted] (b) to (c);
								\path[edge,dotted] (a) to[out=180,in=70] (e);
								\path[edge,dotted] (e) to (f);
								\path[edge,dotted] (f) to[out=290,in=180] (d);
								\path[edge,dotted] (a) to[out=0,in=90] (g);
							\end{tikzpicture} 
					\end{tabular}}
					\captionof{figure}{An alternating path with $\A_4$ as the underlying algebra}
					\label{am-path-example}
				\end{table}  

            We now define an explicit form of the differential $\partial_{ST}^{\A_m}$ in the spanning tree complex $\CSTam$.
		
		\begin{theorem}\label{Am-differential}
			Let $T$ be a NBC spanning tree and $e\in \EN(T)$ with $e=\{v_1,v_2\}$. Let $P_{v_i}$ be the unique path from $v_d$ to $v_i$ in $\widetilde{T}$ for $i=1,2$ and $\operatorname{NV}(P_{v_i})$ be the collection of non-vanishing internally active edges in $P_{v_i}$. Then,

            \begin{equation}\label{am-diff-formula}
            \begin{aligned}
            \partial_{ST}^{\A_m}\left(T^{(i_1,\cdots,i_{n+1})}\right) = \sum_{\substack{e \in \EN(T) \\ \psi_e(T) \in \operatorname{NBC}(G) \\ i(\psi_e(T)) = i(T) + 1}} (-1)^{\xi(T,e)} & \bigg( (-1)^{\xi(T,e)}. \psi_e(T)^{(i_1,\cdots,\epsilon(C_{v_1}).\epsilon(C_{v_2}),\cdots, i_{n+1})} \ + \\
            & \sum_{i=1}^2 \sum_{f \in \operatorname{NV}(P_{v_i})} (-1)^{L_i(f,e)+\xi(T,e)}. \psi_e(T)^{(j_1,\cdots,j_n)} \bigg)
            \end{aligned}
            \end{equation}

            where, $L_i(f,e) := \# \{ g \mid g\in \IA(T), \ g \text{ lies between } f \text{ and } e \text{ with } g \in P_{v_i}\}$ and 
            \[
             j_k = \begin{cases}
                i_k, & \text{ if } C \subset P \neq P_{v_i}\\
                \epsilon(H^{P_{v_i}}), & \text{ if } C \subset P_{v_i}\\
                \epsilon(C_{v_j}),& \text{ if } e \in C_{v_j} \text{ and } j \neq i
            \end{cases}
            \]
		\end{theorem}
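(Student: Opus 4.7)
The plan is to follow the structure of the proof of Theorem~\ref{differential}, extending the $\A_2$ argument to track enhancements by higher powers of $x$. The first step is to establish the $\A_m$ analogue of Lemma~\ref{lextrick}: the incidence $[\partial_{ST}^{\A_m}(T^{(i_1,\ldots,i_{n+1})}), T'^{(j_1,\ldots,j_n)}]$ is nonzero only when $T' = \psi_e(T)$ for some $e \in \EN(T)$ with $\psi_e(T) \in \operatorname{NBC}(G)$ and $i(\psi_e(T)) = i(T)+1$. The argument is identical to the $\A_2$ case: the quantum grading is preserved by the differential, the homological grading increases by one, and Proposition~\ref{main differential lemma} dictates exactly which interval $[\R(T_k), T_k]$ an alternating path can cross into; moreover, the unique way to leave $[\R(T), T]$ while increasing the homological grading is via a final differential along an external edge $e$.

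The second step is to enumerate all alternating paths from $H_c^{T^{(i_1,\ldots,i_{n+1})}}$ to $H_c^{\psi_e(T)^{(j_1,\ldots,j_n)}}$ using Lemma~\ref{path-within-tree-am}. Every such path splits into a within-$[\R(T),T]$ segment specified by a tuple $(P(f_1),\ldots,P(f_k))$ of disjoint $\widetilde{T}$-paths, each beginning with a non-vanishing internally active edge, followed by the crossing step $d_e$. After the within segment the edge set is still $\IN(T)$, and after the crossing it becomes $\IN(T)\cup\{e\}=\IN(\psi_e(T))$; hence criticality of the endpoint in $[\R(\psi_e(T)), \psi_e(T)]$ reduces to requiring every non-root component of $\IN(\psi_e(T))$ to carry an enhancement different from $1$, which forces every $1$ produced by a $P(f_j)$-transport to sit at the root component or be absorbed by $e$. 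This criticality requirement restricts the contributing paths to exactly two families: the direct path with $k=0$, which merges $C_{v_1}$ and $C_{v_2}$ with enhancement $\epsilon(C_{v_1})\cdot\epsilon(C_{v_2})$ and gives the first summand of \eqref{am-diff-formula} whenever this product is nonzero in $\A_m$; and single-transport paths $k=1$ along $P_{v_i}$, where $f\in\operatorname{NV}(P_{v_i})$ and the extension of $P(f)$ proceeds along $P_{v_i}$ all the way to $C_{v_i}$, giving the second summand. Multi-transport configurations with $k\geq 2$ are excluded because at least one stranded $1$ would remain at a non-root component outside the merged component.

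The third step is the sign computation. Each matching edge along the alternating path contributes the weight $-1$ by the algebraic Morse formula, and the final differential on $e$ contributes $(-1)^{\xi(T,e)}$. For the transport associated to $f\in\operatorname{NV}(P_{v_i})$, the number of matching steps equals the number of internally active edges on $P_{v_i}$ strictly between $f$ and $e$, namely $L_i(f,e)$, which together with the crossing sign produces the factor $(-1)^{L_i(f,e)+\xi(T,e)}$ as claimed; the direct term carries only the crossing sign. The principal obstacle will be the characterization in the second step: one must rigorously rule out all multi-transport configurations and carefully describe the post-merge enhancements $(j_1,\ldots,j_n)$ produced by each single-transport path according to the three cases for $j_k$ stated in the theorem. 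In particular, when $f$ lies on the common prefix of $P_{v_1}$ and $P_{v_2}$, its two extensions toward $v_1$ and toward $v_2$ correspond to distinct alternating paths, and both must be counted separately without double-counting; this needs to be verified by direct inspection of the matching data on $[\R(T),T]$.
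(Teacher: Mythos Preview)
Your proposal is correct and follows essentially the same route as the paper's proof: invoke the $\A_m$ analogue of Lemma~\ref{lextrick} to reduce to paths landing in $[\R(\psi_e(T)),\psi_e(T)]$, classify the within-interval segments via Lemma~\ref{path-within-tree-am}, isolate the direct path and the single-transport paths along $P_{v_1}$ and $P_{v_2}$, and compute the sign from the count of matching steps plus the crossing sign $(-1)^{\xi(T,e)}$.

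Where you differ from the paper is in how you justify the restriction to $k\le 1$. The paper's proof handles this in one sentence, invoking preservation of the $j$-grading to pass directly from the general $H^{P(e_1,\ldots,e_k)}$ of Lemma~\ref{path-within-tree-am} to the single-path states $H^{P_{v_i}(f)}$ terminating at $\widetilde v_i$; this is terse to the point of being cryptic, since $j$-grading is preserved along every alternating path regardless of $k$. Your criticality argument---that each transport deposits a $1$ at a non-root component, and at most one such $1$ can be absorbed by the merge along $e$ while still yielding a critical state of $\psi_e(T)$---is the actual mechanism, and spelling it out is an improvement. Note that the ``sit at the root component'' alternative you mention does not in fact occur, since transport endpoints always lie strictly downstream of the root in $\widetilde T$; only absorption by $e$ is available, which immediately forces $k\le 1$ (the case $k=2$ with both endpoints at $v_1,v_2$ gives a merged component labeled $1$ that cannot be the root, as you can check). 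Your observation about edges $f$ on the common prefix of $P_{v_1}$ and $P_{v_2}$ being counted once for each $i$ is also correct and worth recording; the paper's double sum over $i=1,2$ handles this implicitly.
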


        \begin{proof}
           By the virtue of Lemma \ref{lextrick}, it is enough to describe all the alternating paths between $T^{(i_1,\cdots,i_{n+1})}$ and $\psi_e(T)^{(j_1,\cdots,j_n)}$  where $e \in \EN(T)$. Now suppose $\widetilde{v_1}$ and $\widetilde{v_2}$ be the two end vertices of $e$ in $\widetilde{T}$ together with $\epsilon(\widetilde{v_1})=x^{i_k}$ and $\epsilon(\widetilde{v_2})=x^{i_k'}$. Then either $(i_k+i_k') < m$ or $(i_k+i_k')=m$. For the former case, we will have an alternating path of length $1$ from $T^{(i_1,\cdots,i_{n+1})}$ to $\psi_e(T)^{(i_1,\cdots,\epsilon(C_{v_1}).\epsilon(C_{v_2}),\cdots, i_{n+1})}$  which is obtained by adding the edge $e$ to $H_c^{T^{(i_1,\cdots,i_{n+1})}}$ which provides us the initial summand in the equation \ref{am-diff-formula}. The weight of this path is $(-1)^{\xi(T,e)}$. This summand can also be zero if the latter case holds.\\
           
           Now in general, for any alternating path $\Pt$ we have a choice of an enhanced spanning subgraph $H_\epsilon^{T^{(i_1,\cdots,i_{n+1})}}$ for which $\A^{\downarrow}(H_c^{T^{(i_1,\cdots,i_{n+1})}},H_\epsilon^{T^{(i_1,\cdots,i_{n+1})}})=1$, which is characterized by Lemma \ref{path-within-tree-am}. Now since any $\Pt$ preserves the $j-$grading of the enhanced spanning subgraphs in $\Pt$, so we consider only such $H_\epsilon^{T^{(i_1,\cdots,i_{n+1})}}$ for which $H_\epsilon^{T^{(i_1,\cdots,i_{n+1})}} = H^{P_{v_i}(f)}_\epsilon$, where $f \in \operatorname{NV}(P_{v_i})$ and the end vertex of $P_{v_i}(f)$ is either of $\widetilde{v_i}$ for $i=1,2$. Thus, such an alternating path $\Pt$ starts from $H_c^{T^{(i_1,\cdots,i_{n+1})}}$, reaches to $ H^{P_{v_i}(f)}_\epsilon$ where either $\epsilon(\widetilde{v_1})=1$ or $\epsilon(\widetilde{v_2})=1$ and then we add the edge $e$ to obtain $d \left(H^{P_{v_i}(f)}_\epsilon\right)$ whose enhancement are same as $H^{P_{v_i}(f)}_\epsilon$ on all the components except the component containing $e$, which has the enhancement $\epsilon(\widetilde{v_j})$ with $j \neq i$. The weight of $\Pt$ in this case is given by
           \[
            w(\Pt)=(-1)^{L_i(f,e)+\xi(T,e)}
           \]
        \end{proof}

		\subsection{Homological Span of Chromatic Homology}
		
		In this subsection, we provide a proof of a conjecture posed by Sazdanovic and Scofield \cite{sazdanovic2018patternskhovanovlinkchromatic}.
		
		\begin{thm}[Conjecture 43]\cite{sazdanovic2018patternskhovanovlinkchromatic} \label{hspanthm}
			The homological span of chromatic homology over algebra $\A_m$ of any graph $G$ with $v$ vertices and $b$ blocks is equal to $ hspan \left(\HChm{*}{*}\right)=v-b$.
		\end{thm}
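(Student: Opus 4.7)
The plan is to pin down $h_{\max}$ via a block decomposition argument combined with an inductive ear construction, then verify non-vanishing homology at $h_{\min}=0$ and $h_{\max}=v-b-1$ so that the span equals $v-b$. Throughout I use the spanning tree model of Theorem \ref{Matching on G am}, so that the homological grading of any critical state attached to an NBC tree $T$ is $|\IN(T)|=(v-1)-|\IA(T)|$.

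For the upper bound $h_{\max}\le v-b-1$, I would show every NBC spanning tree satisfies $|\IA(T)|\ge b$. Let $e^{(j)}_{\min}$ denote the smallest edge of block $B_j$ in the fixed ordering. Since every cycle of $G$ lies inside a single block, if $e^{(j)}_{\min}\notin T$ then $\cyc(T,e^{(j)}_{\min})\setminus\{e^{(j)}_{\min}\}\subseteq T$ is a broken circuit, contradicting NBC; so $e^{(j)}_{\min}\in T$. Moreover $\cut(T,e^{(j)}_{\min})\subseteq E(B_j)$, because removing a block-edge from $T$ only separates vertices inside that block, and $e^{(j)}_{\min}$ is the minimum of its cut, hence internally active. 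These $b$ distinct active edges force $|\IA(T)|\ge b$.

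For the matching lower bound, I would construct an NBC spanning tree with $|\IA(T)|=b$ by assembling one tree per block. Bridge blocks contribute their unique edge. For each non-bridge block $B_j$, the crux is the lemma: \emph{every 2-connected graph admits an NBC spanning tree of internal activity exactly $1$}. I would prove this by induction on an open ear decomposition $B_j=C_0\cup P_1\cup\cdots\cup P_k$ with $C_0$ a cycle through $e^\ast=e^{(j)}_{\min}$. Within $C_0$, the tree $C_0\setminus\{f_2\}$ (removing the second-smallest edge of $C_0$) is NBC with only $e^\ast$ internally active, every other tree edge being witnessed by $f_2$. For each successive ear $P_i$, I would delete from the partial tree either $\min(P_i)$ or the second-smallest edge of $P_i$, according to whether $\min(P_i)$ exceeds the minimum edge of the current $x_i$-$y_i$ path; a case check shows that this deletion witnesses every remaining edge of $P_i$ as inactive and is not the minimum of the newly formed fundamental cycle (so NBC persists), while activity of previously inactive tree edges cannot increase because cuts only gain witnesses. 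Since $\cut(T,e)=\cut(T_j,e)$ for $e\in T_j$, activity is additive over blocks: $|\IA(T)|=\sum_j|\IA(T_j)|=b$ and $|\IN(T)|=v-b-1$.

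Non-vanishing at the extremes closes the argument. At $h_{\max}$ the outgoing differential vanishes for dimensional reasons; assuming $v>b+1$ (else $G$ is a tree and the statement is immediate), $T_{\max}$ has an internally dead edge, and an $\A_m$-analogue of Theorem \ref{two torsion}—the same alternating-path computation with the coefficient $m$ replacing $2$—produces a $\Z_m$ summand at grade $v-b-1$ via the predecessor $\psi'(T_{\max})$. At $h_{\min}=0$, the critical state $T_1^{(x^{m-1},\ldots,x^{m-1})}$ has $\partial_{ST}^{\A_m}=0$: every merge along an external edge gives $x^{m-1}\cdot x^{m-1}=x^{2(m-1)}=0\in\A_m$, and no internally active edge is non-vanishing since $2(m-1)\ge m$, so no alternating-path contributions arise either; not being a boundary (no grade $-1$ exists), it yields a free $\Z$ summand. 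Combining these gives $h_{\max}-h_{\min}+1=v-b$.

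The main obstacle is the 2-connected-block lemma within Step~2: although activity is monotone nonincreasing under ear attachments, ensuring simultaneously that exactly one edge remains active and that the NBC property persists through every ear requires a careful ordering-dependent choice of which edge to delete from each new ear, driven by the comparison between the edge ordering inside $P_i$ and the ordering of edges already on the connecting path in the partial tree.
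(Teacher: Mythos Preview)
Your overall architecture matches the paper's: bound $|\IA(T)|\ge b$ via the block structure, construct an NBC tree realizing equality via an ear decomposition of each $2$-connected block, and verify non-vanishing at both ends. Your upper-bound argument (that $e^{(j)}_{\min}$ lies in every NBC tree and is internally active there) is in fact cleaner than the paper's somewhat implicit treatment. But you diverge from the paper in two places, and the second creates a real gap.

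First, you attempt the ear-decomposition construction for an \emph{arbitrary} edge ordering and correctly flag the resulting case analysis as the main obstacle. The paper avoids this entirely: since chromatic homology is independent of the ordering, one is free to \emph{choose} an ordering compatible with the ear decomposition (edges in $G_i$ precede edges in $G_j$ for $i<j$), after which deleting the second-smallest edge of $G_0$ and the smallest edge of each ear immediately yields an NBC tree with a single live edge. Your harder route can be made to work (the existence of an NBC basis with internal activity $1$ in a $2$-connected graph is equivalent to positivity of Crapo's beta invariant), but there is no need for it here.

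Second, and more seriously, your non-vanishing argument at $h_{\max}$ invokes an ``$\A_m$-analogue of Theorem~\ref{two torsion}'' obtained by ``the same alternating-path computation with the coefficient $m$ replacing $2$.'' This is not correct: the $\A_m$ spanning-tree differential (Theorem~\ref{Am-differential}) does not have incidences in $\{0,\pm m\}$, and producing an element whose image is $m\cdot T_M^{(\ast)}$ requires a specific linear combination over many enhanced states together with a separate argument that the target is not exact---precisely the content of Theorem~\ref{mtorsionthm}, which is proved independently and is not a one-line modification of the $\A_2$ case. The paper instead uses a much simpler device: the critical state $T_{\max}^{(1,x,\ldots,x)}$ sits in quantum grading $j=b$, whereas every critical state at homological grading $v-b-2$ has at least $b+2$ components and hence $j\ge b+1$; so $T_{\max}^{(1,x,\ldots,x)}$ is a cycle that cannot be a boundary for purely grading reasons. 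Replacing your torsion appeal with this observation closes the gap.
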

		
		It should be noted that they proved this result for the $\A_2$ algebra and conjectured that it should hold for any $\A_m$ algebra based on computational evidence. In their paper, they also showed that as the girth of the graph approaches infinity, the span of Khovanov homology tends to infinity. We reprove this result using the spanning tree model.\\
		
		Before proving the result, we first recall some basic definitions.

		\begin{definition}
			Let $G$ be a graph. Then a block $B$ is a of $G$ is either a maximal $2$-connected subgraph of $G$ or a bridge.
		\end{definition}
		
		\begin{definition}
			Let $i_{min}$ and $i_{max}$ are the minimum and maximum homological grading  of chromatic homology non trivial homology groups for a graph $G$. $hspan \left(\HChm{*}{*}\right):=i_{max}-i_{min}+1$.
		\end{definition}

		\begin{proof}[Proof of Theorem 
			\ref{hspanconjecture}]
			It is well known that $i_{min}=0$. It also follows from the fact that if $T_1$ is the minimum NBC spanning tree with respect to the lexicographic order then $[{T_{1}^{x^{m-1},x^m,\cdots, x^m}}] \in \HSTm{0}{mv-1}$ is a non-zero homology class.\\
			
			We will show that in a graph with $b$ blocks, there exists a a unique NBC spanning tree $T_{max}$ that has maximum homological grading and has exactly $b$ live edges. As a consequence, the homological degree of $T_{max}$ is $v - b - 1$. Now, in the spanning tree complex $\CSTm{*}{*}$, $[{T_{max}}^{1,x,\cdots, x}]$ (i.e., the enhanced spanning tree with enhancement $1$ in the component containing $v_d$ and enhancement $x$ in other components) is obviously a cycle. Furthermore, by Theorem \ref{Am-differential}, $[{T_{max}}^{1,x,\cdots, x}] \in \HSTm{*}{*}$ is a non-zero homology class. This will prove that $ hspan \left(\HChm{*}{*}\right) = v - b$.\\
			
			\begin{enumerate}
				\item Let $B$ be a block that is $2$-connected. Recall that for a two-connected graph, there exists an \textit{ear decomposition}. An ear decomposition of $B$ is a nested sequence $(G_0,G_1,\cdots,G_k)$ of non-separable subgraphs such that 
				
				\begin{enumerate}
					\item $G_0$ is a cycle,
					\item $G_{i+1} = G_i \cup P_i$, where $P_i$ is an ear of $G_i$ in $G$ for all $0 \leq i < k$,
					\item $G_k = B$.
				\end{enumerate}
				
				We fix an ordering of edges in $B$ such that whenever $i<j$, we have $e < f \ \forall e \in G_i$ and $j \in G_j$.\\
				
				Now, we construct the tree $T_{B}$ by removing the second smallest edge from $G_0$ and the  smallest edge from each ear. It is easy to see that $T_B$ is a NBC spanning tree. Furthermore, the only live edge in $T_B$ is the smallest edge in $G_0$.
				
				\item If $B$ is a block that is a bridge, then in any spanning tree of $G$, $B$ will be a live edge. In fact, we define $T_B = B$ in this case.
				
				\item Given a block decomposition $(B_1,\cdots,B_b)$ of a connected graph $G$,
				
				\[ T_{max} := T_{B_1} \cup \cdots \cup T_{B_b}. \]
				
				It is clear that $T_{max}$ is an NBC spanning tree with $b$ live edges. It is also clear that in any NBC spanning tree there should be at least one $1$ live edge as otherwise it would contain a broken circuit. Finally, the uniqueness of $T_{max}$ follows from the fact that any basis exchange (i.e. removing an internal edge $e$ and add an external edge $f$ in $T_{max}$ decreases the homological grading. This concludes our construction. 
			\end{enumerate}
		\end{proof}
		
		Now we will establish a more general theorem about the effect on chromatic homology for operation of vertex gluing.

		\begin{theorem} \label{vertex gluing}
			
			Let $G$ be a connected graph obtained by gluing two connected graphs $G_1$ and $G_2$ in a single vertex. Then, $\HSTam (G) \cong \HSTam(G_1) \otimes \HSTam(G_2)$.
		\end{theorem}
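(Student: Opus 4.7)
The plan is to produce a chain-level decomposition
\[
\CSTam(G) \;\cong\; \CSTam(G_1) \otimes_{\A_m} \CSTam(G_2),
\]
where each factor is regarded as an $\A_m$-module via multiplication on the enhancement of the component of the internally inactive subgraph containing the shared vertex $v^*$, and then pass to homology. I will place the distinguished root $v_d$ in $V(G_1)$, use $v^*$ as the root of $G_2$, and fix an edge ordering on $E(G)$ that puts every edge of $G_1$ before every edge of $G_2$.

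\textbf{Step 1 (NBC trees split).} Since $G_1 \cap G_2 = \{v^*\}$, every cycle of $G$ lies entirely in one of the $G_i$, and hence so does every broken circuit. The map $T \mapsto (T \cap E(G_1), T \cap E(G_2))$ therefore gives a bijection between NBC spanning trees of $G$ and pairs $(T_1, T_2)$ of NBC spanning trees of the $G_i$. Moreover one checks $\cut_G(T, e) = \cut_{G_i}(T_i, e)$ for $e \in T_i$ and $\cyc_G(T, f) = \cyc_{G_i}(T_i, f)$ for external $f \in E(G_i) \setminus E(T_i)$, because deleting $e$ from $T_i$ separates only $G_i$ while $G_{3-i}$ stays attached at the single point $v^*$. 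In particular activity words and the sets $\IA, \IN$ split as disjoint unions across $G_1$ and $G_2$.

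\textbf{Step 2 (Critical states split).} For $T = T_1 \cup T_2$, the components of $\IN(T)$ are those of $\IN(T_1)$ and $\IN(T_2)$ with the components containing $v^*$ merged into one. By Corollary~\ref{criticalstates-AN}, a critical state of $T$ is an enhancement of these components (free on the root component, nonzero on the others). Such an assignment is equivalent to a pair of critical states of $T_1, T_2$ subject to the compatibility that the enhancement on the merged component equals the product of the enhancements of the two $v^*$-components in $G_1$ and $G_2$. This is exactly the relation defining $\otimes_{\A_m}$, giving the chain-level isomorphism displayed above. The bigradings are compatible because $i(T) = i(T_1) + i(T_2)$ and the quantum grading is additive, with multiplication at $v^*$ matching addition of $x$-exponents.

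\textbf{Step 3 (Differentials match).} Theorem~\ref{Am-differential} expresses $\partial_{ST}^{\A_m}$ as a sum over external edges $e$ of $T$ together with contributions from non-vanishing active edges on the paths $P_{v_1}(T), P_{v_2}(T)$ in the contracted tree $\widetilde{T}$. Every such $e$ lies in a single $G_i$, and both $P_{v_1}(T), P_{v_2}(T)$ lie in $\widetilde{T_i}$ since any path from $v_d$ to an endpoint of $e$ in $G_i$ must pass through $v^*$ and then stay in $G_i$. With the chosen edge ordering, $\xi(T, e)$ splits additively across $G_1$ and $G_2$. Hence $\partial_{ST, G}^{\A_m}$ restricts to the standard tensor-product differential $\partial_1 \otimes \id \pm \id \otimes \partial_2$ on $\CSTam(G_1) \otimes_{\A_m} \CSTam(G_2)$, and the homology statement follows by a Künneth-type argument for tensor products over $\A_m$. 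The main obstacle is precisely the sign bookkeeping in Step 3: verifying that the $\xi(T, e)$ shifts and the parities $L_i(f, e)$ of Theorem~\ref{Am-differential} reassemble correctly into the tensor-product differential, with the correct $(-1)^{i(T_1)}$ prefactor. The bijections of Steps 1 and 2 are comparatively straightforward once the vertex-gluing structure is exploited.
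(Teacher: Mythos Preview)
Your overall strategy matches the paper's: set up a chain-level tensor decomposition of the spanning tree complex and verify that the differentials split. Your explicit use of $\otimes_{\A_m}$, with the module action on the $v^*$-component, is the correct way to make the generator counts on the two sides agree, and is in fact clearer than the paper's bare ``$\otimes$''.

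The gap is in your root placement, and it breaks Step~3. The paper takes $v_d = v^*$ for all three of $G$, $G_1$, $G_2$; you allow $v_d$ to be an arbitrary vertex of $G_1$ and use $v^*$ only as the root of $G_2$. Now take an external edge $e \in E(G_2)$ with endpoints $v_1, v_2$. The path $P_{v_j}(T)$ in $\widetilde T$ from $v_d \in G_1$ to $v_j \in G_2$ must first traverse $\widetilde{T_1}$ from $v_d$ to $v^*$ and only then enter $\widetilde{T_2}$, so your assertion that ``both $P_{v_1}(T), P_{v_2}(T)$ lie in $\widetilde{T_i}$'' is false. In the formula of Theorem~\ref{Am-differential} this produces contributions from non-vanishing internally active edges $f$ lying on the $G_1$-prefix of the path; such terms shuffle enhancements on $G_1$-components even though the edge $e$ being added lies in $G_2$, so that piece of $\partial_{ST,G}^{\A_m}$ is not of the form $\mathrm{id} \otimes \partial_2$. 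With the paper's choice $v_d = v^*$ these prefixes are empty and the splitting of the differential is immediate. The fix is therefore just to set $v_d = v^*$; the rest of what you wrote then goes through. (One minor further point: once the chain isomorphism is in place, the paper passes to homology directly from the chain-level bijection rather than via K\"unneth; if you want the conclusion literally in the form $\HSTam(G_1)\otimes_{\A_m}\HSTam(G_2)$ you should say why no $\operatorname{Tor}^{\A_m}$ obstruction appears, e.g.\ by noting that $\CSTam(G_2)$ with root $v^*$ is free over $\A_m$.)
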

		
		\begin{proof}
			
           Without loss of generality, we assume that the root is placed at the vertex where the two graphs are glued. This vertex will also serve as the root when considering the graphs $G_1$ and $G_2$ individually.

Suppose $T$ is a spanning tree of $G$. Then, $T_i = T \cap G_i$ is a spanning tree of $G_i$. Since $G_1$ and $G_2$ share the root vertex, for any critical state $(\IN(T), \epsilon)$ of $G$, the enhancement $\epsilon$ decomposes as $\epsilon_1 \otimes \epsilon_2$, where $\epsilon_1$ and $\epsilon_2$ are enhancements of $\IN(T_1)$ and $\IN(T_2)$, respectively. 

Consider any two critical states $(\IN(T_i), \epsilon_i)$ and $(\IN(T'_i), \epsilon'_i)$ of $G_i$. Let $T'$ be a spanning tree of $G$ such that $T'_i = T' \cap G_i$. Let $\epsilon$ and $\epsilon'$ be any two enhancements of $\IN(T)$ in $G$ such that they restrict to the enhancements $\epsilon_i$ of $\IN(T_i)$ and $\epsilon'_i$ of $\IN(T'_i)$, respectively, in the chromatic complex of $G_i$. Since $\cyc(T_i, e) = \cyc(T, e)$ for any external edge $e$, using the description of $\partial_{ST}$ in Theorem \ref{Am-differential}, it follows that
\[
[\partial_{ST}^{G_i} \left( (\IN(T_i), \epsilon_i) \right) : (\IN(T'_i), \epsilon'_i)] =  [\partial_{ST}^{G} \left( (\IN(T), \epsilon) \right) : (\IN(T'), \epsilon')].
\]

 We  define the following chain map:

\[
    \Psi : \CSTam(G) \to \CSTam(G_1) \otimes \CSTam(G_2),\]
    
\[    \Psi(\IN(T), \epsilon) := (\IN(T_1), \epsilon_1) \otimes (\IN(T_2), \epsilon_2).\]

Similarly, given two spanning trees \( T_1 \) and \( T_2 \) of \( G_1 \) and \( G_2 \), respectively, one can construct a spanning tree \( T = T_1 \cup T_2 \) of \( G \). We define the map:
Similarly, given two spanning trees \( T_1 \) and \( T_2 \) of \( G_1 \) and \( G_2 \), respectively, one can construct a spanning tree \( T = T_1 \cup T_2 \) of \( G \). We define the map: \[\Upsilon: \CSTam(G_1) \otimes \CSTam(G_2) \to \CSTam(G), \]

\[  \Upsilon ( (\IN(T_1), \epsilon_1) \otimes (\IN(T_2), \epsilon_2) ) = (\IN(T_1 \cup T_2), \epsilon_1 \otimes \epsilon_2).\] \\

It is easy to see that $\Upsilon = \Psi^{-1}$, and it is also a chain map. Hence, $\Psi$ is a quasi-isomorphism.\end{proof}
		
\begin{remark}
        It should be noted that the above quasi-isomorphism doesn't preserve the quantum grading.
        \end{remark}
	
\subsection{Existence of $m$ torsion in $\HChm{*}{*}$ }

        \begin{thm} \label{mtorsionthm}
			The cohomology ${H_{\ast}^{\A_m}}(G)$ of a graph $G$ contains a torsion part if and only if $G$ has no loops and contains a cycle of order greater than or equal to $3$. In this case, $H_{\ast}^{\A_m}(G)$ has a torsion of order dividing $m$.
		\end{thm}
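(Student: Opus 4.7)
I prove the equivalence in two directions.

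For the \emph{only if} direction, suppose $G$ has a loop $e_0$. Pairing each enhanced spanning subgraph $(H, \epsilon)$ with $e_0 \notin E(H)$ to $(H \cup \{e_0\}, \epsilon)$ (the enhancement is preserved, since a loop does not merge components) defines a perfect acyclic matching on $\Chm{*}{*}(G)$ with no critical cells, so $\HChm{*}{*}(G) = 0$. If instead $G$ is simple and contains no cycle of length $\geq 3$, then each connected component of $G$ is a tree, its unique NBC spanning tree is itself, and all of its edges are internally active (no cycles, no broken circuits, no dead edges). Hence $\CSTam(G)$ is concentrated in homological degree $0$, so $\HChm{*}{*}(G) \cong \HSTam(G)$ is free abelian.

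For the \emph{if} direction, I first reduce to the single-block case. Every cycle of length $\geq 3$ in $G$ lies within a single block (a cycle is $2$-connected), and by Theorem \ref{vertex gluing} the homology decomposes as a tensor product over blocks; torsion in any one factor survives in the product. So I may assume $G$ is a block containing a cycle $C = \{e_{i_1} < e_{i_2} < \cdots < e_{i_\ell}\}$ of length $\ell \geq 3$. Choose the NBC spanning tree $T_{par}$ that contains $\{e_{i_1}, e_{i_2}, e_{i_4}, \ldots, e_{i_\ell}\}$ (omitting only $e_{i_3}$ from $C$), extended outside $C$ to an NBC spanning tree of $G$. Then $e_{i_3}$ is external to $T_{par}$, $\cyc(T_{par}, e_{i_3}) = C$, and since the maximal internally live edge of this cycle in $T_{par}$ is $e_{i_2}$, we have $\psi_{e_{i_3}}(T_{par}) = T := T_{par} \cup \{e_{i_3}\} \setminus \{e_{i_2}\}$, an NBC tree as well.

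Mirroring the triangle prototype (for $G = C_3$, $T_{par} = \{e_1, e_2\}$, $T = \{e_1, e_3\}$, a direct computation via Theorem \ref{Am-differential} shows $\sum_{\alpha + \beta = m-1,\, \alpha \geq 1} T_{par}^{(\beta, \alpha, 1)}$ maps under $\partial_{ST}$ to $-m \cdot T^{(x^{m-1}, x)}$ by pairwise cancellation of intermediate targets $T^{(x^{\beta'}, x^{\alpha'})}$), I set
\[
z \; := \; T^{(x^{m-1},\, x,\, x,\, \ldots,\, x)}, \qquad c \; := \; \sum_{\substack{\alpha + \beta = m - 1 \\ \alpha \geq 1}} T_{par}^{(\beta,\, \alpha,\, 1,\, x,\, x,\, \ldots,\, x)},
\]
with the first three indexed positions referring to the root component of $\IN(T_{par})$ and the two components of $\IN(T_{par})$ adjacent to $e_{i_3}$, all other components enhanced by $x$. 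Applying Theorem \ref{Am-differential} summand-by-summand, in each summand the direct-merge term and the transport terms along $P_{v_1}, P_{v_2}$ to the endpoints of $e_{i_3}$ produce a collection of states $T^{(\cdots)}$; summed over $\alpha$, a telescoping argument (identical to the $C_3$ case above) cancels every intermediate state and leaves exactly $\partial_{ST}(c) = -m \cdot z$.

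It remains to show $[z] \neq 0$ in $\HSTam$. By the $\A_m$ analog of Lemma \ref{lextrick}, the only NBC tree whose critical states can hit $z$ under $\partial_{ST}$ via a nonzero incidence is $T_{par}$ (with external edge $e_{i_3}$), and among such critical states only the $m - 1$ appearing in $c$ actually contribute to $z$. The corresponding integer linear system for the coefficient of $z$ and of the simultaneously occurring neighbors $T^{(x^{\beta'}, x^{\alpha'}, x, \ldots)}$ has greatest common divisor $m$, so no integer combination yields $1 \cdot z$. Therefore $[z]$ is a nonzero class annihilated by $m$, giving torsion of order dividing $m$ as required. The main obstacle is the sign bookkeeping in the telescoping cancellation: each transport contribution carries a sign $(-1)^{L_i(f, e_{i_3})}$ depending on the live edges along $P_{v_1}$ and $P_{v_2}$ in each summand of $c$, and one must verify that these signs align so that intermediate targets vanish pairwise while the coefficient of $z$ accumulates to exactly $-m$ rather than a spurious smaller multiple.
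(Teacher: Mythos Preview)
Your argument has two genuine gaps that do not appear fixable without essentially reverting to the paper's strategy.

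\textbf{The differential computation is incomplete.} You assert $\partial_{ST}(c) = -m\cdot z$ by analogy with the $C_3$ case, but in a general block the tree $T_{par}$ has \emph{other} external edges $e'\neq e_{i_3}$, and the formula in Theorem~\ref{Am-differential} sums over all $e\in\EN(T_{par})$. Each such $e'$ contributes terms of the form $\psi_{e'}(T_{par})^{(\cdots)}$, and nothing in your telescoping argument touches these; they do not cancel among themselves, so $\partial_{ST}(c)$ will typically contain many summands supported on trees other than $T$. Moreover, your $T_{par}$ may have far more than two internally active edges (its construction outside $C$ is unspecified), so $\IN(T_{par})$ has many components and the ``three-slot'' notation $(\beta,\alpha,1,\ldots)$ does not capture the full path structure needed to evaluate the transport terms.

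\textbf{The non-exactness argument is wrong.} You claim that the only NBC tree whose critical states can hit $z$ is $T_{par}$. But by Lemma~\ref{lextrick} (and Definition~\ref{preimagetree}), every tree of the form $\psi'_e(T)$ for $e\in\IN(T)$ is a potential source, and in a general block there are many such trees, not just one. The paper's proof confronts exactly this: it sets up a linear system over \emph{all} preimage trees $\psi'_e(T_M)$ and derives a divisibility contradiction. Your assertion that ``only the $m-1$ states appearing in $c$ actually contribute to $z$'' is unjustified.

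The paper avoids both problems by working at the \emph{maximum} homological grading: there $T_M$ is unique with exactly one live edge, so $\IN(T_M)$ has exactly two components and any preimage tree $T_{M-1}$ has exactly two live edges and three components. This is what makes the three-slot computation and the linear-system argument for non-exactness tractable. Your choice of an arbitrary cycle and an arbitrary $T_{par}$ loses this control.
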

		\begin{proof}
			
			It suffices to prove the theorem for connected graphs. First, assume that $G$ is a $2$-connected graph. Let $T_M$ be the unique NBC spanning tree in the maximum homological grading. By Theorem \ref{hspanthm}, $T_M$ has exactly one live edge. Let $T_{M-1}$ be the NBC spanning tree preceding $T_{max}$ in the lexicographical order with $|T_{M-1} \cap T_{M}|=|T_{M}|-1$ i.e. $T_{M}= T_{M-1} \setminus \{e\} \cup \{f\}$. Also since there is a unique NBC tree the maximum homological grading ( The unique NBC tree with exactly one live edge $e_m$), it follows that $i(T_{M})- i(T_{M-1})=1$, $\IN(T_{M}) \setminus \IN(T_{M-1})= \{ f \} $ and $\IA(T_{M-1}) \setminus \IA(T_{M}) = \{ e \}  $.\\

            Now, consider the following chain element:
                \[
                 S= \left(\sum_{i+j=m-1} T_{M-1}^{x,x^i,x^j} - \sum_{i+j=m} T_{M-1}^{1,x^i,x^j}\right) 
                \]
                We use diagrammatic notations in order to denote the enhanced spanning subgraphs of $[\R(T_{M-1}),T_{M-1}]$ and $[\R(T_M),T_M]$. Then $\partial_{ST}(S)$ is given by the following expression: (Also see Figure \ref{amincidences} for all possible alternating paths to $T_M^{x,x^{m-1}}$)

                \[
                 \begin{aligned}
                   \partial_{ST}(S) & = \partial_{ST} \left(\sum_{i+j=m-1} \Tmp{x}{x^i}{x^j} \right) - \partial_{ST} \left( \sum_{i+j=m} \Tmp{1}{x^i}{x^j} \right)\\
                   & = (m-2)\left(\Tmn{x}{x^{m-1}}\right) - \sum_{i+j=m-1} \left( \Tmn{x^{i+1}}{x^j} + \Tmn{x^{j+1}}{x^i} \right) \ + \ 2.\sum_{k=1}^{m-1}\left(\Tmn{x^k}{x^{m-k}} \right) \\
                   & = (m-2)\left(\Tmn{x}{x^{m-1}}\right) - 2\sum_{k=2}^{m-1}\left( \Tmn{x^k}{x^{m-k}}\right) +  2.\sum_{k=1}^{m-1}\left(\Tmn{x^k}{x^{m-k}} \right) \\
                   & = m \left(\Tmn{x}{x^{m-1}}\right) = m.T_M^{x,x^{m-1}}
                 \end{aligned}  
                \]
                
\begin{figure}[h]
                {
                \centering 
                \scalebox{0.5}
                {\includesvg{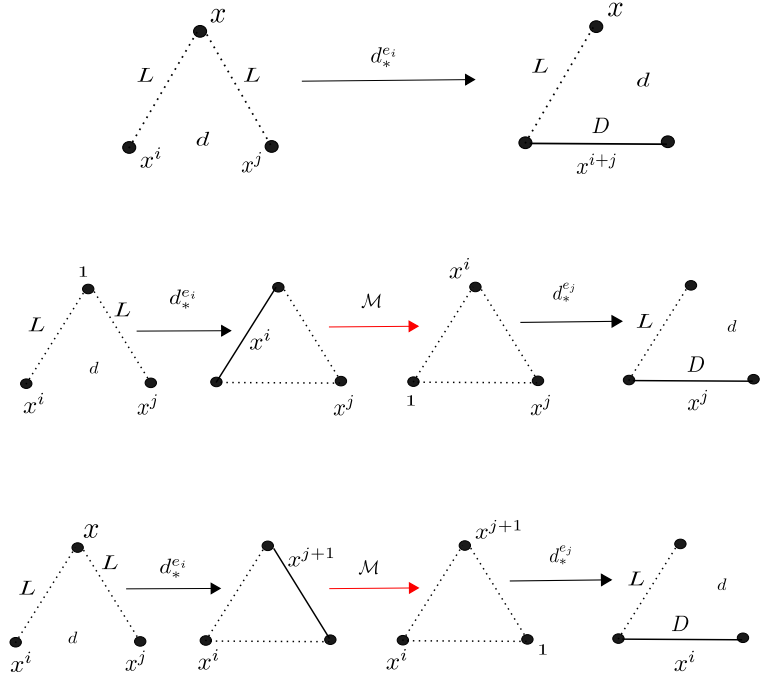}
                }
                \caption{Alternating path between enhancements of $T_{M-1}$ to $T_M$}
                \label{amincidences}
                }
                \end{figure}
     Now we show that $T_M^{x,x^{m-1}}$ is not in the boundary. First observe that for any $e \in \IN(T_M)$, $\psi'_e(T_M)$, the preimage tree will be either of the following trees (based on the placement of $v_d$), where the internally inactive edges of $\psi'_e(T_M)$ have been contracted:

\begin{table}[!tbph]
 \centering
    \begin{tabular}{c c c c}
        \begin{tikzpicture}[scale=0.5,baseline]
            \tikzset{node style/.style={fill=black, draw=black, shape=circle, inner sep=2pt}}
            \node [node style, color=red] (0) at (0, 2) {};
            \node [node style] (1) at (-1.5, 0) {};
            \node [node style] (2) at (1.5, 0) {};
            \draw[dotted] (1) to node[left] {$L$} (0);
            \draw[dotted] (0) to node[right] {$L$} (2);
            \draw[dotted] (1) to node[below] {$d$} (2);
        \end{tikzpicture} 
        &
        \begin{tikzpicture}[scale=0.5,baseline]
            \tikzset{node style/.style={fill=black, draw=black, shape=circle, inner sep=2pt}}
            \node [node style, color=red] (0) at (0, 2) {};
            \node [node style] (1) at (-1.5, 0) {};
            \node [node style] (2) at (1.5, 0) {};
            \draw[dotted] (1) to node[left] {$L$} (0);
            \draw[dotted] (0) to node[right] {$L$} (2);
            \draw[dotted] (0) to[out=60,in=10, looseness=2.5] node[below] {$d$} (2);
        \end{tikzpicture} 
        &
        \begin{tikzpicture}[scale=0.5,baseline]
            \tikzset{node style/.style={fill=black, draw=black, shape=circle, inner sep=2pt}}
            \node [node style] (0) at (0, 2) {};
            \node [node style, color=red] (1) at (-1.5, 0) {};
            \node [node style] (2) at (1.5, 0) {};
            \draw[dotted] (1) to node[left] {$L$} (0);
            \draw[dotted] (0) to node[right] {$L$} (2);
            \draw[dotted] (1) to node[below] {$d$} (2);
        \end{tikzpicture} 
        &
        \begin{tikzpicture}[scale=0.5,baseline]
            \tikzset{node style/.style={fill=black, draw=black, shape=circle, inner sep=2pt}}
            \node [node style] (0) at (0, 2) {};
            \node [node style, color=red] (1) at (-1.5, 0) {};
            \node [node style] (2) at (1.5, 0) {};
            \draw[dotted] (1) to node[left] {$L$} (0);
            \draw[dotted] (0) to node[right] {$L$} (2);
            \draw[dotted] (0) to[out=60,in=10, looseness=2.5] node[below] {$d$} (2);
        \end{tikzpicture} 
        \\
        \textbf{(a)} & \textbf{(b)} & \textbf{(c)} & \textbf{(d)}
    \end{tabular}
\end{table}

Out of these four possibilities, one can easily show that (b) and (d) will vanish under $\partial_{ST}$ since in both the cases there is only one internally active edge in the cycle space of the externally inactive edge. Thus, if $T_{M}^{x,x^{m-1}}$ is  exact then we will have

\[
\begin{aligned}
     \partial_{ST} & \left( \sum_{\substack{T= \psi'_e(T_M) \\ e \in \IN(T_M)}} \ \sum_{i+j=m} \left(a_{T,i} \ \Tmp{1}{x^i}{x^j} + c_{T,i} \ \Tmpl{1}{x^i}{x^j}\right) \right. + \\
     & \left. \sum_{\substack{T= \psi'_e(T_M) \\ e \in \IN(T_M)}} \ \sum_{i+j=m-1} \left( b_{T,i} \ \Tmp{x}{x^i}{x^j} + d_{T,i} \ \Tmpl{x}{x^i}{x^j}\right) \right) = \Tmn{x}{x^{m-1}} \\
     \implies & \sum_{i=1}^{m-1} -(a_i+a_{m-i}) \left(\Tmn{x^i}{x^{m-i}}\right) + \sum_{i=1}^{m-2} \left(b_i \Tmn{x}{x^{m-1}} \right) - \sum_{i=2}^{m-1} (b_{i-1}+b_{m-i}) \left( \Tmn{x^i}{x^{m-i}}\right)+\\
     &\sum_{i=1}^{m-1} (c_i+c_{m-i}) \left( \Tmnl{x^i}{x^{m-i}}\right) + \sum_{i=2}^{m-1} (d_{i-1}+d_{m-i}) \left( \Tmnl{x^i}{x^{m-i}}\right) - \sum_{i=1}^{m-2} d_i \left(\Tmnl{x^{m-1}}{x}\right) = \Tmn{x}{x^{m-1}}
\end{aligned}
\]\\

In the above calculation,
\[ a_i:=\sum_{\substack{T= \psi'_e(T_M) \\ e \in \IN(T_M)}} a_{T,i}, \ b_i:=\sum_{\substack{T= \psi'_e(T_M) \\ e \in \IN(T_M)}} b_{T,i}, \ c_i:=\sum_{\substack{T= \psi'_e(T_M) \\ e \in \IN(T_M)}} c_{T,i} \text{ and } d_i:=\sum_{\substack{T= \psi'_e(T_M) \\ e \in \IN(T_M)}} d_{T,i}.\]\\

Comparing the coefficients, we have the following identities:

 \begin{align}
     & -(a_1+a_{m-1}) + (c_1 + c_{m-1}) + \sum_{i=1}^{m-2} b_i = 1 \\
     & -(a_i+a_{m-i}) - (b_{i-1} + b_{m-i}) + (c_i + c_{m-i}) + (d_{i-1} + d_{m-i}) = 0, \qquad 2 \leq i \leq m-1 \\
     & \sum_{i=1}^{m-2} d_i =0
 \end{align}

 For $i=m-1$, combining (7) and (8) we get,
 \begin{equation}
     -(b_{m-2}+b_1)+(d_{m-2}+d_1)= \left(\sum_{i=1}^{m-2}b_i\right) -1
 \end{equation}
 
 Similarly for $i=2$, combining (8) and (10)  we get,
 \begin{equation}
      -(a_2 + a_{m-2})+(c_2+c_{m-2}) = 1- \left(\sum_{i=1}^{m-2} b_i\right)
 \end{equation}

 Suppose $m$ is odd and of the form $m=2k+1$ then continuing in the above manner we arrive at $i=k+1$ in order to get,

 \begin{equation}
     -2b_k + 2d_k = \left(\sum_{i=1}^{m-2}b_i\right) -1
 \end{equation}

 Now we multiply all the equations with $2$ except for (12) and add them all to get,

 \[
  -2\sum_{i=1}^{m-2}b_i + 2\sum_{i=1}^{m-2}d_i = (m-2) \left(\sum_{i=1}^{m-2}b_i - 1\right) 
 \]

 Now using (9) we have, $m\left(\sum_{i=1}^{m-2}b_i\right) = (m-2)$ which implies $m \mid m-2$ which is not possible since, $m$ is odd. Thus, we have a contradiction.\\

 When $m$ is even of the form $2k$ then instead of equation (12) we will have
 \[
  -(b_{k-1}+b_k)+(d_{k-1}+d_k) = \left(\sum_{i=1}^{m-2}b_i\right) -1
 \]
So, adding all the equations on $b_i$ and $d_i$ from $i=2$ to $i=k+1$, we have
\[
-\sum_{i=1}^{m-2}b_i - \sum_{i=1}^{m-2}d_i = \frac{m}{2} \left(\sum_{i=1}^{m-2}b_i-1\right)
\]
Again using (9) we get, $(m+2)\left(\sum_{i=1}^{m-2}b_i\right) = m$ which implies $m+2 \mid m$ which is a contradiction.

Hence, considering upto signs of both $T_M^{x,x^{m-1}}$ and $(-1)^{\xi(\psi'_e(T_M),e)}$ we conclude that $T_M^{x,x^{m-1}}$ is not exact. Thus, we have a torsion of order dividing $m$ in $\HSTam(B)$. Now, the general case follows from Theorem \ref{vertex gluing}.
\end{proof}

\end{section}

\bibliographystyle{alpha}
\bibliography{citation}

\begin{thebibliography}{HGR05}

\bibitem[Bj{\"{o}}92]{bjoner}
Anders Bj{\"{o}}rner.
\newblock {\em Homology and Shellability of Matroids and Geometric Lattices}, page 226–283.
\newblock Encyclopedia of Mathematics and its Applications. Cambridge University Press, 1992.

\bibitem[CS19]{Chandler_Sazdanovic_2019}
Alex Chandler and Radmila Sazdanovic.
\newblock A broken circuit model for chromatic homology theories, 2019.

\bibitem[HGR05]{Helme_Guizon_Rong_2005}
L.~Helme-Guizon and Y.~Rong.
\newblock A categorification for the chromatic polynomial.
\newblock {\em Algebraic and Geometric Topology}, 2005.

\bibitem[JW09]{article}
Michael J\"{o}llenbeck and Volkmar Welker.
\newblock Minimal resolutions via algebraic discrete morse theory.
\newblock {\em Memoirs of the American Mathematical Society}, 923, 01 2009.

\bibitem[LHG06]{torsionam}
Yongwu~Rong Laure Helme-Guizon, Józef H.~Przytycki.
\newblock Torsion in graph homology.
\newblock {\em Fundamenta Mathematicae}, 190(1):139--177, 2006.

\bibitem[Sk{\"{o}}05]{Skldberg2005MorseTF}
Emil Sk{\"{o}}ldberg.
\newblock Morse theory from an algebraic viewpoint.
\newblock {\em Transactions of the American Mathematical Society}, 358:115--129, 2005.

\bibitem[SS18]{sazdanovic2018patternskhovanovlinkchromatic}
Radmila Sazdanovic and Daniel Scofield.
\newblock Patterns in khovanov link and chromatic graph homology, 2018.

\end{thebibliography}
		
\end{document}